\newtheorem{theorem}{Theorem}[section]
\newtheorem{lemma}[theorem]{Lemma}
\newtheorem{proposition}[theorem]{Proposition}
\newtheorem{corollary}[theorem]{Corollary}
\newtheorem{remark}[theorem]{Remark}
\numberwithin{equation}{section}
\newcommand{\sourceG}{g}
\newcommand{\semiG}{\mathbb{A}}
 \newcommand{\rmi}{{\rm i}}
\newcommand{\R}{\mathbb{R}}
\newcommand{\Z}{\mathbb{Z}}
\newcommand{\Ndim}{n}
\newcommand{\id}{i}
\newcommand{\solU}{f}
\newcommand{\wN}{\ell}
\newcommand{\wNz}{\ell_0}
\newcommand{\PwNz}{\ell_0'}
\newcommand{\wE}{j}
\newcommand{\wM}{i}
\newcommand{\wJ}{j_*}
\newcommand{\wJt}{\tilde{j}}
\newcommand{\ALTsig}{\varrho}
\newcommand{\Am}{A}
\newcommand{\Ac}{A}
\newcommand{\vel}{v}
\newcommand{\spa}{x}
\newcommand{\fva}{\xi}
\newcommand{\dK}{K}
\newcommand{\NgE}{{K \ge 2\ksob}}
\newcommand{\DgE}{\Ndim \ge 3}
\newcommand{\CD}{\mathcal{D}}
\newcommand{\CE}{\mathcal{E}}
\newcommand{\CF}{\mathcal{F}}
\newcommand{\FM}{\mu}
\newcommand{\MM}{\sqrt{\mu}}
\newcommand{\FP}{\mathbf{P}}
\newcommand{\SB}{\mathbf{B}}
\newcommand{\FL}{L}
\newcommand{\FI}{\mathbf{I}}
\newcommand{\CI}{\mathcal{I}}
\newcommand{\CN}{N(\FL)}
\newcommand{\na}{\nabla}
\newcommand{\al}{\alpha}
\newcommand{\be}{\beta}
\newcommand{\la}{\lambda}
\newcommand{\pa}{\partial}
\newcommand{\ka}{\kappa}
\newcommand{\eps}{\epsilon}
\newcommand{\Ga}{\Gamma}
\newcommand{\eqdef}{\overset{\mbox{\tiny{def}}}{=}}
\newcommand{\threed}{{\mathbb R}^{\Ndim}}
\newcommand{\nsm}{|}
\newcommand{\ang}[1]{ \left< {#1} \right> }
\newcommand{\domain}{\mathbb{R}^\Ndim}
\newcommand{\ind}{ {\mathbf 1}}
\newcommand{\spacen}{N^{s,\gamma}}
\newcommand{\spaceL}{L^2_{\gamma+2s}}
\newcommand{\spaceELLn}{N^{s,\gamma}_\wN}
\newcommand{\ksob}{K^*_n}
\newcommand{\SMeps}{\eps_{\dK,\wN}}
\newcommand{\hilbertONE}{\mathcal{H}_{\vel}}
\newcommand{\hilbertTWO}{\mathcal{H}'_{\vel}}
\begin{document}

\title[The Boltzmann equation, Besov spaces, and decay rates in $\threed_{\spa}$]
{The Boltzmann equation, Besov spaces, and optimal time decay rates in $\threed_{\spa}$}

\author[V. Sohinger]{Vedran Sohinger}
\address{
University of Pennsylvania, Department of Mathematics, David Rittenhouse Lab, 209 South 33rd Street, Philadelphia, PA 19104-6395, USA}
\email{vedranso at math.upenn.edu}
\urladdr{http://www.math.upenn.edu/~vedranso/}
\thanks{V. S. was supported by a Simons Postdoctoral Fellowship.}

\author[R. M. Strain]{Robert M. Strain}
\email{strain at math.upenn.edu}
\urladdr{http://www.math.upenn.edu/~strain/}
\thanks{R.M.S. was partially supported by the NSF grants  DMS-1200747, DMS-0901463, and an Alfred P. Sloan Foundation Research Fellowship.}

\begin{abstract}
We prove that $k$-th order derivatives of perturbative classical solutions to the hard and soft potential Boltzmann equation (without the angular cut-off assumption) in the whole space, $\threed_x$ with $\DgE$, converge in large time to the global Maxwellian with the optimal decay rate of  $O\left(t^{-\frac{1}{2}\left(k+\ALTsig+\frac{\Ndim}{2}-\frac{\Ndim}{r}\right)}\right)$
in the $L^r_x(L^2_{\vel})$-norm for any $2\leq r\leq \infty$.   These results hold for any $\ALTsig \in (0, \Ndim/2]$ as long as initially $\| f_0\|_{\dot{B}^{-\ALTsig,\infty}_2 L^2_{\vel}} < \infty$.   In the hard potential case, we prove faster decay results in the sense that if $\| \FP f_0\|_{\dot{B}^{-\ALTsig,\infty}_2 L^2_{\vel}} < \infty$ and 
$\| \{\FI - \FP\} f_0\|_{\dot{B}^{-\ALTsig+1,\infty}_2 L^2_{\vel}} < \infty$ for $\ALTsig \in (\Ndim/2, (\Ndim+2)/2]$  then the solution decays the global Maxwellian in $L^2_\vel(L^2_x)$ with the optimal large time decay rate of $O\left(t^{-\frac{1}{2}\ALTsig}\right)$.
\end{abstract}

\keywords{Kinetic Theory, Boltzmann equation, long-range interaction, non cut-off, soft potentials, hard potentials, fractional derivatives, time decay, convergence rates, Besov spaces. \\
\indent 2010 {\it Mathematics Subject Classification.}  35Q20, 35R11, 76P05, 82C40, 35B65, 26A33}


%
\setcounter{tocdepth}{1}
\maketitle
\tableofcontents

\thispagestyle{empty}





\section{Introduction and the main result}\label{first:sec}

The study of optimal large time decay rates in the whole space for perturbative solutions to non-linear dissipative partial differential equations with degenerate structure has received a substantial amount of attention in recent times, for example 
\cite{MR882376,MR2325837,ZhuStrain,ZhuStrain2, MR0609540, SdecaySOFT,2010arXiv1006.3605D,MR2357430,arXiv:0912.1742,MR677262,GuoWang2011,MR2764990,DYZ2011}.   For equations in which $L^2(\threed_{\spa})$ based norms can be propagated by the solution, it is common to make a smallness assumption on the $L^1(\threed_{\spa})$ norm of the initial data and combine this with $L^2(\threed_{\spa})$ type estimates in order to obtain large time decay estimates.  However it is often the case that propagating bounds on $L^1(\threed_{\spa})$ norms is difficult along the time evolution.  This can cause severe difficulties in applications because  one could improve existing theories by showing that an $L^1(\threed_{\spa})$ type norm is small or bounded after a finite but large time $T>0$, and then applying the aforementioned decay theory.  To overcome these types of difficulties, it is of great interest to prove 
 decay rates in an $L^2(\threed_{\spa})$ based space which is larger than $L^1(\threed_{\spa})$.  In this paper we accomplish this task for the non-cutoff Boltzmann equation in the homogeneous Besov-Lipschitz space $\dot{B}^{-\ALTsig,\infty}_2 \supset L^p(\threed_{\spa})$ where for $p\in [1,2]$ we use $\ALTsig = \frac{\Ndim}{p}-\frac{\Ndim}{2}$.  We remark that these spaces can be thought of as a physical choice since it is possible to obtain the $L^1(\threed_{\spa})$ embedding.   In the hard-potential case, we prove faster decay results in the 
spaces $\dot{B}^{-\ALTsig,\infty}_2$ for $\ALTsig \in (\frac{\Ndim}{2}, \frac{\Ndim+2}{2}]$.  
We  anticipate that our methods are applicable to a much wider class of degenerately dissipative equations.

For the non-cutoff Boltzmann equation, particularly for the soft potential case, there are two competing degenerate effects; so that this equation can be thought of as ``doubly degenerate''.  Firstly, for both the hard and the soft potentials, there is a degeneracy in the whole space because the macroscopic part of the solution is not a part of the dissipation.  Second there is a further degeneracy,  for the soft potentials, due to the weak velocity decay in the dissipation.  As described below, we develop new methods to overcome the combination of these difficulties in $\dot{B}^{-\ALTsig,\infty}_2 L^2_{\vel}$.

We study solutions to the {\em Boltzmann equation}, which is given by
  \begin{equation}
  \frac{\partial F}{\partial t} + v \cdot \nabla_{\spa} F = {\mathcal Q}(F,F),
  \quad
  F(0,x,\vel) = F_0(x,\vel).
  \label{BoltzFULL}
  \end{equation}
Here the unknown is $F=F(t,x,v)\ge 0$, which for $t\ge 0$ physically represents the density of particles in phase space. The spatial coordinates are $x\in\threed_{\spa}$, and velocities are $\vel\in\threed_{\vel}$ with 
$\DgE$.   The  {\em Boltzmann collision operator}, ${\mathcal Q}$,
is a bilinear operator which acts only on the velocity variables, $\vel$, instantaneously in $(t,x)$ as 
  \begin{equation*}
  {\mathcal Q} (G,F)(v) \eqdef 
  \int_{\mathbb{R}^n}  dv_* 
  \int_{\mathbb{S}^{n-1}}  d\sigma~ 
  B(v-v_*, \sigma) \, 
  \big[ G'_* F' - G_* F \big].
  \end{equation*} 
We use the standard shorthand $F = F(v)$, $G_* = G(v_*)$, $F' = F(v')$, $G_*^{\prime} = G(v'_*)$. 
In this expression,  $v$, $v_*$ and $v'$, $v' _*$  are 
the velocities of a pair of particles before and after collision.  They are connected through the formulas
  \begin{equation}
  v' = \frac{v+v_*}{2} + \frac{|v-v_*|}{2} \sigma, \qquad
  v'_* = \frac{v+v_*}{2} - \frac{|v-v_*|}{2} \sigma,
  \qquad \sigma \in \mathbb{S}^{n-1}.
\notag 
  \end{equation}
  We will discuss below in more detail the {\em Boltzmann collision kernel}, $B(v-v_*, \sigma)$.

 We will study the linearization of \eqref{BoltzFULL} around the Maxwellian equilibrium states 
\begin{equation}
F(t,x,v) = \FM(v)+\sqrt{\FM(v)} f(t,x,v),
\label{maxLIN}
\end{equation}
where without loss of generality the Maxwellian is given by
$$
\FM(v) \eqdef (2\pi)^{-n/2}e^{-|v|^2/2}.
$$
We use the homogeneous mixed Besov space $\dot{B}^{\ALTsig,\infty}_2 L^2_\vel $ with norm  
\begin{equation}\label{besovINFdef}
\| g\|_{\dot{B}^{\ALTsig,\infty}_2 L^2_\vel} 
\eqdef \sup_{j \in \Z} \left( 2^{\ALTsig j } \| \Delta_j g \|_{L^2(\threed_\spa \times \threed_\vel)} \right),
\quad 
\ALTsig \in \R.
\end{equation}
Here $\Delta_j$ are the standard Littlewood-Paley projections onto frequencies of order $2^j$ (in the spatial, $x$, variable only); they are defined in Section \ref{app.besov.h}.  We provide a discussion of more general Besov spaces in Section \ref{app.besov.h}.  
We suppose once and for all that $\dK$ is an integer satisfying $\NgE$, where $\ksob \eqdef \lfloor \frac{\Ndim}{2} +1 \rfloor$ is the smallest integer which is strictly greater than $\frac{\Ndim}{2}$.

Now given initial data  $\solU_0(x,\vel)$, to the Boltzmann equation \eqref{BoltzFULL} in the form \eqref{maxLIN}, we define an instant energy functional for the initial data as
\begin{equation}
\SMeps 
\approx
\sum_{|\al|+|\be|\leq \dK}\|w^{\wN-|\be|\rho}\pa^\al_\be \solU_0\|_{L^2_{\spa} L^2_{\vel}}^2.
\notag
\end{equation}
Above $\wN\ge 0$, $\alpha$ and $\beta$ are multi-indices, then $\pa^\al_\be$ is the corresponding high order spatial and velocity derivative, and $\rho>0$ is a parameter that is defined in the paragraph below \eqref{def.dNm}.   In particular  $\SMeps$ is defined again precisely in \eqref{def.id.jm}.   The current goal is to state our main theorems on time decay rates right away; therefore we postpone the definitions of the rest of our notation, and the statement of our existence and uniqueness Theorem \ref{thm.energy} (from \cite{gsNonCut0,SdecaySOFT}) until Section \ref{sec.notation} below.

Our main theorems are stated as follows:

\begin{theorem}\label{thm.main.decay}
Suppose that $\SMeps$ 
is sufficiently small with $\wN \ge \wNz$, where $\wNz$ 
is given by \eqref{wNz.def} below.  Consider the global solution $f(t,x,v)$ to the Boltzmann equation from Theorem \ref{thm.energy} with initial data $f_0(x,v)$.  

Fix $\ALTsig \in (0, \Ndim/2]$ and we consider any $k\in\{ 0,1,\ldots, \dK -1\}$.  Suppose additionally that $\| f_0\|_{\dot{B}^{-\ALTsig,\infty}_2 L^2_{\vel}} < \infty$.  Then uniformly for $t\ge 0$ we have
\begin{equation}
\label{Theorem1A}
\|  f(t) \|_{\dot{B}^{m,\infty}_2 L^2_{\vel}}^2 
\lesssim (1+t)^{-(m+\ALTsig)},
\quad \forall m \in [-\ALTsig, k].
\end{equation}
Furthermore 
\begin{equation}
\label{Theorem1B}
\sum_{k \le |\alpha| \le \dK} \| \partial^\alpha \solU(t) \|_{L^2_{\spa}L^2_{\vel}}^2
\lesssim (1+t)^{-(k+\ALTsig)}.
\end{equation}
\end{theorem}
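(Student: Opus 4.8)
The plan is to combine the energy method of Theorem \ref{thm.energy} with a time-frequency weighted interpolation argument in the Littlewood-Paley decomposition. The starting point is the standard macroscopic--microscopic energy inequality for the linearized equation: applying $\Delta_j$ to \eqref{BoltzFULL} (in the form \eqref{maxLIN}) and testing against $\Delta_j f$ in $L^2_x L^2_v$, one obtains a differential inequality of the form $\frac{d}{dt}\|\Delta_j f(t)\|_{L^2_x L^2_v}^2 + c\, \mathcal{D}(\Delta_j f(t)) \lesssim 0$, where $\mathcal{D}$ is the dissipation (coercive on the microscopic part $\{\FI-\FP\}f$ in the anisotropic norm, but degenerate on the macroscopic part $\FP f$). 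The crucial point, following the strategy of \cite{SdecaySOFT}, is that at frequency $2^j$ the macroscopic part can be controlled after integrating in time, or via a clever linear combination with the interaction functional that recovers $\lesssim 2^{2j}\|\FP\Delta_j f\|^2$ in the dissipation. This yields, for the low-frequency regime, $\frac{d}{dt}E_j(t) + c\, 2^{2j} E_j(t) \lesssim \text{(nonlinear errors)}$, hence $E_j(t) \lesssim e^{-c 2^{2j} t} E_j(0) + \text{(Duhamel term)}$; and for the high-frequency part one uses the full high-order energy functional $\SMeps$ from Theorem \ref{thm.energy}, which decays because the derivative $\partial^\alpha f$ with $|\alpha|\geq 1$ is itself controlled.

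Next I would set up the time-weighted estimates. Define, for $\ell \geq 0$, the quantity $\mathcal{E}_\ell(t) = \sum_{|\alpha|\le \ell}\|\partial^\alpha f(t)\|_{L^2_x L^2_v}^2$ and the dissipation $\mathcal{D}_\ell(t)$, and prove an inequality $\frac{d}{dt}\mathcal{E}_\ell + \mathcal{D}_\ell \lesssim 0$ together with a Sobolev-type interpolation $\mathcal{E}_\ell \lesssim \mathcal{D}_\ell^{1-\theta}\,\|f\|_{\dot{B}^{-\ALTsig,\infty}_2 L^2_v}^{2\theta}$ for an appropriate $\theta \in (0,1)$. The hypothesis $\|f_0\|_{\dot{B}^{-\ALTsig,\infty}_2 L^2_v} < \infty$ must be propagated in time: this is where the Besov framework is essential, since $\dot{B}^{-\ALTsig,\infty}_2$ is stable under the linear flow (the frequency-localized pieces decay monotonically) and the nonlinearity $\mathcal{Q}(f,f)$ can be absorbed because the negative-regularity Besov norm is subcritical with respect to the available energy bounds. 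Once $\sup_{t}\|f(t)\|_{\dot{B}^{-\ALTsig,\infty}_2 L^2_v} \lesssim 1$ is established, the interpolation inequality feeds into the differential inequality to give $\frac{d}{dt}\mathcal{E}_\ell + c\,\mathcal{E}_\ell^{1/(1-\theta)} \lesssim 0$, and a standard ODE comparison lemma yields $\mathcal{E}_\ell(t) \lesssim (1+t)^{-(1-\theta)/\theta}$; choosing $\ell$ and tracking the exponent bookkeeping gives exactly the rate $(1+t)^{-(k+\ALTsig)}$ in \eqref{Theorem1A}–\eqref{Theorem1B}.

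To get the sharp frequency-by-frequency statement \eqref{Theorem1A} for all $m \in [-\ALTsig,k]$, rather than just the summed derivative bound, I would run the Littlewood-Paley argument termwise: for each $j$, $\|\Delta_j f(t)\|_{L^2_x L^2_v}^2 \lesssim e^{-c 2^{2j}t}\|\Delta_j f_0\|^2 + (\text{nonlinear contribution})$, and then estimate $\sup_j 2^{2mj}\|\Delta_j f(t)\|^2$ by splitting into $2^j \lesssim (1+t)^{-1/2}$ and $2^j \gtrsim (1+t)^{-1/2}$. On the low-frequency side the factor $2^{2mj} 2^{2\ALTsig j}$ combined with $\|\Delta_j f_0\|^2 \lesssim 2^{-2\ALTsig j}\|f_0\|_{\dot{B}^{-\ALTsig,\infty}_2 L^2_v}^2$ is summable/bounded and produces $(1+t)^{-(m+\ALTsig)}$; on the high-frequency side the exponential decay $e^{-c 2^{2j}t}$ beats any polynomial, and one uses the uniform-in-time energy bound on $\partial^\alpha f$ up to order $k$ from Theorem \ref{thm.energy}. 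The nonlinear (Duhamel) contribution is handled by the same split, using the bilinear estimates for $\mathcal{Q}$ in the non-cutoff anisotropic norm together with the already-established decay of lower-order quantities, closing a bootstrap.

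The main obstacle, I expect, is the interplay of the \emph{two} degeneracies in the soft-potential case: the macroscopic degeneracy (no $\FP f$ in $\mathcal{D}$) forces the use of the interaction-functional trick and thus the careful low-frequency analysis, while the velocity-weight degeneracy (weak decay of the soft-potential dissipation in $v$) means the interpolation inequality $\mathcal{E}_\ell \lesssim \mathcal{D}_\ell^{1-\theta}(\cdots)^\theta$ loses velocity weights, so one must carry the weighted norms $\|w^{\ell-|\beta|\rho}\partial^\alpha_\beta f\|$ throughout and show that the weight loss is compatible with the claimed decay exponent — this is precisely why the weight parameters $\wN \ge \wNz$ and $\rho$ enter the hypotheses, and getting the bookkeeping of weights against time-decay powers to close is the delicate part. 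The hard-potential case is cleaner because the dissipation controls $\|\cdot\|_{L^2_{\gamma+2s}}$ with $\gamma + 2s \ge 0$, so no weight is lost and the argument simplifies, which is also why the faster-decay refinement in the abstract is available there.
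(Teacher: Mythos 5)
Your plan asserts that the uniform bound $\sup_{t\ge 0}\| f(t)\|_{\dot{B}^{-\ALTsig,\infty}_2 L^2_{\vel}}\lesssim 1$ is essentially automatic ("stable under the linear flow", nonlinearity "absorbed because subcritical"), but this bound is the crux of the whole theorem and does not come for free. Applying $\Delta_j$ and Bernstein forces you to estimate $\|w^{-(\gamma+2s)/2}\Gamma(f,f)\|_{L^p_x L^2_{\vel}}$ with $p=\frac{2\Ndim}{\Ndim+2\ALTsig}$, and the purely macroscopic contribution is $\|\,[a,b,c]\,\|_{L^{2p}_x}^2$; at the endpoint $\ALTsig=\Ndim/2$ (i.e. $p=1$, the case that gives the $L^1$ embedding) this is $\|[a,b,c]\|_{L^2_x}^2$, a zero-derivative, purely hydrodynamic quantity that is not controlled by the dissipation \eqref{def.dNm} and is not time-integrable a priori. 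The paper closes this only through Proposition \ref{prop.besov.ev} plus the bootstrap of Sub-section \ref{subsection4}: for $\ALTsig\in(\frac{\Ndim-2}{2},\frac{\Ndim}{2}]$ one first interpolates the initial data to a smaller exponent $\ALTsig'$, proves the decay \eqref{main.decay.proof1} for $\ALTsig'$, and only then feeds that decay of $\FP f$ and its derivatives back into the time integral in \eqref{besov.prop.small} to make it converge and obtain \eqref{est.unif.besov}. Without a substitute for this iteration your argument does not close precisely in the range that motivates the Besov setting.

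The second gap concerns the soft potentials, which Theorem \ref{thm.main.decay} covers. The per-frequency bound $E_j(t)\lesssim e^{-c2^{2j}t}E_j(0)+\text{Duhamel}$ is false when $\gamma+2s<0$: even for the linear flow the velocity degeneracy only yields polynomial time-frequency decay with a loss of velocity weight (cf. \eqref{boundS}), and the Duhamel/linear-semigroup route you sketch is exactly what the paper avoids for this theorem (it is used only for the hard-potential Theorem \ref{thm.extra.decay}). Relatedly, the interpolation $\mathcal{E}_\ell\lesssim \mathcal{D}_\ell^{1-\theta}\|f\|_{\dot{B}^{-\ALTsig,\infty}_2 L^2_{\vel}}^{2\theta}$ cannot hold as stated for soft potentials, since the microscopic part of $\mathcal{E}_\ell$ is measured in $L^2_{\vel}$ while the dissipation carries only $L^2_{\gamma+2s}$ with $\gamma+2s<0$; you name this weight bookkeeping as the delicate point but offer no mechanism to resolve it. The paper's mechanism is the time-velocity splitting \eqref{split.E}: on $\{w^{\gamma+2s}\ge (1+\varepsilon t)^{-1}\}$ the dissipation weight suffices, and on the complement one spends the polynomial velocity weights $w^{\wNz}$ (this is what determines $\wNz$ in \eqref{wNz.def}) to buy the factor $(1+\varepsilon t)^{-1-(k+\ALTsig)}$, as in \eqref{Theorem1C}. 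Your time weight $(1+\varepsilon t)^s$, the interaction functional for the macroscopic dissipation, and the dyadic split at $2^j\sim(1+t)^{-1/2}$ do match the paper's proof of \eqref{Theorem1B}; but the two ingredients above — the bootstrap proof of the a priori Besov bound and the time-velocity splitting — are exactly where the proof lives, and they are missing.
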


Notice we have established the optimal $L^{p}_{\spa}$ decay rate  for $p\in [1,2)$ for all of the derivatives of order $k\in\{ 0,1,\ldots, \dK -1\}$ in the larger space $\dot{B}^{-\ALTsig,\infty}_2$ with $\ALTsig \in [0, \Ndim/2]$ and $\ALTsig= \frac{\Ndim}{p}-\frac{\Ndim}{2}$; and we only need to assume that initially $\| f_0\|_{\dot{B}^{-\ALTsig,\infty}_2 L^2_{\vel}} < \infty$.

\begin{corollary}\label{cor.decay.interp}
Fix any $2\leq r\leq \infty$ and $k\in\{ 0,1,\ldots, \dK -1\}$ satisfying the inequality $k< \dK-1 -\frac{\Ndim}{2}+\frac{\Ndim}{r}$.\footnote{We remark that the proof of Corollary \ref{cor.decay.interp} easily shows that if $2\leq r< \infty$ then we can allow  $k\le  \dK-1 -\frac{\Ndim}{2}+\frac{\Ndim}{r}$, and we only need to restrict to $k< \dK-1 -\frac{\Ndim}{2}$ when $r=\infty$.  }  Suppose all the conditions from Theorem \ref{thm.main.decay} hold.    
Then for $\ALTsig \in (0, \Ndim/2]$ with $\| f_0\|_{\dot{B}^{-\ALTsig,\infty}_2 L^2_{\vel}} < \infty$, we have the following  estimate
\begin{equation} \notag 
\sum_{|\alpha| = k } \| \partial^\alpha \solU(t)\|_{L^r_\spa L^2_{\vel}}^2
\lesssim(1+t)^{-k - \ALTsig-\frac{\Ndim}{2}+\frac{\Ndim}{r}},
\end{equation}
which holds uniformly over $t \ge 0$. 
\end{corollary}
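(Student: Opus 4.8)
The plan is to obtain the $L^r_\spa L^2_\vel$ decay as an interpolation between the endpoint cases $r=2$ and $r=\infty$, using the Littlewood-Paley characterization of Besov spaces together with the two families of estimates already produced in Theorem \ref{thm.main.decay}. Recall from \eqref{besovINFdef} and the discussion in Section \ref{app.besov.h} that $\|g\|_{\dot B^{0,\infty}_2 L^2_\vel} \lesssim \|g\|_{L^2_\spa L^2_\vel}$, while the $r=\infty$ estimate will come from summing the Littlewood-Paley pieces $\Delta_j \partial^\alpha \solU$ in $L^\infty_\spa$ via Bernstein's inequality. So the proof naturally splits according to which frequency regime dominates.

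First I would write, for $|\alpha|=k$, $\partial^\alpha \solU(t) = \sum_{j\in\Z} \Delta_j \partial^\alpha \solU(t)$ and use Bernstein's inequality in the spatial variable, $\|\Delta_j h\|_{L^\infty_\spa} \lesssim 2^{j\Ndim/2}\|\Delta_j h\|_{L^2_\spa}$, applied inside the $L^2_\vel$ norm (Minkowski's inequality lets one pull the $L^2_\vel$ norm inside). More generally for $2\le r\le\infty$ one has $\|\Delta_j h\|_{L^r_\spa} \lesssim 2^{j\Ndim(\frac12-\frac1r)}\|\Delta_j h\|_{L^2_\spa}$. Then $\|\partial^\alpha \solU(t)\|_{L^r_\spa L^2_\vel} \lesssim \sum_j 2^{j(\Ndim/2-\Ndim/r)}\|\Delta_j\partial^\alpha \solU(t)\|_{L^2_\spa L^2_\vel} = \sum_j 2^{j(k+\Ndim/2-\Ndim/r)}\,2^{-jk}\|\Delta_j\partial^\alpha\solU(t)\|_{L^2_\spa L^2_\vel}$. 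I would split this sum at a frequency threshold $2^{j}\sim (1+t)^{-1/2}$: for low frequencies $2^j \lesssim (1+t)^{-1/2}$ I use the bound $\|\Delta_j \solU(t)\|_{L^2_\spa L^2_\vel} \le 2^{\ALTsig j}\|\Delta_j \solU(t)\|_{L^2_\spa L^2_\vel}\cdot 2^{-\ALTsig j}$ controlled through $\|\solU(t)\|_{\dot B^{-\ALTsig,\infty}_2 L^2_\vel}^2 \lesssim (1+t)^{-\ALTsig}$ from \eqref{Theorem1A} with $m=-\ALTsig$ — actually more efficiently I bound $2^{j(k+\ALTsig)}\|\Delta_j\partial^\alpha\solU\|_{L^2_\spa L^2_\vel}^2$ type quantities by $\|\solU(t)\|_{\dot B^{m,\infty}_2 L^2_\vel}^2$ for a suitable $m\in[-\ALTsig,k]$ and sum the geometric series; for high frequencies $2^j\gtrsim (1+t)^{-1/2}$ I use \eqref{Theorem1B}, namely $\sum_{k\le|\alpha|\le\dK}\|\partial^\alpha\solU(t)\|^2_{L^2_\spa L^2_\vel}\lesssim (1+t)^{-(k+\ALTsig)}$, which via Plancherel dominates $\sum_{2^j\gtrsim(1+t)^{-1/2}} 2^{2j\dK}\|\Delta_j\solU\|^2_{L^2_\spa L^2_\vel}$ up to lower-order terms, giving extra powers of $2^{-j(\dK-k)}$ to make the high-frequency geometric sum converge.

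Carrying this out, the low-frequency piece contributes $\bigl(\sum_{2^j\lesssim (1+t)^{-1/2}} 2^{j(k+\Ndim/2-\Ndim/r)}2^{-jk}\|\Delta_j\partial^\alpha\solU\|_{L^2_\spa L^2_\vel}\bigr)^2$; distributing weights so that each term reads $2^{j(\Ndim/2-\Ndim/r+\ALTsig)/1}$ against a uniformly-bounded Besov factor, and using that the exponent $\Ndim/2-\Ndim/r+\ALTsig>0$ (when $r<\infty$ or when $\ALTsig>0$) so the sum is dominated by its top term $\sim (1+t)^{-\frac12(\Ndim/2-\Ndim/r+\ALTsig)}$, after squaring one gets exactly $(1+t)^{-(\ALTsig+\Ndim/2-\Ndim/r)}$; meanwhile the extra $2^{-jk}\cdot 2^{jk}$ bookkeeping and the decay of the weighted Besov norms $\|\solU(t)\|_{\dot B^{m,\infty}_2}^2\lesssim (1+t)^{-(m+\ALTsig)}$ supply the additional factor $(1+t)^{-k}$. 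The high-frequency piece is where the hypothesis $k<\dK-1-\Ndim/2+\Ndim/r$ enters: it guarantees that after extracting $2^{j(\Ndim/2-\Ndim/r)}$ from Bernstein and matching against the $2^{2j\dK}$-weighted $L^2$ norm from \eqref{Theorem1B}, there is a strictly positive power of $2^{-j}$ left, so the sum over $2^j\gtrsim(1+t)^{-1/2}$ converges and is controlled by its bottom term, again producing $(1+t)^{-(k+\ALTsig+\Ndim/2-\Ndim/r)}$. Finally I would combine the two pieces, noting the $r=2$ case is immediate from \eqref{Theorem1B} and the $r=\infty$ case only needs the strict inequality to close the high-frequency geometric series — this is the content of the footnote.

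The main obstacle I anticipate is the careful bookkeeping of the two-sided frequency split: one must choose the split point $(1+t)^{-1/2}$ (natural since the heat-like decay rate $(1+t)^{-1/2}$ per spatial derivative governs the problem), then in the low-frequency regime allocate the available decay — which is quantified by \emph{the whole one-parameter family} \eqref{Theorem1A} of estimates, not just a single one — so that every dyadic term is summable with the correct final exponent, and simultaneously in the high-frequency regime extract enough derivatives from \eqref{Theorem1B} that the $2^{2j\dK}$ weight beats the $2^{j\Ndim(1-2/r)}$ growth from the Bernstein loss. Getting the endpoint $r=\infty$ to work (where the low-frequency exponent $\Ndim/2-\Ndim/r+\ALTsig = \Ndim/2+\ALTsig$ is still positive but one must be slightly more careful, and the high-frequency side needs the \emph{strict} inequality) is the delicate point, and is exactly why the hypothesis is stated with a strict inequality in general but can be relaxed for $r<\infty$.
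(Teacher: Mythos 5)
Your overall strategy (Littlewood--Paley decomposition, vector-valued Bernstein, splitting the dyadic sum at $2^j\sim(1+t)^{-1/2}$, low frequencies via \eqref{Theorem1A} and high frequencies via \eqref{Theorem1B}) is viable, and the low-frequency half works as you describe. But the high-frequency step, as written, does not produce the claimed rate. You pair the weight $2^{2j\dK}$ with the decay rate $(1+t)^{-(k+\ALTsig)}$ from \eqref{Theorem1B} at index $k$. Each high-frequency term is then bounded by $2^{j(k+\frac{\Ndim}{2}-\frac{\Ndim}{r}-\dK)}(1+t)^{-\frac12(k+\ALTsig)}$, and the geometric sum over $2^j\gtrsim(1+t)^{-1/2}$ is indeed dominated by its bottom term; but that bottom term contributes the \emph{growing} factor $(1+t)^{\frac12(\dK-k-\frac{\Ndim}{2}+\frac{\Ndim}{r})}$, so the high-frequency piece comes out as $(1+t)^{-\frac12(2k+\ALTsig+\frac{\Ndim}{2}-\frac{\Ndim}{r}-\dK)}$, which is slower than the target $(1+t)^{-\frac12(k+\ALTsig+\frac{\Ndim}{2}-\frac{\Ndim}{r})}$ by a factor $(1+t)^{\frac12(\dK-k)}$ (i.e.\ $(1+t)^{\dK-k}$ after squaring). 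Relatedly, you misattribute the role of the hypothesis: mere convergence of the high-frequency sum only needs $k<\dK-\frac{\Ndim}{2}+\frac{\Ndim}{r}$; the ``$-1$'' in $k<\dK-1-\frac{\Ndim}{2}+\frac{\Ndim}{r}$ is forced not by summability but by the fact that the top-order derivatives of order $\dK$ never decay faster than the order-$(\dK-1)$ rate, so the weight you extract must stop at $\dK-1$.

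The fix stays inside your framework: in the high-frequency regime write each term as $2^{j(k+\frac{\Ndim}{2}-\frac{\Ndim}{r}-(\dK-1))}\cdot 2^{j(\dK-1)}\|\Delta_j\solU\|_{L^2_\spa L^2_\vel}$, bound $\sup_j 2^{j(\dK-1)}\|\Delta_j\solU\|_{L^2_\spa L^2_\vel}\lesssim\|\Lambda^{\dK-1}\solU\|_{L^2_\spa L^2_\vel}\lesssim(1+t)^{-\frac12(\dK-1+\ALTsig)}$ by applying Theorem \ref{thm.main.decay}/\eqref{Theorem1B} with its index set equal to $\dK-1$ (legitimate, since the theorem holds for every index in $\{0,\ldots,\dK-1\}$ under the same hypotheses), and note that the hypothesis makes the exponent $k+\frac{\Ndim}{2}-\frac{\Ndim}{r}-(\dK-1)$ strictly negative; then the bottom term at $2^j\approx(1+t)^{-1/2}$ gives exactly $(1+t)^{-\frac12(k+\ALTsig+\frac{\Ndim}{2}-\frac{\Ndim}{r})}$, with no loss. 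For comparison, the paper avoids the dyadic splitting altogether: it invokes the vector-valued Gagliardo--Nirenberg inequality \eqref{func.sob.old} (with $m=0$, upper index $\dK-1$, $\hilbertONE=L^2_{\vel}$) together with Lemma \ref{partialderivativesLp} to get $\sum_{|\alpha|=k}\|\partial^\alpha\solU\|^2_{L^r_\spa L^2_\vel}\lesssim\|\solU\|^{2(1-\theta)}_{L^2_\spa L^2_\vel}\sum_{|\alpha|=\dK-1}\|\partial^\alpha\solU\|^{2\theta}_{L^2_\spa L^2_\vel}$ with $\theta=\frac{k+\frac{\Ndim}{2}-\frac{\Ndim}{r}}{\dK-1}$, and then applies \eqref{Theorem1B} at orders $0$ and $\dK-1$; note that this, too, pairs the order-$(\dK-1)$ norm with its own rate $(1+t)^{-(\dK-1+\ALTsig)}$, which is exactly the ingredient your high-frequency step is missing.
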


A substantial difficulty in proving Theorem \ref{thm.main.decay} is the fact that we need to use nonlinear energy estimates in vector-valued mixed-norm spaces, such as $L^p_x \hilbertONE$, where $\hilbertONE$ is a separable Hilbert space in the $\vel$ variable. As we will see, such estimates arise naturally as a result of the definition of mixed-norm Besov spaces in \eqref{besovINFdef}, as well as from the definition of the 
sharp weighted geometric fractional  Sobolev norm in
\eqref{normdef}.   We overcome this difficulty by using an elaborate analysis of vector-valued Calder\'{o}n-Zygmund theory given in Section \ref{secAPP:INTERP}. We will give a more precise account of these ideas where they come up naturally below.

Regarding Corollary \ref{cor.decay.interp}, 
one can, in principle, use the methods described in the proof to obtain decay estimates in the stronger norm $L^2_{\vel}L^r_{\spa}$.  In order to do this, we would have to reverse the order of the norms in the interpolation estimates of Section \ref{secAPP:INTERP} (which is possible). We do not currently pursue this issue. 

We can furthermore analyze the full energy functional defined in \eqref{def.eNm}.
In the hard potential case \eqref{kernelP}, we have faster decay results.

\begin{theorem}\label{thm.extra.decay}
Suppose that $\SMeps$ 
is sufficiently small with $\wN \ge \wNz$, where $\wNz$ 
is given by \eqref{wNz.def} below.  Consider the global solution $f(t,x,v)$ to the Boltzmann equation from Theorem \ref{thm.energy} with initial data $f_0(x,v)$.  

Fix $\ALTsig \in (0, \Ndim/2]$, suppose additionally that $\| f_0\|_{\dot{B}^{-\ALTsig,\infty}_2 L^2_{\vel}} < \infty$.  Then 
\begin{equation}
\label{TheoremExtra}
\CE_{\dK,\wN}(t)
\lesssim (1+t)^{-\ALTsig},
\end{equation}
which holds uniformly for $t\ge 0$.  Here $\CE_{\dK,\wN}(t)$ is the full instant energy functional given as in  \eqref{def.eNm} and Theorem \ref{thm.energy} below.
Furthermore, in the hard potential case \eqref{kernelP}, for $\ALTsig \in (\Ndim/2, (\Ndim+2)/2]$, and $\FP$ defined in \eqref{form.p} below,   if 
$$
\| \FP f_0\|_{\dot{B}^{-\ALTsig,\infty}_2 L^2_{\vel}} + \| \{\FI - \FP\} f_0\|_{\dot{B}^{-\ALTsig+1,\infty}_2 L^2_{\vel}} < \infty,
$$ 
 then the solution also uniformly satisfies \eqref{TheoremExtra} with this $\ALTsig$.
\end{theorem}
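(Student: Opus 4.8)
The plan is to combine the energy--dissipation inequality of Theorem \ref{thm.energy} with a Littlewood--Paley frequency splitting and with the (uniform in time) propagation of a negative Besov norm, following the same scheme that underlies Theorem \ref{thm.main.decay} but now applied to the \emph{full} instant energy functional $\CE_{\dK,\wN}$. First I would record the Lyapunov structure: since $\SMeps$ is small, Theorem \ref{thm.energy} gives $\frac{d}{dt}\CE_{\dK,\wN}(t) + \la\,\CD_{\dK,\wN}(t) \le 0$, where $\CD_{\dK,\wN}$ is the dissipation rate of \eqref{def.dNm}. The point is that $\CD_{\dK,\wN}$ fails to control $\CE_{\dK,\wN}$ only through the low spatial frequency part of the macroscopic component $\FP f$: at spatial frequencies $|\fva|\gtrsim 1$ one has $\CD_{\dK,\wN}\gtrsim \CE_{\dK,\wN}^{|\fva|\gtrsim 1}$, and the microscopic part $\{\FI-\FP\}f$ is controlled at \emph{all} frequencies (in the hard potential case \eqref{kernelP} the dissipation norm dominates $L^2_\vel$; in the soft case this must be replaced by the weighted-norm interpolation, trading velocity weights for time decay, already developed in the body of the paper). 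Then, writing $\CE_{\dK,\wN}(t)\le \CE_{\dK,\wN}^{|\fva|>R}(t)+\CE_{\dK,\wN}^{|\fva|\le R}(t)$ and using $\sum_{2^j\le R}\|\Delta_j f(t)\|_{L^2(\domain\times\threed_\vel)}^2 \lesssim R^{2\ALTsig}\|f(t)\|_{\dot B^{-\ALTsig,\infty}_2 L^2_\vel}^2$ for the low-frequency piece, one obtains $\CE_{\dK,\wN}(t)\lesssim R^{-2}\CD_{\dK,\wN}(t)+R^{2\ALTsig}$ provided $\|f(t)\|_{\dot B^{-\ALTsig,\infty}_2 L^2_\vel}$ stays bounded; inserting this into the Lyapunov inequality with the time-dependent choice $R(t)^2\sim (1+t)^{-1}$ and integrating (after multiplying by a sufficiently large power of $1+t$) yields $\CE_{\dK,\wN}(t)\lesssim (1+t)^{-\ALTsig}$. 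Equivalently, one may phrase these last steps as a bootstrap on the time-weighted functional $\sup_{0\le\tau\le t}(1+\tau)^{\ALTsig}\CE_{\dK,\wN}(\tau)$. This is exactly \eqref{TheoremExtra} and it requires only $\|f_0\|_{\dot B^{-\ALTsig,\infty}_2 L^2_\vel}<\infty$ as long as $\ALTsig\in(0,\Ndim/2]$, which is the range in which that negative Besov norm can be propagated through the nonlinearity.

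For the faster decay in the hard potential case with $\ALTsig\in(\Ndim/2,(\Ndim+2)/2]$ the new ingredient is to replace the single bound on $\|f(t)\|_{\dot B^{-\ALTsig,\infty}_2 L^2_\vel}$ by the macro--micro split $f=\FP f+\{\FI-\FP\}f$. For the macroscopic part I would exploit the local conservation laws: $\FP f$ evolves in divergence form, so $|\widehat{\FP f}(t,\fva)|\lesssim |\widehat{\FP f_0}(\fva)| + |\fva|\int_0^t|\widehat{(\cdots)}(s,\fva)|\,ds$, where $(\cdots)$ collects the microscopic and quadratic fluxes. Combining $\|\FP f_0\|_{\dot B^{-\ALTsig,\infty}_2 L^2_\vel}<\infty$ with the decay already established (Theorem \ref{thm.main.decay} and the first part above) to control $\int_0^t|\widehat{(\cdots)}(s,\fva)|\,ds$ uniformly in $\fva$, one gets $\int_{|\fva|\le R}|\widehat{\FP f}(t,\fva)|^2\,d\fva\lesssim R^{2\ALTsig}$ uniformly in $t$, where the extra factor $R^2$ coming from the divergence structure absorbs the volume factor $R^{\Ndim}$ of the ball precisely when $2\ALTsig\le \Ndim+2$ — this is where the threshold $(\Ndim+2)/2$ enters. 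Dually, $\{\FI-\FP\}f$ needs only the weaker hypothesis $\|\{\FI-\FP\}f_0\|_{\dot B^{-\ALTsig+1,\infty}_2 L^2_\vel}<\infty$: it is forced by $\vel\cdot\na_\spa \FP f$, i.e.\ by one spatial derivative of $\FP f$, and it sits inside the dissipation with a genuine spectral gap in the hard potential case, so a Duhamel estimate on $\Delta_j\{\FI-\FP\}f$ using the linearized decay together with the nonlinear bounds propagates the $\dot B^{-\ALTsig+1,\infty}_2 L^2_\vel$ norm, which is admissible exactly when $-\ALTsig+1\ge -\Ndim/2$, again $\ALTsig\le (\Ndim+2)/2$. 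Since $\{\FI-\FP\}f$ is fully controlled by $\CD_{\dK,\wN}$, the only uncontrolled piece in the frequency split remains $\widehat{\FP f}\,\ind_{|\fva|\le R}$, and the ODE argument above closes in the same way, giving $\CE_{\dK,\wN}(t)\lesssim (1+t)^{-\ALTsig}$ in this larger range.

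I expect the main obstacle to be the uniform-in-time propagation of the negative Besov norms through the non-cutoff collision nonlinearity. This requires bilinear estimates for the Boltzmann operator ${\mathcal Q}$ (equivalently for its perturbative form) in the mixed vector-valued spaces $\dot B^{s,\infty}_2 L^2_\vel$, acting on the dyadic pieces $\Delta_j {\mathcal Q}(f,f)$, which is precisely where the vector-valued Calder\'on--Zygmund machinery of Section \ref{secAPP:INTERP} is needed; and one must do this while keeping the macroscopic and microscopic components at the two different regularities $-\ALTsig$ and $-\ALTsig+1$ and while respecting the degenerate (macroscopic, and in the soft case also velocity-weighted) structure of $\CD_{\dK,\wN}$. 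The second delicate point is the sharp low-frequency bookkeeping that makes the admissible range come out to be exactly $\ALTsig\le (\Ndim+2)/2$ — the balance between the $R^2$ gain from the divergence form of the conservation laws and the $R^{\Ndim}$ volume of the frequency ball, together with the matching restriction $-\ALTsig+1\ge -\Ndim/2$ from propagating the microscopic Besov norm.
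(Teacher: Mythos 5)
Your treatment of the first claim ($\ALTsig\in(0,\Ndim/2]$) is essentially the paper's own argument: there the uniform bound \eqref{est.full.unif.besov} on $\|f(t)\|_{\dot{B}^{-\ALTsig,\infty}_2 L^2_{\vel}}$ is available, and the paper simply runs the time-weighted Lyapunov/frequency-splitting scheme of Theorem \ref{thm.main.decay} on the full functional, starting from \eqref{main.diff.decay}, with the time--velocity splitting \eqref{split.E} handling the soft-potential weight degeneracy that you also flag. So that half is fine.

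The second half ($\ALTsig\in(\Ndim/2,(\Ndim+2)/2]$, hard potentials) has a genuine gap, and it sits exactly at the step you treat most casually: the claim that $\int_{|\fva|\le R}|\widehat{\FP f}(t,\fva)|^2\,d\fva\lesssim R^{2\ALTsig}$ uniformly in $t$ because the conservation laws give $|\widehat{\FP f}(t,\fva)|\lesssim|\widehat{\FP f_0}(\fva)|+|\fva|\int_0^t|\widehat{(\cdots)}(s,\fva)|\,ds$ with the time integral ``uniform in $\fva$''. First, the fluxes in the macroscopic balance laws are not only microscopic and quadratic: the $a$-equation is forced by $b$, the $b$-equation by $a+2c$, etc., so the dominant low-frequency fluxes are the macroscopic fields themselves -- precisely the quantities whose low-frequency behavior you are trying to bound -- and the scheme is circular. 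Second, even for the microscopic and quadratic fluxes, a bound uniform in $\fva$ requires $L^1_{\spa}$-type control, which the $L^2_{\spa}$-based energy theory never provides for $\{\FI-\FP\}f$; for the quadratic flux one can use Cauchy--Schwarz, but then one needs $\int_0^t\|f(s)\|_{L^2_{\spa,\vel}}^2\,ds$ to be bounded, and the only a priori decay available at the start of the argument is $\CE_{\dK,\wN}(s)\lesssim(1+s)^{-(\ALTsig-1)}$ (from the first part applied with exponent $\ALTsig-1\le\Ndim/2$), which is not time-integrable when $\ALTsig\le 2$ (the whole range $\Ndim=3$, $\ALTsig\in(3/2,2]$). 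Indeed the paper explicitly states that in this range a uniform bound of type \eqref{est.unif.besov}/\eqref{est.full.unif.besov} may be unavailable, and it proceeds by a completely different mechanism: a spectral analysis of $\widehat{\SB}(\xi)$ near $\xi=0$ in the spirit of Ellis--Pinsky (Proposition \ref{eigenvalues}), where the expansion $\FP_j(\xi)=\FP^{(0)}_j(\omega)+\kappa\FP^{(1)}_j(\xi)$ yields an extra factor $|\xi|$, hence an extra $t^{-1/2}$, for the semigroup acting on \emph{microscopic} data (Theorem \ref{linear.decay.A}); this is applied through the Duhamel formula \eqref{Boltz.rep} to the purely microscopic source $\{\FI-\FP\}\Ga(\solU,\solU)$ (using \eqref{ProjectionGamma}), and the resulting decay of $\|[a,b,c]\|_{L^2_{\spa}}$ is fed into a time-weighted energy estimate and bootstrapped finitely many times (with a logarithmic loss handled separately at $\ALTsig=2$). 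Your proposal contains no substitute for this spectral input; your heuristic for the threshold $(\Ndim+2)/2$ (the $R^2$ divergence gain against the $R^{\Ndim}$ volume) is the right Schonbek-type intuition, but as written it does not convert into a closable estimate for the coupled macro--micro, degenerately dissipative system.
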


As we will see, the proofs of Theorem \ref{thm.main.decay} and Theorem \ref{thm.extra.decay} are conceptually very different. The proof of Theorem \ref{thm.main.decay} relies on nonlinear energy estimates, whereas the proof of Theorem \ref{thm.extra.decay} is based on a comparison to the linear problem. The link with the analysis of the linear problem allows us to make a connection with the pioneering work of Ellis and Pinsky \cite{MR0609540}. After some efforts, their methods can be extended to our case and they can give us a more precise understanding of the linearized Boltzmann operator for small spatial frequencies. It turns out that this part of the linearized operator is the most difficult to control. Having obtained this control, we can obtain a gain of a factor $t^{-\frac{1}{2}}$, and we can deduce the optimal decay rates for an optimal range of parameters, in the sense that we will now explain.

When we say that these large time decay rates are ``optimal'' we mean that they are the same as those for the linear Boltzmann equation \eqref{BoltzLIN}, as seen in Theorem \ref{thm.decay.lin}.  The optimal rates in $L^r_\spa L^2_{\vel}$, from Corollary \ref{cor.decay.interp}  also hold for \eqref{BoltzLIN}.    These rates for $0$-th order derivatives also coincide with classical time-decay results for the Boltzmann equation \cite{MR882376,MR677262} with angular cut-off studied using spectral analysis.  We note that the method of  \cite{GuoWang2011} does not obtain the optimal $L^1$ decay rates.
Related recent results, concerning Besov spaces and the Boltzmann equation, which appeared after this work was complete can be found in \cite{ArsenioMasmoudi} and \cite{DuanLiuXu}.

The decay rates that we obtain are also consistent with the classical optimal large time decay rates for the heat equation; see for instance \cite{MR1938147}.    In particular it is well known that if $g_0(x)$ is a tempered distribution vanishing at infinity and satisfying $\| g_0\|_{\dot{B}^{-\ALTsig,\infty}_2(\threed_\spa)} < \infty$, then one further has
$$
\| g_0\|_{\dot{B}^{-\ALTsig,\infty}_2(\threed_\spa)}
\approx
\left\| t^{\ALTsig/2} \left\|  e^{t \Delta} g_0 \right\|_{L^2(\threed_{\spa})}  \right\|_{L^\infty_t((0, \infty) )}, \quad \text{for any $\ALTsig>0$.}
$$
See for instance \cite[Theorem 5.4]{MR1938147} where further references and more general results can be found.
Notice that the faster decay rates of higher derivatives for solutions to the heat equation can be easily obtained in the same way.

Notice that for the heat equation, and for the linear Boltzmann equation \eqref{BoltzLIN} in Theorem \ref{thm.decay.lin}, these decay results using initial conditions in the negative regularity Besov spaces (of order ``$-\ALTsig$'') hold for any $\ALTsig  > 0$.  For the non-linear problem  the restriction of $0<\ALTsig \le (\Ndim + 2)/2$ is also encountered in the large time optimal decay rates for the incompressible Navier-Stokes system; see  \cite{MR881519} and \cite[Ch. 26]{MR1938147}.  For  incompressible Navier-Stokes, it appears that we may not hope to go beyond $\ALTsig = (\Ndim + 2)/2$ without choosing special initial data \cite{MR1938147}. 
Thus the range $0<\ALTsig \le (\Ndim + 2)/2$ seems to us to represent a satisfying theory of decay rates in these spaces.

We are furthermore concerned in this paper primarily with obtaining the optimal large time convergence rates.  In that light we are not as concerned with optimizing the assumptions that we use on the regularity ($\dK \ge 2\ksob$) or the number of weights placed on the initial data ($\wN \ge \wNz$ with $\wNz$ 
from \eqref{wNz.def} below).

We obtain  decay for all derivatives of order $k\in\{ 0,1,\ldots, \dK -1\}$, where $\dK$  is the Sobolev regularity of the initial data in $\SMeps$ from \eqref{def.id.jm} and the existence theory in Theorem \ref{thm.energy}.  Our obstruction to obtaining the higher order decay of the highest order derivative $\dK$ comes from the estimates of the functionals $\CI^k(t)$ in Lemma \ref{lem.inter.f}, which fatally contain error terms including derivatives of order $k+1$ when controlling derivative energy estimates of order $k$.  

In the rest of this section, we will finish introducing the full model \eqref{BoltzFULL}, including the collision kernel, and then we discuss its geometric fractionally diffusive behavior.  The  Boltzmann collision kernel, $B(v-v_*, \sigma)$, will physically depend upon the {\em relative velocity} $|v-v_*|$ and on
the {\em deviation angle}  $\theta$ through the formula
$\cos \theta = ( v-v_* )\cdot \sigma/|v-v_*|$ where, without restriction, we can suppose by symmetry that  $B(v-v_*, \sigma)$
is supported on $\cos \theta \ge 0$.

\subsection*{The Collision Kernel}

Our assumptions are the following:
 \begin{itemize}
 \item 
We suppose that $B(v-v_*, \sigma)$ takes product form in its arguments as
\begin{equation}
B(v-v_*, \sigma) =\Phi( |v-v_*| ) \, b(\cos \theta).
\notag
\end{equation}
It generally holds that both $b$ and $\Phi$ are non-negative functions. 
 
  \item The angular function $t \mapsto b(t)$ is not locally integrable; for $c_b >0$ it satisfies
\begin{equation}
\frac{c_b}{\theta^{1+2s}} 
\le 
\sin^{\Ndim -2} \theta  ~ b(\cos \theta) 
\le 
\frac{1}{c_b\theta^{1+2s}},
\quad
s \in (0,1),
\quad
   \forall \, \theta \in \left(0,\frac{\pi}{2} \right].
   \label{kernelQ}
\end{equation}
 
  \item The kinetic factor $z \mapsto \Phi(|z|)$ satisfies for some $C_\Phi >0$
\begin{equation}
\Phi( |v-v_*| ) =  C_\Phi  |v-v_*|^\gamma , \quad \gamma  \ge -2s.
\label{kernelP}
\end{equation}
In the rest of this paper these will be called ``hard potentials.'' 

  \item Our results will also apply to the more singular situation
\begin{equation}
\Phi( |v-v_*| ) =  C_\Phi  |v-v_*|^\gamma , \quad  -2s > \gamma  > -\Ndim,
\quad \gamma + 2s > -\frac{\Ndim}{2}.
\label{kernelPsing}
\end{equation}
These will be called ``soft potentials'' throughout this paper.

 \end{itemize}
 
These collision kernels are physically motivated since they can be derived from a spherical intermolecular repulsive potential such as 
$
\phi(r)=r^{-(p-1)}
$
with $p \in (2,\infty)$ as shown by Maxwell in 1866.  In the physical dimension $(n=3)$,  $B$ satisfies the conditions above with $\gamma = (p-5)/(p-1)$ and $s = 1/(p-1)$;  see \cite{MR1942465}.

We linearize the Boltzmann equation \eqref{BoltzFULL} around \eqref{maxLIN}.  This grants an equation for the perturbation, $f(t,x,v)$, that is given by
\begin{gather}
 \partial_t f + v\cdot \nabla_{\spa} f + \FL (f)
=
\Ga (f,f),
\quad
f(0, x, v) = f_0(x,v),
\label{Boltz}
\end{gather}
where the {\it linearized Boltzmann operator}, $\FL$, is defined as
$$
 \FL(g)
 \eqdef  
- \FM^{-1/2}\mathcal{Q}(\FM ,\MM g)- \FM^{-1/2}\mathcal{Q}(\MM g,\FM),
$$
and the bilinear operator, $\Gamma$, is then
\begin{gather}
\Gamma (g,h)
\eqdef
 \FM^{-1/2}\mathcal{Q}(\MM g,\MM h).
\label{gamma0}
\end{gather}
The $(\Ndim+2)$-dimensional null space of $\FL$ is well known \cite{MR1379589}: 
\begin{equation}\label{nullLsp}
    \CN \eqdef {\rm span}\left\{\sqrt{\FM}, ~  \vel_1\sqrt{\FM}, ~\ldots, ~  \vel_\Ndim\sqrt{\FM}, ~   (|\vel|^2-\Ndim)\sqrt{\FM} \right\}.
\end{equation}
Now, for fixed $(t,x)$, we define the orthogonal projection from
$L^2_{\vel}$ to $\CN$ as
\begin{equation}
 \FP f=a^f(t,x)\sqrt{\FM}
    +\sum_{i=1}^{\Ndim}
    b^f_i(t,x)\vel_i\sqrt{\FM}+c^f(t,x) \frac{1}{\sqrt{2\Ndim}}(|\vel|^2-\Ndim)\sqrt{\FM}, \label{form.p}
\end{equation}
where the functions $a^f$, $b^f\eqdef [b_1,\cdots,b_n]$ and $c^f$ are defined by
\begin{equation}
\label{coef.p.def}
\begin{split}
     & a= \langle \MM, \solU\rangle= \langle \MM, \FP \solU\rangle,
  \\
    & b_i=\langle \vel_i \MM, \solU\rangle
=\langle \vel_i \MM,\FP \solU\rangle,
\\
   & c= \frac{1}{\sqrt{2\Ndim}}\langle (|\vel|^2-\Ndim) \MM, \solU\rangle
= \frac{1}{\sqrt{2\Ndim}}\langle (|\vel|^2-\Ndim) \MM, \FP \solU\rangle.
    \end{split}
\end{equation}
Then we can write $f =  \FP f + \{ \FI- \FP\} f$.  
It is a well-known fact \cite{MR1379589} that:
\begin{equation}
\label{ProjectionGamma}
\FP \Ga (f, f) = 0.
\end{equation}
We further define $[a,b,c]$ to be the vector with components $a,b,c$.  And $|[a,b,c]|$ is the standard Euclidean length of these vectors.

In the works \cite{gsNonCut0,gsNonCutA,gsNonCutEst},
Gressman and the second author have introduced into the Boltzmann theory the 
following sharp weighted geometric fractional  Sobolev norm:
\begin{equation} 
\nsm f\nsm_{\spacen}^2 
\eqdef 
\nsm f\nsm_{L^2_{\gamma+2s}}^2 + 
\int_{\mathbb{R}^n} dv \int_{\mathbb{R}^n} dv' ~
(w(\vel) w(\vel') )^{\frac{\gamma+2s+1}{2}}
 \frac{|f' - f|^2}{d(v,v')^{n+2s}} 
\ind_{d(v,v') \leq 1}. \label{normdef} 
\end{equation}
Generally, $\ind_{A}$ is the standard indicator function of the set $A$.
Now this space includes the weighted $L^2_\wN$ space, for $\wN\in\mathbb{R}$, with norm given by
$$
\nsm f\nsm_{L^2_{\wN}}^2 
\eqdef
\int_{\mathbb{R}^n} dv ~ 
w^{\wN}(\vel) 
~
|f(v)|^2.
$$
The weight function\footnote{We point out that our notation for \eqref{weigh} is different from the notation in the second author's previous papers \cite{gsNonCut0,SdecaySOFT} when $\gamma+2s<0$ from \eqref{kernelPsing}.}
 is defined as follows
\begin{equation}
w(\vel) 
\eqdef 
\ang{\vel}, \quad
\ang{v}
\eqdef \sqrt{1+|v|^2}.
\label{weigh}
\end{equation}
The fractional differentiation effects are measured 
using the anisotropic metric $d(v,v')$ on the ``lifted'' paraboloid (in $\R^{\Ndim+1}$) as
$$
d(v,v') \eqdef \sqrt{ |v-v'|^2 + \frac{1}{4}\left( |v|^2 -  |v'|^2\right)^2}.
$$
This metric encodes the nonlocal anisotropic changes in the power of the weight. 

In this space, the linearized collision operator $\FL$ is non-negative in $L^2_{\vel}$ and it is coercive in
the sense that there is a constant $\la>0$ such that \cite[Theorem 8.1]{gsNonCut0}:  
\begin{equation}\label{coerc}
 \ang{g, \FL g}
\geq 
\la\nsm\{\FI-\FP\}g\nsm^2_{\spacen}.
\end{equation}
The norm $\spacen$ provides a sharp  characterization of the linearized collision operator \cite[(2.13)]{gsNonCut0}; in earlier work \cite{MR2322149} the sharp gain of velocity weight in $L^2_{\vel}$ was established for the non-derivative part of \eqref{normdef}.

\subsection{Discussion of the method}
There have been numerous investigations on the rate of convergence to Maxwellian equilibrium for the nonlinear Boltzmann equation or related kinetic equations in the whole space; see for example 
\cite{arXiv:0912.1742,MR575897,MR2209761,MR2366140,strainSOFT,SdecaySOFT,MR2357430,2010arXiv1006.3605D,GuoWang2011}.  Many of the early results are well documented in Glassey \cite{MR1379589}.  Further detailed discussions of more recent results can be found in \cite{2010arXiv1006.3605D} and \cite{SdecaySOFT}.  We  point out that this current work was motivated by several recent results 
\cite{gsNonCutA,gsNonCut0,gsNonCutEst,arXiv:0912.1742, MR0609540,2010arXiv1006.3605D,MR2209761,MR2366140,SdecaySOFT,GuoWang2011}.


To establish our main results in Theorem \ref{thm.main.decay}, we start from a differential inequality for high order derivatives \eqref{main.diff.ce}. In this inequality, we then apply a time weighted energy estimate combined with a new time-regularity comparison via dyadic decomposition. In this way, we can achieve the optimal decay rates in spite of the degenerate dissipation. 

 In order to obtain the high order differential inequality mentioned above, e.g. \eqref{main.diff.ce}, we need develop several product interpolation estimates.  The details of the proofs of these estimates are contained in Section \ref{secAPP:INTERP}.  There is a rather serious added complication in the context of the non cut-off Boltzmann equation because all of the product estimates are in spaces such as $L^p_{\spa}\spacen$ with the exotic non-isotropic $\spacen$ as in \eqref{normdef}.  We take the approach of using Sobolev type inequalities (in the variable $\spa$) in the mixed spaces $L^p_{\spa}\hilbertONE$ where $\hilbertONE$ is an arbitrary separable Hilbert space (in the variable $\vel$).  Then the desired product estimates for $L^p_{\spa}\spacen$ follow in the special case when $\hilbertONE = \spacen$.

Additionally, to prove the faster decay rates in Theorem \ref{thm.extra.decay} we generally use the time weighted estimates combined with the linear decay theory.  Notice that the slower decay results in Theorem \ref{thm.main.decay} and Corollary \ref{cor.decay.interp} are proven ``Non-Linearly'' without the linear decay theory since we can establish a uniform in time upper bound on $\|  f(t) \|_{\dot{B}^{-\ALTsig,\infty}_2 L^2_{\vel}}^2$ for $\ALTsig \in (0, \frac{\Ndim}{2}]$ as in \eqref{Theorem1A}.  In the regime when 
$\ALTsig \in (\frac{\Ndim}{2}, \frac{\Ndim+2}{2}]$ we are unable to prove these uniform bounds and they may be unavailable.  Instead we prove the faster linear decay rates in Theorem \ref{linear.decay.A} by using a detailed frequency analysis of the linearized operator \eqref{FT.SB} in a small frequency ball around the origin, which is motivated by the work of Ellis and Pinsky \cite{MR0609540}.  Using this refined linear decay theory, we gain an additional order of $t^{-\frac{1}{2}}$ decay on the non-linear term because it is purely microscopic.  We also need to iterate this non-linear decay analysis a finite number of times in order to overcome degeneracies in the time integral estimates and obtain the optimal decay rates.  

We will explain other difficulties when they are encountered at the appropriate places throughout the course of the paper.  In the next sub-section we will give a detailed description of the remaining notation, as well as stating the relevant existence result from Theorem \ref{thm.energy}.

\subsection{Notation, and the existence result}\label{sec.notation}

 For any non-negative integer $m$, we use $H^m$ to denote the
usual Sobolev spaces $H^m(\threed_{\spa}\times\threed_{\vel})$, $H^m(\threed_{\spa})$, or  $H^m(\threed_{\vel})$, respectively, where for example
$$
H^m_{\ell}(\threed_{\vel})
\eqdef
\left\{ f \in L^2_\ell (\threed_{\vel}) : \nsm f \nsm_{H^m_{\ell}(\threed)}^2 
\eqdef \sum_{|\beta| \le m}\int_{\threed} dv ~ w^\ell (\vel) \left| \partial_\beta f(v) \right|^2 <\infty
\right\}.
$$
Then let us denote $H^m_{0} = H^m$.  Here, for multi-indices, we denote
\begin{equation*}
 \pa^{\al}_\be=\pa_{x_1}^{\al_1}\cdots\pa_{x_\Ndim}^{\al_\Ndim}
    \pa_{\vel_1}^{\be_1}\cdots\pa_{\vel_\Ndim}^{\be_\Ndim},
    \quad
    \al=[\al_1,\ldots,\al_\Ndim],
    \quad
\be=[\be_1,\ldots,\be_\Ndim].
\end{equation*}
The length of $\al$ is $|\al|=\al_1+\cdots+\al_\Ndim$ and the length of
$\be$ is $|\be|=\be_1+\cdots+\be_\Ndim$.

Then comparisons of $\nsm \cdot \nsm_{\spacen}$ to the weighted isotropic Sobolev spaces are:
\begin{equation}
\nsm g \nsm_{L^2_{\gamma+2s}(\threed_{\vel})}^2
\lesssim 
\nsm g\nsm_{\spacen}^2 
\lesssim \nsm g \nsm_{H^1_{\gamma+2s}(\threed_{\vel})}^2.
\label{isocomp}
\end{equation}
See \cite[Eq. (2.15)]{gsNonCut0} for the sharp comparison which in particular implies \eqref{isocomp}.

For Banach spaces $X$ and $Y$, we write $X(\threed_{\vel}) = X_{\vel}$ and $Y(\threed_\spa) = Y_\spa$. 
Specifically, we use $L^p_\spa$, $L^p_{\vel}$ and $L^p_{\spa,\vel}$  to denote $L^p(\threed_\spa)$, $L^p(\threed_{\vel})$ and $L^p(\threed_\spa \times \threed_{\vel})$  with $1 \le p \le \infty$ respectively.  There should be no confusion between $L^2_{\spa}$, $L^2_{\vel}$ and $L^2_\ell$, etc, since $x$ and $\vel$ are never used to denote a weight. Given the spaces as above, we define the following ordered mixed spaces:  
\begin{eqnarray}
\label{definition.mixednorm}
\| h\|_{Y_{\spa} X_{\vel}}
& \eqdef &
\left\|~ \nsm h\nsm_{X(\threed_{\vel})} ~ \right\|_{Y(\threed_{\spa})},
\\
\| h\|_{ X_{\vel}Y_{\spa}}
& \eqdef &
 \nsm ~ \left\| h  \right\|_{Y(\threed_{\spa})}~ \nsm_{X(\threed_{\vel})},
\end{eqnarray}
Thus for example, as in Corollary \ref{cor.decay.interp}, the space $L^p_{\vel} L^r_{\spa}$ may be different from $ L^r_{\spa} L^p_{\vel}$.  Generally a norm with only one line $\nsm \cdot \nsm_{X_{\vel}}$ denotes that it is only in the ``$\vel$'' variable, however a norm with two sets of lines $\| \cdot \|$ is either in both variables ``$(\spa,\vel)$'' or only in the ``$\spa$'' variable (and there should be no confusion between these cases).  We remark that this is a slight departure from the notation used in the second author's previous papers, e.g. 
\cite{gsNonCut0,gsNonCutEst,gsNonCutA,SdecaySOFT}; in this paper it is necessary to distinguish between the ordering of the evaluation of the norms.

Recalling the notations surrounding \eqref{besovINFdef}, and in Section \ref{app.besov.h},  we will also use the following mixed Besov space semi-norm as
\begin{equation}
\label{Besovseminorm}
\|  g \|_{\dot{B}^{\ALTsig,q}_p X_{\vel}}
 \eqdef 
\|\left( 2^{\ALTsig j } \| \Delta_j g \|_{L^p_{\spa} X_{\vel}} \right)_j\|_{\ell^{q}_j},
\quad p,q \in [1,\infty], \quad \ALTsig \in \R,
\end{equation}
where for a sequence, $\left( a_j \right)_{j\in \Z}$, we use the standard $\ell^{q}_j$ norm as
$$
\|\left(a_j \right)_j\|_{\ell^{q}_j} \eqdef \left( \sum_{j\in \Z} |a_j|^q \right)^{1/q}
\quad \text{for $1\le q < \infty$,  and}
\quad
\|\left(a_j \right)_j\|_{\ell^{\infty}_j} \eqdef  \sup_{j\in \Z} |a_j|.
$$
Thus $\dot{B}^{s,q}_p$ is the homogeneous Besov space in the variable $x$.
We always use the Besov space and the frequency projection $ \Delta_j $ only in the spatial variable $\spa \in \threed_x$.
Notice that in the special case of Besov semi-norms, $\|  \cdot \|_{\dot{B}^{\ALTsig,q}_p X_{\vel}}$, we do not follow the Banach space ordering convention described above. As we will see, this will add an additional complication in the interpolation estimates we want to use.
 
We use $\langle\cdot,\cdot\rangle$ to denote the inner product over the Hilbert space $L^2_{\vel}$, i.e.
\begin{equation*}
    \langle g,h\rangle=\int_{\threed} g(\vel) \overline{h(\vel)} ~ d\vel,\ \ g=g(\vel), ~h=h(\vel)\in
    L^2_{\vel}.
\end{equation*}
Analogously $\left(\cdot,\cdot\right)$ denotes the inner product over $L^2(\threed_{\spa}\times\threed_{\vel})$.

Recalling \eqref{weigh},  we then consider the weighted anisotropic derivative space as in \eqref{normdef}: 
 \begin{gather*}
\nsm h \nsm^2_{{\spaceELLn}}
\eqdef 
\nsm w^{\ell} h\nsm_{L^2_{\gamma+2s}}^2 + \int_{\mathbb{R}^n} dv ~ 
w^{\gamma+2s+1}(v) 
w^{2\ell}(v)
\int_{\mathbb{R}^n} dv' 
~
 \frac{|h' - h|^2}{d(v,v')^{n+2s}} 
\ind_{d(v,v') \leq 1}.
\end{gather*}
Note that  
$
\nsm h \nsm_{{\spacen}}
=
\nsm h \nsm_{{\spacen_0}}.
$
Throughout the paper, we will use $\hilbertONE$ to denote a separable Hilbert space in the $v$ variable. In particular, one can take $\hilbertONE$ to be $L^2_v$, $L^2_{\gamma+2s}$, $H^s_{v}$ and $\mathbb{R}$ with the absolute value. Moreover, one can take $\hilbertONE=N^{s,\gamma}_{\ell}$. That $N^{s,\gamma}_{\ell}$ is separable follows from the equivalence of norms given in \cite[Eq. (7.7)]{gsNonCut0}.

Given a solution, $\solU(t,x,\vel)$, to the Boltzmann equation \eqref{Boltz}, we define an instant energy functional to be a continuous function, 
 $\CE_{\dK,\wN}(t)$, which satisfies
\begin{equation}
\CE_{\dK,\wN}(t)\approx
\sum_{|\al|+|\be|\leq \dK}\|w^{\wN-|\be|\rho}\pa^\al_\be \solU(t)\|_{L^2_{\spa} L^2_{\vel}}^2.
\label{def.eNm}
\end{equation}
Above and below: $\wN\ge 0$.
We also define the dissipation rate $\CD_{\dK,\wN}(t)$ as
\begin{equation}
\CD_{\dK,\wN}(t)
\eqdef
\sum_{1\leq |\al|\leq \dK}\|\pa^\al \solU(t)\|_{L^2_{\spa} \spacen_\wN}^2+\sum_{|\al|+|\be|\leq \dK}\|\pa^\al_\be \{\FI-\FP\}\solU(t)\|_{L^2_{\spa} \spacen_{\wN-|\be|\rho}}^2.
\label{def.dNm}
\end{equation}
Here we also use $\rho =1$ under \eqref{kernelP} and otherwise we use $\rho = - \gamma - 2s >0$ for the soft potentials \eqref{kernelPsing}.   (This $\rho$ is to correct for our change in the definition of the weight \eqref{weigh} from previous papers such as \cite{gsNonCut0,SdecaySOFT}.)   We do not explicitly use these functionals in our proofs herein.  Initially we define
\begin{equation}\label{def.id.jm}
    \SMeps\eqdef\CE_{\dK,\wN}(0).
\end{equation}
We can now state the following existence result, and Lyapunov inequalities:

\begin{theorem}\label{thm.energy}
\cite{gsNonCut0,SdecaySOFT}.
Fix $\wN \ge 0$ and $\solU_0(x,\vel)$.  
There are $\CE_{\dK,\wN}(t)$, $\CD_{\dK,\wN}(t)$ such that if $\SMeps$ is
sufficiently small, then the Cauchy problem to
the Boltzmann equation \eqref{Boltz} admits a unique global solution
$\solU(t,x,\vel)$ satisfying the Lyapunov inequality
\begin{eqnarray}
\frac{d}{dt}\CE_{\dK,\wN}(t)+\la \CD_{\dK,\wN}(t)\leq 0,
\quad \forall t\geq 0.
\label{thm.energy.1}
\end{eqnarray}
Here $\la >0$ may depend on $\wN$. 

In particular, we note that, for all $t \geq 0$:
$
\CE_{\dK,\wN}(t) \leq \SMeps.
$
\end{theorem}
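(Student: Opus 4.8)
The plan is to prove Theorem~\ref{thm.energy} by the macro--micro energy method adapted to the anisotropic norm $\spacen$, closed by a continuity argument; note that the functionals $\CE_{\dK,\wN}$ and $\CD_{\dK,\wN}$ (subject to the equivalences \eqref{def.eNm}, \eqref{def.dNm}) are to be constructed as part of the proof. First I would produce a local-in-time solution by iteration: given $f^{m}$, solve the linear problem $\pa_t f^{m+1}+\vel\cdot\nabla_{\spa}f^{m+1}+\FL f^{m+1}=\Ga(f^{m},f^{m})$ with data $f_0$, which is well posed since $\FL\ge 0$ on $L^2_{\vel}$ and transport is skew-adjoint. Uniform boundedness and contraction of the iterates in the energy norm follow from the coercivity and upper-bound estimates for $\FL$ and $\Ga$ in the $\spacen$ norm from \cite{gsNonCut0,gsNonCutEst}, combined with Sobolev calculus in $\spa$: since $\dK\ge 2\ksob$ and $\ksob>\Ndim/2$, the space $H^\dK_{\spa}$ is an algebra and $H^{\ksob}_{\spa}\hookrightarrow L^\infty_{\spa}$, which lets one distribute the spatial derivatives when applying Leibniz's rule to $\Ga(f,f)$.

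Next comes the a priori estimate. For each $(\al,\be)$ with $|\al|+|\be|\le\dK$, apply $\pa^\al_\be$ to \eqref{Boltz}, pair in $L^2_{\spa,\vel}$ against $w^{2(\wN-|\be|\rho)}\pa^\al_\be f$, and sum. The transport term contributes a perfect $\spa$-derivative and integrates to zero; the linear term produces, via the coercivity \eqref{coerc}, the microscopic dissipation $\sum_{|\al|+|\be|\le\dK}\|\pa^\al_\be\{\FI-\FP\}f\|^2_{\spacen_{\wN-|\be|\rho}}$, up to commutators among $\pa_\be$, the weight $w^{2(\wN-|\be|\rho)}$, and $\vel\cdot\nabla_{\spa}$. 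The weight shift by $|\be|\rho$ exists precisely to absorb these commutators: each velocity derivative costs one weight power, and for the soft potentials \eqref{kernelPsing} the loss is exactly $\rho=-\gamma-2s$, the gap between $\spacen$ and $\spacen$ with one extra velocity weight, cf.\ \eqref{isocomp}. The nonlinear contribution is controlled by the trilinear estimate
\[
\sum_{|\al|+|\be|\le\dK}\Bigl|\Bigl(\pa^\al_\be\Ga(f,f),\ w^{2(\wN-|\be|\rho)}\pa^\al_\be f\Bigr)\Bigr|\ \lesssim\ \CE_{\dK,\wN}^{1/2}\,\CD_{\dK,\wN},
\]
which is absorbed once $\CE_{\dK,\wN}$ is small. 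This yields $\tfrac{d}{dt}\CE^{(0)}_{\dK,\wN}+\la\sum_{|\al|+|\be|\le\dK}\|\pa^\al_\be\{\FI-\FP\}f\|^2_{\spacen_{\wN-|\be|\rho}}\le C\,\CE_{\dK,\wN}^{1/2}\CD_{\dK,\wN}$, where $\CE^{(0)}_{\dK,\wN}\approx\sum_{|\al|+|\be|\le\dK}\|w^{\wN-|\be|\rho}\pa^\al_\be f\|^2_{L^2_{\spa,\vel}}$.

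The step I expect to be the main obstacle is recovering dissipation of the macroscopic field $[a^f,b^f,c^f]$ from \eqref{form.p}--\eqref{coef.p.def}, which lies in the null space $\CN$ where $\FL$ vanishes and hence does not appear in \eqref{coerc} at all --- this is the whole-space degeneracy highlighted in the introduction. Testing \eqref{Boltz} against the collision invariants $\sqrt{\FM}$, $\vel_i\sqrt{\FM}$, $(|\vel|^2-\Ndim)\sqrt{\FM}$ gives local conservation laws expressing $\pa_t[a,b,c]$ and certain first $\spa$-derivative combinations of $[a,b,c]$ through $\spa$-derivatives of $\{\FI-\FP\}f$ and quadratic terms (here $\FP\Ga(f,f)=0$ from \eqref{ProjectionGamma} is used). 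From these one constructs a Kawashima-type interaction functional $\CI(t)$, built from $L^2_{\spa}$ pairings of $\pa^\al[a,b,c]$ with suitable components of $\pa^\al\{\FI-\FP\}f$ for $|\al|\le\dK-1$, arranged so that $\tfrac{d}{dt}\CI\le -\kappa\|\nabla_{\spa}[a,b,c]\|^2_{H^{\dK-1}_{\spa}}+C(\text{microscopic dissipation})+C\,\CE_{\dK,\wN}^{1/2}\CD_{\dK,\wN}$. The zero spatial mode of $[a,b,c]$ is not recovered, but this is harmless since we only need $\CE_{\dK,\wN}$ to be non-increasing (its genuine decay is the subject of the rest of the paper and requires the extra negative-Besov hypothesis). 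Setting $\CE_{\dK,\wN}\eqdef\CE^{(0)}_{\dK,\wN}+\varepsilon\,\CI$ with $\varepsilon$ small makes $\CE_{\dK,\wN}$ equivalent to the right-hand side of \eqref{def.eNm} and upgrades the dissipation to the full $\CD_{\dK,\wN}$ of \eqref{def.dNm}, giving $\tfrac{d}{dt}\CE_{\dK,\wN}+\la\,\CD_{\dK,\wN}\le C\,\CE_{\dK,\wN}^{1/2}\CD_{\dK,\wN}$.

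Finally, a standard continuity argument closes everything: choosing $\SMeps=\CE_{\dK,\wN}(0)$ so small that $C\,\CE_{\dK,\wN}(0)^{1/2}<\la/2$, on any interval where $\CE_{\dK,\wN}\le 2\SMeps$ the last inequality self-improves to $\tfrac{d}{dt}\CE_{\dK,\wN}+\tfrac{\la}{2}\CD_{\dK,\wN}\le 0$; bootstrapping propagates this for all $t\ge 0$, which yields the global solution, the Lyapunov inequality \eqref{thm.energy.1}, and the bound $\CE_{\dK,\wN}(t)\le\SMeps$. Uniqueness follows from the same energy/trilinear estimate applied to the difference of two solutions with identical data, closing a Gr\"onwall inequality.
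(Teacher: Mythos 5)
Your sketch is essentially the same argument by which this theorem is established in the cited sources \cite{gsNonCut0,SdecaySOFT}: the paper itself does not reprove it, but the quoted result is obtained there exactly via a local iteration, weighted macro--micro energy estimates with the $w^{\wN-|\be|\rho}$ weight hierarchy, coercivity of $\FL$ in $\spacen$ plus trilinear estimates absorbed by smallness, a Kawashima-type interaction functional to recover dissipation of $\nabla_\spa[a,b,c]$, and a continuity/bootstrap argument. So your proposal matches the standard route; no genuinely different idea and no gap worth flagging at this level of detail.
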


This Theorem \ref{thm.energy} is the building block for our decay results stated earlier in Theorem \ref{thm.main.decay} and Corollary \ref{cor.decay.interp}.  In those statements we used a sufficient number of weights, which we now define precisely. 
We define the quantity $\ell_0^d$ by:
\begin{equation}
\label{ell0d}
\ell_0^d
\eqdef 
\left\{
\begin{array}{ccc}
\frac{n}{2} + \lceil \frac{n}{2} \rceil-3, & \text{for $n$ odd}, \\
n+ 2 \lceil \frac{n}{2} \rceil-7, & \text{for $n$ even}.
\end{array}
\right.
\end{equation}
Furthermore, let us define the quantity $M$ by:
\begin{equation}
\label{Mequation}
M \eqdef
\max \left\{ \left( \frac{2 \dK}{\Ndim -2} -1 \right), \ell_0^d \right\}
\end{equation}
 These choices come from \eqref{L1}, \eqref{L2}, \eqref{L3'}, \eqref{L4} and \eqref{L5} respectively.
 
Here and in several places in the rest of the paper, we will use the notation
\begin{equation}
\label{notation.wJt}
\wJt \eqdef \left\lceil \Ndim/2 \right\rceil-1,
\end{equation}
which denotes the largest integer that is strictly less than $\frac{\Ndim}{2}$.   We recall that in general 
$m=\left\lceil a \right\rceil$ is the smallest integer satisfying  $m \ge a$ and $m'=\left\lfloor a \right\rfloor$ is the largest integer satisfying  $m' \le a$.

Now we define the following weight:
\begin{equation} \label{wNz.def}
\wNz
\eqdef 
\left\{
\begin{array}{ccc}
\max\left\{
\frac{\gamma+2s}{2}+1, 2, 2(\gamma+2s) \right\}, & \text{hard potentials: \eqref{kernelP}}, \\
-\frac{(\gamma+2s)}{2}
\max\left\{ \left(M+1\right),(\frac{n}{2}+\dK)\right\},
&  \text{soft potentials: \eqref{kernelPsing}}.
\end{array}
\right.
\end{equation}
Above and below we are using $M$ from \eqref{Mequation}.
For future reference, we will also define the following related weight:
\begin{equation} \label{wNz.def'}
\PwNz
\eqdef 
\left\{
\begin{array}{ccc}
0, & \text{ for the hard potentials: \eqref{kernelP}}, \\
-\frac{( \gamma + 2s)}{2}M, &  \text{for the soft potentials: \eqref{kernelPsing}.}\\
\end{array}
\right.
\end{equation}
We note that in the rest of this article, we will implicitly assume sometimes without mention that $\wN \ge \wNz$.  We further define
\begin{equation} \label{wNz.def.first}
\wNz^1
\eqdef 
\frac{(\gamma+2s)^+}{2}.
\end{equation}
Here we recall the general notation $(a)^+ = \max\{a, 0\}$.

Throughout this paper  we let $C$  denote
some positive (generally large) inessential constant and $\la$ denotes some positive (generally small) inessential constant, where both $C$ and
$\la$ may change values from line to line. 
Furthermore $A \lesssim B$ means $A \le C B$, and 
$A \gtrsim B$ means $B \lesssim A$. If $C$ depends on a parameters $p_1,\ldots,p_j$, we write $A \leq C_{p_1,\ldots,p_j} B$.
In addition,
$A\approx B$ means $A \lesssim B$ and $B \lesssim A$.

\subsection{Organization of the paper}
In Section \ref{sec.decayl} we will prove several non-linear energy estimates for the solutions to the non cut-off Boltzmann equation \eqref{Boltz} from Theorem \ref{thm.energy} which will be used in the proof of Theorem \ref{thm.main.decay} and Corollary \ref{cor.decay.interp}.
After that in Section \ref{sec.decayNL} we give the proof of Theorem \ref{thm.main.decay}.  We also prove Corollary \ref{cor.decay.interp} and Theorem \ref{thm.extra.decay}.  
Then in Section \ref{secAPP:INTERP}, in part because of the exotic nature of some of our spaces \eqref{normdef}, we prove a collection of functional interpolation inequalities in separable Hilbert spaces.
In the first part of Section \ref{secAPP:linear} we prove large time decay rates of the linear Boltzmann equation \eqref{BoltzLIN} in Besov spaces using dyadic time-frequency splittings and a pointwise time-frequency differential inequality from \cite{SdecaySOFT}.
Finally, in the second part of Section \ref{secAPP:linear} we prove faster decay rates for the linearized problem in the hard potential case \eqref{kernelP} under the additional assumption that the initial data is purely microscopic as in \eqref{form.p}. 
Our analysis in this part is based on a precise understanding of the spectrum of the spatial Fourier transform of the linearized operator for frequencies near zero. 

\subsection{Acknowledgements}
V.S. was supported by a Simons Postdoctoral Fellowship.
R.M.S. was partially supported by the NSF grants DMS-0901463, DMS-1200747, and an Alfred P. Sloan Foundation Research Fellowship. The authors would like to thank the referee for a careful reading and for helpful comments.

\section{Non-linear energy estimates}
\label{sec.decayl}

In this section, we will prove some non-linear differential and integral inequalities for the solutions of the Boltzmann equation \eqref{Boltz} in Theorem \ref{thm.energy}. Our strategy will be to use product estimates from \cite{gsNonCut0,SdecaySOFT}, as well as the functional interpolation inequalities from Section \ref{secAPP:INTERP}. The vector-valued functions we study, among others, take values in the non-isotropic Sobolev spaces in $\vel$, which were previously used in \cite{gsNonCut0}. In proving these estimates, one encounters the difficulty that the macroscopic part doesn't appear in the coercivity estimate \eqref{coerc}, and hence these terms have to be taken care of separately. All of these issues are addressed in Sub-section \ref{Derivative estimates}.
Moreover, we can apply the Littlewood-Paley projection operators defined in Section \ref{app.besov.h} to obtain energy estimates for solutions of \eqref{Boltz} in functional Besov spaces. The latter question is studied in Sub-section \ref{Besov estimates}.

\subsection{Derivative estimates}
\label{Derivative estimates}
This sub-section is devoted to proving two energy estimates for solutions to the Boltzmann equation \eqref{Boltz}.
In Proposition \ref{prop:energy} we prove Lyapunov inequalities for the Sobolev norms of fixed order $k\in\{0, 1, \ldots, \dK\}$.  Then in Lemma \ref{lem.inter.f} we prove a differential inequality which includes the macroscopic components \eqref{coef.p.def} of the dissipation \eqref{def.dNm}.  

\begin{proposition}\label{prop:energy}
Suppose that $\SMeps$ from \eqref{def.id.jm} is sufficiently small with $\ell \geq \wNz$ for $\wNz$ as in \eqref{wNz.def}.
Let $\solU(t,\spa,\vel)$ be the solution to the Boltzmann equation \eqref{Boltz} from Theorem \ref{thm.energy}.  
Fix $k\in\{0, 1, \ldots, \dK\}$; then the following inequality holds
\begin{multline*}
\frac{1}{2} \frac{d}{dt} \sum_{|\alpha| = k} \|\pa^\al \solU \|^2_{L^2_{\spa}L^2_{\vel}}
+
\la
\sum_{|\alpha| = k} \|  \{\FI-\FP\}\pa^\al\solU(t) \|^2_{L^2_{x}\spacen}
\\
\lesssim
\SMeps \sum_{\wJ \le |\alpha| \le  \dK } \|\pa^\al \solU \|^2_{L^2_{x} \spacen},
\end{multline*}
where $\wJ \eqdef \min\{k+1, \dK\}$.  
\end{proposition}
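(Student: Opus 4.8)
The plan is to differentiate the Boltzmann equation \eqref{Boltz} in $\spa$ by a fixed multi-index $\al$ with $|\al| = k$, pair with $\pa^\al \solU$ in $L^2_{\spa}L^2_{\vel}$, and sum over $|\al| = k$. The transport term $v \cdot \na_\spa \pa^\al \solU$ vanishes after integration by parts, the linear term $\FL \pa^\al \solU$ produces the coercive contribution via \eqref{coerc}, namely $\sum_{|\al| = k} \la \|\{\FI - \FP\}\pa^\al \solU\|_{L^2_\spa \spacen}^2$, and the remaining task is to bound the nonlinear term $\sum_{|\al|=k} (\pa^\al \Ga(\solU,\solU), \pa^\al \solU)$. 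First I would apply the Leibniz rule to write $\pa^\al \Ga(\solU,\solU) = \sum_{\al_1 + \al_2 = \al} \binom{\al}{\al_1} \Ga(\pa^{\al_1}\solU, \pa^{\al_2}\solU)$, and then use the fact \eqref{ProjectionGamma} that $\FP \Ga(f,f) = 0$, so that only $\{\FI - \FP\}\pa^\al \solU$ (on the side of the dissipative factor) actually needs to be paired against — but here I should be careful: for the $L^2_\vel$ pairing to see only the microscopic part we pair with $\pa^\al \solU$ directly and then decompose that factor using $\pa^\al \solU = \FP \pa^\al \solU + \{\FI-\FP\}\pa^\al\solU$ and note $(\Ga(\cdot,\cdot), \FP \pa^\al \solU)_{L^2_\vel} = 0$ pointwise in $(t,\spa)$.

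The core estimate is then a trilinear bound of the form
\[
\left| (\Ga(\pa^{\al_1}\solU, \pa^{\al_2}\solU), \{\FI-\FP\}\pa^\al \solU)_{L^2_{\spa}L^2_{\vel}} \right|
\lesssim
\| \pa^{\al_1}\solU \|_{?} \; \| \pa^{\al_2}\solU \|_{L^2_\spa \spacen} \; \| \{\FI-\FP\}\pa^\al\solU \|_{L^2_\spa \spacen},
\]
using the $\Ga$-estimate from \cite{gsNonCut0,SdecaySOFT} which controls $|(\Ga(g,h), w)_{L^2_\vel}|$ by a product of an $L^\infty$ or weighted $L^2$ norm of one factor times $\spacen$ norms of the other two. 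The factor carrying the low derivative count (either $\al_1$ or $\al_2$, whichever is $\le \lfloor k/2 \rfloor$, hence $\le \wJt$) is placed in $L^\infty_\spa L^2_\vel$ (or a weighted version) and estimated by $\sqrt{\SMeps}$ via the Sobolev embedding $H^{\ksob}_\spa \hookrightarrow L^\infty_\spa$ together with the uniform energy bound $\CE_{\dK,\wN}(t) \le \SMeps$ from Theorem \ref{thm.energy} — this is exactly where the mixed-norm Sobolev inequalities in $L^p_\spa \hilbertONE$ developed in Section \ref{secAPP:INTERP} are invoked, since the target Hilbert space is $\spacen$ rather than $L^2_\vel$. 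The other two factors, of derivative order at most $k+1 \le \dK$, contribute $\|\pa^{\al_2}\solU\|_{L^2_\spa \spacen}$ and $\|\{\FI-\FP\}\pa^\al\solU\|_{L^2_\spa \spacen}$; applying Young's inequality $ab \le \eta a^2 + C_\eta b^2$ absorbs the $\{\FI-\FP\}\pa^\al\solU$ square into the coercive term on the left (this forces the small coefficient $\la$), and what survives on the right is $\SMeps \sum_{\wJ \le |\al| \le \dK} \|\pa^\al \solU\|_{L^2_\spa \spacen}^2$, matching the claimed inequality with $\wJ = \min\{k+1, \dK\}$.

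The main obstacle I anticipate is the bookkeeping of velocity weights in the non-isotropic $\spacen$ norm. Unlike the cut-off case, the $\Ga$ product estimates require careful tracking of the weight $w^{\gamma+2s+1}$ appearing in \eqref{normdef}, and the splitting of derivatives must be arranged so that the low-order factor is genuinely controlled by $\SMeps$ (which carries weights $w^{\wN - |\be|\rho}$) after the Sobolev embedding in $\spa$. A secondary subtlety is handling the case where $\al_1$ or $\al_2$ equals $\al$ itself (so the other is $0$): then the $0$-th order factor $\solU$ must go in $L^\infty_\spa$, requiring $\ksob$ derivatives, which is why the restriction $\dK \ge 2\ksob$ and $\wN \ge \wNz$ is used. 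Everything else — the integration by parts in $\spa$, the coercivity \eqref{coerc}, the Leibniz expansion — is routine; the delicate point is ensuring the trilinear estimate closes with all factors of derivative order at most $\dK$ and with the low factor absorbed into the small data size.
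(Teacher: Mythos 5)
Your setup (differentiate, pair with $\pa^\al \solU$, kill the transport term, use \eqref{coerc} and \eqref{ProjectionGamma} to reduce to estimating $\sum_{\al_1\le\al}\bigl(\Ga(\pa^{\al_1}\solU,\pa^{\al-\al_1}\solU),\{\FI-\FP\}\pa^\al\solU\bigr)$) matches the paper. But there is a genuine gap in how you close the trilinear estimate, and it is exactly at the point that makes the proposition nontrivial: the right-hand side of the claimed inequality is $\SMeps\sum_{\wJ\le|\alpha|\le\dK}\|\pa^\al\solU\|^2_{L^2_\spa\spacen}$ with $\wJ=\min\{k+1,\dK\}$, i.e.\ it contains \emph{only derivatives of order at least $k+1$} (when $k<\dK$). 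In the Leibniz splitting the two factors have orders summing to $k$, so your strategy of putting the low-order factor in $L^\infty_\spa$ (via $H^{\ksob}_\spa\hookrightarrow L^\infty_\spa$, controlled by $\sqrt{\SMeps}$) and keeping the other factor in $L^2_\spa\spacen$ leaves, after Young's inequality, a term of the form $\SMeps\sum_{|\alpha|\le k}\|\pa^\al\solU\|^2_{L^2_\spa\spacen}$ on the right. The order-$k$ (and lower) contributions there, in particular their hydrodynamic parts, cannot be absorbed into the left-hand side, which only carries the microscopic dissipation $\sum_{|\alpha|=k}\|\{\FI-\FP\}\pa^\al\solU\|^2_{L^2_\spa\spacen}$. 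So your argument proves a strictly weaker inequality, and that weaker inequality is useless for the time-weighted decay scheme of Section \ref{sec.decayNL}, which hinges on the energy of order $k$ being controlled by derivatives of order $k+1$ plus microscopic terms. (A smaller slip of the same kind: you assert the surviving factors have ``derivative order at most $k+1$'', but no factor of order $k+1$ ever appears from the Leibniz rule; it must be \emph{created}.)

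The missing ingredient is the derivative-gaining interpolation that the paper isolates in \eqref{lem.app.interp.sec} and \eqref{goal.prove.last} (Lemmas \ref{lem.app.interp}, \ref{last.lem.app}, \ref{partialderivativesLp}): one chooses H\"older exponents $p,q\ge2$ with $\frac1p+\frac1q=\frac12$ case-by-case (not $p=\infty$, $q=2$) and then uses mixed-norm Gagliardo--Nirenberg/Besov interpolation, e.g.\ $\|\Lambda^k f\|_{L^{2^*}_\spa\hilbertONE}\lesssim\|\Lambda^{k+1}f\|_{L^2_\spa\hilbertONE}$, to convert the product of the two Leibniz factors into $\|f\|_{H^{\ksob}_\spa\hilbertONE}$ (small, by Theorem \ref{thm.energy}) times $\sum_{|\alpha'|=\wJ}\|\pa^{\alpha'}f\|_{L^2_\spa\hilbertONE}$; intermediate orders between $0$ and $k$ never appear on the right. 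The paper also first performs a micro--macro decomposition of the arguments of $\Ga$ (giving the terms $\Ac_1,\Ac_2,\Ac_3$), because the product estimates from \cite{gsNonCut0} applicable to the purely macroscopic factors ($[a,b,c]$ against smooth rapidly decaying velocity functions) differ from the trilinear estimate needed when the second slot is microscopic, and because the soft-potential case forces the precise weight bookkeeping ($\PwNz$, $\wNz$) in Lemma \ref{last.lem.app} that you only gesture at. Without the interpolation step your proof does not reach the stated conclusion; with it, the rest of your outline is essentially the paper's argument.
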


\begin{proof}[Proof of Proposition \ref{prop:energy}]
We take $\partial^\alpha$ derivatives of \eqref{Boltz}, multiply by $\partial^\alpha \solU$, integrate over $\threed_\spa\times \threed_{\vel}$, and use \eqref{coerc} to obtain
\begin{multline*}
\frac{1}{2} \frac{d}{dt}  \|\pa^\al \solU \|^2_{L^2_{x,v}}
+
\la
 \|  \{\FI-\FP\}\pa^\al\solU(t) \|^2_{L^2_{x}\spacen}
 \\
\lesssim
\sum_{\al_1 \le \al}
\left| \left(
 \Ga(\pa^{\al_1} \solU,  \pa^{\al-\al_1} \solU), 
\{\FI-\FP\}\pa^\al\solU
\right) \right|.
\end{multline*}
The right hand side in the above is obtained by using that $\FP$ and $\pa^{\al}$ commute, and \eqref{ProjectionGamma}.  In the rest of this proof we will focus on estimating the non-linear term.

We write $f = \FP f + \{\FI-\FP\} f$ and we expand the non-linear term as:
\begin{equation}\notag
\Ga(\solU,\solU) =
\Ga (\FP \solU, \FP \solU)
+
\Ga (\{\FI-\FP\}\solU, \FP \solU)
+
\Ga (\solU, \{\FI-\FP\}\solU).
\end{equation}
We thus expand
$$
\left(
 \Ga(\pa^{\al_1} \solU,  \pa^{\al-\al_1} \solU), 
\{\FI-\FP\}\pa^\al\solU
\right) 
=
\Ac_1 + \Ac_2 + \Ac_3.
$$
where:
\begin{eqnarray}
\notag
\Ac_1 
&\eqdef &
 \left(  \Gamma (\FP \pa^{\al_1}  \solU, \FP \pa^{\al-\al_1}  \solU), \partial^\alpha \{\FI-\FP\}\solU \right),
\\
\label{ga.expand}
\Ac_2 
&\eqdef &
 \left(  \Gamma (\{\FI-\FP\} \pa^{\al_1} \solU, \FP \pa^{\al-\al_1}  \solU), \partial^\alpha \{\FI-\FP\}\solU \right),
\\
\notag
\Ac_3 
&\eqdef &
 \left(
 \Ga(\pa^{\al_1} \solU,  \{\FI-\FP\}\pa^{\al-\al_1} \solU), 
\{\FI-\FP\}\pa^\al\solU
\right).
\end{eqnarray}
We will estimate each of these terms individually.

The desired estimate for the term $\Ac_3$ then follows from 
\cite[Eq (6.6)]{gsNonCut0}.  In other words, using \cite[Eq (6.6)]{gsNonCut0} for any small $\delta>0$ we have
\begin{multline}\label{nl.term3}
\left| \Ac_3 \right|
=
\left| \left(
 \Ga(\pa^{\al_1} \solU,  \{\FI-\FP\}\pa^{\al-\al_1} \solU), 
\{\FI-\FP\}\pa^\al\solU
\right) \right|
 \\
\lesssim
\int_{\threed} dx ~ 
 | \pa^{\al_1} \solU  |_{L^2_{\vel}}
 \nsm \{\FI-\FP\}\pa^{\al-\al_1} \solU\nsm_{\spacen}
 \nsm\{\FI-\FP\}\pa^\al\solU \nsm_{\spacen}
  \\
\lesssim
C_\delta
\left\| 
 | \pa^{\al_1} \solU  |_{L^2_{\vel}}
 \nsm \{\FI-\FP\}\pa^{\al-\al_1} \solU \nsm_{\spacen} \right\|_{L^2_{\spa}}^2
 +
\delta
 \|\{\FI-\FP\}\pa^\al\solU \|_{L^2_{\spa}\spacen}^2.
\end{multline}
For the first term and the second term in \eqref{ga.expand}, we notice from \eqref{form.p} that
$$
\Ga (\solU, \FP \solU)= \sum_{i=1}^{\Ndim +2} \psi_i(t,x) \Ga (\solU, \chi_i),
$$
where the $\psi_i(t,x)$ are the elements from \eqref{coef.p.def} and the $\chi_i(v)$ are the smooth rapidly decaying velocity basis vectors in \eqref{nullLsp}.  Thus from \cite[Proposition 6.1]{gsNonCut0}:
\begin{multline}\label{nl.term2}
\left| \Ac_2 \right|
=
\left| \left(  \Gamma (\{\FI-\FP\} \pa^{\al_1} \solU, \FP \pa^{\al-\al_1}  \solU), \partial^\alpha \{\FI-\FP\}\solU \right) \right|
\\
\lesssim
\int_{\threed}dx ~ 
  \nsm  \partial^{{\al_1}} \{\FI-\FP\} \solU\nsm_{L^2_{\gamma+2s-(\Ndim-1)}}
  |\pa^{\al-\al_1} [a,b,c]| ~
  \nsm   \{\FI-\FP\}\pa^{\al} \solU\nsm_{L^2_{\gamma+2s-(\Ndim-1)}}
  \\
\lesssim
C_\delta
\left\| 
  \nsm  \partial^{{\al_1}} \{\FI-\FP\} \solU\nsm_{L^2_{\gamma+2s}}
  |\pa^{\al-\al_1} [a,b,c]| 
  \right\|_{L^2_{\spa}}^2
  +
\delta
 \|  \{\FI-\FP\}\pa^{\al} \solU\|_{L^2_{\spa}\spacen}^2.
\end{multline}
Above the first inequality is a statement of \cite[Proposition 6.1]{gsNonCut0}.  Then in the second inequality we discard some of the velocity decay weights because they will not be used.  Also 
 here $|[a,b,c]|$ is the Euclidean absolute value norm of the coefficients from \eqref{coef.p.def}.    The last case to consider is $\Ac_1$.  From \cite[Proposition 6.1]{gsNonCut0} again
 \begin{multline}\label{nl.term1}
\left| \Ac_1 \right|
=
\left| \left(  \Gamma (\FP \pa^{\al_1}  \solU, \FP \pa^{\al-\al_1}  \solU), \partial^\alpha \{\FI-\FP\}\solU \right) \right|
\\
\lesssim
\int_{\threed}dx ~ 
|\pa^{\al_1} [a,b,c]| ~|\pa^{\al-\al_1} [a,b,c]| ~
\nsm  \partial^\alpha \{\FI-\FP\}\solU\nsm_{L^2_{\gamma+2s-(\Ndim-1)}}
  \\
\lesssim
C_\delta
\left\| 
~  |\pa^{\al_1} [a,b,c]| ~
  |\pa^{\al-\al_1} [a,b,c]| ~
  \right\|_{L^2_{\spa}}^2
  +
\delta
 \|  \{\FI-\FP\}\pa^{\al} \solU\|_{L^2_{\spa}\spacen}^2.
\end{multline}
Collecting these estimates in \eqref{nl.term3}, \eqref{nl.term2}, \eqref{nl.term1}, and summing over $|\alpha| = k$ we obtain
$$
\frac{1}{2} \frac{d}{dt} \sum_{|\alpha| = k} \|\pa^\al \solU \|^2_{L^2_{x,v}}
+
\la
\sum_{|\alpha| = k} \|  \{\FI-\FP\}\pa^\al\solU(t) \|^2_{L^2_{x}\spacen}
\lesssim
 B .
$$
Here $B = B_1 + B_2 + B_3$ with
\begin{eqnarray}
\notag
B_1 &\eqdef  &
\sum_{|\alpha| = k}
\sum_{\al_1 \le \al}
\left\| 
~  |\pa^{\al_1} [a,b,c]| ~
  |\pa^{\al-\al_1} [a,b,c]| ~
  \right\|_{L^2_{\spa}}^2,
  \\
  \label{Bdef}
  B_2 &\eqdef &
  \sum_{|\alpha| = k}
\sum_{\al_1 \le \al}
  \left\|   \nsm  \partial^{{\al_1}} \{\FI-\FP\} \solU\nsm_{L^2_{\gamma+2s}}  |\pa^{\al-\al_1} [a,b,c]|   \right\|_{L^2_{\spa}}^2,
    \\
    \notag
  B_3 &\eqdef &
  \sum_{|\alpha| = k}
\sum_{\al_1 \le \al}
\left\|  \nsm \pa^{\al_1} \solU  \nsm_{L^2_{\vel}} \nsm \{\FI-\FP\}\pa^{\al-\al_1} \solU\nsm_{\spacen} \right\|_{L^2_{\spa}}^2.
\end{eqnarray}
Thus we have reduced the proof of Proposition \ref{prop:energy} to proving that 
\begin{equation}\label{prop.e.rest}
 B 
\lesssim
\SMeps \sum_{\wJ \le |\alpha| \le  \dK } \|\pa^\al \solU \|^2_{L^2_{x} \spacen}.
\end{equation}
We will prove \eqref{prop.e.rest} for each of the terms in \eqref{Bdef} individually. 

In order to bound $B_1$ and $B_2$, we will use interpolation. 
Suppose that $\hilbertONE$ is a separable Hilbert space  of functions in $v$ as in Section \ref{sec.notation}.
Furthermore, suppose that $|\al | = k \in \{0, 1, \ldots, \dK\}$, $\al_1 \le \al$. 
Then, there exist $p, q\ge 2$ satisfying $\frac{1}{p}+\frac{1}{q} = \frac{1}{2}$ such that we have:
\begin{equation}  \label{lem.app.interp.sec}
\| \partial^{\al-\al_1}  f \|_{ L^q_{\spa}  \hilbertONE}
\| \partial^{\al_1}   f \|_{ L^p_{\spa}  \hilbertONE}
\lesssim
\|  f \|_{H^{\ksob}_{\spa} \hilbertONE}
\sum_{|\alpha'| = \min\{k+1, \dK\} } \|\pa^{\al'} \solU \|_{L^2_{\spa} \hilbertONE}.
\end{equation}
The inequality \eqref{lem.app.interp.sec} follows from the results of Lemma \ref{partialderivativesLp} and Lemma \ref{lem.app.interp}, all of which are proved in Section \ref{secAPP:INTERP}.  

We begin by looking at the term 
$B_1$ from \eqref{Bdef}.  We note that by \eqref{coef.p.def}, by the exponential decay in $v$ of $\sqrt{\FM}$, and by the Cauchy-Schwarz inequality:
\begin{equation}
\label{macro.point.p.est}
|\pa^{\al_1} [a,b,c]| \lesssim 
| w^{-\wE } \pa^{\al_1} f |_{L^2_{\vel}},
\end{equation}
which holds for all $\wE$.
We can furthermore deduce that
\begin{equation}
\label{macro.point.estimate}
|\FP \pa^{\al_1} f|_{L^2_{\gamma+2s}} \lesssim |\pa^{\al_1} f|_{L^2_{\gamma+2s}}.
\end{equation}
Let us take $\hilbertONE=L^2_{\gamma+2s}$ in 
\eqref{lem.app.interp.sec}. Then, for the $p$, $q$ which one obtains, 
we can use H\"{o}lder's inequality in $x$ to estimate: 
 \begin{equation}\label{nl.term1.e1}
\left\| 
~  |\pa^{\al_1} [a,b,c]| ~
  |\pa^{\al-\al_1} [a,b,c]| ~
  \right\|_{L^2_{\spa}}^2 
\lesssim
  \|   \pa^{\al_1} f \|_{L^q_{\spa} \spaceL}^2
  \|   \pa^{\al-\al_1} f \|_{L^p_{\spa} \spaceL}^2.
\end{equation}
The fact that $B_1$ is bounded by the right-hand side in \eqref{prop.e.rest} follows directly from \eqref{nl.term1.e1} and \eqref{lem.app.interp.sec} with $\hilbertONE=L^2_{\gamma+2s}$ if we use Theorem \ref{thm.energy} (to deduce the fact that it suffices to estimate everything at $t=0$) and \eqref{def.id.jm}. More precisely, we observe that, by construction:
$$
\|f(t)\|^2_{H^{\ksob}_x L^2_{\gamma+2s}} \approx \sum_{|\alpha| \leq \ksob} \|w^{\frac{\gamma+2s}{2}}\pa^{\al} f(t)\|_{L^2_x L^2_v}^2 \lesssim \CE_{\dK,\ell}(t)
\lesssim
\epsilon_{\dK,\ell},
$$ 
which holds since $\ell \geq \frac{\gamma+2s}{2}$ in the hard potential case, and since $\ell \geq 0$ in the soft potential case by our choice of $\ell$, and because of Theorem \ref{thm.energy} and \eqref{def.id.jm}.

For the term $B_2$ in \eqref{Bdef}, we argue similarly.
We let $p$, $q$ be as in the bound for $B_1$.  
By H\"{o}lder's inequality in $x$, we have: 
\begin{multline}\label{nl.term2.e1}
  \left\|   \nsm  \partial^{{\al_1}} \{\FI-\FP\} \solU\nsm_{L^2_{\gamma+2s}}  |\pa^{\al-\al_1} [a,b,c]|   \right\|_{L^2_{\spa}}^2
  \\
\lesssim
  \|  \pa^{\al_1} f \|_{L^q_{\spa} \spaceL}^2
  \|  \pa^{\al-\al_1} f \|_{L^p_{\spa}\spaceL}^2.
\end{multline}
Here we used  that $\{\FI-\FP\}$ is a bounded linear operator on $L^2_{\gamma+2s}$ which commutes with $\pa^{\al_1}$. The boundedness property follows from the fact that $\FP$ is bounded on $L^2_{\gamma+2s}$ by \eqref{macro.point.estimate}. 
Then, exactly as before, \eqref{prop.e.rest} follows from \eqref{nl.term2.e1} and \eqref{lem.app.interp.sec} using also Theorem \ref{thm.energy} and \eqref{def.id.jm}.  
 
For the last term $B_3$ in \eqref{Bdef}, also when $\frac{1}{p}+\frac{1}{q} = \frac{1}{2}$, we have
  \begin{equation}\label{nl.term3.e1}
\left\|  \nsm \pa^{\al_1} \solU  \nsm_{L^2_{\vel}} \nsm \{\FI-\FP\}\pa^{\al-\al_1} \solU\nsm_{\spacen} \right\|_{L^2_{\spa}}^2
  \\
\lesssim
  \|  \pa^{\al_1} f \|_{L^q_{\spa} L^2_{\vel}}^2
  \|  \pa^{\al-\al_1} f \|_{L^p_{\spa} \spacen}^2.
\end{equation}
In this case, for the upper bound in \eqref{nl.term3.e1} we note that there exist $p,q \geq 2$ such that $\frac{1}{p}+\frac{1}{q}=\frac{1}{2}$ for which the following estimate holds: 
\begin{multline}\label{goal.prove.last}
    \|  \pa^{\al_1} f \|_{L^q_{\spa} L^2_{\vel}}
  \|  \pa^{\al-\al_1} f \|_{L^p_{\spa} \spacen}
  \\
\lesssim
\left( \|  f \|_{H^{\ksob}_{\spa} \spacen_{\PwNz}}+ \| w^{\PwNz}  f \|_{H^{\dK}_{\spa} L^2_{\vel}}\right)
\sum_{\wJ \le |\al'| \le \dK} \| \pa^{\al'}    \solU \|_{L^2_{\spa} \spacen}.
\end{multline}
Again $\wJ \eqdef\min\{k+1, \dK\}$ and $\PwNz$ is from \eqref{wNz.def'}.
Inequality \eqref{goal.prove.last} follows from Lemma \ref{partialderivativesLp} and Lemma \ref{last.lem.app}, which are proved in Section \ref{secAPP:INTERP}.

Using the fact that $\dK \geq 2 \ksob \geq \ksob + 2$, \eqref{isocomp}, and that $s \in (0,1)$, we obtain 
  \begin{equation}\label{boundepsilon}
\|  f \|_{H^{\ksob}_{\spa} \spacen_{\PwNz}}^2
\lesssim
\|  f \|_{H^{\dK-2}_{\spa} \spacen_{\PwNz}}^2 
\lesssim
\| w^{\PwNz} f \|_{H^{\dK-2}_\spa H^1_{\gamma+2s}}^2.
\end{equation}
Notice that we have dispensed with the geometric content of $\spacen_{\PwNz}$ because  it is 
our goal to show that the above expression is $\lesssim \SMeps$, which is an isotropic norm.   
In order to do this, we will use our assumption 
that $\ell \geq \wNz$ with $\wNz$ in \eqref{wNz.def} and consider the hard potential case \eqref{kernelP} and soft potential case \eqref{kernelPsing} separately.

In the hard potential case \eqref{kernelP},  recall that $\rho=1$ in \eqref{def.eNm} and $\PwNz = 0$ in \eqref{wNz.def'}. Using  $\CE_{\dK,\ell}(t)$ and $\epsilon_{\dK,\ell}$ from \eqref{def.eNm} and \eqref{def.id.jm} and Theorem \ref{thm.energy} we have
$$
\| f\|^2_{H^{\dK-2}_x H^1_{\gamma+2s}}
\lesssim
 \sum_{|\alpha|\le\dK-2, |\beta|\le 1} \|w^{\frac{1}{2}(\gamma+2s)}\partial^{\alpha}_{\beta}f\|_{L^2_x L^2_v}^2
 \lesssim
\CE_{\dK,\ell}(t)
 \lesssim
\epsilon_{\dK,\ell},
$$
provided that $\ell -1\geq  \frac{1}{2}(\gamma+2s)$,
which holds since $\ell \ge \wNz$ using \eqref{wNz.def}.  The desired bound on $B_3$ from \eqref{prop.e.rest} then follows from \eqref{nl.term3.e1}, \eqref{goal.prove.last}, and this analysis.

In the soft potential case \eqref{kernelPsing}, we recall $\rho=-\gamma-2s$ and we note that:
\begin{multline}\notag
\|w^{\PwNz}f\|^2_{H^{\dK-2}_x H^1_{\gamma+2s}}
\approx \sum_{|\alpha|\le\dK-2, |\beta|\le 1} \|w^{\PwNz+\frac{1}{2}(\gamma+2s)}\partial^{\alpha}_{\beta}f\|_{L^2_x L^2_v}^2
\\
 \lesssim
\sum_{|\beta|\le 1, |\alpha|\le \dK-2}\|w^{\ell-|\beta|\rho}\partial^{\alpha}_{\beta}f\|_{L^2_x L^2_v}^2 
 \lesssim
\CE_{\dK,\ell}(t)
 \lesssim
\epsilon_{\dK,\ell},
\end{multline}
whenever $\ell - \rho \geq \PwNz - \frac{\rho}{2}$, or equivalently that  $\ell \geq  \PwNz-\frac{1}{2}(\gamma+2s)$, which holds since $\wNz \geq  \PwNz-\frac{1}{2}(\gamma+2s)$. 
We thus deduce again that $B_3$ satisfies the bound in \eqref{prop.e.rest}.
\end{proof}

We recall that the proof of Proposition \ref{prop:energy} relied on the use of \eqref{coerc} and hence we only obtained the microscopic terms $\|\{\FI-\FP\}\pa^{\al}f(t)\|_{L^2_xN^{s,\gamma}}^2$ on the left-hand side of the inequality. In order to control the macroscopic terms we use an interaction functional approach to prove the following bound:

\begin{lemma}\label{lem.inter.f}
Under the conditions from Theorem \ref{thm.energy}, there exists continuous functionals $\CI^k(t)$, 
for any $k\in\{0, 1, \ldots, \dK -1\}$, such that 
\begin{equation}\label{inter.diff}
-\frac{d \CI^k}{dt} + \la \sum_{|\alpha| = k+1}  \left\| \partial^\alpha [a,b,c] \right\|^2_{L^2_{\spa}} 
\lesssim
\sum_{k \le |\alpha| \le k+1} \| \{\FI - \FP \} \partial^\alpha \solU \|_{L^2_{\spa} L^2_{\gamma+2s}}^2.
\end{equation}
The functional $\CI^k(t)$ furthermore satisfies for any $m\ge 0$ the uniform estimates
\begin{equation}\label{functional.est}
\left|  \CI^k(t) \right| 
\lesssim
\sum_{k \le |\alpha| \le k+1} \| w^{-m} \partial^\alpha \solU (t) \|_{L^2_{\spa}L^2_{\vel}}^2.
\end{equation}
Additionally, we will give the precise definition of $\CI^k(t)$ below.
\end{lemma}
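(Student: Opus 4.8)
The plan is to adapt the classical local-conservation-law and interaction-functional method for the Boltzmann equation to the non-cutoff operator. Write $f=\FP f+g$ with $g\eqdef\{\FI-\FP\}f$, so that $\FP f=(a+b\cdot v+c(|v|^2-\Ndim))\MM$ up to the normalization in \eqref{form.p}. Applying $\FP$ to \eqref{Boltz} and using $\FL\FP=0$, that the range of $\FL$ is orthogonal to $N(\FL)$, and \eqref{ProjectionGamma} gives the local conservation laws $\partial_t(\FP f)+\FP(v\cdot\nabla_\spa\FP f)=-\FP(v\cdot\nabla_\spa g)$, i.e. a first-order system for $[a,b,c]$ whose forcing consists of first $v$-moments of $\nabla_\spa g$. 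Testing \eqref{Boltz} instead against the Burnett functions $(v_iv_j-\tfrac{|v|^2}{\Ndim}\delta_{ij})\MM$ and $v_i(|v|^2-(\Ndim+2))\MM$ (rapidly decaying and orthogonal to $N(\FL)$) produces the ``elliptic'' relations, schematically
\[
\nabla_\spa[a,b,c]=-\partial_t\Theta(g)+\Upsilon(\nabla_\spa g)+\left(\text{$v$-moments of }\Ga(f,f)\right),
\]
where $\Theta$ and $\Upsilon$ are fixed bounded linear moment functionals in $v$. Because all the test functions are Schwartz class, the collision term is transferred onto them, $\langle\zeta,\FL g\rangle=\langle\FL\zeta,g\rangle$, and together with the transport term reduces to $L^2_{\gamma+2s}$-norms of $g$ and $\nabla_\spa g$; the quadratic term $\Ga(f,f)$ is estimated using the non-cutoff weak-continuity bounds of \cite{gsNonCut0}.

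For fixed $k\in\{0,1,\dots,\dK-1\}$ I would apply $\partial^\alpha$ with $|\alpha|=k$ to these relations and set
\[
\CI^k(t)\eqdef\sum_{|\alpha|=k}\int_{\threed}dx~\partial^\alpha\Theta(g)\cdot\partial^\alpha\nabla_\spa[a,b,c],
\]
taken with suitable constants and with a pairing among the components $a$, $b$, $c$ chosen precisely so that, after differentiating in $t$, substituting $\partial_t\partial^\alpha[a,b,c]$ from the conservation laws, and integrating by parts in $\spa$, the leading term is $+\la\sum_{|\alpha|=k+1}\|\partial^\alpha[a,b,c]\|_{L^2_\spa}^2$.

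It then remains to absorb the error terms, which are of three kinds. (i) Bilinear expressions of the form $\partial^{k+1}g\cdot\partial^{k+1}[a,b,c]$: split by Cauchy--Schwarz into $\eta\|\partial^{k+1}[a,b,c]\|_{L^2_\spa}^2+C_\eta\|\partial^{k+1}g\|_{L^2_\spa L^2_{\gamma+2s}}^2$ and move the first onto the left. (ii) The term $\partial_t\partial^\alpha\Theta(g)$: here $\partial_t\partial^\alpha g$ is eliminated by the microscopic equation $\partial_t g+\FL g+\{\FI-\FP\}(v\cdot\nabla_\spa f)=\Ga(f,f)$; the $\FL g$ and $\{\FI-\FP\}(v\cdot\nabla_\spa g)$ pieces reduce, after transferring $\FL$ and $v$ onto $\zeta$, to $L^2_{\gamma+2s}$-norms of $\partial^\alpha g$ and $\partial^{k+1}g$, the $\{\FI-\FP\}(v\cdot\nabla_\spa\FP f)$ piece yields another $\partial^{k+1}[a,b,c]$ term to be absorbed, and the remaining piece is quadratic. (iii) All quadratic $\Ga$-contributions: using Theorem \ref{thm.energy}, the smallness of $\SMeps$, and the product estimates of Section \ref{secAPP:INTERP}, they are dominated by a small multiple of $\sum_{|\alpha|=k+1}\|\partial^\alpha[a,b,c]\|_{L^2_\spa}^2$ together with the microscopic right-hand side of \eqref{inter.diff}, and absorbed accordingly. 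This yields \eqref{inter.diff}.

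Finally \eqref{functional.est} is immediate from the construction: since $\Theta$ pairs $g$ against rapidly decaying velocity functions, $|\partial^\alpha\Theta(g)|\lesssim|w^{-m}\partial^\alpha g|_{L^2_\vel}\lesssim|w^{-m}\partial^\alpha f|_{L^2_\vel}$ for every $m\ge0$ (using the boundedness of $\{\FI-\FP\}$ on weighted $L^2_\vel$), while $|\partial^{\alpha'}[a,b,c]|\lesssim|w^{-m}\partial^{\alpha'}f|_{L^2_\vel}$ for $|\alpha'|=k+1$ as in \eqref{macro.point.p.est}; Cauchy--Schwarz in $\spa$ and the arithmetic--geometric mean inequality conclude. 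The main obstacle is the first step: since in the non-cutoff regime $\FL$ and multiplication by $v$ are unbounded and $\Ga$ carries no pointwise kernel, one must arrange every collision and transport contribution to fall on the Schwartz-class Burnett test functions and invoke the weak-continuity estimates of \cite{gsNonCut0}, while simultaneously selecting the constants in $\CI^k$ so that the cross terms close with the correct sign and the quadratic $\Ga$ terms absorb cleanly without spoiling the form of the right-hand side of \eqref{inter.diff}.
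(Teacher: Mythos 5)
Your proposal is correct and follows essentially the same route as the paper: the paper's $\CI^k$ is precisely this kind of interaction functional (pairings of the Burnett-moment functionals $r_{bi}$, $r_{ij}$, $r_c$ of $\{\FI-\FP\}f$ with derivatives of $[a,b,c]$, plus the macro--macro term $\int (\nabla\cdot\partial^\alpha b)\,\partial^\alpha a$), its linear macroscopic estimates are those of \cite[Theorem 8.4]{gsNonCut0} re-traced while keeping track of derivative orders, and the quadratic moments $\langle\partial^\alpha\Gamma(f,f),e\rangle$ are absorbed exactly as you indicate, via the micro--macro splitting and the product estimate \eqref{lem.app.interp.B} so that only order-$(k+1)$ macroscopic derivatives appear alongside the microscopic right-hand side. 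The one point to make explicit is that the purely micro--macro pairing in your displayed formula for $\CI^k$ must indeed be supplemented by the macroscopic cross term $\int(\nabla\cdot\partial^\alpha b)\,\partial^\alpha a$ (as in the paper's $\mathcal{I}_a^\alpha$) to generate the $\|\nabla\partial^\alpha a\|_{L^2_\spa}^2$ dissipation, which your phrase ``a pairing among the components $a$, $b$, $c$'' already anticipates.
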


This lemma was essentially already proven in \cite{gsNonCut0}, except that some of the estimates therein were too crude as written in the statements of the theorems and lemmas.  This type of estimate is well known, and we refer to for example \cite{MR2420519,gsNonCut0,MR2000470,MR2095473,GuoWang2011,MR2259206,SdecaySOFT,ZhuStrain,ZhuStrain2} for previous developments.  
We will explain carefully the main differences between this estimate, and what is done in \cite{gsNonCut0}.

\begin{proof}[Proof of Lemma \ref{lem.inter.f}]
We define $\CI^k(t)$ as follows
$$
\CI^k(t) = \sum_{|\alpha| =k} \left\{
\mathcal{I}_a^\alpha(t)
+
\mathcal{I}_b^\alpha(t)
+
\mathcal{I}_c^\alpha(t)\right\},
$$
where each of the functionals above are defined as
\begin{eqnarray*}
\mathcal{I}_a^\alpha(t) &\eqdef &
\int_{\domain}  ~ dx ~ \left( \nabla \cdot \partial ^\alpha b \right)\partial^\alpha a(t,x)
+
\sum_{i=1}^\Ndim
\int_{\domain}  ~ dx ~ \partial_{x_i}\partial ^\alpha r_{bi} ~
\partial^\alpha a(t,x),
\\
\mathcal{I}_b^\alpha(t) &\eqdef &
- 
 \sum_{j\neq i} \int_{\domain} ~ dx ~ \partial_{x_j} \partial^\alpha r_{ij}\partial^\alpha b_i,
\\
\mathcal{I}_c^\alpha(t) &\eqdef &
- \int_{\domain} ~ dx ~ \partial ^\alpha r_c(t,x) ~ \cdot \nabla_{\spa} \partial ^\alpha c(t,x).
\end{eqnarray*}
Here the $r_{bi}, r_{ij}, r_c$ are all of the form 
$
 \langle \{{\bf I-P}\}f, e_k \rangle
$
where each $e_k$ is some fixed linear combination of the following basis:
\begin{gather}
\left( v_i|v|^2\sqrt{\mu } \right)_{1\le i\le n}, ~ 
\left(v_i^2\sqrt{\mu } \right)_{1\le i\le n},  ~
\left(v_iv_j\sqrt{\mu } \right)_{1\le i<j\le n}, ~
\left(v_i\sqrt{\mu } \right)_{1\le i\le n}, ~
\sqrt{\mu }.  
\label{base}
\end{gather}
Notice that \eqref{functional.est} follows directly from the definition of $\CI^k(t)$.

Our goal is then to establish \eqref{inter.diff}.  Now following the proof of \cite[Theorem 8.4]{gsNonCut0} 
we directly obtain
\begin{multline}\label{old.gs.est}
-\frac{d \CI^k}{dt} + \la \sum_{|\alpha| = k+1}  \left\| \partial^\alpha [a,b,c] \right\|^2_{L^2_{\spa}} 
\lesssim
\sum_{k \le |\alpha| \le k+1} \| \{\FI - \FP \} \partial^\alpha \solU \|_{L^2_{\spa} L^2_{\gamma+2s}}^2
\\
+
\sum_{|\alpha| = k} 
\sum_{j}
\left\| \langle \partial^\alpha \Gamma(f,f), e_j\rangle\right\|_{L^2_{\spa}}^2.
\end{multline}
Above (and below)   each ``$e_j$'' is a fixed linear combination of the basis \eqref{base}, and $\sum_{j}$ is a finite sum.
We give a detailed explanation of how to establish \eqref{old.gs.est} from the proof of \cite[Theorem 8.4]{gsNonCut0}.\footnote{The statement of \cite[Theorem 8.4]{gsNonCut0} is too crude to establish \eqref{old.gs.est} because it does not keep track of the order of derivatives in each term (since that was not needed in \cite{gsNonCut0}).}  Particularly the proof of inequality 
\cite[(8.21)]{gsNonCut0} without using \cite[Lemmas 8.6 \& 8.7]{gsNonCut0} actually yields
\begin{gather}
\notag
-\frac{d \mathcal{I}_a^\alpha  }{dt}
+
\|\nabla \partial ^\alpha a\|^2_{L^2_x}
-
\eta \| \nabla \cdot \partial ^\alpha b \|^2_{L^2_x} 
\lesssim
\left\| \langle  l(\{{\bf I-P}\}\partial^{\alpha} f), e\rangle\right\|_{L^2_{\spa}}^2.
\\
+
C_\eta
\|\{{\bf I-P}\} \nabla_x  \partial^\alpha f\|^2_{L^2_{\gamma+2s}}
+
\left\| \langle \partial^{\alpha} \Gamma(f,f), e\rangle\right\|_{L^2_{\spa}}^2,
\label{mainAest}
\end{gather}
which holds for any small $\eta>0$.  The ``$e$'' above (and below) is some fixed linear combination of the basis \eqref{base}. Here
\begin{equation}
l(\{{\bf I-P}\}f) \eqdef -\{v\cdot \nabla _x+L\}\{{\bf I-P}\}f.  \label{l}
\end{equation}
Similarly the proof of inequalities
\cite[(8.22)]{gsNonCut0} and \cite[(8.24)]{gsNonCut0} without using \cite[Lemma's 8.6 \& 8.7]{gsNonCut0} more precisely yield for any small $\eta>0$ that
\begin{gather}
\notag
-
\frac{d \mathcal{I}_c^\alpha  }{dt}
+
\|\nabla \partial ^\alpha c\|^2_{L^2_x}
-
\eta \| \nabla \cdot \partial ^\alpha b \|^2_{L^2_x} 
\lesssim
\left\| \langle  l(\{{\bf I-P}\}\partial^{\alpha} f), e\rangle\right\|_{L^2_{\spa}}^2.
\\
+
C_\eta
\|\{{\bf I-P}\} \nabla_x  \partial^\alpha f\|^2_{L^2_{\gamma+2s}}
+
\left\| \langle \partial^{\alpha} \Gamma(f,f), e\rangle\right\|_{L^2_{\spa}}^2,
\label{mainCest}
\end{gather}
and further
\begin{gather}
\notag
-\frac{d \mathcal{I}_b^\alpha  }{dt}
+
\|\nabla \partial ^\alpha b\|^2_{L^2_x}
-
 \eta \left\{ \| \nabla  \partial ^\alpha a \|^2_{L^2_x} + \| \nabla  \partial ^\alpha c \|^2_{L^2_x}  \right\}
\lesssim
\sum_{j} 
\left\| \langle  l(\{{\bf I-P}\}\partial^{\alpha} f), e_j\rangle\right\|_{L^2_{\spa}}^2.
\\
+
C_\eta
\|\{{\bf I-P}\} \nabla_x  \partial^\alpha f\|^2_{L^2_{\gamma+2s}}
+
\sum_{j} 
\left\| \langle \partial^{\alpha} \Gamma(f,f), e_j\rangle\right\|_{L^2_{\spa}}^2.
\label{mainBest}
\end{gather}
Then following the proof of \cite[Lemma 8.6]{gsNonCut0} by combining in particular \eqref{l} with \cite[(6.12)]{gsNonCut0} we conclude that
$$
\sum_{|\alpha| =k} 
\left\| \langle  l(\{{\bf I-P}\}\partial^{\alpha} f), e\rangle\right\|_{L^2_{\spa}}^2
\lesssim
\sum_{k \le |\alpha| \le k+1} \| \{\FI - \FP \} \partial^\alpha \solU \|_{L^2_{\spa} L^2_{\gamma+2s}}^2
$$
Adding together \eqref{mainAest}, \eqref{mainCest},  and \eqref{mainBest} and summing over ${|\alpha| =k}$, and using 
the above inequality, yields  
\eqref{old.gs.est}.

We then claim that we have the following estimate 
\begin{multline}\label{est.nl.interact}
\sum_{|\al| = k} 
\left\| \langle \partial^\alpha \Gamma(f,f), e\rangle\right\|_{L^2_{\spa}}^2 
\\
\lesssim 
\SMeps
\sum_{|\alpha'| = k+1}\left(
  \left\| \partial^{\alpha'} [a,b,c] \right\|_{L^2_{\spa}}^2  
+
 \| \{\FI - \FP \} \partial^{\alpha'} \solU \|_{L^2_{\spa} L^2_{\gamma+2s}}^2
\right). 
\end{multline}
Now \eqref{old.gs.est} combined with \eqref{est.nl.interact} establishes \eqref{inter.diff} since $\SMeps$ is sufficiently small. Thus our goal is reduced to establishing \eqref{est.nl.interact}.

We now use equation (6.12) of \cite[Proposition 6.1]{gsNonCut0} to obtain that 
$$
\sum_{|\al| = k} 
\left\| \langle \partial^\alpha \Gamma(f,f), e\rangle\right\|_{L^2_{\spa}}
\lesssim
\sum_{|\al| = k} \sum_{\al_1 \le \al}
\left\| ~ \nsm \partial^{\al_1} f \nsm_{L^2_{-m}} ~ \nsm \partial^{\al - \al_1} f \nsm_{L^2_{-m}} ~ \right\|_{L^2_{\spa}},
$$
which holds for any large $m \ge 0$. Here, we are also using the fact that $e$ satisfies the property (4.1) in \cite{gsNonCut0} which we need in order to apply 
\cite[Proposition 6.1]{gsNonCut0}. More precisely, we are using the fact that $|e| \lesssim \exp(-\lambda |v|^2)$, which follows since $e$ is a linear combination of functions satisfying this bound.

Then for all $p,q \geq 2$ with $\frac{1}{p}+\frac{1}{q} = \frac{1}{2}$ we have
$$
\left\| ~ \nsm \partial^{\al_1} f \nsm_{L^2_{-m}} ~ \nsm \partial^{\al - \al_1} f \nsm_{L^2_{-m}} ~ \right\|_{L^2_{\spa}}
\lesssim
\left\|  \partial^{\al_1} f  \right\|_{L^q_{\spa} \spaceL}
\left\|   \partial^{\al - \al_1} f \right\|_{L^p_{\spa} \spaceL}.
$$
In the hard potential case, we take $m \geq 0$ and in the soft potential case, we take $m \geq \frac{-\gamma-2s}{2}$.
We do a micro-macro decomposition, as in \eqref{form.p}, to see that 
\begin{multline}\label{terms.n.est}
\left\|  \partial^{\al_1} f  \right\|_{L^q_{\spa} \spaceL}
\left\|  \partial^{\al - \al_1} f \right\|_{L^p_{\spa} \spaceL}
\\
\lesssim
\left\|  \partial^{\al_1} [a,b,c]  \right\|_{L^q_{\spa}}
\left\|  \partial^{\al - \al_1} [a,b,c]  \right\|_{L^p_{\spa} }
\\
+
\left\|  \partial^{\al_1} [a,b,c]  \right\|_{L^q_{\spa}}
\left\|   \partial^{\al - \al_1} \{\FI - \FP \}f \right\|_{L^p_{\spa} \spaceL}
\\
+
\left\|  \partial^{\al_1} \{\FI - \FP \}f  \right\|_{L^q_{\spa} \spaceL}
\left\|  \partial^{\al - \al_1} [a,b,c]  \right\|_{L^p_{\spa} }
\\
+
\left\|  \partial^{\al_1} \{\FI - \FP \}f  \right\|_{L^q_{\spa} \spaceL}
\left\|  \partial^{\al - \al_1} \{\FI - \FP \}f \right\|_{L^p_{\spa} \spaceL}.
\end{multline}
To estimate these terms we will use \eqref{lem.app.interp.B} just below.

For any $k\in \{0, 1, \ldots, \dK-1\}$  such that ${|\al| = k}$ and $\al_1 \le \al$ there exists $p, q\ge 2$ satisfying $\frac{1}{p}+\frac{1}{q} = \frac{1}{2}$ such that we have
\begin{multline}\label{lem.app.interp.B}
\|  \partial^{\al - \al_1} \solU \|_{ L^q_{\spa}  \hilbertONE}
\|  \partial^{\al_1}   g \|_{ L^p_{\spa}  \hilbertTWO}
\lesssim
\|  f \|_{H^{\dK-2}_{\spa} \hilbertONE}
\sum_{|\alpha'| = k+1 } \|\pa^{\al'} g \|_{L^2_{\spa} \hilbertTWO}
\\
+
\|  g \|_{H^{\dK-2}_{\spa} \hilbertTWO}
\sum_{|\alpha'| = k+1 } \|\pa^{\al'} \solU \|_{L^2_{\spa} \hilbertONE}.
\end{multline}
This holds for any separable Hilbert spaces $\hilbertONE$ and $\hilbertTWO$ such as those from Section \ref{sec.notation}. The bound \eqref{lem.app.interp.B} follows from Lemma \ref{partialderivativesLp} and Lemma \ref{lem.app.interp}, which are proved in Section \ref{secAPP:INTERP}. We are also using the fact that $\dK -2 \geq \ksob$. Notice further that, in \eqref{lem.app.interp.B}, $\hilbertONE$ and $\hilbertTWO$ could be $\mathbb{R}$ with norm given by absolute values, as in for instance $\left\|  \partial^{\al_1} [a,b,c]  \right\|_{L^q_{\spa}}$. 
To finish this off, we notice that all of the terms in the upper bounds of \eqref{terms.n.est} can be bounded above by the norms in the lower bound of the inequality of \eqref{lem.app.interp.B} with $\|\cdot\|_{\hilbertONE}$ and 
$\|\cdot\|_{\hilbertTWO}$ either given by absolute values, $\nsm \cdot \nsm$, or by $|  ~ \cdot  ~ |_{ \spaceL}$.  Thus using \eqref{lem.app.interp.B} with the appropriate $p$, $q$ as in \eqref{lem.app.interp.B}, and using Theorem \ref{thm.energy}, in \eqref{terms.n.est} we notice that \eqref{est.nl.interact} holds true. More precisely, by Theorem \ref{thm.energy}, we obtain uniform bounds on the full instant energy functional $\CE_{\dK,\wN}(t)$ for all non-negative times. Here, we also use the fact that $\ell \geq \frac{\gamma+2s}{2}$ by \eqref{wNz.def}.
\end{proof}


\subsection{Estimates in the homogeneous Besov space}
\label{Besov estimates}
In this sub-section, we assume that the initial data $f_0$ is sufficiently regular and we prove the following integral inequality for the functional Besov norms of the solution $f$ to \eqref{Boltz}.

\begin{proposition}\label{prop.besov.ev}
Consider $f(t,x,v)$, the solution to the Boltzmann equation obtained in Theorem \ref{thm.energy}, with initial data $f_0(x,v)$ satisfying $\| f_0 \|_{\dot{B}_2^{-\ALTsig,\infty}L^2_{\vel}} < \infty$ with $\ALTsig \in (0, \Ndim/2]$.  Let $\SMeps$ from \eqref{def.id.jm} be small with $\wN \ge \wNz^1$ with $\wNz^1$ from \eqref{wNz.def.first}.  Then
\begin{multline}\label{besov.prop.small}
\| f(t) \|_{\dot{B}_2^{-\ALTsig,\infty}L^2_{\vel}}^2
\lesssim 
\| f_0 \|_{\dot{B}_2^{-\ALTsig,\infty}L^2_{\vel}}^2
  \\
+
\SMeps \int_0^t ds 
\sum_{|\beta| \le 2} \| \partial_\beta \{ \FI - \FP\} f (s) \|_{L^2_{\spa}L^2_{\gamma+2s}}^2
\\
+
 \int_0^t ds 
\left\|    \FP \solU (s)\right\|_{L^{2}_{\spa}L^2_{\vel}}^{2}
\sum_{|\alpha| =\lfloor \frac{\Ndim}{2} -\ALTsig \rfloor }\left\| \partial^\alpha   \FP \solU(s)\right\|_{L^{2}_{\spa}L^2_{\vel}}^{2(1-\theta)}
\sum_{|\alpha'| =\lfloor \frac{\Ndim}{2} -\ALTsig+1 \rfloor}\left\| \partial^{\alpha'}   \FP \solU(s)\right\|_{L^{2}_{\spa}L^2_{\vel}}^{2\theta},
\end{multline}
where $\theta = \frac{\Ndim}{2} -\ALTsig -\lfloor \frac{\Ndim}{2} -\ALTsig \rfloor\in [0,1)$.
\end{proposition}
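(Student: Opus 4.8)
The plan is to run a Littlewood--Paley localized version of the energy method behind Theorem \ref{thm.energy}. First I would apply the frequency projection $\Delta_j$ (acting only in the $\spa$ variable) to \eqref{Boltz}, pair the result with $\Delta_j \solU$ in $L^2_{\spa,\vel}$, and observe that the transport term $\vel\cdot\na_\spa$ is skew-adjoint while \eqref{coerc} controls the linear operator $\FL$. Since $\Delta_j$ commutes with the self-adjoint, $\vel$-acting projection $\FP$, and \eqref{ProjectionGamma} gives $\FP\Delta_j\Ga(\solU,\solU)=0$, the nonlinear contribution may be paired against the purely microscopic part: $(\Delta_j\Ga(\solU,\solU),\Delta_j\solU)=(\Delta_j\Ga(\solU,\solU),\{\FI-\FP\}\Delta_j\solU)$. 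This produces, for each $j\in\Z$,
\begin{equation*}
\frac{1}{2}\frac{d}{dt}\|\Delta_j\solU\|_{L^2_{\spa,\vel}}^2+\la\|\{\FI-\FP\}\Delta_j\solU\|_{L^2_\spa\spacen}^2\lesssim \left|\left(\Delta_j\Ga(\solU,\solU),\{\FI-\FP\}\Delta_j\solU\right)\right|.
\end{equation*}
Multiplying by $2^{-2\ALTsig j}$, integrating in time, taking $\sup_{j\in\Z}$, discarding the nonnegative microscopic dissipation on the left, and recognizing $\sup_j 2^{-2\ALTsig j}\|\Delta_j\solU_0\|_{L^2_{\spa,\vel}}^2=\|\solU_0\|_{\dot{B}_2^{-\ALTsig,\infty}L^2_\vel}^2$, reduces everything to bounding the time integral of the right-hand side above, weighted by $2^{-2\ALTsig j}$ and supremized over $j$.

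For the nonlinear term I would split $\Ga(\solU,\solU)=\Ga(\FP\solU,\FP\solU)+\Ga(\{\FI-\FP\}\solU,\FP\solU)+\Ga(\solU,\{\FI-\FP\}\solU)$ and treat the two kinds of terms differently. On the pieces carrying a $\{\FI-\FP\}\solU$ factor I would use the trilinear estimates of \cite[Proposition 6.1]{gsNonCut0} and \cite[Eq.~(6.6)]{gsNonCut0}, combined with the paraproduct calculus to handle $\Delta_j$ of the product, the vector-valued interpolation inequalities of Section \ref{secAPP:INTERP} in the mixed spaces $L^p_\spa\hilbertONE$, the isotropic comparison \eqref{isocomp}, and the uniform bound $\CE_{\dK,\wN}(t)\le\SMeps$ from Theorem \ref{thm.energy} (which extracts a factor $\sqrt{\SMeps}$ from an $H^{\ksob}_\spa$ norm of one factor, $\ksob>\Ndim/2$); Young's inequality then absorbs a small multiple of $\|\{\FI-\FP\}\Delta_j\solU\|_{L^2_\spa\spacen}^2$ into the left. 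After the $2^{-2\ALTsig j}$-weighting and the time integration this accounts for the term $\SMeps\int_0^t\sum_{|\be|\le2}\|\pa_\be\{\FI-\FP\}\solU(s)\|_{L^2_\spa L^2_{\gamma+2s}}^2\,ds$; the velocity weight $L^2_{\gamma+2s}$ and the order-$\le 2$ velocity derivatives there reflect the $H^1_{\gamma+2s}$-type upper bound for $\spacen$ in \eqref{isocomp} together with the hypothesis $\wN\ge\wNz^1$, and no $\spa$-derivatives appear because $\Delta_j$ of $\Ga$ carries none and the frequency weight is absorbed by the bilinear structure.

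For the macroscopic--macroscopic piece $\Ga(\FP\solU,\FP\solU)$, writing $\FP\solU=\sum_i\psi_i(t,\spa)\chi_i(\vel)$ as in \eqref{form.p} with $\chi_i$ rapidly decaying in $\vel$, bilinearity gives $\Ga(\FP\solU,\FP\solU)=\sum_{i,i'}\psi_i\psi_{i'}\,\Ga(\chi_i,\chi_{i'})$, whence $\|\Delta_j\Ga(\FP\solU,\FP\solU)\|_{L^2_\spa L^2_\vel}\lesssim\sum_{i,i'}\|\Delta_j(\psi_i\psi_{i'})\|_{L^2_\spa}$. Pairing with $\{\FI-\FP\}\Delta_j\solU$, using Young's inequality once more to absorb the dissipation, multiplying by $2^{-2\ALTsig j}$, integrating in $t$, and using $\sup_j\int_0^t(\cdots)\le\int_0^t\sup_j(\cdots)$, one is left with $\int_0^t\sum_{i,i'}\|\psi_i\psi_{i'}(s)\|_{\dot{B}_2^{-\ALTsig,\infty}}^2\,ds$. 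Now the product law $\|uw\|_{\dot{B}_2^{-\ALTsig,\infty}}\lesssim\|u\|_{L^2_\spa}\|w\|_{\dot{H}^{\Ndim/2-\ALTsig}_\spa}$ --- valid precisely for $0<\ALTsig\le\Ndim/2$, via the embedding $L^{2\Ndim/(\Ndim+2\ALTsig)}_\spa\hookrightarrow\dot{B}_2^{-\ALTsig,\infty}$, Hölder, and $\dot{H}^{\Ndim/2-\ALTsig}_\spa\hookrightarrow L^{\Ndim/\ALTsig}_\spa$ --- together with $\|\psi_i\|_{L^2_\spa}\lesssim\|\FP\solU\|_{L^2_\spa L^2_\vel}$, $\|\psi_{i'}\|_{\dot{H}^{\Ndim/2-\ALTsig}_\spa}\lesssim\|\FP\solU\|_{\dot{H}^{\Ndim/2-\ALTsig}_\spa L^2_\vel}$, and the interpolation $\|\FP\solU\|_{\dot{H}^{\Ndim/2-\ALTsig}_\spa L^2_\vel}^2\lesssim\|\FP\solU\|_{\dot{H}^{\lfloor\Ndim/2-\ALTsig\rfloor}_\spa L^2_\vel}^{2(1-\theta)}\|\FP\solU\|_{\dot{H}^{\lfloor\Ndim/2-\ALTsig\rfloor+1}_\spa L^2_\vel}^{2\theta}$ with $\theta$ as stated (and $\|g\|_{\dot{H}^m_\spa}^2\approx\sum_{|\al|=m}\|\pa^\al g\|_{L^2_\spa}^2$ for integers $m\ge0$), yields exactly the last term of \eqref{besov.prop.small}.

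I expect the main obstacle to be the low-frequency regime $j\to-\infty$, where the weight $2^{-2\ALTsig j}$ diverges: each nonlinear contribution must decay fast enough as $j\to-\infty$, which forces one to exploit the bilinear/product structure rather than crude $L^2_\spa$ bounds on $\Delta_j$, and this is exactly where the restriction $0<\ALTsig\le\Ndim/2$ is consumed. A secondary difficulty is bookkeeping in the exotic mixed norm $L^p_\spa\spacen$ with the non-isotropic velocity space \eqref{normdef}: this is why the vector-valued interpolation theory of Section \ref{secAPP:INTERP} is needed, so that the $\spa$-variable Sobolev and paraproduct estimates can be run with values in an arbitrary separable Hilbert space $\hilbertONE$ and then specialized to $\hilbertONE=\spacen$.
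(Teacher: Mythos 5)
Your setup is exactly the paper's: localize with $\Delta_j$, pair with $\Delta_j\solU$, use \eqref{coerc}, \eqref{ProjectionGamma} and the commutation of $\Delta_j$ with $\FP$, then weight by $2^{-2\ALTsig j}$, integrate in time and take $\sup_j$. Your treatment of the macroscopic--macroscopic piece $\Ga(\FP\solU,\FP\solU)$ --- the embedding $L^{2\Ndim/(\Ndim+2\ALTsig)}_\spa\hookrightarrow\dot B^{-\ALTsig,\infty}_2$, H\"older against $L^{\Ndim/\ALTsig}_\spa$, and the Gagliardo--Nirenberg interpolation giving the stated $\theta$ --- is also essentially the paper's route to the third term of \eqref{besov.prop.small} (compare \eqref{abc} and \eqref{abc2}).

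The gap is in the terms carrying a factor $\{\FI-\FP\}\solU$. As you yourself observe, the whole difficulty is the divergent weight $2^{-2\ALTsig j}$ as $j\to-\infty$, but your plan for these terms (pair directly, apply the trilinear estimates of \cite[Proposition 6.1, Eq.\ (6.6)]{gsNonCut0}, invoke paraproduct calculus and vector-valued interpolation, absorb the dissipation by Young) never actually produces the compensating factor $2^{2\ALTsig j}$: after Young's inequality those pairing estimates leave $2^{-2\ALTsig j}$ times an $L^2_\spa$-based quantity with no intrinsic $j$-decay, so the supremum over $j$ blows up; saying the weight is ``absorbed by the bilinear structure'' is an assertion, not an argument. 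The paper's mechanism, applied uniformly to all of $\Ga(\solU,\solU)$, is: Cauchy--Schwarz in $\vel$ to place $w^{-(\gamma+2s)/2}\Delta_j\Ga(\solU,\solU)$ in $L^2_\spa L^2_\vel$, then the Bernstein inequality \eqref{app.bernstein.XV} to trade this for $2^{\ALTsig j}\|w^{-(\gamma+2s)/2}\Ga(\solU,\solU)\|_{L^p_\spa L^2_\vel}$ with $p=2\Ndim/(\Ndim+2\ALTsig)$ (this is where $0<\ALTsig\le\Ndim/2$ enters and where the $j$-uniformity comes from), then the bilinear \emph{norm} estimate \eqref{Prop3.1} from \cite[Proposition 3.1]{SdecaySOFT} --- not the pairing estimates of \cite{gsNonCut0} --- followed by the micro--macro split \eqref{terms.to.est} and H\"older with one factor in $L^{\Ndim/\ALTsig}_\spa$, which is controlled by $\SMeps$ via the functional Sobolev embedding and Theorem \ref{thm.energy}, as in \eqref{abc.last.three}. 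This is also why the second term of \eqref{besov.prop.small} reads $\sum_{|\beta|\le 2}\|\partial_\beta\{\FI-\FP\}\solU\|_{L^2_\spa L^2_{\gamma+2s}}^2$: the two velocity derivatives come from the $H^{\id}_\vel$, $\id\in\{1,2\}$, norm in \eqref{Prop3.1}, not from the $H^1_{\gamma+2s}$ comparison \eqref{isocomp} as you state. If you insist on the paraproduct route you would have to re-derive a Bernstein-type gain on each low-output-frequency piece, which amounts to the paper's argument in a more complicated guise.
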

We note that the integrand in the first integral is related to the dissipation rate \eqref{def.dNm}, whose time integral we know is finite by Theorem \ref{thm.energy}. This will be a crucial observation in the following section, when we prove uniform a priori bounds on the macroscopic part in a functional Besov space. The precise bound is given in \eqref{est.unif.besov}. 

\begin{proof}[Proof of Proposition \ref{prop.besov.ev}.]
The operators  $\Delta_j$ are defined in Section \ref{app.besov.h}.
We take $\Delta_j$ of \eqref{Boltz}, multiply \eqref{Boltz} by $\Delta_j f$, integrate over $\threed_{\spa} \times \threed_{\vel}$ and use \eqref{coerc} to obtain
$$
\frac{1}{2} \frac{d}{dt} \| \Delta_j f\|_{L^2_{x,v}}^2
+
\la \| \{ \FI - \FP \} \Delta_j f \|_{L^2_{\spa} \spacen}^2 
\lesssim
\left| \left( 
\Delta_j \Ga (f, f), 
\{ \FI - \FP \} \Delta_j f 
\right) \right|. 
$$
Here, we used \eqref{ProjectionGamma} and the fact that $\FP$ and $\Delta_j$ commute.  We estimate the upper bound directly as: 
$$
\left| \left( 
\Delta_j \Ga (f, f), 
\{ \FI - \FP \} \Delta_j f 
\right) \right|
\le
C_\la \| w^{\frac{-\gamma - 2s}{2}} \Delta_j \Ga (f, f)\|_{L^2_{\spa}L^2_{\vel}}^2
+
\frac{\la}{2}
\|\{ \FI - \FP \} \Delta_j f \|_{L^2_{\spa} \spaceL}^2.
$$
As in \eqref{app.bernstein.XV}, using the Bernstein inequality, one obtains
$$
 \| w^{\frac{-\gamma - 2s}{2}} \Delta_j \Ga (f, f)\|_{L^2_{x}L^2_{v}}^2
\lesssim
2^{2\ALTsig j} \| w^{\frac{-\gamma - 2s}{2}} \Ga (f, f)\|_{L^p_{x}L^2_{v}}^2,
$$
where $\ALTsig = \frac{\Ndim}{p} - \frac{\Ndim}{2}$ for $p \in [1, 2]$.  Equivalently 
$p = \frac{2 \Ndim}{\Ndim + 2\ALTsig}$ for $\ALTsig \in [0,  \frac{\Ndim}{2}]$.
We need to estimate $\| w^{\frac{-\gamma - 2s}{2}} \Ga (f, f)\|_{L^p_{x}L^2_{v}}^2$.
Let us use the estimate from \cite[Proposition 3.1, Eq (3.20)]{SdecaySOFT} to obtain
\begin{equation}\label{upper.nl.est}
 \| w^{\frac{-\gamma - 2s}{2}} \Ga (f, f)\|_{L^p_{x}L^2_{v}}^2
\lesssim
\left\|  ~ \left\nsm w^{\frac{(\gamma+2s)^+}{2}} f \right\nsm_{L^2_v} ~ \left\nsm w^{\frac{(\gamma + 2s)^+}{2}} f \right\nsm_{H^\id_v} ~ \right\|_{L^p_{x}}^2
\end{equation}
Here $\id =1$ if $s\in (0, 1/2)$ and $\id=2$ if $s\in [1/2,1)$ as in \eqref{kernelQ} and $(\gamma+2s)^+$ is the positive part of $\gamma+2s$.  
More precisely, we recall that \cite[Proposition 3.1, Eq (3.20)]{SdecaySOFT} states that for real numbers $b^{+}, b^{-}, b' \geq 0$ with $b^{-} \geq b'$, and $b=b^{+}-b^{-}$, one has the following uniform estimate:
\begin{equation}
\label{Prop3.1}
|w^b\Gamma(f,g)|_{L^2_{\vel}} \lesssim |w^{b^{+}-b'} f|_{L^2_{\gamma+2s}}
 |w^{b+b'} g|_{H^{i}_{(\gamma+2s)+(\gamma+2s)^{+}}}
\end{equation}
where $i=1$ for $s \in (0,\frac{1}{2})$ and $i=2$ for $s \in [\frac{1}{2},1)$.  Then to obtain \eqref{upper.nl.est} we use 
\eqref{Prop3.1}.  In the hard potential case \eqref{kernelP}, we take $b^{+}=0, b^{-}=\frac{\gamma+2s}{2}$ and $b'=0$.  In the soft potential case \eqref{kernelPsing}, we take $b^{+}=-\frac{\gamma+2s}{2}, b^{-}=0$, and $b'=0$. 

In the following we will prove upper bounds for the estimate in \eqref{upper.nl.est}.
We do a micro-macro decomposition, e.g. \eqref{form.p}, of $\solU$ to further bound \eqref{upper.nl.est} as
\begin{multline} \label{terms.to.est}
\left\|  ~ \left\nsm w^{\frac{(\gamma+2s)^+}{2}} f \right\nsm_{L^2_v} ~ \left\nsm w^{\frac{(\gamma+2s)^+}{2}} f \right\nsm_{H^\id} ~ \right\|_{L^p_{x}}^2
\lesssim
\left\|  ~ \left| [a,b,c]\right|^2 ~ \right\|_{L^p_{x}}^2
\\
+
\left\|  ~ \left\nsm w^{\frac{(\gamma+2s)^+}{2}}\{ \FI - \FP\} \solU \right\nsm_{L^2_v} ~ \left| [a,b,c]\right| ~ \right\|_{L^p_{x}}^2
\\
+
\left\|  ~ \left| [a,b,c]\right| ~ \left\nsm w^{\frac{(\gamma+2s)^+}{2}} \{ \FI - \FP\} \solU \right\nsm_{H^\id} ~ \right\|_{L^p_{x}}^2
\\
+
\left\|  ~  \left\nsm w^{\frac{(\gamma+2s)^+}{2}}\{ \FI - \FP\} \solU \right\nsm_{L^2_v} ~ 
\left\nsm w^{\frac{(\gamma+2s)^+}{2}} \{ \FI - \FP\} \solU \right\nsm_{H^\id}  ~ \right\|_{L^p_{x}}^2.
\end{multline}
We will now estimate each of the terms in the upper bound of \eqref{terms.to.est} separately.  The main difficulty in estimating these terms arises from the macroscopic parts of the solution.  Notice that for the first upper bound in \eqref{terms.to.est} we have the equality
\begin{equation}\notag
\left\|  ~ \left| [a,b,c]\right|^2 ~ \right\|_{L^p_{x}}^2
=
\left\|   [a,b,c] \right\|_{L^{2p}_{x}}^4.
\end{equation}
If $p=1$, which is the case when $\ALTsig = \frac{\Ndim}{2}$, then a major difficulty is that this term can not be further estimated from above in terms of the dissipation from \eqref{def.dNm}.  
We first note that, by arguing as in the proof of \eqref{macro.point.p.est}, we can deduce that:
\begin{equation}
\label{macro.point.p.est'}
|\pa^{\al_1} [a,b,c]| \lesssim 
|\pa^{\al_1} \FP f |_{L^2_{\vel}}
\end{equation}
Since $\frac{1}{2}+\frac{\ALTsig}{\Ndim} = \frac{1}{p}$, use H{\"o}lder and \eqref{macro.point.p.est'} with $\al_1=0$ to obtain 
\begin{equation}
\label{abc}
\left\|  ~ \left| [a,b,c]\right|^2 ~ \right\|_{L^p_{x}}^2
\lesssim
\left\|   \FP \solU \right\|_{L^{2}_{x}L^2_{\vel}}^2
\left\|   \FP \solU \right\|_{L^{\frac{\Ndim}{\ALTsig} }_{x}L^2_{\vel}}^2.
\end{equation}
Since $\ALTsig \in (0,  \frac{\Ndim}{2}]$ then $\frac{\Ndim}{\ALTsig} \in [2,\infty)$.

Let us first consider the subcase when $\frac{n}{2}-\ALTsig \notin\mathbb{Z}$.  For the  term in $L^{\frac{\Ndim}{\ALTsig}}_{\spa}L^2_{\vel}$ we use \eqref{func.sob.old} and Lemma \ref{partialderivativesLp}  to obtain
\begin{equation}
\label{abc2}
\left\|  \FP \solU\right\|_{L^{{\frac{\Ndim}{\ALTsig} }}_{x}L^2_{\vel}}^2
\lesssim
\left(
\sum_{|\alpha| = m}\left\|  \pa^\alpha \FP \solU\right\|_{L^{2}_{x}L^2_{\vel}} \right)^{2(1-\theta)}
\left(\sum_{|\alpha| = \wE}
\left\| \pa^\alpha \FP \solU \right\|_{L^{2}_{x}L^2_{\vel}}  \right)^{2 \theta}
\end{equation}
where now $\theta = \frac{\frac{\Ndim}{2} - \ALTsig-m}{\wE - m}$ and $m$ and $\wE$, with $m \neq \wE$ are to be chosen.  We generally choose  $\wE\eqdef \lfloor \frac{\Ndim}{2} -\ALTsig+1 \rfloor$  and choose $m \eqdef \lfloor \frac{\Ndim}{2} -\ALTsig \rfloor$.
Then $j-m =1$ and $\theta = \frac{\Ndim}{2} - \ALTsig-\lfloor \frac{\Ndim}{2} -\ALTsig \rfloor \in (0,1)$, since by assumption $\frac{\Ndim}{2} - \ALTsig \notin \mathbb{Z}$.

If $\frac{\Ndim}{2} - \ALTsig \in\mathbb{Z}$, we can use \eqref{func.sob} to deduce that \eqref{abc2} holds with $\theta=0$ and $m=\frac{\Ndim}{2} -\ALTsig$.
This completes our estimate for the first term in \eqref{terms.to.est}.

The remaining terms can be estimated quickly.  Indeed since 
$\frac{1}{2}+\frac{\ALTsig}{\Ndim} = \frac{1}{p}$ the last three terms in \eqref{terms.to.est} are all bounded above by a constant multiple of 
\begin{multline}
\label{abc.last.three}
\| w^{\frac{(\gamma+2s)^+}{2}} f\|_{L^{\frac{\Ndim}{\ALTsig} }_{x}L^2_{v}}^2
  \sum_{|\beta| \le 2} \| w^{\frac{(\gamma+2s)^+}{2}}\partial_\beta \{ \FI - \FP\} f \|_{L^2_{x,v}}^2
  \\
\lesssim \SMeps \sum_{|\beta| \le 2} \| w^{\wN}\partial_\beta \{ \FI - \FP\} f \|_{L^2_{x}L^2_{\gamma+2s}}^2.  
\end{multline}
We used the functional Sobolev embedding \eqref{func.sobolev} to obtain the last inequality above. More precisely, we note that:
$
\|w^{\frac{(\gamma+2s)^+}{2}}f\|_{L^{\frac{n}{\ALTsig}}_{\spa} L^2_{\vel}}
\lesssim \|w^{\frac{(\gamma+2s)^+}{2}}f\|_{H^m_{\spa} L^2_{\vel}}
$
for $m=\lfloor \frac{\Ndim}{2}-\ALTsig + 1\rfloor \leq \dK$.
Hence, since $\ell \geq \frac{(\gamma+2s)^+}{2}$, it follows from Theorem \ref{thm.energy} that
$\|w^{\frac{(\gamma+2s)^+}{2}}f\|^2_{L^{\frac{n}{\ALTsig}}_{\spa} L^2_{\vel}} \lesssim \epsilon_{\dK,\ell}.$
The bound \eqref{abc.last.three} now follows.
\end{proof}


\section{Large-time non-linear decay in Besov spaces}\label{sec.decayNL}

Our goal in this section is to prove the main result. We will do this by combining the differential inequalities from the previous section to prove a stronger differential inequality for a quantity which is equivalent to the appropriate Sobolev norm of the solution $f$ to \eqref{Boltz}. More precisely, for a fixed $k \in\{ 0, 1, \ldots, \dK -1\}$ and for a small $\ka >0$, we define $\CE^k(t)$ to be the following continuous energy functional
\begin{equation}\notag 
\CE^k(t) \eqdef  \frac{1}{2} \sum_{k \le |\alpha| \le \dK} \| \partial^\alpha f(t) \|_{L^2_{\spa} L^2_{\vel}}^2
-
\ka \sum_{k \leq j \leq K-1} \CI^j(t).
\end{equation}
Then by \eqref{functional.est} for $\ka >0$ sufficiently small we have 
\begin{equation}
\label{CEequivalence}
\CE^k(t) \approx \sum_{k \le |\alpha| \le \dK} \| \partial^\alpha f(t) \|_{L^2_{\spa} L^2_{\vel}}^2.  
\end{equation}
Our goal will be to prove a strong differential inequality for $\CE^k(t)$. 
We achieve this by using a new dyadic time-regularity summation argument, and inequalities from Section \ref{secAPP:INTERP}. The main differential inequality is given in \eqref{differential.inequality}. It gives the high Sobolev norm estimate $\CE^k(t) \lesssim (1+t)^{-(k+\ALTsig)}$. In order to derive the differential inequality, we need to assume the a priori uniform bound \eqref{est.unif.besov} on the macroscopic terms in the functional Sobolev space. The latter bound is proved by using the integral inequality in Besov space given in Proposition \ref{prop.besov.ev}.

In Sub-section \ref{subsection1}, we prove some preliminary differential inequalities which follow from the results of the previous section. In Sub-section \ref{subsection2}, we give a  proof of Theorem \ref{thm.main.decay} under the assumption of the a priori bound \eqref{est.unif.besov}. Furthermore, in Sub-section \ref{subsection4}, we verify the a priori bound \eqref{est.unif.besov}. In Sub-section \ref{subsection5}, we collect all the estimates and use an interpolation argument to deduce the bounds which are claimed in Theorem \ref{thm.main.decay} and in Corollary \ref{cor.decay.interp}. We note that the bounds we prove are, in fact, slightly stronger than the ones in the statement; see \eqref{interpolation2}. Finally, in Sub-section \ref{secAPP:faster}, we prove Theorem \ref{thm.extra.decay} by using the improved linear decay estimates from Section \ref{secAPP:linear}. 

\subsection{Some preliminary differential inequalities}
\label{subsection1}
In this sub-section, we collect the main estimates from Section \ref{sec.decayl} in order to deduce one differential inequality for $\CE^k(t)$.
Namely, using Lemma \ref{lem.inter.f} with Proposition \ref{prop:energy}, we deduce the following instantaneous differential inequality for some $\eta>0$:
\begin{equation}\label{main.diff.ce}
 \frac{d \CE^k}{dt} (t) 
 + 
 \eta \left(  \CD^h_{k+1}(t)
 +
  \CD^m_{k}(t) \right)
 \le 0.
\end{equation}
where the hydrodynamic part of the dissipation, $\CD^h_{k+1}(t)$, and the microscopic part of the dissipation,  $\CD^m_{k}(t)$,  are each defined as 
$$
\CD^h_{k+1}(t) \eqdef \sum_{k+1 \le |\alpha| \le \dK} \|  \partial^\alpha [a,b,c]\|_{L^2_{x}}^2,
\quad 
\CD^m_{k}(t) \eqdef \sum_{k \le |\alpha| \le \dK} \| \{\FI - \FP \}  \partial^\alpha f \|_{L^2_\spa L^2_{\gamma+2s}}^2.
$$
More precisely, we first note that, by Lemma \ref{lem.inter.f}, Proposition \ref{prop:energy} and by the definition of $\CE^k(t)$, it follows that for some $C_1,C_2>0$ we have
\begin{multline}\label{dCEdt}
\frac{d \CE^k}{dt} (t) \leq C_1 \epsilon_{\dK,\ell} \sum_{k+1 \leq |\alpha| \leq \dK} \|\partial^{\alpha} f\|_{L^2_x N^{s,\gamma}}^2 - \lambda \sum_{k \leq |\alpha| \leq \dK} \|\{\FI-\FP\} \partial^{\alpha} f\|_{L^2_x N^{s,\gamma}}^2
\\
+C_2 \,\kappa \sum_{k \leq |\alpha| \leq \dK} \|\{\FI-\FP\}\partial^{\alpha}f\|_{L^2_x L^2_{\gamma+2s}}^2 - \kappa \lambda \sum_{k+1 \leq |\alpha| \leq \dK} \|\partial^{\alpha} [a,b,c]\|_{L^2_x}^2
\\
\leq (C_1 \epsilon_{\dK,\ell}-\lambda+C_2 \kappa) \sum_{k \leq |\alpha| \leq \dK} \|\{\FI-\FP\} \partial^{\alpha}f\|_{L^2_x N^{s,\gamma}}^2 
\\
+C_1 \epsilon_{\dK,\ell} \sum_{k+1 \leq 
|\alpha| \leq \dK} \|\FP \partial^{\alpha}f\|_{L^2_x N^{s,\gamma}}^2- \kappa \lambda \sum_{k+1 \leq |\alpha| \leq \dK} \|\partial^{\alpha} [a,b,c]\|_{L^2_x}^2.
\end{multline}
Now, we note that, for some $C_3>0$:
\begin{equation}
\label{Pabc}
\|\FP \partial^{\alpha}f\|_{L^2_x N^{s,\gamma}} \leq C_3 \|\partial^{\alpha}[a,b,c]\|_{L^2_x}
\end{equation}
To prove this we recall \eqref{form.p} and the fact that $\FP$ and $\partial^{\alpha}$ commute.
Then \eqref{Pabc} follows taking $|\cdot|_{N^{s,\gamma}}$, using the triangle inequality, and taking $\|\cdot\|_{L^2_x}$.

One first fixes $\lambda>0$ which satisfies Lemma \ref{lem.inter.f} and Proposition \ref{prop:energy}. Afterwards, one takes $\kappa>0$ small in order to satisfy \eqref{CEequivalence} and for which $-\lambda+C_2 \kappa <0$.
Finally, one chooses $\epsilon_{\dK,\ell}$ small enough so that $C_1\epsilon_{\dK,\ell}-\lambda+C_2 \kappa<0$ and $C_1 C_3^2 \epsilon_{\dK,\ell} - \kappa \lambda<0$.
Substituting these estimates into \eqref{dCEdt}, we deduce that \eqref{main.diff.ce} indeed holds. 

We now prove Theorem \ref{thm.main.decay}.  We will suppose in the proof below that for some $\ALTsig \in (0, \frac{\Ndim}{2}]$ we have the following uniform estimate 
\begin{equation}\label{est.unif.besov}
\| [a,b,c](t)\|_{\dot{B}^{-\ALTsig,\infty}_2} \le C_0<\infty, \quad \forall t \ge 0.
\end{equation}
Let us see how Theorem \ref{thm.main.decay} follows if we know this additional bound.

\subsection{Proof of \eqref{Theorem1B} in Theorem \ref{thm.main.decay} assuming the a priori bound \eqref{est.unif.besov}}
\label{subsection2}
The proof is based on a dyadic time-regularity decomposition. 

\begin{proof}[Proof of \eqref{Theorem1B} in Theorem \ref{thm.main.decay}]
We use time-weighted estimates.  
Fix $s\ge 0$ to be chosen later, and $\varepsilon>0$ small, we multiply \eqref{main.diff.ce} by the time weight $(1+\varepsilon t)^s$ to obtain
\begin{equation}
\label{Theorem1A'}
 \frac{d }{dt} \left( (1+\varepsilon t)^s \CE^k(t) \right)
 + 
 \eta (1+\varepsilon t)^s  \left(  \CD^h_{k+1}(t)
 +
  \CD^m_{k}(t) \right)
 \le s\varepsilon(1+\varepsilon t)^{s-1}  \CE^k(t).
\end{equation}
We use \eqref{CEequivalence}, the decomposition \eqref{form.p} with \eqref{coef.p.def}, and estimates which are analogous to the one used in the proof of \eqref{Pabc} to obtain
\begin{equation}\label{est.upper.differen}
\CE^k(t) \lesssim 
\sum_{ |\alpha| =k} \|  \partial^\alpha [a,b,c]\|_{L^2_{x}}^2(t)
+
\CD^h_{k+1}(t) 
+
\sum_{k \le |\alpha| \le \dK} \| \{\FI - \FP \}  \partial^\alpha f \|_{L^2_\spa L^2_{\vel}}^2(t).
\end{equation}
We will handle each of the terms in the upper bound of \eqref{est.upper.differen} separately.  

To handle the first term in the upper bound of \eqref{est.upper.differen} we notice that
\begin{equation}\label{LP.equiv}
\sum_{ |\alpha| =m} \|  \partial^\alpha [a,b,c]\|_{L^2_{x}}^2(t)
\approx
\sum_{ j \in \Z}  2^{2m j}\|  \Delta_j [a,b,c]\|_{L^2_{x}}^2(t), \quad \forall m =0,1,2,\ldots
\end{equation}
Recall $\Delta_j$ are the Littlewood-Paley projections onto frequencies $2^j$ which are defined in Section \ref{app.besov.h}. Notice that \eqref{LP.equiv} is a consequence of Theorem \ref{LittlewoodPaleyHilbertSpace} combined with Lemma \ref{partialderivativesLp} and \eqref{app.bernstein.XVI}.  
Now \eqref{LP.equiv} implies that we have
\begin{multline}\notag
 (1+\varepsilon t)^{-1} 
\sum_{ 2^j  \sqrt{1+\varepsilon t} \ge 1 }  2^{2 k j}\|  \Delta_j [a,b,c]\|_{L^2_{x}}^2(t)
\lesssim
\sum_{ 2^j  \sqrt{1+\varepsilon t} \ge 1}  2^{2(k+1) j}\|  \Delta_j [a,b,c]\|_{L^2_{x}}^2(t)
\\
\lesssim \sum_{ |\alpha| =k+1} \|  \partial^\alpha [a,b,c]\|_{L^2_{x}}^2(t).
\end{multline}
Crucially the implied constants are uniform in $\sqrt{1+\varepsilon t}$.  
Using \eqref{LP.equiv} we have shown 
\begin{multline}\label{LP.lbound}
 (1+\varepsilon t)^{-1} \sum_{ |\alpha| =k} \|  \partial^\alpha [a,b,c]\|_{L^2_{x}}^2(t)
\lesssim\sum_{ |\alpha| =k+1} \|  \partial^\alpha [a,b,c]\|_{L^2_{x}}^2(t)
\\
+ (1+\varepsilon t)^{-1} 
\sum_{ 2^j  \sqrt{1+\varepsilon t} \le 1 }  2^{2 k j}\|  \Delta_j [a,b,c]\|_{L^2_{x}}^2(t).
\end{multline}
Furthermore, we have the direct estimate 
\begin{multline}\notag
\sum_{ 2^j  \sqrt{1+\varepsilon t} \le 1 }  2^{2 k j}\|  \Delta_j [a,b,c](t)\|_{L^2_{x}}^2
\\
\lesssim
\| [a,b,c](t)\|_{\dot{B}^{-\ALTsig,\infty}_2}^2
\sum_{ 2^j  \sqrt{1+\varepsilon t} \le 1 }  2^{2 (k+\ALTsig) j}
\\
\lesssim
\| [a,b,c](t)\|_{\dot{B}^{-\ALTsig,\infty}_2}^2
 (1+\varepsilon t)^{-(k+\ALTsig)}.
\end{multline}
In the last inequality we have used the following calculation
\begin{multline}\notag
\sum_{ 2^j  \sqrt{1+\varepsilon t} \le 1 }  2^{2 (k+\ALTsig) j}
=
 (1+\varepsilon t)^{-(k+\ALTsig)} 
\sum_{ 2^j  \sqrt{1+\varepsilon t} \le 1 }  \left(  2^j  \sqrt{1+\varepsilon t} \right)^{2 (k+\ALTsig) }
\\
\lesssim
 (1+\varepsilon t)^{-(k+\ALTsig)},
\end{multline}
where the implied uniform constant in the upper bound is independent of the size of $\sqrt{1+\varepsilon t}$.  Collecting these estimates, including \eqref{LP.lbound}, we obtain 
\begin{multline}\notag 
 (1+\varepsilon t)^{-1} \sum_{ |\alpha| =k} \|  \partial^\alpha [a,b,c]\|_{L^2_{x}}^2(t)
\lesssim  \sum_{ |\alpha| =k+1} \|  \partial^\alpha [a,b,c]\|_{L^2_{x}}^2(t)
\\
+  \| [a,b,c](t)\|_{\dot{B}^{-\ALTsig,\infty}_2}^2 (1+\varepsilon t)^{-1-(k+\ALTsig)}.
\end{multline}
which by \eqref{est.unif.besov} is:
\begin{equation}
\label{Theorem1B.needeed}
\lesssim \sum_{ |\alpha| =k+1} \|  \partial^\alpha [a,b,c]\|_{L^2_{x}}^2(t) + C_0^2 (1+\varepsilon t)^{-1-(k+\ALTsig)}.
\end{equation}
This will be our main estimate for the first term in the upper bound of \eqref{est.upper.differen}.

For the third term in \eqref{est.upper.differen} we will use a time-velocity splitting.  Recalling \eqref{weigh},  we define the time-velocity splitting sets by
\begin{equation}
E(t) = \{(1+\varepsilon t)^{-1} \le  w^{\gamma+2s}(v) \}, \quad  E^c(t)= \{ 1 <  w^{-\gamma-2s}(v) (1+\varepsilon t)^{-1}\}.
\label{split.E}
\end{equation}
With this splitting, we have the following estimate
\begin{multline}\label{Theorem1C}
(1+\varepsilon t)^{-1}
\sum_{k \le |\alpha| \le \dK} \| \{\FI - \FP \}  \partial^\alpha f \|_{L^2_\spa L^2_{\vel}}^2
\lesssim
\sum_{k \le |\alpha| \le \dK} \| \ind_E \{\FI - \FP \}  \partial^\alpha f \|_{L^2_\spa \spaceL}^2
\\
+
\Am (1+\varepsilon t)^{-1-(k+\ALTsig)}\sum_{k \le |\alpha| \le \dK} 
\|w^{\wNz} \ind_{E^c} \{\FI - \FP \}  \partial^\alpha f \|_{L^2_\spa L^2_{\vel}}^2
\\
\lesssim
\sum_{k \le |\alpha| \le \dK} \| \ind_E \{\FI - \FP \}  \partial^\alpha f \|_{L^2_\spa \spaceL}^2
+
\Am (1+\varepsilon t)^{-1-(k+\ALTsig)} \SMeps.
\end{multline}
where $\ind_E$ is the usual indicator function of the set $E$ from \eqref{split.E}.  Using $E^c$ the last estimate above holds for $\Am = 1$ since $\wNz \ge -(\gamma+2s)(k+\ALTsig)/2\ge 0$ in the case of the soft potentials \eqref{kernelPsing} using \eqref{wNz.def}.  For the hard potentials \eqref{kernelP}, we notice that $\ind_E \equiv 1$ (always) and we then have the above estimate with $\Am = 0$.

We use \eqref{Theorem1A'}, \eqref{est.upper.differen}, \eqref{Theorem1B.needeed}, and \eqref{Theorem1C} to deduce that, for $\varepsilon>0$ sufficiently small\begin{multline}
\label{differential.inequality} 
 \frac{d }{dt} \left( (1+\varepsilon t)^s \CE^k(t) \right)
 + 
\lambda (1+\varepsilon t)^s  \left(  \CD^h_{k+1}(t)
 +
  \CD^m_{k}(t) \right)
  \\
\lesssim
(1+\varepsilon t)^{s-1-(k+\ALTsig)} \left(C_0^2+ \SMeps\right).
\end{multline}
Choosing $s = k+\ALTsig + \delta$ for a small $\delta\in (0,1)$ and integrating this in time:
\begin{multline}\label{main.decay.proof1}
  \CE^k(t)
\lesssim
\left(C_0^2+ \SMeps\right)
(1+\varepsilon t)^{-k-\ALTsig - \delta}
\int_0^t du ~ (1+\varepsilon u)^{\delta-1}
\\
+(1+\varepsilon t)^{-k-\ALTsig - \delta} \CE^k(0)
\\
\lesssim
(1+ t)^{-k-\ALTsig - \delta} (1+ t)^{ \delta}
\approx (1+ t)^{-(k+\ALTsig)}.
\end{multline}
The constant is uniform in $t \ge 0$.  Then \eqref{main.decay.proof1} holds as long as we have \eqref{est.unif.besov}.  We note that \eqref{Theorem1B} immediately follows from \eqref{CEequivalence} and \eqref{main.decay.proof1}.  \end{proof}

\subsection{Proof of the a priori bound \eqref{est.unif.besov}}
\label{subsection4}

In this sub-section, we verify the a priori bound \eqref{est.unif.besov} which was crucial in the proof of Theorem \ref{thm.main.decay}. In order to prove this bound, we will use Proposition \ref{prop.besov.ev}. As was noted earlier, the first integral on the right-hand side of the inequality obtained in Proposition \ref{prop.besov.ev} is bounded by the integral of the dissipation \eqref{def.dNm}, whereas for the second integral, we have to work harder.
As we will see in the proof, the case when  $\ALTsig \in (0,\frac{n-2}{2})$ can still 
be estimated by using the integral of the dissipation, whereas the case $\ALTsig \in [\frac{n-2}{2},\frac{n}{2}]$ is more difficult, and it requires an additional interpolation step. We are interested in obtaining the endpoint case $\rho=\frac{n}{2}$ since $ \dot{B}^{-\frac{n}{2},\infty}_2 \supset L^1(\mathbb{R}^{n}_x)$.

\begin{proof}[Proof of \eqref{est.unif.besov}]
We will estimate the last two terms in the upper bound of \eqref{besov.prop.small}.  In particular, for any solution $\solU(t)$ as in Theorem \ref{thm.energy}, we will prove the special case of \eqref{Theorem1A} when $m=-\ALTsig$
\begin{equation}\label{est.full.unif.besov}
\| \solU(t)\|_{\dot{B}^{-\ALTsig,\infty}_2 L^2_{\vel} }\le C_0<\infty, \quad \forall t \ge 0,
\quad \text{if} \quad \| \solU_0\|_{\dot{B}^{-\ALTsig,\infty}_2 L^2_{\vel} }<\infty.
\end{equation}
We note that this bound is stronger than \eqref{est.unif.besov} since $\FP$ is a projection on $L^2_{\vel}$ and since the Littlewood-Paley projections $\Delta_j$ commute with $\FP$. In other words, we are using that for all $j$:
$$2^{-\ALTsig j} \|\Delta_j f\|_{L^2_{\spa} L^2_{\vel}} \geq 
2^{-\ALTsig j} \|\Delta_j \FP f\|_{L^2_{\spa} L^2_{\vel}} \thickapprox
2^{-\ALTsig j} |\Delta_j [a,b,c]|_{L^2_{\vel}}$$
where we are also using \eqref{coef.p.def}. By taking suprema in $j$, it follows that the condition \eqref{est.full.unif.besov} is indeed stronger than \eqref{est.unif.besov}.

For the second term in \eqref{besov.prop.small}, for some finite constant $C>0$, from \eqref{thm.energy.1} we have that
\begin{multline}
\label{DnM.bound1}
\int_0^t ds \sum_{|\beta| \leq 2} \|\partial_{\beta} \{\FI-\FP\}f(s)\|_{L^2_x L^2_{\gamma+2s}}^2
\\
\leq 
 \int_0^t ds 
\sum_{|\beta|\le 2} \|\partial_{\beta} \{\FI-\FP\}f(s)\|^2_{L^2_{\spa} N^{s,\gamma}_{\ell-2\rho}}
\le C,
\end{multline}
provided that $\ell-2\rho \geq 0$.
This condition is satisfied because $\ell \ge \wNz$ for $\wNz$ from \eqref{wNz.def}. Namely, for the hard potentials \eqref{kernelP}, we have $\wNz \geq 2=2\rho$. 
In the soft potential case \eqref{kernelPsing}, we also know that $\wNz \geq 2(-\gamma-2s)=2\rho$ from \eqref{wNz.def}.

To estimate the last term in the upper bound of \eqref{besov.prop.small} we first suppose that $\ALTsig \in (0, \frac{\Ndim - 2}{2}]$; then \eqref{est.unif.besov} will follow directly from \eqref{besov.prop.small} when combined with the time integrated Lyapunov inequality \eqref{thm.energy.1}.  
In particular $\ALTsig \in (0, \frac{\Ndim - 2}{2}]$ implies that $\lfloor \frac{\Ndim}{2} -\ALTsig \rfloor \ge 1$ and also
$2 \le \lfloor \frac{\Ndim}{2} -\ALTsig +1 \rfloor \le \ksob$.  Then we have
\begin{multline}\notag 
 \int_0^t ds 
\left\|    \FP \solU (s)\right\|_{L^{2}_{\spa}L^2_{\vel}}^{2}
\sum_{|\alpha| =\lfloor \frac{\Ndim}{2} -\ALTsig \rfloor }\left\| \partial^\alpha   \FP \solU(s)\right\|_{L^{2}_{\spa}L^2_{\vel}}^{2(1-\theta)}
\sum_{|\alpha'| =\lfloor \frac{\Ndim}{2} -\ALTsig+1 \rfloor}\left\| \partial^{\alpha'}   \FP \solU(s)\right\|_{L^{2}_{\spa}L^2_{\vel}}^{2\theta}
\\
\lesssim
\SMeps
 \int_0^t ds 
\sum_{1 \le |\alpha| \le \ksob }\left\| \partial^\alpha   \FP \solU(s)\right\|_{L^{2}_{\spa}L^2_{\vel}}^{2}
\lesssim 1.
\end{multline}
Recall from \eqref{besov.prop.small} that $\theta \in [0,1)$.  In the above we have used the time integrated Lyapunov inequality \eqref{thm.energy.1}, \eqref{def.dNm} and the fact that $\FP$ is a bounded operator on $L^2_x L^2_{\vel}$  which commutes with differentiation in $x$. Then when $\ALTsig \in (0, \frac{\Ndim - 2}{2}]$
and $\| f_0\|_{\dot{B}^{-\ALTsig,\infty}_2 L^2_{\vel}} < \infty$, 
 \eqref{est.unif.besov} follows from \eqref{besov.prop.small} and these last few calculations.

The next case we consider is when $\ALTsig \in (\frac{\Ndim - 2}{2}, \frac{\Ndim}{2})$ and $\| f_0\|_{\dot{B}^{-\ALTsig,\infty}_2 L^2_{\vel}} + \| f_0\|_{L^2_{\spa} L^2_{\vel}} < \infty$ (as we will see below, the case $\ALTsig=\frac{\Ndim}{2}$ is a little bit different). In this situation we obtain that $\| f_0\|_{\dot{B}^{-\ALTsig',\infty}_2 L^2_{\vel}}< \infty$ for any $\ALTsig' \in (0, \ALTsig)$ by interpolation:
\begin{equation}\label{interpolationresult}
\| f_0\|_{\dot{B}^{-\ALTsig',\infty}_2 L^2_{\vel}} 
\le 
\| f_0\|_{\dot{B}^{-\ALTsig,\infty}_2 L^2_{\vel}}^{\ALTsig'/\ALTsig}  
\| f_0\|_{L^2_{\spa} L^2_{\vel}}^{(\ALTsig-\ALTsig')/\ALTsig}.
\end{equation}  
Then \eqref{interpolationresult} follows from Lemma \ref{besov.LM} with $\hilbertONE=L^2_{\vel}$ 
and $L^2_{\spa} L^2_{\vel} \approx \dot{B}^{0,2}_2 L^2_{\vel} \subset \dot{B}^{0,\infty}_2 L^2_{\vel}$.
We conclude that \eqref{main.decay.proof1} holds for any $\ALTsig'  \in (0, \frac{\Ndim - 2}{2})$ and any $k\in \{0,1,\ldots, \dK -1\}$.

Then, by using \eqref{besov.prop.small}, \eqref{DnM.bound1}, \eqref{CEequivalence} and \eqref{main.decay.proof1} for $\ALTsig'$, it follows that
\begin{multline}\label{besov.decay.l}
\| f(t) \|_{\dot{B}_2^{-\ALTsig,\infty}L^2_{\vel}}^2
\lesssim 
\| f_0 \|_{\dot{B}_2^{-\ALTsig,\infty}L^2_{\vel}}^2 + 1
\\
+
 \int_0^t ds ~ 
 \left( 1+s \right)^{-\ALTsig'}
\left( 1+s \right)^{-\left(\lfloor \frac{\Ndim}{2} -\ALTsig \rfloor + \ALTsig'\right)(1-\theta)}
\left( 1+s \right))^{-\left(\lfloor \frac{\Ndim}{2} -\ALTsig +1\rfloor + \ALTsig'\right)\theta}
\\
\le C_0 < \infty.
\end{multline}
Here from \eqref{besov.prop.small} we use $\theta = \frac{\Ndim}{2} -\ALTsig -\lfloor \frac{\Ndim}{2} -\ALTsig \rfloor$.
Given any $\ALTsig \in (\frac{\Ndim - 2}{2}, \frac{\Ndim}{2})$ 
we choose  $\ALTsig' \eqdef \frac{\Ndim - 2}{2} - \epsilon'\in (0, \frac{\Ndim - 2}{2})$ for any sufficiently small
$\epsilon'\in (0, \frac{\theta}{2})$.  This then guarantees that the upper bound for \eqref{besov.decay.l} is finite since $\DgE$. More precisely, we want to guarantee that $-2\ALTsig'-\lfloor \frac{\Ndim}{2}-\ALTsig \rfloor-\theta= -2\ALTsig'-\theta<-1$, which can be shown to follow from from $2\epsilon'-\theta < \Ndim-3$. Then the above choice of $\epsilon'$ is sufficient.
In the above calculation, we used the fact that $\lfloor \frac{\Ndim}{2}-\ALTsig \rfloor=0$ whenever $\ALTsig \in (\frac{\Ndim - 2}{2}, \frac{\Ndim}{2})$.

Finally, if $\ALTsig = \frac{\Ndim}{2}$ and $\| f_0\|_{\dot{B}^{-\ALTsig,\infty}_2 L^2_{\vel}}< \infty$ we again use the estimates \eqref{interpolationresult} and \eqref{besov.decay.l} with $\theta =0$ and
  $\ALTsig' \eqdef \frac{\Ndim }{2} - \epsilon \in (\frac{\Ndim - 2}{2}, \frac{\Ndim}{2})$ for some sufficiently small
$\epsilon\in (0, 1/2)$.  Then $-2\ALTsig'<-1$ in  \eqref{besov.decay.l} and this establishes \eqref{est.unif.besov} for any $\ALTsig \in [0, \frac{\Ndim}{2}]$.  \end{proof}

\subsection{Conclusion of the proof; the interpolation step}
\label{subsection5}
In this sub-section, we prove the exact statement of the result in Theorem \ref{thm.main.decay}. 
For completeness, we explain now how to deduce \eqref{Theorem1A} and \eqref{Theorem1B}.  In particular from \eqref{CEequivalence}, \eqref{main.decay.proof1} and the fact that
$\dot{H}^{k}_{\spa,\vel}  \approx \dot{B}^{k,2}_2 L^2_{\vel}$, we obtain that
\begin{equation}\notag
\|f(t)\|_{\dot{B}^{k,2}_2 L^2_{\vel}}^2 \lesssim (1+t)^{-(k+\ALTsig)},
\end{equation}
for $k \in \{0,1,\ldots,\dK-1\},$
which is \eqref{Theorem1B}.
We furthermore use the above, \eqref{est.full.unif.besov}, and \eqref{func.interp'} with $\hilbertONE=L^2_{\vel}$ to deduce that
\begin{equation}
\label{interpolation2}
\|f(t)\|_{\dot{B}^{a,s}_2 L^2_{\vel}}^2 \lesssim (1+t)^{-(a+\ALTsig)},
\quad \forall s \ge 2\left(\frac{k+\ALTsig}{a+\ALTsig} \right),
\quad \forall a \in [-\ALTsig,k],
\end{equation}
which is stronger than the bound in \eqref{Theorem1A}.  These estimates hold uniformly in $t \ge 0$.  

Specifically to obtain \eqref{interpolation2} we used the interpolation result \eqref{func.interp'} with $\ell = a$, $k= - \ALTsig$, $m=k$, $q' = 2\left(\frac{k+\ALTsig}{a+\ALTsig} \right)$, $r' = \infty$, $p' =2$ and $q=r=p=2$.
We also noted the following embeddings 
$\dot{B}^{a,q'}_2 L^2_{\vel} \subset \dot{B}^{a,s}_2 L^2_{\vel} \subset  \dot{B}^{a,\infty}_2 L^2_{\vel}$.

Finally, we prove Corollary \ref{cor.decay.interp}:

\begin{proof}[Proof of Corollary \ref{cor.decay.interp}]
Fix some $2 \le r \le \infty$.
We will use the interpolation estimate
\begin{equation}\label{func.sob.new.old}
\sum_{|\alpha| = k } \| \partial^\alpha \solU(t)\|_{L^r_\spa L^2_{\vel}}^2
\lesssim
\|  \solU(t) \|_{L^2_{\spa} L^2_{\vel} }^{2(1-\theta)}
\sum_{|\alpha| = \dK-1 }
\| \partial^\alpha \solU(t) \|_{L^2_{\spa} L^2_{\vel} }^{2\theta},
~~
\theta = \frac{k+\frac{\Ndim}{2} - \frac{\Ndim}{r} }{\dK-1}.
\end{equation}
This follows from \eqref{func.sob.old} (with $m=0$, $\ALTsig = \dK-1$, and $\hilbertONE = L^2_{\vel}$) combined with Lemma \ref{partialderivativesLp}.  The interpolation \eqref{func.sob.new.old} holds (using \eqref{func.sob.old}) when $\dK-1 > k + \frac{\Ndim}{2} - \frac{\Ndim}{r}$ (which explains the corresponding restriction in Corollary \ref{cor.decay.interp}).  Then Corollary \ref{cor.decay.interp} follows by applying \eqref{Theorem1B} to the upper bounds of \eqref{func.sob.new.old}. 
\end{proof}


\subsection{The proof of the faster time-decay rates in Theorem \ref{thm.extra.decay}}\label{secAPP:faster}
Let us now prove Theorem \ref{thm.extra.decay}. In order to prove this theorem, we will need to use linear decay estimates given by Proposition \ref{linear.decay.B} and Proposition \ref{linear.decay.m} below. We will use estimates with weights of the form $(1+\varepsilon t)^s$, for $\varepsilon$ a small parameter.  Consider the linearized Boltzmann equation with a microscopic source $\sourceG=\sourceG(t,x,\vel)$:
\begin{equation}\label{ls}
    \left\{\begin{array}{l}
    \pa_t \solU+\vel\cdot\na_x \solU +\FL \solU =\sourceG,\\
 \solU|_{t=0}=\solU_0.
    \end{array}\right.
\end{equation}
For the nonlinear system \eqref{Boltz}, the non-homogeneous source term is given by
\begin{equation}\label{def.g.non}
    \sourceG=\Ga(\solU,\solU) = \{\FI-\FP\} \Ga(\solU,\solU).
\end{equation}
This fact follows from \eqref{ProjectionGamma}.
Solutions of \eqref{ls} formally take the following form
\begin{equation}
    \solU(t)=\semiG(t)\solU_0 + \int_0^t d\tau ~ \semiG(t-\tau)~\sourceG(\tau), \quad \semiG(t)\eqdef e^{-t\SB}, \quad \SB \eqdef \FL+\vel\cdot\na_x.
    \label{ls.semi}
\end{equation}
Here $\semiG(t)$ is the linear solution operator corresponding to  \eqref{ls} with $\sourceG=0$.

Our goal in this section will be to prove Theorem \ref{thm.extra.decay}. This theorem is more subtle that the previous decay theorems because of the more negative regularity exponent in for example the space $\dot{B}^{-(\Ndim+2)/2,\infty}_2$.  In this situation we do not have uniform in time bounds such as either \eqref{est.full.unif.besov} or even \eqref{est.unif.besov}.  Therefore the previous methods are difficult to apply, and instead we will use linear decay estimates.

\begin{proposition}\label{linear.decay.B}  Suppose $m , \ALTsig \in \R$ with $m+\ALTsig > 0$ and $\wN\in \R$.
Then
$$
\|w^{\wN}\semiG(t) f_0\|_{\dot{H}^m_{\spa}L^2_{\vel}} \lesssim (1+t)^{-\frac{\ALTsig+m}{2}} 
\|w^{\wN+\sigma} f_0\|_{\dot{H}^m_{\spa}L^2_{\vel} \cap \dot{B}^{-\ALTsig,\infty}_2L^2_{\vel} }.
$$
This holds when $ \|  w^{\wN+\sigma} \solU_0 \|_{ \dot{H}^m_{\spa}L^2_{\vel} \cap \dot{B}^{-\ALTsig,\infty}_2L^2_{\vel}  }< \infty$.  Notice that for the additional weight on the initial data we assume
$\sigma > -(m+\ALTsig)(\gamma+2s)>0$ for the soft potentials \eqref{kernelPsing}.  And for the hard potentials \eqref{kernelP} we take $\sigma =0$. 
\end{proposition}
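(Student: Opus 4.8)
The plan is to pass to the spatial Fourier variable, establish a pointwise-in-frequency decay bound, and then sum dyadically over frequencies. Writing $\hat\solU(t,\fva,\vel)$ for the Fourier transform in $\spa$ of the solution of \eqref{ls} with $\sourceG\equiv 0$, one has $\pa_t\hat\solU + \rmi(\vel\cdot\fva)\hat\solU + \FL\hat\solU = 0$ with $\hat\solU(0)=\hat\solU_0$. The starting point is the pointwise time--frequency differential inequality of \cite{SdecaySOFT} underlying the linear decay theory developed in the first part of Section \ref{secAPP:linear}: using the coercivity \eqref{coerc} together with a Fourier-variable interaction functional that absorbs the macroscopic component $\FP\hat\solU$ (the Fourier analogue of Lemma \ref{lem.inter.f}), one obtains a functional $E_\wN(t,\fva)\approx\|w^{\wN}\hat\solU(t,\fva)\|_{L^2_{\vel}}^2$ and a $\la>0$ with
\[
\frac{d}{dt}E_\wN(t,\fva) + \la\,\frac{|\fva|^2}{1+|\fva|^2}\,\bigl\|w^{\wN+\frac{\gamma+2s}{2}}\hat\solU(t,\fva)\bigr\|_{L^2_{\vel}}^2 \le 0 .
\]
For the hard potentials \eqref{kernelP}, $\gamma+2s\ge 0$ gives $w^{\wN+\frac{\gamma+2s}{2}}\gtrsim w^{\wN}$, the inequality closes, and integration yields the frequency-localized bound $\|w^{\wN}\hat\solU(t,\fva)\|_{L^2_{\vel}}^2\lesssim e^{-\la|\fva|^2 t/(1+|\fva|^2)}\|w^{\wN}\hat\solU_0(\fva)\|_{L^2_{\vel}}^2$, which is the case $\sigma=0$.

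The soft potential case \eqref{kernelPsing} is the main obstacle, since there $\gamma+2s<0$, so the velocity weight in the dissipation, $w^{\wN+\frac{\gamma+2s}{2}}$, is strictly weaker than the weight $w^{\wN}$ carried by $E_\wN$, and the inequality no longer closes directly. Here I would run a time--velocity splitting of $\threed_{\vel}$ into the sets $\{w^{-(\gamma+2s)}\le 1+t\}$ and its complement: on the first set $w^{\wN}\le(1+t)^{1/2}\,w^{\wN+\frac{\gamma+2s}{2}}$, so the dissipative term controls $(1+t)^{-1}$ times the truncated energy, while on the complement $w^{-2\sigma}\le(1+t)^{2\sigma/(\gamma+2s)}$, so the remaining part of the energy is bounded by $(1+t)^{2\sigma/(\gamma+2s)}\|w^{\wN+\sigma}\hat\solU(t,\fva)\|_{L^2_{\vel}}^2$. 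Feeding this back into the pointwise inequality and iterating a bounded number of times (as in \cite{SdecaySOFT}) yields the frequency-localized bound
\[
\|w^{\wN}\hat\solU(t,\fva)\|_{L^2_{\vel}}^2 \lesssim e^{-\la|\fva|^2 t/(1+|\fva|^2)}\,\|w^{\wN+\sigma}\hat\solU_0(\fva)\|_{L^2_{\vel}}^2
\]
for every $\sigma$ with $\sigma>-(m+\ALTsig)(\gamma+2s)$. This restriction is precisely what forces the loss-of-weights factor to decay at rate $(1+t)^{-\sigma/|\gamma+2s|}$ with $\sigma/|\gamma+2s|>m+\ALTsig$, i.e.\ strictly faster than the target rate, which is what lets the iteration (and, later, the time integral in the nonlinear application) close.

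Given the frequency-localized bound above (with $\sigma=0$ for the hard potentials), I would conclude by a Littlewood--Paley decomposition in $\spa$. By Theorem \ref{LittlewoodPaleyHilbertSpace} and Lemma \ref{partialderivativesLp}, together with Plancherel on each dyadic block,
\[
\|w^{\wN}\semiG(t)\solU_0\|_{\dot{H}^m_{\spa}L^2_{\vel}}^2 \approx \sum_{j\in\Z}2^{2mj}\,\|w^{\wN}\Delta_j\semiG(t)\solU_0\|_{L^2_{\spa,\vel}}^2 \lesssim \sum_{j\in\Z}2^{2mj}\,e^{-\la 2^{2j}t/(1+2^{2j})}\,\|w^{\wN+\sigma}\Delta_j\solU_0\|_{L^2_{\spa,\vel}}^2 .
\]
Split the sum at $2^j=1$. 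For $2^j>1$ the exponential is $\le e^{-\la t/2}$ and the remaining series is $\le\|w^{\wN+\sigma}\solU_0\|_{\dot{H}^m_{\spa}L^2_{\vel}}^2$, which is acceptable since $e^{-\la t/2}\lesssim(1+t)^{-(m+\ALTsig)}$. For $2^j\le 1$ use $\|w^{\wN+\sigma}\Delta_j\solU_0\|_{L^2_{\spa,\vel}}\le 2^{\ALTsig j}\|w^{\wN+\sigma}\solU_0\|_{\dot{B}^{-\ALTsig,\infty}_2 L^2_{\vel}}$ together with $2^{2j}/(1+2^{2j})\ge\frac{1}{2} 2^{2j}$, reducing matters to the elementary bound $\sum_{2^j\le 1}2^{2(m+\ALTsig)j}e^{-\la 2^{2j}t/2}\lesssim(1+t)^{-(m+\ALTsig)}$, valid since $m+\ALTsig>0$ (substitute $\tau=2^{2j}t$ and use that $\tau\mapsto\tau^{m+\ALTsig}e^{-\la\tau/2}$ is bounded while the sample points are dyadically spaced). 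Adding the two pieces gives $\|w^{\wN}\semiG(t)\solU_0\|_{\dot{H}^m_{\spa}L^2_{\vel}}^2\lesssim(1+t)^{-(m+\ALTsig)}\|w^{\wN+\sigma}\solU_0\|_{\dot{H}^m_{\spa}L^2_{\vel}\cap\dot{B}^{-\ALTsig,\infty}_2 L^2_{\vel}}^2$, and taking square roots yields the claim. As indicated, the genuinely delicate point is the soft-potential frequency-localized estimate, where the degeneracy of the velocity dissipation forces both the loss of $\sigma$ weights on the data and the stated restriction on $\sigma$.
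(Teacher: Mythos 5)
Your overall architecture is the paper's: the pointwise time--frequency Lyapunov inequality of \cite{SdecaySOFT} (Theorem \ref{thm.tfli} here), followed by a dyadic decomposition in $\spa$ split at $|\xi|\approx 1$, with the $\dot{B}^{-\ALTsig,\infty}_2L^2_{\vel}$ norm of the data absorbing the low frequencies; your hard-potential case and the final summation scheme match Theorem \ref{thm.decay.lin}. The genuine error is the soft-potential frequency-localized estimate you display: for \eqref{kernelPsing} you claim $\| w^{\wN}\hat{\solU}(t,\xi)\|_{L^2_{\vel}}^2\lesssim e^{-\la|\xi|^2t/(1+|\xi|^2)}\,\| w^{\wN+\sigma}\hat{\solU}_0(\xi)\|_{L^2_{\vel}}^2$. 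This is false, and the time--velocity splitting you sketch cannot produce it: each application of the splitting trades a fixed amount of extra velocity weight for a single factor of $(1+t)^{-1}$, so iterating a bounded number of times yields only an \emph{algebraic} rate whose order is proportional to the number of weights spent; no finite weight loss gives exponential decay at a fixed nonzero frequency, because the linearized soft-potential operator has no spectral gap. What \cite{SdecaySOFT} actually provides, and what the paper uses as \eqref{boundS}, is $\CE_\wN(t,\xi)\lesssim\bigl(1+t(1\wedge|\xi|^2)/\sigma_0\bigr)^{-\sigma_0}\,\CE_{\wN+\sigma'}(0,\xi)$ for any $\sigma_0>0$, with weight loss $\sigma'=-\sigma_0(\gamma+2s)$. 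Your own closing sentence, saying the lost-weight factor decays like $(1+t)^{-\sigma/|\gamma+2s|}$, is consistent with this algebraic bound but contradicts your exponential display, so the write-up is internally inconsistent at exactly the delicate point.

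The repair is the paper's argument: insert the algebraic factor and redo the two dyadic sums, and this is where the hypothesis on $\sigma$ is genuinely used (your version never needs it, which is a symptom that the displayed bound is too strong). Choosing $\sigma_0=\sigma/|\gamma+2s|>m+\ALTsig$, for $2^j\ge 1$ the factor $(1+t/\sigma_0)^{-\sigma_0}\lesssim(1+t)^{-(m+\ALTsig)}$ multiplies $2^{2mj}\|w^{\wN+\sigma}\Delta_j\solU_0\|_{L^2_{\spa,\vel}}^2$, which sums to the $\dot H^m_{\spa}L^2_{\vel}$ norm of the weighted data; for $2^j\le 1$ one writes $2^{2mj}\|w^{\wN}\Delta_j\solU(t)\|^2\lesssim t^{-(m+\ALTsig)}(\sqrt{t}\,2^j)^{2(m+\ALTsig)}\bigl(1+(\sqrt{t}\,2^j)^2\bigr)^{-\sigma_0}\|w^{\wN+\sigma}\solU_0\|_{\dot B^{-\ALTsig,\infty}_2L^2_{\vel}}^2$ and the sum over $j$ is bounded uniformly in $t$ precisely when $\sigma_0>m+\ALTsig$, which is \eqref{sumEXAMPLE} (your exponential version only needed $m+\ALTsig>0$). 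With this correction your proof coincides with the paper's proof of Theorem \ref{thm.decay.lin}, specialized to $p=2$.
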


We point out that Proposition \ref{linear.decay.B} is proven in Theorem \ref{thm.decay.lin} of Section \ref{secAPP:linear}.  In the following, we observe faster decay in the hard potential case \eqref{kernelP} when the initial data is microscopic, as in \eqref{form.p}.

\begin{proposition}\label{linear.decay.m}
Suppose the initial condition $f_0$ in \eqref{ls} with $\sourceG=0$ satisfies:
\begin{equation}
\label{microscopicf0}
\FP f_0=0.
\end{equation}
Fix $m , \ALTsig \in \R$ with $m+\ALTsig > 0$ and $\wN \ge 0$.
Then we have
$$
\|w^{\wN} \semiG(t) f_0\|_{\dot{H}^m_{\spa}L^2_{\vel}} \lesssim (1+t)^{-\frac{\ALTsig+m+1}{2}} 
\|w^{\wN} f_0\|_{\dot{H}^m_{\spa}L^2_{\vel} \cap \dot{B}^{-\ALTsig,\infty}_2L^2_{\vel}}.
$$
This faster decay is proven in the hard potential case  \eqref{kernelP}.
\end{proposition}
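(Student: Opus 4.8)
The plan is to pass to the spatial Fourier side and split frequencies: away from the origin we argue exactly as in the proof of the generic linear decay (Theorem \ref{thm.decay.lin}), where a hypocoercive differential inequality already gives exponential decay, while near the origin we use the refined spectral analysis of Section \ref{secAPP:linear}, in which the microscopicity hypothesis \eqref{microscopicf0} produces the extra half power of $t$.

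First I would write $\widehat{\semiG(t)f_0}(\xi) = e^{-t\hat{\SB}(\xi)}\hat{f}_0(\xi)$ with $\hat{\SB}(\xi) = \FL + i\,\vel\cdot\xi$ the Fourier symbol of $\SB$ as in \eqref{FT.SB}, so that by Plancherel
\begin{equation*}
\|w^{\wN}\semiG(t)f_0\|_{\dot{H}^m_{\spa}L^2_{\vel}}^2
\approx \int_{\domain}|\xi|^{2m}\,\big| w^{\wN}e^{-t\hat{\SB}(\xi)}\hat{f}_0(\xi)\big|_{L^2_{\vel}}^2\,d\xi .
\end{equation*}
Fix a small $r_0>0$ and split this integral over $\{|\xi|>r_0\}$ and $\{|\xi|\le r_0\}$. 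On $\{|\xi|>r_0\}$ I would invoke the pointwise-in-$\xi$ differential inequality from \cite{SdecaySOFT}: in the hard potential case \eqref{kernelP} it furnishes a functional $\mathcal{E}_{\wN}(\xi,t)\approx |w^{\wN}\hat f(t,\xi)|_{L^2_{\vel}}^2$ (here $\wN\ge0$ is used so the velocity weight closes with no loss) obeying $\frac{d}{dt}\mathcal{E}_{\wN}(\xi,t) + \la\,\frac{|\xi|^2}{1+|\xi|^2}\,\mathcal{E}_{\wN}(\xi,t)\le 0$. For $|\xi|>r_0$ this gives $|w^{\wN}\hat f(t,\xi)|_{L^2_{\vel}}\lesssim e^{-\la_0 t}|w^{\wN}\hat f_0(\xi)|_{L^2_{\vel}}$ with $\la_0 = \la r_0^2/(1+r_0^2)>0$, and since any exponential dominates $(1+t)^{-(\ALTsig+m+1)}$, integrating $|\xi|^{2m}$ over $|\xi|>r_0$ bounds this piece by $\lesssim (1+t)^{-(\ALTsig+m+1)}\|w^{\wN}f_0\|_{\dot{H}^m_{\spa}L^2_{\vel}}^2$.

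For $\{|\xi|\le r_0\}$ I would use the Ellis--Pinsky--type description of $\hat{\SB}(\xi)$ near $\xi=0$ supplied by Theorem \ref{linear.decay.A} (adapting \cite{MR0609540} to the non cut-off hard potential operator): in a fixed ball about the origin the spectrum consists of $\Ndim+2$ analytic branches $\lambda_j(\xi) = i a_j|\xi| - b_j|\xi|^2 + O(|\xi|^3)$ with $b_j>0$, with analytic eigenprojections $\FP_j(\xi) = \FP_j^{(0)} + |\xi|\,\FP_j^{(1)} + O(|\xi|^2)$ whose leading terms satisfy $\sum_j \FP_j^{(0)} = \FP$ (the projection onto $\CN$ in \eqref{nullLsp}), while the complementary propagator $e^{-t\hat{\SB}(\xi)}\big(\FI-\sum_j\FP_j(\xi)\big)$ decays like $e^{-\la_0 t}$ uniformly for $|\xi|\le r_0$. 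Because $f_0$ satisfies \eqref{microscopicf0}, $\FP\hat{f}_0(\xi)=0$, hence $\sum_j\FP_j^{(0)}\hat{f}_0(\xi)=0$, so the order $|\xi|^0$ contribution to $\sum_j e^{\lambda_j(\xi)t}\FP_j(\xi)\hat{f}_0(\xi)$ cancels and what survives carries an extra factor $|\xi|$; the upshot of Theorem \ref{linear.decay.A} is that for $|\xi|\le r_0$
\begin{equation*}
\big| w^{\wN}e^{-t\hat{\SB}(\xi)}\hat{f}_0(\xi)\big|_{L^2_{\vel}}
\lesssim |\xi|\,e^{-\theta_0|\xi|^2 t}\,|\hat{f}_0(\xi)|_{L^2_{\vel}} + e^{-\la_0 t}\,|\hat{f}_0(\xi)|_{L^2_{\vel}},
\end{equation*}
for some $\theta_0>0$ (the polynomial weight $w^{\wN}$ is harmless on $\{|\xi|\le r_0\}$ since the evolution there is carried by the smooth modes in \eqref{nullLsp}, which absorb any polynomial velocity weight, and for the remainder one combines instead with the weighted estimate above). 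Squaring and integrating $|\xi|^{2m}$ over $|\xi|\le r_0$, the $e^{-\la_0 t}$ term is handled as in the high-frequency regime, and the main term is $\int_{|\xi|\le r_0}|\xi|^{2m+2}e^{-2\theta_0|\xi|^2 t}|\hat{f}_0(\xi)|_{L^2_{\vel}}^2\,d\xi$. Decomposing dyadically and using $\|\Delta_j f_0\|_{L^2_{\spa}L^2_{\vel}}^2\le 2^{2\ALTsig j}\|f_0\|_{\dot{B}^{-\ALTsig,\infty}_2 L^2_{\vel}}^2$ with the elementary bound $\sum_{2^j\le r_0}2^{(2m+2+2\ALTsig)j}e^{-\theta_0 2^{2j}t}\lesssim (1+t)^{-(m+1+\ALTsig)}$ (valid since $m+\ALTsig>0$), this term is $\lesssim (1+t)^{-(m+1+\ALTsig)}\|f_0\|_{\dot{B}^{-\ALTsig,\infty}_2 L^2_{\vel}}^2$ — the Fourier analogue of the $e^{t\Delta}$ estimate recalled in the introduction. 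Combining the two regimes gives the claim.

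The main obstacle is the low-frequency spectral input, Theorem \ref{linear.decay.A}: one must show that microscopic data loses nothing and genuinely gains a clean factor $|\xi|$, not a factor $|\xi|t$, from the central branches. This rests on the full Ellis--Pinsky expansion of the eigenvalues and eigenprojections of $\hat{\SB}(\xi)=\FL+i\,\vel\cdot\xi$ about $\xi=0$, in particular the algebraic identity $\sum_j\FP_j^{(0)}=\FP$, on careful control of the $O(|\xi|^3 t)$ remainders in $e^{\lambda_j(\xi)t}$, and on the interplay between the oscillatory factors $e^{i a_j|\xi|t}$ and the dissipative factors $e^{-b_j|\xi|^2 t}$; carrying this out for the non cut-off hard potential $\FL$ with the sharp norm $\spacen$ and the coercivity \eqref{coerc}, rather than for cut-off hard spheres, is the technical heart. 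Once Theorem \ref{linear.decay.A} is available, the frequency splitting and dyadic summation above are routine.
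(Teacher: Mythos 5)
Your skeleton is close to the paper's (Fourier splitting into high and low frequencies, Ellis--Pinsky expansion near $\xi=0$, a gain of one factor $|\xi|$ from $\FP f_0=0$, dyadic summation against the $\dot{B}^{-\ALTsig,\infty}_2$ norm), but there is a genuine gap in your low-frequency step: you decompose the propagator as the sum of the $\Ndim+2$ eigenmodes plus a complementary piece and assume that $e^{-t\widehat{\SB}(\xi)}\big(\FI-\sum_j\FP_j(\xi)\big)$ decays like $e^{-\la_0 t}$ uniformly for $|\xi|\le r_0$. That semigroup remainder bound is not provided by the spectral input the paper actually establishes — Proposition \ref{eigenvalues} yields only the eigenvalue branches and eigenprojections, not a full spectral decomposition of $e^{-t\widehat{\SB}(\xi)}$ with an exponentially decaying remainder — and proving it for the non cut-off operator would require separate resolvent/spectral-mapping work that the paper deliberately avoids. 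Relatedly, your appeal to Theorem \ref{linear.decay.A} as the source of this description is circular, since that theorem is exactly the statement being proven; and your cancellation argument as phrased ("$\sum_j\FP^{(0)}_j\hat f_0=0$, so the $O(1)$ term cancels") is insufficient on its own, because the branches carry different factors $e^{-\zeta_j(\xi)t}$: what is actually used is that each $\FP^{(0)}_j \hat f_0=0$ individually, which follows from $\FP^{(0)}_j=\FP^{(0)}_j\FP$ by \eqref{eigenvalue4}.

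The paper closes these points differently: only the macroscopic component is treated spectrally. Lemma \ref{projectionPj} gives the exact scalar evolution \eqref{ODE} for $\FP_j(\xi)\overline{\hat f}$, which upon integration and use of \eqref{eigenvalue2} (with $\FP_j(\xi)\hat f_0=\kappa\,\FP^{(1)}_j(\xi)\hat f_0$) produces the clean extra factor $|\xi|$; Lemma \ref{projections} then recovers $|\FP\hat f(t,\xi)|_{L^2_{\vel}}$ from $\sum_j|\FP_j(\xi)\hat f|_{L^2_{\vel}}$, yielding \eqref{LittlewoodPaleyC} and the macroscopic decay \eqref{macroscopicONE}. The microscopic part is controlled not by a spectral remainder but by the pointwise-in-$\xi$ hypocoercive Duhamel inequality \eqref{2.25} from \cite{SdecaySOFT}, whose source $|\xi|^2|\FP\hat f(s,\xi)|^2_{L^2_{\vel}}$ inherits the gain, and the velocity weight $\wN\ge 0$ is propagated through the weighted inequalities \eqref{macroWeightINEQ} and \eqref{macroWeightellINEQ} rather than by your informal remark that the smooth modes absorb the weight (which does not cover the non-macroscopic part of the low-frequency solution). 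If you replace your complementary-propagator step and your weight handling with this energy-based treatment, the rest of your argument (high-frequency exponential decay and the dyadic Besov summation giving $(1+t)^{-(m+\ALTsig+1)}$ for the squared norm) goes through as in the paper.
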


Again Proposition \ref{linear.decay.m} is proven in Theorem \ref{linear.decay.A} of Section \ref{secAPP:linear}.  Now we use these linear decay results, and previous developments to prove Theorem \ref{thm.extra.decay}.

\begin{proof}[Proof of Theorem \ref{thm.extra.decay}]
Starting from \eqref{thm.energy.1}, we obtain as before
\begin{equation}\label{main.diff.decay}
\frac{d}{dt}\CE_{\dK,\wN}(t)
 + 
 \la \left(  \CD^h_{\dK}(t)
 +
  \CD^m_{\dK,\wN}(t) \right)
 \le 0.
\end{equation}
where the hydrodynamic part of the dissipation, $\CD^h_{\dK}(t)$, and the microscopic part of the dissipation,  
$\CD^m_{\dK,\wN}(t)$,  are each defined as in the following
$$
\CD^h_{\dK}(t) \eqdef \sum_{1 \le |\alpha| \le \dK} \|  \partial^\alpha [a,b,c]\|_{L^2_{x}}^2,
\quad 
\CD^m_{\dK,\wN}(t) \eqdef \sum_{|\al|+|\be|\leq \dK}\|\pa^\al_\be \{\FI-\FP\}\solU(t)\|_{L^2_{\spa} \spacen_{\wN-|\be|\rho}}^2.
$$
Here we also recall \eqref{def.dNm} for the definition of $\rho$.  First for the case of Theorem \ref{thm.extra.decay} when $\ALTsig \in (0, \Ndim/2]$ and $\| f_0\|_{\dot{B}^{-\ALTsig,\infty}_2 L^2_{\vel}} < \infty$, then we still have \eqref{est.full.unif.besov} and we can use the proof of Theorem \ref{thm.main.decay}.  This establishes \eqref{TheoremExtra} when $\ALTsig \in (0, \Ndim/2]$.

In the remainder of our proof,  we suppose \eqref{kernelP} and we consider the case when $\| \FP f_0\|_{\dot{B}^{-\ALTsig,\infty}_2 L^2_{\vel}} < \infty$ and also
$\| \{\FI - \FP\} f_0\|_{\dot{B}^{-\ALTsig+1,\infty}_2 L^2_{\vel}} < \infty$ for $\ALTsig \in (\Ndim/2, (\Ndim+2)/2]$. Notice that in this situation we may not know either \eqref{est.full.unif.besov} or even \eqref{est.unif.besov}.  Instead we will use the time-weighted estimates and the linear decay theory. 

  Fix $s\ge 0$ to be chosen later, and $\varepsilon>0$ small (also determined below), we now multiply \eqref{main.diff.decay} by the time weight $(1+\varepsilon t)^s$ to obtain
\begin{equation}\label{Theorem1B.weigh}
 \frac{d}{dt} \left( (1+\varepsilon t)^s  \CE_{\dK,\wN}(t) \right)
 + 
 \la (1+\varepsilon t)^s  
   \left(  \CD^h_{\dK}(t)
 +
  \CD^m_{\dK,\wN}(t) \right)
 \le s\varepsilon(1+\varepsilon t)^{s-1}  \CE_{\dK,\wN}(t).
\end{equation}
We use \eqref{def.eNm}, the decomposition \eqref{form.p} with \eqref{coef.p.def}, and estimates which are analogous to the one used in the proof of \eqref{Pabc} to obtain
\begin{equation}\label{est.upper.differe.weigh}
\CE_{\dK,\wN}(t)
\lesssim 
 \|  [a,b,c]\|_{L^2_{x}}^2(t)
+
\CD^h_{\dK}(t) 
+
\sum_{|\al|+|\be|\leq \dK}\| w^{\ell - |\beta| \rho} \pa^\al_\be \{\FI-\FP\}\solU(t)\|_{L^2_{\spa}L^2_{\vel}}^2.
\end{equation}
We will handle each of the terms in the upper bound of \eqref{est.upper.differe.weigh} separately.

Initially, our focus will be on the first term in the upper bound of \eqref{est.upper.differe.weigh}.    As in  \eqref{ls.semi}, with $\sourceG$ given by \eqref{def.g.non}, we expand the solution to \eqref{Boltz} as
\begin{equation}  \label{Boltz.rep}
   \solU(t)=\semiG(t)\FP\solU_0+\semiG(t)\{\FI-\FP\}\solU_0+I_1(t),
\end{equation}
where we additionally use \eqref{def.g.non} to observe that
$$
  I_1(t) =\int_0^t\semiG(t-\tau)\{\FI-\FP\}\Gamma(\solU,\solU)(\tau)d\tau.
$$
We now apply Propositions \ref{linear.decay.B} and \ref{linear.decay.m}
 to $\semiG(t)\FP\solU_0$ and $\semiG(t)\{\FI-\FP\}\solU_0$ respectively, to obtain
 \begin{eqnarray}  \notag
    \| \semiG(t)\FP\solU_0\|_{L^2_{\vel} L^2_{\spa} }
  &  \lesssim &
    (1+t)^{-\frac{\rho}{2}} \| \FP\solU_0\|_{L^2_{\vel} L^2_{\spa} \cap \dot{B}_{2}^{-\ALTsig,\infty}L^2_{\vel}  },
\\
    \| \semiG(t)\{ \FI - \FP\}\solU_0\|_{L^2_{\vel} L^2_{\spa} }
  &  \lesssim &
    (1+t)^{-\frac{\rho}{2}} \| \{ \FI - \FP\}\solU_0\|_{L^2_{\vel} L^2_{\spa} \cap \dot{B}_{2}^{-\ALTsig+1,\infty}L^2_{\vel}  },
    \notag
\end{eqnarray}
 where we recall that here $\ALTsig \in (\Ndim/2, (\Ndim+2)/2]$.   For $I_1(t)$, we use Proposition \ref{linear.decay.m} to deduce the estimate
\begin{multline*}
    \|  I_1(t)\|_{L^2_{\vel} L^2_{\spa}}
    \leq 
    \int_0^t\| \semiG(t-\tau)\{ \FI - \FP\}\Gamma(\solU,\solU)(\tau)\|_{L^2_{\vel} L^2_{\spa}}d\tau
    \\
\lesssim
    \int_0^t(1+t-\tau)^{-\frac{\ALTsig}{2}}\| \Gamma(\solU,\solU)(\tau)\|_{L^2_{\vel} L^2_{\spa}\cap \dot{B}_{2}^{-\ALTsig+1,\infty}L^2_{\vel}}d\tau
        \\
\lesssim
    \int_0^t(1+t-\tau)^{-\frac{\ALTsig}{2}}\| \Gamma(\solU,\solU)(\tau)\|_{L^2_{\vel} L^2_{\spa}\cap L^2_{\vel}L^p_{\spa}}d\tau.
\end{multline*}
For the final inequality above we used the embedding 
$
\dot{B}_{2}^{-\ALTsig+1,\infty}L^2_{\vel} \supset L^2_{\vel}\dot{B}_{2}^{-\ALTsig+1,\infty}
$
where $L^2_{\vel}\dot{B}_{2}^{-\ALTsig+1,\infty}$ denotes the space with norm 
$
\left\| \| \cdot \|_{\dot{B}_{2}^{-\ALTsig+1,\infty}} \right\|_{L^2_{\vel}}.
$
This is then followed by the embedding 
$
\dot{B}_{2}^{-\ALTsig+1,\infty} \supset  L^p_{\spa}
$
for $p = \frac{\Ndim}{\ALTsig -1 + \frac{\Ndim}{2}}\in [1, 2)$ when $\ALTsig \in (\Ndim/2, (\Ndim + 2)/2]$  which itself is a consequence of Lemma \ref{lem.besov.emb}.   Combining these we obtain 
$
\dot{B}_{2}^{-\ALTsig+1,\infty}L^2_{\vel} \supset L^2_{\vel}\dot{B}_{2}^{-\ALTsig+1,\infty} \supset 
L^2_{\vel}L^p_{\spa}
$
as desired.

Notice further that by interpolation, \cite[Equation (3.22)]{SdecaySOFT}, and $p \in [1,2)$, we have, for all $\tau \in [0,t]$ that
\begin{equation}
\label{bilinearGamma}
\| \Gamma(\solU,\solU)(\tau)\|_{L^2_{\vel} L^2_{\spa}\cap  L^2_{\vel} L^p_{\spa}}
\lesssim 
\| \Gamma(\solU,\solU)(\tau)\|_{L^2_{\vel} L^2_{\spa}\cap L^2_{\vel}L^1_{\spa}}
\lesssim \CE_{\dK,\wN' }(\tau).  
\end{equation}
The last inequality holds for $\wN' = 2(\gamma+2s)$.   Now we will assume that
\begin{equation}
\CE_{\dK,\wN}(t)  \lesssim  (1+t)^{-\alpha}
\label{inidial.decay.use}
\end{equation}
holds for some $\alpha>0$ and use this iteratively to upgrade the decay rate.   Then, in the following estimates, we will always be working with $\ell \geq \wNz \ge \wN'$ for $\wNz$ in \eqref{wNz.def}.

Since $\| \FP f_0\|_{\dot{B}^{-\ALTsig,\infty}_2 L^2_{\vel}} +\| \{\FI - \FP\} f_0\|_{\dot{B}^{-\ALTsig+1,\infty}_2 L^2_{\vel}} < \infty$  we have 
$\| f_0\|_{\dot{B}^{-\ALTsig+1,\infty}_2 L^2_{\vel}} < \infty$ as in \eqref{interpolationresult}, since $f_0 \in L^2_{\spa} L^2_{\vel}$. Then for $\ALTsig \in (\Ndim/2, (\Ndim+2)/2]$, from the first part of Theorem \ref{thm.extra.decay} which is already proven, \eqref{inidial.decay.use} holds with $\alpha = \ALTsig-1$.


We collect all of the previous estimates and use \eqref{Boltz.rep} to conclude that 
\begin{multline}\label{thm.ns.p2}
\|  [a,b,c]\|_{L^2_{\spa}}(t)
\lesssim
\|  \solU(t)\|_{L^2_{\spa}L^2_{\vel}}
\\
\lesssim
    \|\semiG(t)\FP\solU_0\|_{L^2_{\vel} L^2_{\spa} }
    +
    \|\semiG(t)\{ \FI - \FP\}\solU_0\|_{L^2_{\vel} L^2_{\spa} }
    +
     \| I_1(t)\|_{L^2_{\vel} L^2_{\spa}}
     \\
\lesssim
(1+t)^{-\frac{\rho}{2}}
+
 \int_0^t d\tau ~ (1+t-\tau)^{-\frac{\ALTsig}{2}} (1+\tau)^{-\alpha}.
\end{multline}
The first inequality above used estimate \eqref{macro.point.p.est}.   Then we evaluate the time integral as in \cite[Proposition 4.5]{strainSOFT}, to conclude that 
\begin{equation}
 \int_0^t d\tau ~ (1+t-\tau)^{-\frac{\ALTsig}{2}} (1+\tau)^{-\alpha}
\lesssim
A(t)(1+t)^{-\min\{\frac{\ALTsig}{2}+\alpha-1,\frac{\ALTsig}{2}, \alpha\}}.
\label{desired.a.est}
\end{equation}
where $A(t) = \log(2+t)$ if $\max\{\frac{\ALTsig}{2}, \alpha\} = 1$ and $A(t) = 1$ otherwise.
Then
\begin{equation}
\|  [a,b,c]\|_{L^2_{\spa}}(t)
\lesssim
(1+t)^{-\beta}.
\label{desired.a.est.spec}
\end{equation}
where if $\ALTsig > 2$, i.e.  $\ALTsig \in (2, (\Ndim + 2)/2]$ then since $\alpha = \ALTsig-1$ we have from \eqref{thm.ns.p2} and \eqref{desired.a.est} that
$
\beta = \frac{\rho}{2}.
$
This is the desired estimate when $\ALTsig > 2$.

For the third term in the upper bound of \eqref{est.upper.differe.weigh}, we have the uniform estimate
\begin{equation}\label{CTheorem1C}
\| w^{\ell - |\beta| \rho} \pa^\al_\be \{\FI-\FP\}\solU(t)\|_{L^2_{\spa}L^2_{\vel}}^2
\lesssim
(1+\varepsilon t) \|   w^{\ell - |\beta| \rho} \pa^\al_\be \{\FI - \FP \}  f \|_{L^2_\spa \spaceL}^2.
\end{equation}
Here we explicitly used the hard potentials \eqref{kernelP} assumption.

Collect \eqref{CTheorem1C} and \eqref{desired.a.est.spec}, with $\beta>0$, into \eqref{est.upper.differe.weigh} and choose $\varepsilon>0$ sufficiently small; then plug these into \eqref{Theorem1B.weigh} to obtain uniformly in $t \ge 0$ that
\begin{equation}\label{Theorem1B.weigh.latest}
 \frac{d}{dt} \left( (1+\varepsilon t)^s  \CE_{\dK,\wN}(t) \right)
 + 
 \la (1+\varepsilon t)^s  
   \left(  \CD^h_{\dK}(t)
 +
  \CD^m_{\dK,\wN}(t) \right)
\lesssim (1+\varepsilon t)^{s-1-2\beta} .
\end{equation}
Now choose $s = 2\beta + \delta$ for any small $\delta\in (0,1)$ and integrate this in time to obtain
\begin{multline}\label{main.decay.proof1.latest}
\CE_{\dK,\wN}(t)
\lesssim
(1+\varepsilon t)^{-2\beta - \delta}
\CE_{\dK,\wN}(0)
+
(1+\varepsilon t)^{-2\beta - \delta}
\int_0^t du ~ (1+\varepsilon u)^{\delta-1}
\\
\lesssim
(1+ t)^{-2\beta - \delta} (1+ t)^{ \delta}
\approx (1+ t)^{-2\beta}.
\end{multline}
We thus have Theorem \ref{thm.extra.decay} when $\ALTsig \in (2, (\Ndim + 2)/2]$ and $\beta = \frac{\rho}{2}$.

Next consider the case when $\ALTsig =2$.  Then as in  \eqref{thm.ns.p2} and \eqref{desired.a.est}, we obtain
$$
\|  [a,b,c]\|_{L^2_{\spa}}(t)
\lesssim
\log(2+t)(1+t)^{-1}.
$$
In this situation we encounter a temporal log.  Then, using this last estimate, analogous to \eqref{CTheorem1C}, \eqref{Theorem1B.weigh.latest} and \eqref{main.decay.proof1.latest} we get the estimate
$$
\CE_{\dK,\wN}(t)  \lesssim  \log^2(2+t)(1+t)^{-2}.
$$
This estimate loses the optimal decay rate by a log.  However we can plug this estimate back into \eqref{thm.ns.p2} and \eqref{desired.a.est} to obtain  \eqref{desired.a.est.spec} with $\beta = 1$.  Then again following \eqref{CTheorem1C}, \eqref{Theorem1B.weigh.latest} and \eqref{main.decay.proof1.latest} we obtain 
\eqref{inidial.decay.use} with $\alpha = \ALTsig$ when  $\ALTsig =2$.

Lastly suppose that we have $\ALTsig \in (\Ndim/2, 2)$. This case only occurs for $\Ndim=3$.  Then we have  \eqref{inidial.decay.use} with $\alpha = \ALTsig-1$, and we have the time integral estimates 
 \eqref{thm.ns.p2} and \eqref{desired.a.est}, but with $\ALTsig \in (\frac{3}{2}, 2)$ this only yields 
 \eqref{desired.a.est.spec} with $\beta = \frac{3}{2} \ALTsig - 2$.

Then we apply the same procedure, e.g. \eqref{desired.a.est.spec}, \eqref{CTheorem1C}, \eqref{Theorem1B.weigh.latest} and \eqref{main.decay.proof1.latest}, to observe that, when $\ALTsig \in (3/2, 2)$, then  \eqref{inidial.decay.use}  holds with $\alpha = \left( 3\ALTsig - 4 \right)>\ALTsig-1$.  However since $3\ALTsig - 4 < \ALTsig$ when $\ALTsig \in (3/2, 2)$, 
we will need to iterate more.

Now suppose \eqref{inidial.decay.use} holds for $\alpha = \alpha_j$ for $j=0,\ldots, k$ where $\alpha_0= \left( 3\ALTsig - 4 \right)$.  Then we obtain from  \eqref{thm.ns.p2} and \eqref{desired.a.est} that \eqref{desired.a.est.spec} holds with $\beta = \beta_{k}$
where 
$$
\beta_{k} \eqdef 
\min\left\{\frac{\ALTsig}{2}+\alpha_{k}-1,\frac{\ALTsig}{2}, \alpha_{k}\right\}.
$$
Then combining this estimate with \eqref{desired.a.est.spec},  \eqref{CTheorem1C}, \eqref{Theorem1B.weigh.latest} and \eqref{main.decay.proof1.latest} we obtain that \eqref{inidial.decay.use} holds with $\alpha= \alpha_{k+1} = 2\beta_{k}$ where
$$
\alpha_{k+1} =2\beta_{k}
=
2\min\left\{\frac{\ALTsig}{2}+\alpha_{k}-1,\frac{\ALTsig}{2}\right\}.
$$
We continue this procedure until $\alpha_{k+1} = \ALTsig$ in which case we have 
Theorem \ref{thm.extra.decay} for this value of $\ALTsig$ and we terminate the algorithm.     


Now if $\alpha_{k}\ge 1$ then $\alpha_{k+1} =\ALTsig$ and when $\alpha_{k}<  1$ then $\alpha_{k+1} =2\alpha_{k}+\ALTsig-2$.  Therefore we define the following recursion for $k\ge 0$ by
$$
\tilde{\alpha}_{k+1} =2\tilde{\alpha}_{k}+\ALTsig-2
=
2^{k+1}\tilde{\alpha}_{0}+\left(2^{k+1} - 1 \right)(\ALTsig-2).
$$
If $\tilde{\alpha}_{0} = \alpha_{0} = \left( 3\ALTsig - 4 \right)$ with $\ALTsig \in (3/2, 2)$,
then $\tilde{\alpha}_{0}+(\ALTsig-2)>0$. Hence, $(\tilde{\alpha}_{k+1})$ is increasing and $\tilde{\alpha}_{k+1}  \to \infty$ as $k \to \infty$.    We conclude that for any $\ALTsig \in (3/2, 2)$ there exists a finite integer $k\ge 0$ such that  $\alpha_{k}\ge 1$ then $\alpha_{k+1} =\ALTsig$ and the algorithm terminates.   It follows that Theorem \ref{thm.extra.decay} holds when $\ALTsig \in (3/2, 2)$, when $\Ndim=3$.  
\end{proof}

The next section is dedicated to the proof of the functional interpolation inequalities we needed to use in order to prove the differential and integral inequalities in Sections \ref{sec.decayl} and \ref{sec.decayNL}, as well as the linear decay estimates Proposition \ref{linear.decay.B} and Proposition \ref{linear.decay.m}.  In Section \ref{secAPP:linear}, we prove the linear decay estimates.

\section{Functional interpolation inequalities and auxiliary results}
\label{secAPP:INTERP}

In this section, we develop several functional type Sobolev inequalities which we use to rigorously justify the proofs of the nonlinear energy estimates in Sections \ref{sec.decayl} and \ref{sec.decayNL}. We will use analogues of the Calder\'{o}n-Zygmund theory in the functional framework. The key point  is that the generalizations of the Littlewood-Paley Inequality and the 
H\"{o}rmander-Mikhlin Multiplier Theorem hold in the functional setting. Most of the claims that we will use can be deduced from the existing literature, such as \cite{MR1800316} or \cite{Tao2}.

An additional subtlety of working in the functional setting is that the Besov seminorm, as given in \eqref{Besovseminorm}, doesn't correspond to the convention for mixed norms given in \eqref{definition.mixednorm}. Namely, the use of the definition on \eqref{definition.mixednorm} would require taking the Besov norm of the function $\|f(\cdot)\|_{\hilbertONE}$, whereas in \eqref{Besovseminorm}, we localize in the frequency variable dual to $\spa$ inside the $\hilbertONE$ norm. Hence, we can't automatically use any of the Sobolev embeddings in mixed norm spaces, but we have to rederive them by looking at the dyadic components separately and by using the functional Calder\'{o}n-Zygmund theory.

To do this we will use the vector-valued Calder\'{o}n-Zygmund theory, for which we refer the reader to \cite[Chapter 5.5]{MR1800316} or to \cite{Tao2}. In Sub-section \ref{app.besov.h}, we recall the basic properties of homogeneous Besov spaces and their embedding properties in the scalar-valued setting. Sub-section \ref{app.sobolev.ineq} is devoted to the main properties of functional Besov spaces. In particular, we study the Sobolev-type inequalities which one can prove in these spaces. Furthermore, we recall the functional Littlewood-Paley theory and H\"{o}rmander-Mikhlin Multiplier theory in Sub-section \ref{subsectionA4}. Finally, in Sub-section \ref{sec:main.prod}, we prove the product estimates which we used in Section \ref{sec.decayl}. More precisely, we will prove the estimates we needed in order to deduce \eqref{lem.app.interp.sec}, \eqref{goal.prove.last} and \eqref{lem.app.interp.B}.

\subsection{Homogeneous Besov spaces}\label{app.besov.h}
For an integrable function $g: \threed_{\spa} \to\R$, its Fourier transform is defined by
\begin{equation*}
  \widehat{g}(\fva)= \CF g(\fva)\eqdef \int_{\threed} e^{-2\pi \rmi  x\cdot \fva } g(x)dx, \quad
  x\cdot
   \fva\eqdef\sum_{j=1}^\Ndim \spa_j \fva_j,
   \quad
   \fva\in\threed.
\end{equation*}
We define $\Lambda^k$, the Riesz potential of order $k \in \R$, by:
$$
\CF(\Lambda^k f) (\xi)\eqdef|\xi|^k \widehat{f}(\xi).$$
We now describe a standard Littlewood-Paley decomposition on $\mathbb{R}^{\Ndim}_\spa$ as follows.  Let 
$\phi \in C^\infty_c(\threed_\fva)$ be such that $\phi(\fva) = 1$ when $|\fva| \le 1$ and $\phi(\fva) = 0$ when $|\fva| \ge 2$.  Let 
$\varphi(\fva) = \phi(\fva) - \phi(2\fva)$ and $\varphi_j(\fva) = \varphi(\frac{\fva}{2^j})$
for ${j \in \Z}$.  Then
$
\sum_{j \in \Z} \varphi_j(\fva) = 1, \quad \fva \ne 0.
$
Further let $\CF(\psi)(\fva) = \varphi(\fva)$ and then $\psi_j(\spa) =  2^{\Ndim j} \psi(2^{ j} \spa)$ satisfies $\CF(\psi_j)= \varphi_j$.  We define 
$$
\Delta_j f \eqdef (\psi_j * f )(x).
$$
And if, say, $f\in L^p(\threed_{\spa})$, for $1<p<\infty$, then $f =\sum_{j \in \Z}\Delta_j (f)$, with convergence in $L^p$.
Now we define the homogeneous Besov seminorm for $1 \le q < \infty$ by
$$
\| f \|_{\dot{B}^{\ALTsig, q}_p \hilbertONE}
\eqdef
\left( \sum_{j \in \Z} \left( 2^{\ALTsig j} \| \Delta_j  f \|_{L^p_{\spa} \hilbertONE} \right)^q \right)^{1/q}, 
\quad 
\| f \|_{\dot{B}^{\ALTsig, \infty}_p \hilbertONE}
\eqdef
\sup_{j \in \Z} \left( 2^{\ALTsig j} \| \Delta_j f \|_{L^p_{\spa} \hilbertONE} \right).
$$
The following embeddings are known:
$$
\dot{B}^{\ALTsig, q'}_p\hilbertONE \subset \dot{B}^{\ALTsig, q}_p\hilbertONE,
\quad
\text{for $q \ge q'$}.
$$
As was noted at the beginning of this section, the Besov interpolation estimates in the scalar-valued setting are difficult to apply directly in the functional setting due to our definition of the functional Besov seminorm 
\eqref{Besovseminorm}. 

\subsection{Functional Sobolev-type inequalities in Besov spaces}\label{app.sobolev.ineq}

The main tool which is going to allow us to develop the functional Besov theory is the following Minkowski-type inequality for $f=f(\spa,\vel)$ and $g=g(\spa)$:
\begin{equation}\label{app.minkowski}
\| f * g \|_{\hilbertONE}\leq \| f  \|_{\hilbertONE} * |g|,
\end{equation}
which can be shown by an application of 
the Cauchy-Schwarz inequality in $\hilbertONE$.

We can use \eqref{app.minkowski} and Young's inequality to prove the following Bernstein-type inequalities: 
\begin{eqnarray}\label{app.bernstein.XV}
 \| \Delta_j f  \|_{L^p_{\spa} \hilbertONE}  \lesssim  2^{\left( \frac{\Ndim}{q} - \frac{\Ndim}{p} \right) j}  \| \Delta_j f  \|_{L^q_{\spa} \hilbertONE}, && \quad  \forall 1 \leq q \leq p \le \infty,
\\
\label{app.bernstein.XVI}
 \|\Delta_j \Lambda^s f\|_{L^p \hilbertONE}  \approx  2^{js} \|\Delta_j f\|_{L^p \hilbertONE}, && \quad \forall 1 \leq  p \le \infty,~ s \in \R.
\end{eqnarray}
These inequalities are useful since they give us estimates on the pieces we are considering in the Besov seminorm \eqref{Besovseminorm}.

We can also deduce the following $L^p_{\spa} \hilbertONE$ embedding by using \eqref{app.bernstein.XV}:

\begin{lemma}\label{lem.besov.emb}
Suppose that $\ALTsig > 0$ and $1\le p \le 2$.  We have the embedding 
$
L^p \hilbertONE \subset \dot{B}^{-\ALTsig, \infty}_q \hilbertONE
$ where $\frac{1}{p}-\frac{1}{q} = \frac{\ALTsig}{\Ndim}$.  In particular we have the estimate
\begin{equation}
\label{lem.besov.emb.bound}
\| f \|_{\dot{B}^{-\ALTsig, \infty}_q \hilbertONE} \lesssim \| f\|_{L^p_{\spa} \hilbertONE}.
\end{equation}
This holds for example with $\ALTsig = \frac{\Ndim}{2}$, $q=2$ and $p=1$.
\end{lemma}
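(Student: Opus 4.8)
The plan is to reduce the claim to two facts about the Littlewood--Paley blocks $\Delta_j$ in the $\hilbertONE$-valued setting: a uniform-in-$j$ bound for $\Delta_j$ on $L^p_\spa\hilbertONE$, and the vector-valued Bernstein inequality \eqref{app.bernstein.XV}. First I would record the uniform bound. Each $\Delta_j$ is convolution in $\spa$ with $\psi_j$, and since $\psi_j(\spa)=2^{\Ndim j}\psi(2^j\spa)$ we have $\|\psi_j\|_{L^1(\threed_\spa)}=\|\psi\|_{L^1(\threed_\spa)}$ independently of $j$. Applying the Minkowski-type inequality \eqref{app.minkowski} followed by Young's inequality in $\spa$ gives, uniformly in $j\in\Z$,
\begin{equation}\notag
\|\Delta_j f\|_{L^p_\spa\hilbertONE}
=\|\psi_j*f\|_{L^p_\spa\hilbertONE}
\le \left\| \, \|f\|_{\hilbertONE}*|\psi_j| \, \right\|_{L^p_\spa}
\le \|\psi\|_{L^1(\threed_\spa)}\,\|f\|_{L^p_\spa\hilbertONE}.
\end{equation}

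Next I would invoke \eqref{app.bernstein.XV}, with the exponent pair there taken to be our $p$ (playing the role of the smaller exponent) and $q$ (the larger one --- note $\frac1p-\frac1q=\frac{\ALTsig}{\Ndim}>0$ forces $q\ge p$), which yields
\begin{equation}\notag
\|\Delta_j f\|_{L^q_\spa\hilbertONE}
\lesssim 2^{\left(\frac{\Ndim}{p}-\frac{\Ndim}{q}\right)j}\,\|\Delta_j f\|_{L^p_\spa\hilbertONE}
=2^{\ALTsig j}\,\|\Delta_j f\|_{L^p_\spa\hilbertONE}.
\end{equation}
Combining the two displays gives $2^{-\ALTsig j}\|\Delta_j f\|_{L^q_\spa\hilbertONE}\lesssim\|f\|_{L^p_\spa\hilbertONE}$ uniformly in $j$; taking the supremum over $j\in\Z$ and recalling the definition \eqref{Besovseminorm} of $\|\cdot\|_{\dot{B}^{-\ALTsig,\infty}_q\hilbertONE}$ produces \eqref{lem.besov.emb.bound}, and the stated special case $\ALTsig=\Ndim/2$, $q=2$, $p=1$ is then immediate.

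I do not anticipate a real obstacle here: the only point requiring care is that both ingredients are used in their $\hilbertONE$-valued form, but \eqref{app.minkowski}, \eqref{app.bernstein.XV} and Young's inequality supply exactly these, so the classical scalar argument transfers verbatim. If anything, the ``hard part'' is purely bookkeeping --- making sure the direction of Bernstein's inequality is the one that trades spatial integrability for the factor $2^{\ALTsig j}$ rather than against it.
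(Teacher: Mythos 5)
Your proof is correct and follows exactly the route the paper intends: the lemma is stated there as a direct consequence of the vector-valued Bernstein inequality \eqref{app.bernstein.XV}, supplemented (as you make explicit) by the uniform $L^p_{\spa}\hilbertONE$-boundedness of $\Delta_j$ via \eqref{app.minkowski} and Young's inequality. Your bookkeeping of the exponents, including the direction of Bernstein and the identification $\frac{\Ndim}{p}-\frac{\Ndim}{q}=\ALTsig$, is accurate, so nothing further is needed.
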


As a consequence of \eqref{app.bernstein.XVI}, we obtain, for all $s_1,s_2 \in \mathbb{R}$ and for all $1 \leq p \leq \infty$:
\begin{equation}
\label{A3'}
\|\Lambda^{s_1}f\|_{\dot{B}^{s_2,p}_q \hilbertONE} \approx \|f\|_{\dot{B}^{s_1+s_2,p}_q \hilbertONE}.
\end{equation}

Let us note the following interpolation result:

\begin{lemma}\label{lem.app.opt.sob}
Suppose that $m \ne \ALTsig$. We have the following interpolation estimate:
\begin{equation}\notag 
\|  f \|_{ \dot{B}^{k,1}_p \hilbertONE}
\lesssim
\|  f \|_{\dot{B}^{m,\infty}_r \hilbertONE}^{1-\theta}
\|  f \|_{\dot{B}^{\ALTsig,\infty}_r \hilbertONE}^{\theta}
\end{equation}
where $0 < \theta < 1$ and $1 \le r \le p \le \infty$. We also require:
\begin{equation}\label{theta.gns}
 k+\frac{\Ndim}{r} -\frac{\Ndim}{p}
=
 m(1-\theta) + \ALTsig \theta.
\end{equation}
\end{lemma}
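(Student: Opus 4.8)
The plan is to run a standard dyadic frequency interpolation, estimating each Littlewood--Paley block $\Delta_j f$ separately. First I would dispose of the degenerate cases: if either $\| f \|_{\dot{B}^{m,\infty}_r \hilbertONE}$ or $\| f \|_{\dot{B}^{\ALTsig,\infty}_r \hilbertONE}$ is infinite there is nothing to prove, while if one of them vanishes then $\Delta_j f = 0$ in $L^r_\spa \hilbertONE$ for every $j\in\Z$, so the left-hand side vanishes as well. Thus I may assume both quantities are finite and strictly positive; denote them by $A$ and $B$, respectively.

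Next I would reduce the exponent $p$ to $r$ on each block. Since $1\le r\le p\le\infty$, the functional Bernstein inequality \eqref{app.bernstein.XV} gives $\| \Delta_j f \|_{L^p_\spa \hilbertONE} \lesssim 2^{(\Ndim/r - \Ndim/p)j} \| \Delta_j f \|_{L^r_\spa \hilbertONE}$. Writing $a_j \eqdef \| \Delta_j f \|_{L^r_\spa \hilbertONE}$ and $\ell \eqdef k + \Ndim/r - \Ndim/p$, this yields
\[
\| f \|_{\dot{B}^{k,1}_p \hilbertONE} = \sum_{j\in\Z} 2^{kj} \| \Delta_j f \|_{L^p_\spa \hilbertONE} \lesssim \sum_{j\in\Z} 2^{\ell j} a_j ,
\]
and by the definition of the two seminorms we have the pointwise bounds $a_j \le 2^{-mj} A$ and $a_j \le 2^{-\ALTsig j} B$ for all $j\in\Z$.

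The core step is to split the sum $\sum_{j} 2^{\ell j} a_j$ at an integer threshold $j_0$ and optimize. Assuming without loss of generality $m < \ALTsig$ (the case $m > \ALTsig$ being symmetric after exchanging $A$ and $B$), the hypothesis \eqref{theta.gns} reads $\ell = m(1-\theta) + \ALTsig\theta$, and since $\theta\in(0,1)$ this forces $m < \ell < \ALTsig$. Using $a_j \le 2^{-mj}A$ for $j\le j_0$ and $a_j \le 2^{-\ALTsig j}B$ for $j>j_0$, the two resulting geometric series converge (precisely because $\ell - m > 0$ and $\ell - \ALTsig < 0$), so
\[
\sum_{j\in\Z} 2^{\ell j} a_j \lesssim A\, 2^{(\ell - m)j_0} + B\, 2^{(\ell - \ALTsig)j_0}.
\]
Choosing $j_0$ to be the nearest integer to $\tfrac{1}{\ALTsig - m}\log_2(B/A)$ balances the two terms; since $\tfrac{\ell - m}{\ALTsig - m} = \theta$ and $\tfrac{\ALTsig - \ell}{\ALTsig - m} = 1-\theta$ by \eqref{theta.gns}, this produces the bound $\lesssim A^{1-\theta} B^\theta$, which is exactly the asserted inequality (rounding $j_0$ to an integer costs only a fixed multiplicative constant). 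The argument is entirely routine; the only thing that genuinely needs checking is that the exponent inequalities $m < \ell < \ALTsig$ hold — which is where the assumptions $0 < \theta < 1$ and \eqref{theta.gns} enter, guaranteeing convergence of the geometric series — together with the elementary verification that the optimization in $j_0$ reproduces the exponents $1-\theta$ and $\theta$. I do not anticipate any serious obstacle.
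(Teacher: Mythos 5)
Your argument is correct and is essentially the paper's own proof: Bernstein's inequality \eqref{app.bernstein.XV} to pass from $L^p_\spa\hilbertONE$ to $L^r_\spa\hilbertONE$ on each dyadic block, a split of the sum at a threshold using the two $\dot{B}^{\cdot,\infty}_r$ bounds on the two geometric tails (convergent since $m<\ell<\ALTsig$), and optimization of the threshold at $\frac{1}{\ALTsig-m}\log_2(B/A)$, which yields the exponents $1-\theta$ and $\theta$ via \eqref{theta.gns}. The only cosmetic differences are your explicit treatment of the degenerate cases and the integer rounding of the threshold, neither of which changes the argument.
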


\begin{proof}[Proof of Lemma \ref{lem.app.opt.sob}]  
Without loss of generality suppose that $m < \ALTsig$.  
For $R \in \R$ to be chosen later, we expand out
$$
\| f \|_{\dot{B}^{k,1}_p \hilbertONE} 
=
 \sum_{j \in \Z}  2^{k j} \| \Delta_j  f \|_{L^p_{\spa} \hilbertONE} =  \sum_{j \ge R} + \sum_{j < R}.
$$
Now using \eqref{app.bernstein.XV} we obtain
$$
  \sum_{j \ge R} 2^{k j} \| \Delta_j  f \|_{L^p_{\spa} \hilbertONE}
\lesssim
  \sum_{j \ge R}  2^{k j+\left( \frac{\Ndim}{r} - \frac{\Ndim}{p} \right) j}
  \| \Delta_j  f \|_{L^r_{\spa} \hilbertONE}
\lesssim
  2^{\left(k +\left( \frac{\Ndim}{r} - \frac{\Ndim}{p} \right) -\ALTsig\right) R} 
  \| f \|_{\dot{B}^{\ALTsig,\infty}_r \hilbertONE} .
$$
For the other term
$$
  \sum_{j < R} 2^{k j} \| \Delta_j  f \|_{L^p_{\spa} \hilbertONE}
\lesssim
 \sum_{j < R}
2^{k j+\left( \frac{\Ndim}{r} - \frac{\Ndim}{p} \right) j}
  \| \Delta_j  f \|_{L^r_{\spa} \hilbertONE}
\lesssim
  2^{\left(k +\left( \frac{\Ndim}{r} - \frac{\Ndim}{p} \right) -m\right) R} 
  \| f \|_{\dot{B}^{m,\infty}_r \hilbertONE} .
$$
Choosing 
$
R = \log_2 \left(\frac{ \| f \|_{\dot{B}^{\ALTsig,\infty}_r \hilbertONE} }{\| f \|_{\dot{B}^{m,\infty}_r \hilbertONE} } \right)^{1/(\ALTsig-m)}
$ yields the result.
\end{proof} 

Notice that Lemma \ref{lem.app.opt.sob} directly implies an optimized functional Sobolev inequality of Gagliardo-Nirenberg-Sobolev-type.  We obtain directly that for $m \neq \ALTsig$:
\begin{equation}\label{func.sob.old}
\| \Lambda^k g \|_{ L^p_{\spa} \hilbertONE}
\lesssim
\| \Lambda^m g \|_{L^2_{\spa} \hilbertONE}^{1-\theta}
\| \Lambda^\ALTsig g \|_{L^2_{\spa} \hilbertONE}^{\theta}
\end{equation}
where $0 < \theta < 1$, $2 \le p \le \infty$,
and again $\theta$ satisfies \eqref{theta.gns}.

We will frequently use the following functional Sobolev type inequalities 
\begin{equation}\label{func.sob}
\|  g \|_{ L^q_{\spa} \hilbertONE}
\lesssim
\| \Lambda^\ALTsig g \|_{L^p_{\spa} \hilbertONE},
\quad \frac{1}{p} - \frac{1}{q} = \frac{\ALTsig}{\Ndim}, \quad 1 < p < q < \infty,
\end{equation}
which implies that $p=\frac{\Ndim q }{\Ndim + q\ALTsig}$.  
The functional inequality \eqref{func.sob} follows directly from \eqref{app.minkowski} combined with the standard fractional integration proof of  \eqref{func.sob} when there is not an additional function space $\hilbertONE$. In other words, the inequality \eqref{app.minkowski} allows us to reduce the proof of the vector-valued case to the scalar-valued case. For the details of the scalar-valued case, we refer the reader to \cite[Proposition A.3]{MR2233925}.

We observe the following Besov space variant of \eqref{func.sob}; namely:
\begin{equation}\label{func.bes}
\|  g \|_{ \dot{B}^{0,2}_q \hilbertONE}
\lesssim
\| \Lambda^\ALTsig g \|_{\dot{B}^{0,2}_p \hilbertONE},
\quad \frac{1}{p} - \frac{1}{q} = \frac{\ALTsig}{\Ndim}, \quad 1 < p < q < \infty.
\end{equation}
Notice that \eqref{func.bes} immediately follows from applying \eqref{func.sob} to the individual functions $\Delta_j g$ and then taking the $\ell^2_j$ norms.

We will also use the functional Sobolev embedding:
\begin{equation}
\label{func.sobolev}
\| g \|_{L^q_{\spa} \hilbertONE} \leq C_{q,n} ||g||_{H^k_x \hilbertONE}, \quad \mbox{ whenever } \quad k + \frac{\Ndim}{q} \geq \frac{\Ndim}{2}, \quad 2 \leq q < \infty,
\end{equation}
and
$ 
\| g \|_{L^{\infty}_{\spa} \hilbertONE} \leq C_n ||g||_{H^k_x \hilbertONE},
$
$\forall k > \frac{\Ndim}{2}$. 
In the endpoint case $k + \frac{\Ndim}{q} = \frac{\Ndim}{2}$, \eqref{func.sobolev} follows directly from \eqref{func.sob}.     In all the other non-endpoint cases, \eqref{func.sobolev} follows directly from \eqref{func.sob.old}.  
We can deduce the Besov version of \eqref{func.sobolev}:
\begin{equation}
\label{func.besov}
\| g \|_{\dot{B}^{0,2}_q \hilbertONE} \leq C_{q,n} ||g||_{H^k_x \hilbertONE}, \quad \mbox{ whenever } \quad k + \frac{\Ndim}{q} \geq \frac{\Ndim}{2}, \quad 2 \leq q < \infty,
\end{equation}
and
$ 
\| g \|_{\dot{B}^{0,2}_{\infty} \hilbertONE} \leq C_n ||g||_{H^k_x \hilbertONE},
$
$\forall k > \frac{\Ndim}{2}$.  We will use these inequalities to prove our main product estimates in Section \ref{sec:main.prod}.

First let us give one more Besov space interpolation estimate:

\begin{lemma}\label{besov.LM}
Fix $m>\ell \ge k$, and $1 \leq p \leq q \leq r \leq \infty$.  We have
\begin{equation}
\label{func.interp'}
\|g\|_{\dot{B}^{\ell,q'}_q\hilbertONE} \le  \|g\|_{\dot{B}^{k,r'}_r\hilbertONE}^{\theta}
\|g\|_{\dot{B}^{m,p'}_p\hilbertONE}^{1-\theta}.
\end{equation}
These parameters satisfy the following restrictions
\begin{equation}\notag
\ell = k \theta + m(1-\theta),
\quad \frac{1}{q}=\frac{\theta}{r}+\frac{1-\theta}{p},
\quad \frac{1}{q'}=\frac{\theta}{r'}+\frac{1-\theta}{p'}.
\end{equation}
Also $1 \leq p' \leq q' \leq r' \leq \infty$ and solving we have $\theta = \frac{m- \ell }{m-k} \in (0,1]$.
\end{lemma}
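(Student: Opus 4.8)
The plan is to reduce the estimate to two elementary log-convexity (interpolation) inequalities applied one after the other: first for Lebesgue norms on each dyadic frequency block, then for the $\ell^{q}$ sequence space on the resulting sequence of blocks. No harmonic analysis beyond the definitions is needed, and in particular the Bernstein inequalities \eqref{app.bernstein.XV}--\eqref{app.bernstein.XVI} play no role here. Concretely, fix $j \in \Z$ and observe that $x \mapsto \nsm \Delta_j g(x) \nsm_{\hilbertONE}$ is a non-negative scalar function, so Lyapunov's inequality for Lebesgue norms, together with the hypotheses $\frac{1}{q} = \frac{\theta}{r} + \frac{1-\theta}{p}$ and $p \le q \le r$, gives
\begin{equation*}
\| \Delta_j g \|_{L^q_\spa \hilbertONE} \le \| \Delta_j g \|_{L^r_\spa \hilbertONE}^{\theta}\, \| \Delta_j g \|_{L^p_\spa \hilbertONE}^{1-\theta}.
\end{equation*}
Since $\theta = \frac{m-\ell}{m-k} \in (0,1]$ satisfies $k\theta + m(1-\theta) = \ell$, we have $2^{\ell j} = (2^{kj})^{\theta}(2^{mj})^{1-\theta}$, so with $a_j \eqdef 2^{kj}\| \Delta_j g \|_{L^r_\spa \hilbertONE}$ and $b_j \eqdef 2^{mj}\| \Delta_j g \|_{L^p_\spa \hilbertONE}$ this becomes the pointwise-in-$j$ bound $2^{\ell j}\| \Delta_j g \|_{L^q_\spa \hilbertONE} \le a_j^{\theta}\, b_j^{1-\theta}$.

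Next I would take the $\ell^{q'}_j$ norm of both sides. Assuming $\theta \in (0,1)$ --- the case $\theta = 1$ forces $q = r$, $q' = r'$, $\ell = k$, and the inequality is then a trivial equality --- I raise to the power $q'$ and apply H\"older's inequality for sequences with exponents $u \eqdef \frac{r'}{\theta q'}$ and $v \eqdef \frac{p'}{(1-\theta)q'}$. These are conjugate, since $\frac{1}{u} + \frac{1}{v} = q'\big(\frac{\theta}{r'} + \frac{1-\theta}{p'}\big) = 1$, and both are $\ge 1$: from $p' \le q' \le r'$ and $\theta \le 1$ we get $\theta q' \le q' \le r'$, while $\frac{1}{q'} \ge \frac{1-\theta}{p'}$ (because $\frac{\theta}{r'} \ge 0$) gives $(1-\theta)q' \le p'$. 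This yields
\begin{equation*}
\big\| (a_j^{\theta}\, b_j^{1-\theta})_j \big\|_{\ell^{q'}_j} \le \big\| (a_j)_j \big\|_{\ell^{r'}_j}^{\theta}\, \big\| (b_j)_j \big\|_{\ell^{p'}_j}^{1-\theta} = \| g \|_{\dot{B}^{k,r'}_r \hilbertONE}^{\theta}\, \| g \|_{\dot{B}^{m,p'}_p \hilbertONE}^{1-\theta}.
\end{equation*}
Combining with the block-level bound from the previous step gives $\| g \|_{\dot{B}^{\ell,q'}_q \hilbertONE} \le \big\| (a_j^{\theta}\, b_j^{1-\theta})_j \big\|_{\ell^{q'}_j}$, which is the asserted inequality.

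Finally I would dispose of the degenerate and endpoint cases: when any of $p'$, $q'$, $r'$ equals $\infty$ the sequence-H\"older step is replaced by its obvious $\sup$-analogue, and if either factor on the right-hand side is infinite there is nothing to prove. I expect the only genuine bookkeeping is the verification that the H\"older exponents $u$ and $v$ are $\ge 1$, which hinges on the constraints $p' \le q' \le r'$ and $\frac{1}{q'} = \frac{\theta}{r'} + \frac{1-\theta}{p'}$; this is the single (and quite mild) obstacle in the argument.
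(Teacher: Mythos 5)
Your proposal is correct and follows essentially the same route as the paper: a blockwise Lyapunov/H\"older interpolation in $x$ for each $\Delta_j g$ (using $\tfrac1q=\tfrac{\theta}{r}+\tfrac{1-\theta}{p}$), followed by H\"older's inequality in $j$ with the conjugate exponents determined by $\tfrac{1}{q'}=\tfrac{\theta}{r'}+\tfrac{1-\theta}{p'}$. You simply spell out the exponent bookkeeping and the degenerate cases that the paper leaves implicit.
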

The most common case of this inequality that we will use is
\begin{equation}
\label{func.interp}
\|g\|_{\dot{B}^{\ell,2}_q\hilbertONE} \lesssim  \|g\|_{\dot{B}^{0,2}_r\hilbertONE}^{\theta}
\|g\|_{\dot{B}^{\ell+m,2}_p\hilbertONE}^{1-\theta},
\end{equation}
with $\ell, m > 0$, for $\theta=\frac{\ell}{\ell+m} \in (0,1)$.

\begin{proof}[Proof of Lemma \ref{besov.LM}]
Recall
$\|\Delta_j g\|_{L^q_x \hilbertONE}=\|\|\Delta_j g(x,\cdot)\|_{\hilbertONE}\|_{L^q_x}.$
We use H{\"{o}}lder's inequality in $x$ to obtain
$$
\|\Delta_j g\|_{L^q \hilbertONE}\leq \|\|\Delta_j g(x,\cdot)\|_{\hilbertONE}\|_{L^r_x}^{\theta} \|\|\Delta_j g(x,\cdot)\|_{\hilbertONE}\|_{L^p_x}^{1-\theta}= \|\Delta_j g\|_{L^r \hilbertONE}^{\theta} \|\Delta_j g\|_{L^p \hilbertONE}^{1-\theta}.
$$
The claim then follows by applying H\"{o}lder's inequality in $j$.
\end{proof}

\begin{remark}
Note that some other physical-space proofs of analogous interpolations don't easily generalize to the functional setting due to the definition \eqref{Besovseminorm}.
\end{remark}

\subsection{The Littlewood-Paley Inequality for Hilbert Space-valued functions}
\label{subsectionA4}
In this sub-section, we use the tools from vector-valued Calder\'{o}n-Zygmund theory to obtain additional functional Sobolev inequalities in Besov spaces.
In the following, we will always further suppose that $\hilbertONE$ is some separable Hilbert space acting only on the variables $\vel \in \threed_{\vel}$ as in Section \ref{sec.notation}.  
We sometimes also use $\hilbertTWO$ as a second Hilbert space which satisfies the same assumptions.

By using the vector-valued Calder\'{o}n-Zygmund theory, it is possible to show the following vector-valued Littlewood-Paley inequality (c.f.\cite{MR1800316}):

\begin{theorem}
\label{LittlewoodPaleyHilbertSpace}
Suppose that $\|f\|_{L^p_x \hilbertONE}< \infty$ for
some $1<p<\infty$.  Then
$$
\left\| \left( \sum_{j\in \Z} \|\Delta_j f(x)\|_{\hilbertONE}^2 \right)^{1/2}  \right\|_{L^p_x} \approx \|f\|_{L^p_x \hilbertONE}.
$$ 
\end{theorem}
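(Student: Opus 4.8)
The plan is to deduce the Hilbert-space-valued square-function estimate from the classical scalar Littlewood--Paley inequality by means of a randomization (Khintchine) argument combined with the vector-valued Calder\'{o}n--Zygmund theory referenced in \cite[Chapter 5.5]{MR1800316}. First I would observe that the operator $f \mapsto (\Delta_j f)_{j\in\Z}$ can be viewed as a single convolution operator with an $\ell^2$-valued kernel: define $T f(x) \eqdef (\psi_j * f(x))_{j\in\Z}$, where $\psi_j$ are as in Section~\ref{app.besov.h}, so that $Tf$ takes values in the Hilbert space $\ell^2_j(\hilbertONE)$, which is again a separable Hilbert space. The scalar-valued statement (i.e.\ the classical Littlewood--Paley inequality on $L^p(\threed_\spa)$, valid for $1<p<\infty$) says precisely that $T$ is bounded from $L^p_x$ to $L^p_x(\ell^2_j)$. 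The kernel $x \mapsto (\psi_j(x))_j$ satisfies the H\"{o}rmander-type cancellation and size conditions uniformly in $j$ (this is the standard verification underlying the scalar proof), so $T$ is a Calder\'{o}n--Zygmund operator with values in $\ell^2_j$.

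The key step is then to tensor with the Hilbert space $\hilbertONE$. Because the vector-valued Calder\'{o}n--Zygmund theory in \cite[Chapter 5.5]{MR1800316} (see also \cite{Tao2}) applies verbatim when the target space $\ell^2_j$ is replaced by $\ell^2_j(\hilbertONE)$ --- the only property used is that this is a Banach space (indeed Hilbert space) in which the kernel estimates make sense, and for singular integrals with $\mathrm{UMD}$-valued (in particular Hilbert-space-valued) kernels the Calder\'{o}n--Zygmund extrapolation from $L^2$ to $L^p$, $1<p<\infty$, holds --- one obtains that $T \otimes \mathrm{Id}_{\hilbertONE}$ is bounded from $L^p_x \hilbertONE$ to $L^p_x(\ell^2_j(\hilbertONE))$. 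Unwinding the definition of the target norm gives
\[
\left\| \left( \sum_{j\in\Z} \|\Delta_j f(x)\|_{\hilbertONE}^2 \right)^{1/2} \right\|_{L^p_x} \lesssim \|f\|_{L^p_x \hilbertONE}.
\]
For the $L^2$ endpoint that seeds the extrapolation one can alternatively argue directly: by Fubini's theorem and Plancherel in $x$, $\| (\sum_j \|\Delta_j f\|_\hilbertONE^2)^{1/2}\|_{L^2_x}^2 = \sum_j \|\Delta_j f\|_{L^2_x \hilbertONE}^2 = \sum_j \|\varphi_j \widehat{f}\|_{L^2_\xi \hilbertONE}^2$, which is comparable to $\|\widehat f\|_{L^2_\xi \hilbertONE}^2 = \|f\|_{L^2_x \hilbertONE}^2$ by the finite-overlap property of the $\varphi_j$.

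For the reverse inequality $\|f\|_{L^p_x \hilbertONE} \lesssim \| (\sum_j \|\Delta_j f\|_\hilbertONE^2)^{1/2}\|_{L^p_x}$ I would use the standard duality argument: pick $\tilde\Delta_j$ a slightly fattened projection so that $\Delta_j = \Delta_j \tilde\Delta_j$ and $\sum_j \tilde\Delta_j \Delta_j = \mathrm{Id}$ (with convergence in $L^p_x\hilbertONE$ for $1<p<\infty$, as recorded after the definition of $\Delta_j$ in Section~\ref{app.besov.h}), then pair $f$ against $g \in L^{p'}_x \hilbertONE$, write $(f,g) = \sum_j (\Delta_j f, \tilde\Delta_j g)$, apply the Cauchy--Schwarz inequality in $\hilbertONE$ pointwise, then the Cauchy--Schwarz inequality in $j$, then H\"{o}lder in $x$, and finally the already-proved forward inequality applied to $\tilde\Delta_j g$ in $L^{p'}$. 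The main obstacle, such as it is, is purely expository: one must make sure that the vector-valued Calder\'{o}n--Zygmund machinery of \cite{MR1800316} really does cover $\ell^2$-valued kernels with an \emph{additional} separable Hilbert space $\hilbertONE$ tensored on, i.e.\ that all constants are independent of $\hilbertONE$; this is standard since Hilbert spaces are $\mathrm{UMD}$ with uniformly bounded constants, but it is the only point requiring care, and I would simply cite \cite[Chapter 5.5]{MR1800316} and \cite{Tao2} for it rather than reprove the extrapolation.
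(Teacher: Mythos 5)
Your proposal is correct and follows essentially the same route as the paper, which gives no proof of Theorem \ref{LittlewoodPaleyHilbertSpace} but simply invokes the vector-valued Calder\'{o}n--Zygmund theory of \cite[Chapter 5.5]{MR1800316} and \cite{Tao2}; your sketch (the $\ell^2_j(\hilbertONE)$-valued kernel with H\"{o}rmander condition, the $L^2$ endpoint via Plancherel and finite overlap, extrapolation to $1<p<\infty$, and the duality argument with fattened projections for the reverse bound) is exactly the standard argument behind that citation. The only cosmetic remark is that the appeal to UMD is unnecessary: since the $L^2$ bound is available directly, the Banach-space-valued Calder\'{o}n--Zygmund extrapolation already applies with constants independent of $\hilbertONE$.
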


From the Littlewood-Paley Theorem, we can deduce a Sobolev embedding bound:

\begin{lemma}
Fix $2 \leq p <\infty$, $s \in \mathbb{R}$.
Then we have:
\begin{equation}
\label{LpBesov2}
\|\Lambda^s f\|_{L^p_x \hilbertONE} \lesssim \|f\|_{\dot{B}^{s,2}_p \hilbertONE} \approx
\| \Lambda^s f\|_{\dot{B}^{0,2}_p \hilbertONE}.
\end{equation}
\end{lemma}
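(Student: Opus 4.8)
The plan is to first dispose of the stated equivalence $\|f\|_{\dot{B}^{s,2}_p\hilbertONE}\approx\|\Lambda^s f\|_{\dot{B}^{0,2}_p\hilbertONE}$, which is precisely \eqref{A3'} applied with regularity exponents $s$ and $0$; equivalently, it follows from \eqref{app.bernstein.XVI} applied to each dyadic block $\Delta_j f$ and then taking $\ell^2_j$ norms. Hence it suffices to prove the genuine inequality $\|\Lambda^s f\|_{L^p_x\hilbertONE}\lesssim\|\Lambda^s f\|_{\dot{B}^{0,2}_p\hilbertONE}$, and, setting $g\eqdef\Lambda^s f$, this reduces to the $s$-free statement $\|g\|_{L^p_x\hilbertONE}\lesssim\|g\|_{\dot{B}^{0,2}_p\hilbertONE}$ for any $g$ with $2\le p<\infty$.

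Next I would invoke the vector-valued Littlewood--Paley inequality, Theorem \ref{LittlewoodPaleyHilbertSpace}, which gives
$$
\|g\|_{L^p_x\hilbertONE}\approx\Bigl\|\,\bigl(\textstyle\sum_{j\in\Z}\|\Delta_j g(x)\|_{\hilbertONE}^2\bigr)^{1/2}\Bigr\|_{L^p_x}=\Bigl\|\textstyle\sum_{j\in\Z}\|\Delta_j g(x)\|_{\hilbertONE}^2\Bigr\|_{L^{p/2}_x}^{1/2}.
$$
Since $p\ge 2$ we have $p/2\ge 1$, so Minkowski's inequality (the triangle inequality for the $L^{p/2}_x$ norm) applies to the nonnegative summands $\|\Delta_j g(x)\|_{\hilbertONE}^2$ and yields
$$
\Bigl\|\textstyle\sum_{j\in\Z}\|\Delta_j g(x)\|_{\hilbertONE}^2\Bigr\|_{L^{p/2}_x}\le\sum_{j\in\Z}\bigl\|\,\|\Delta_j g(x)\|_{\hilbertONE}^2\bigr\|_{L^{p/2}_x}=\sum_{j\in\Z}\|\Delta_j g\|_{L^p_x\hilbertONE}^2.
$$
Combining the last two displays gives $\|g\|_{L^p_x\hilbertONE}^2\lesssim\sum_{j\in\Z}\|\Delta_j g\|_{L^p_x\hilbertONE}^2=\|g\|_{\dot{B}^{0,2}_p\hilbertONE}^2$, and taking square roots finishes the argument.

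The only point that needs care is the direction of Minkowski's inequality: it is essential that $p/2\ge 1$, i.e. $p\ge 2$, which is exactly the hypothesis of the lemma (for $p<2$ the inequality would run the wrong way and the conclusion fails in general). There is no real obstacle beyond bookkeeping with the mixed-norm exponents and checking that Theorem \ref{LittlewoodPaleyHilbertSpace} is applicable, which it is once $\|g\|_{L^p_x\hilbertONE}<\infty$; if one prefers not to assume this finiteness a priori, one may instead first restrict the frequency sum to $|j|\le N$, run the identical estimate with constants independent of $N$, and then pass to the limit $N\to\infty$ by monotone convergence.
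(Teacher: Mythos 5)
Your proposal is correct and follows essentially the same route the paper intends: the equivalence is the Bernstein/Riesz-potential identity \eqref{A3'}, and the embedding is deduced from the vector-valued Littlewood--Paley inequality of Theorem \ref{LittlewoodPaleyHilbertSpace} together with the triangle inequality in $L^{p/2}_x$ (valid precisely because $p\ge 2$), which is the standard argument the paper leaves implicit. Your closing remark on the finiteness hypothesis and the truncation/limiting step is a harmless extra precaution, not a deviation.
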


We will also need to use a Hilbert space-valued version of the H\"{o}rmander-Mikhlin Multiplier Theorem. 
For a function $m:\mathbb{R}^n_{\xi} \rightarrow \mathbb{C}$, we define the Fourier multiplier operator $m(D)$ on $L^2_x \hilbertONE$ by:
$$
\CF(m(D)f)(\xi)\eqdef m(\xi) \widehat{f}(\xi).
$$
Then we have (c.f. \cite{MR1800316} and \cite{Tao2})

\begin{proposition}
\label{Multiplier}
Suppose that $m:\mathbb{R}^n_{\xi} \rightarrow \mathbb{C}$ is a bounded function such that 
$$
|\nabla^k_{\xi} m(\xi)| \lesssim \frac{1}{|\xi|^k},
\quad
\forall \xi \in \mathbb{R}^n \setminus \{0\}, \quad 0 \leq k \leq n+2.
$$ 
Then $m(D): L^p_x \hilbertONE \rightarrow L^p_x \hilbertONE$ is a bounded operator for all $1<p<\infty$.
\end{proposition}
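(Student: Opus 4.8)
The plan is to run the standard Calder\'{o}n--Zygmund argument, keeping careful track of where the Hilbert space structure of $\hilbertONE$ is used; the point is that it enters only in two places, Plancherel's theorem and self-duality, both of which are available precisely because $\hilbertONE$ is a Hilbert space.  First I would establish boundedness on $L^2_\spa\hilbertONE$: since Plancherel holds for $\hilbertONE$-valued functions, $\|m(D)f\|_{L^2_\spa\hilbertONE} = \|m(\fva)\widehat{f}(\fva)\|_{L^2_\fva\hilbertONE} \le \|m\|_{L^\infty_\fva}\|f\|_{L^2_\spa\hilbertONE}$.  Next I would produce the convolution kernel $K$ together with its size and regularity estimates.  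Using the Littlewood--Paley cutoffs $\varphi_j$ from Section \ref{app.besov.h}, decompose $m = \sum_j m\varphi_j$; each piece $m_j\eqdef m\varphi_j$ is supported in $|\fva|\approx 2^j$ and, by the hypothesis $|\nabla^k_\fva m(\fva)|\lesssim |\fva|^{-k}$ for $0\le k\le n+2$, satisfies $|\nabla^k_\fva m_j(\fva)|\lesssim 2^{-jk}$ on that annulus.  Integration by parts then gives $|\nabla^\ell_\spa K_j(\spa)|\lesssim 2^{j(n+\ell)}(1+2^j|\spa|)^{-(n+2)}$ for $\ell=0,1$, where $K_j$ is the inverse Fourier transform of $m_j$; summing in $j$ shows that $K\eqdef\sum_j K_j$ agrees, away from the origin, with a function satisfying $|K(\spa)|\lesssim |\spa|^{-n}$ and $|\nabla_\spa K(\spa)|\lesssim |\spa|^{-n-1}$, and that $m(D)f = K*f$ for Schwartz $f$.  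The gradient bound yields the H\"{o}rmander regularity condition $\sup_{y\ne 0}\int_{|\spa|\ge 2|y|}|K(\spa-y)-K(\spa)|\,d\spa<\infty$.

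With these two ingredients in hand, I would carry out the Calder\'{o}n--Zygmund decomposition for $\hilbertONE$-valued $L^1$ functions.  Since $K$ is scalar-valued it acts as a scalar multiple of the identity on $\hilbertONE$, so $K*f$ is a well-defined Bochner integral; given $f\in L^1_\spa\hilbertONE$ and $\lambda>0$, apply the usual decomposition at height $\lambda$ to the scalar function $\spa\mapsto\|f(\spa)\|_{\hilbertONE}$ to obtain a family of dyadic cubes $Q$, and set $f = g + b$ with $b=\sum_Q b_Q$, $b_Q = (f-\langle f\rangle_Q)\ind_Q$ where $\langle f\rangle_Q$ is the $\hilbertONE$-valued average; the estimates $\|g\|_{L^\infty_\spa\hilbertONE}\lesssim\lambda$, $\|g\|_{L^1_\spa\hilbertONE}\lesssim\|f\|_{L^1_\spa\hilbertONE}$, $\int_Q\|b_Q\|_{\hilbertONE}\lesssim\lambda|Q|$ and $\sum_Q|Q|\lesssim\lambda^{-1}\|f\|_{L^1_\spa\hilbertONE}$ go through with absolute values replaced by $\|\cdot\|_{\hilbertONE}$.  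Applying the $L^2$ bound to $g$ and the H\"{o}rmander condition to each $b_Q$ (again replacing $|\cdot|$ by $\|\cdot\|_{\hilbertONE}$ throughout) yields the weak $(1,1)$ estimate $|\{\spa:\|m(D)f(\spa)\|_{\hilbertONE}>\lambda\}|\lesssim\lambda^{-1}\|f\|_{L^1_\spa\hilbertONE}$, and Marcinkiewicz interpolation with the $L^2$ bound gives boundedness on $L^p_\spa\hilbertONE$ for $1<p\le 2$.  Finally, for $2\le p<\infty$ I would use duality: $(L^p_\spa\hilbertONE)^* = L^{p'}_\spa\hilbertONE$ since $\hilbertONE$ is Hilbert (hence reflexive), and the adjoint of $m(D)$ is the multiplier with symbol $\overline{m(-\fva)}$, which satisfies exactly the same hypotheses, so the already-established case $1<p'\le 2$ applies.

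The routine-but-slightly-delicate step is making precise sense of the kernel $K$ (it is a priori only a tempered distribution, the pointwise H\"{o}rmander estimates holding off the diagonal) and checking that $m(D)$ is genuinely given by convolution against it on a dense class; this is handled by the dyadic decomposition above exactly as in the scalar theory.  The one genuinely vector-valued point — and hence where one must be slightly careful — is that the Calder\'{o}n--Zygmund decomposition and the weak-$(1,1)$ argument must be run with Bochner integrals and $\hilbertONE$-norms in place of Lebesgue integrals and absolute values; but because the kernel is scalar, no operator-valued Calder\'{o}n--Zygmund theory is needed and the argument is essentially verbatim the classical one, as in \cite[Chapter 5.5]{MR1800316} or \cite{Tao2}.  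I do not expect any serious obstacle.
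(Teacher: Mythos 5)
Your argument is correct, and it is essentially the paper's approach: the paper does not write out a proof of Proposition \ref{Multiplier} at all, but simply invokes the vector-valued Calder\'{o}n--Zygmund theory of \cite{MR1800316} and \cite{Tao2}, whose proof is exactly the one you reconstruct (Plancherel on $L^2_\spa\hilbertONE$, dyadic kernel estimates and the H\"{o}rmander condition, Calder\'{o}n--Zygmund decomposition of the scalar function $\spa\mapsto\|f(\spa)\|_{\hilbertONE}$, weak $(1,1)$ plus Marcinkiewicz interpolation, and duality for $p>2$). The only cosmetic slip is the symbol of the adjoint (it is $\overline{m(\fva)}$ for the sesquilinear pairing, or $m(-\fva)$ for the bilinear one), which is immaterial since any of these satisfies the same Mikhlin hypotheses.
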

By using Proposition \ref{Multiplier}, with $m(\xi)\eqdef\frac{\xi^{\alpha}}{|\xi|^k}$, it is possible to deduce:

\begin{lemma} \label{partialderivativesLp}
Let $\hilbertONE$ be a Hilbert space and let $1<p<\infty$.
For all multiindices $\alpha=(\alpha_1,\ldots,\alpha_n)$ with $|\alpha|=k$, the following bound holds:
\begin{equation} \notag 
\|\partial^{\alpha} f\|_{L^p_x \hilbertONE} \lesssim \|\Lambda^k f\|_{L^p_x \hilbertONE}.
\end{equation}
In particular, when $p=2$, we can use Plancherel's Theorem to deduce:
\begin{equation}\notag 
\|\Lambda^k f\|_{L^2_x \hilbertONE} \lesssim \sum_{|\alpha|=k} \|\partial^{\alpha}f\|_{L^2_x \hilbertONE}.
\end{equation}
\end{lemma}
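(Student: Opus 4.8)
The plan is to recognize $\partial^\al$ as a Fourier multiplier of order zero composed with $\Lambda^k$, and then to invoke the vector-valued H\"ormander--Mikhlin theorem, Proposition \ref{Multiplier}, together with the vector-valued Plancherel identity. Concretely, for a multi-index $\al$ with $|\al| = k$ I would set $m(\fva) \eqdef (2\pi \rmi)^{k}\fva^\al/|\fva|^k$ for $\fva \neq 0$, where $\fva^\al = \fva_1^{\al_1}\cdots\fva_\Ndim^{\al_\Ndim}$; with the Fourier transform convention fixed in Section \ref{app.besov.h} one then has $\CF(\partial^\al f)(\fva) = (2\pi\rmi\fva)^\al \widehat f(\fva) = m(\fva)\,|\fva|^k\widehat f(\fva) = m(\fva)\,\CF(\Lambda^k f)(\fva)$, i.e. $\partial^\al f = m(D)\Lambda^k f$.

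The first step is to check that $m$ satisfies the hypotheses of Proposition \ref{Multiplier}. Since $m$ is homogeneous of degree $0$ and smooth on $\threed\setminus\{0\}$, it is bounded, and for each $j\ge 0$ the derivative $\na^j_\fva m$ is homogeneous of degree $-j$ and smooth away from the origin; evaluating on the unit sphere and rescaling gives $|\na^j_\fva m(\fva)| \lesssim |\fva|^{-j}$ for all $\fva\neq 0$, in particular for $0\le j\le \Ndim+2$. Proposition \ref{Multiplier} then yields that $m(D)\colon L^p_\spa\hilbertONE \to L^p_\spa\hilbertONE$ is bounded for every $1<p<\infty$, and composing with $\Lambda^k$ gives the asserted bound $\|\partial^\al f\|_{L^p_\spa\hilbertONE}\lesssim \|\Lambda^k f\|_{L^p_\spa\hilbertONE}$.

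For the reverse inequality in the case $p=2$, the plan is to argue directly with Plancherel's theorem for $\hilbertONE$-valued functions, $\|g\|_{L^2_\spa\hilbertONE} = \|\widehat g\|_{L^2_\spa\hilbertONE}$, which holds since $\hilbertONE$ is a separable Hilbert space. This reduces the claim to the pointwise-in-$\fva$ comparison $|\fva|^{2k}\lesssim \sum_{|\al|=k}|\fva^\al|^2$, which in turn follows from the multinomial expansion $|\fva|^{2k} = (\sum_{i=1}^\Ndim\fva_i^2)^k = \sum_{|\al|=k}\binom{k}{\al}\fva^{2\al}$ after bounding the binomial coefficients by their maximum over $|\al|=k$. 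Integrating the comparison against $\|\widehat f(\fva)\|_{\hilbertONE}^2$ and using Plancherel once more then gives $\|\Lambda^k f\|_{L^2_\spa\hilbertONE}\lesssim \sum_{|\al|=k}\|\partial^\al f\|_{L^2_\spa\hilbertONE}$.

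I do not expect a genuine obstacle here; the only mildly delicate points are the (routine) symbol estimates for the degree-zero homogeneous multiplier $m$, and confirming that the Fourier transform and Plancherel's identity extend isometrically to $\hilbertONE$-valued $L^2$ functions — the latter follows by expanding in an orthonormal basis of $\hilbertONE$ and applying the scalar statement to each coefficient, using separability of $\hilbertONE$.
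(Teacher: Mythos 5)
Your proposal is correct and follows essentially the same route as the paper: the paper deduces the first bound from Proposition \ref{Multiplier} with the homogeneous degree-zero symbol $m(\xi)=\xi^{\alpha}/|\xi|^k$, and the second from Plancherel's theorem, exactly as you do, with your write-up merely filling in the routine symbol estimates and the multinomial comparison that the paper leaves implicit.
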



We note that Lemma \ref{partialderivativesLp} is important because the product estimates \eqref{lem.app.interp.sec}, \eqref{goal.prove.last} and \eqref{lem.app.interp.B} in Section \ref{sec.decayl} are given in terms of $\|\pa^{\al}f\|_{L^p_{\spa} \hilbertONE}$, whereas the estimates in this section are given in terms of $\|\Lambda^k f\|_{L^p_{\spa} \hilbertONE}$. 

\subsection{The main product estimates}
\label{sec:main.prod}

In this sub-section, we prove product estimates which allow us to deduce \eqref{lem.app.interp.sec}, \eqref{goal.prove.last} and \eqref{lem.app.interp.B}. The first bound we prove is Lemma \ref{lem.app.interp} which holds in the framework of general Hilbert spaces $\hilbertONE$ and $\hilbertTWO$ as in Section \ref{sec.notation}. The second result is Lemma \ref{last.lem.app} in which the Hilbert spaces are $L^2_{\vel}$ and $N^{s,\gamma}$. The proofs of both results are based on a case-by-case analysis in which one uses the functional Sobolev type inequalities from Sub-section \ref{app.sobolev.ineq}. We have not attempted to optimize the choice of $p$ and $q$, nor of the weight $\ell'$ used in Lemma \ref{last.lem.app}.  The first product estimate we prove is:

\begin{lemma}
\label{lem.app.interp}
For any $k\in\{0, 1, \ldots, \dK\}$ and $\wM \in\{0, 1, \ldots, k\}$ 
there exists $p, q\ge 2$ satisfying $\frac{1}{p}+\frac{1}{q} = \frac{1}{2}$ such that we have
\begin{equation}\notag 
\| \Lambda^{k-\wM}  f \|_{ L^q_{\spa}  \hilbertONE}
\| \Lambda^{\wM}   g \|_{ L^p_{\spa}  \hilbertTWO}
\lesssim
\|  g \|_{H^{\ksob}_{\spa} \hilbertTWO}
\|  \Lambda^{\wJ } f \|_{L^2_{\spa} \hilbertONE}
+
\|  f \|_{H^{\ksob}_{\spa} \hilbertONE}
\|   \Lambda^{\wJ } g \|_{L^2_{\spa} \hilbertTWO},
\end{equation}
where $\wJ \eqdef \min\{k+1, \dK\}$. Also $\hilbertONE$ and $\hilbertTWO$ are Hilbert spaces as in Section \ref{sec.notation} 
\end{lemma}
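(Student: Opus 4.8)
The plan is to reduce the estimate to a Gagliardo–Nirenberg–Sobolev interpolation in the functional setting, choosing the exponents $p$ and $q$ according to where $i$ and $k-i$ fall relative to the threshold $K^*_n$. First I would split into cases. If $i \le K^*_n - 1$ and $k - i \le K^*_n - 1$ — which can always be arranged to make sense after possibly relabeling, except in a small number of extreme cases handled below — I would aim to put the higher-order factor on the $L^2_\spa$ side and the lower-order factor on an $L^\infty_\spa$ or high-Lebesgue side. Concretely, if $i \le k - i$ (so $k - i$ is the ``large'' index), I would choose $q$ large (possibly $q = \infty$) so that $\| \Lambda^{i} g \|_{L^p_\spa \hilbertTWO}$ can be controlled by $\| g \|_{H^{K^*_n}_\spa \hilbertTWO}$ via the functional Sobolev embedding \eqref{func.sobolev} (using $i \le K^*_n$ and the fact that we may take $p$ close to $2$), while $\| \Lambda^{k-i} f \|_{L^q_\spa \hilbertONE}$ gets controlled via the optimized functional GNS inequality \eqref{func.sob.old}: interpolating between $\Lambda^{m_1} f$ in $L^2_\spa$ and $\Lambda^{m_2} f$ in $L^2_\spa$ with $m_1 = 0$ or a low value and $m_2 = j_* = \min\{k+1,\dK\}$, which produces exactly $\| \Lambda^{j_*} f\|_{L^2_\spa \hilbertONE}$ (possibly times a low-order norm absorbed into $\|f\|_{H^{K^*_n}_\spa\hilbertONE}$ — note $\dK \ge 2K^*_n$ so $K^*_n \le \dK - K^*_n$, giving room). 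The symmetric choice handles $i > k - i$.

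The second main step is bookkeeping on the scaling relation. For \eqref{func.sob.old} to apply with the target $L^q_\spa$ (resp. $L^p_\spa$) norm of $\Lambda^{k-i}$ (resp. $\Lambda^i$), I need $\theta \in (0,1)$ together with the dimensional balance \eqref{theta.gns}. Since $\tfrac1p + \tfrac1q = \tfrac12$ is one constraint and the GNS balances give two more, the exponents are essentially pinned down once I decide which factor carries $\Lambda^{j_*}$; I would verify that the resulting $p, q$ are both $\ge 2$, which holds precisely because $i, k-i \le k \le \dK$ and because $j_* \le \dK \le 2K^*_n \cdot (\text{something})$ gives enough slack. When one of the indices is $0$ (e.g. $i = 0$), I would instead interpolate using \eqref{func.sob} or \eqref{func.sob.old} directly with $m = 0$, and when $\theta$ would need to be $0$ or $1$ I would use the non-strict Sobolev embedding \eqref{func.sob} (the endpoint $k + \Ndim/q = \Ndim/2$ case) or \eqref{func.sobolev} in place of \eqref{func.sob.old}; these degenerate cases are exactly why the statement only asserts existence of suitable $p, q$ rather than giving them explicitly.

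The main obstacle I anticipate is organizing the case analysis cleanly: the interplay between whether $i$ or $k - i$ exceeds the Sobolev threshold $K^*_n$, whether $k+1 \le \dK$ or $k = \dK$ (affecting $j_*$), and whether the GNS exponent $\theta$ lands in the open interval or at an endpoint, together with the constraint $p, q \ge 2$, produces several subcases. The key structural facts that make all of them work are: (i) the hypothesis $\dK \ge 2K^*_n$, which guarantees $\| f\|_{H^{K^*_n}_\spa \hilbertONE} \lesssim \| f\|_{H^{\dK - K^*_n}_\spa \hilbertONE}$ and leaves room to absorb low-order terms; (ii) $K^*_n = \lfloor \Ndim/2 + 1\rfloor > \Ndim/2$, so the embedding $H^{K^*_n}_\spa \hilbertONE \hookrightarrow L^\infty_\spa \hilbertONE$ holds and can soak up the lower-order factor; and (iii) the Minkowski inequality \eqref{app.minkowski}, which reduces every functional Sobolev/GNS inequality to its scalar counterpart so that no new harmonic analysis is needed beyond what Section \ref{app.sobolev.ineq} already provides. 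Once the cases are set up, each one is a one-line application of \eqref{func.sob.old} (or \eqref{func.sob}/\eqref{func.sobolev}) plus Hölder in $\spa$ with $\tfrac1p + \tfrac1q = \tfrac12$.
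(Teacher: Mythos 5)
There is a genuine gap in the core of your argument, namely the ``one\--sided'' assignment of the two factors. For the intermediate indices $\wM\in\{1,\ldots,k-1\}$ your plan is to bound the low\--order factor entirely by $\|g\|_{H^{\ksob}_{\spa}\hilbertTWO}$ and the high\--order factor by a Gagliardo--Nirenberg interpolation $\| \Lambda^{k-\wM} f\|_{L^q_\spa\hilbertONE}\lesssim \|f\|_{H^{\ksob}_\spa\hilbertONE}^{1-\theta}\|\Lambda^{\wJ}f\|_{L^2_\spa\hilbertONE}^{\theta}$, ``absorbing'' the low end into $H^{\ksob}$. But the resulting intermediate quantity $\|f\|_{H^{\ksob}}^{1-\theta}\|\Lambda^{\wJ}f\|_{L^2}^{\theta}\,\|g\|_{H^{\ksob}}$ with $\theta<1$ is \emph{not} controlled by the right\--hand side of the lemma: testing with simultaneously low\--frequency rescalings $f=F(\lambda x)$, $g=G(\lambda x)$, $\lambda\to 0$, it scales like $\lambda^{-\Ndim+\theta \wJ}$, while both terms $\|g\|_{H^{\ksob}}\|\Lambda^{\wJ}f\|_{L^2}$ and $\|f\|_{H^{\ksob}}\|\Lambda^{\wJ}g\|_{L^2}$ scale like $\lambda^{\wJ-\Ndim}$, and $\lambda^{-\Ndim+\theta\wJ}/\lambda^{\wJ-\Ndim}\to\infty$. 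Equivalently, after Young's inequality you generate the cross term $\|f\|_{H^{\ksob}}\|g\|_{H^{\ksob}}$, which is not dominated by the stated bound. So your chain only proves the lemma when the interpolation degenerates to $\theta=1$, i.e.\ when a pure homogeneous Sobolev embedding $\|\Lambda^{k-\wM}f\|_{L^q}\lesssim\|\Lambda^{\wJ}f\|_{L^2}$ is available; scaling forces $\Ndim/2-\Ndim/q=\wM+(\wJ-k)$, which (together with $q\ge 2$ and $\wM+\Ndim/q\le \ksob$ for the $g$\--factor) only works for $\wM\in\{0,k\}$ or small $\wM\lesssim \Ndim/2-1$, not for general $\wM\in\{1,\ldots,k-1\}$ with $k$ up to $\dK-1$. (Your preliminary claim that one can ``always arrange'' $\wM,k-\wM\le\ksob-1$ is also false, e.g.\ $\wM=k-\wM=\ksob$ when $k=2\ksob$, but this is secondary.)

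The missing idea, which is how the paper proceeds, is to interpolate \emph{both} factors with complementary exponents: using the square\--function bound \eqref{LpBesov2}, the Besov interpolation \eqref{func.interp} with $\theta$ for one factor and $1-\theta$ for the other, and the embedding \eqref{func.besov}, one arrives at a product of the form
$\left(\|g\|_{H^{\ksob}}\|\Lambda^{\wJ}f\|_{L^2}\right)^{1-\theta}\left(\|f\|_{H^{\ksob}}\|\Lambda^{\wJ}g\|_{L^2}\right)^{\theta}$,
and Young's inequality then yields exactly the two admissible terms with no cross term. In the case $\wJ=k+1$ one additionally first trades one power of $\Lambda$ for integrability via \eqref{func.sob} (as in \eqref{app.func.sob.useful}) and must then verify the compatibility conditions \eqref{require1} on $q$ (split according to whether $\wM<\frac{\Ndim-2}{\Ndim}(k+1)$ or not); this exponent bookkeeping is a real part of the proof, not a formality. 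Your extreme cases $\wM=0$ and $\wM=k$ do coincide with the paper's treatment (a pure Sobolev embedding on one factor and $H^{\ksob}\hookrightarrow L^p_\spa\hilbertTWO$ on the other), but the heart of the lemma is the intermediate range, and there your approach as written does not close.
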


Note clearly that by using Lemma \ref{lem.app.interp}, together with Lemma \ref{partialderivativesLp}, we can directly deduce \eqref{lem.app.interp.sec} and  \eqref{lem.app.interp.B}.

\begin{proof}[Proof of Lemma \ref{lem.app.interp}]  
We prove this lemma in a series of several special cases.  
Suppose first that $\wM = 0$.  Then we choose $q=2^* = \frac{2\Ndim}{\Ndim - 2}$, as in \eqref{func.sob}, so that 
\begin{equation}\label{sob.est.one}
  \| \Lambda^{k} f \|_{L^{2^*}_{\spa} \hilbertONE}
\lesssim
  \|  \Lambda^{k+1} f \|_{L^2_{x}\hilbertONE}.
\end{equation}
In this case $p = \Ndim$ and we use \eqref{func.sobolev} to obtain  
\begin{equation}\label{sobe.est.two}
  \|  g \|_{L^\Ndim_{\spa} \hilbertTWO}
      \lesssim  
  \|  g \|_{H^{\ksob-1}_{\spa} \hilbertTWO},
\end{equation}
since $K^*_n -1 \geq \frac{n}{2}-1$.
This then establishes Lemma \ref{lem.app.interp} if $\wM =0$ and $\wJ=k+1$.  If $\wM =0$ and $\wJ=k=\dK$ we choose $q=2$ so that $p=\infty$  and we use the embedding 
$L^\infty_{\spa} \hilbertTWO \supset H^{\ksob}_{\spa} \hilbertTWO$
as in  \eqref{func.sobolev}.  Then we obtain
Lemma \ref{lem.app.interp} in this case as well.  The cases $\wM =k$, $k=0$ and $k=1$ can both be handled similarly.  

Next we consider the case $k \geq 2$, $\wJ = k = \dK$ and $i \in\{ 1, \ldots, k-1\}$.  In this situation we choose $q=2\frac{k}{k-i}$ and $p=2\frac{k}{i}$. Since $q,p \geq 2$, we can use \eqref{LpBesov2} and \eqref{func.interp} with $\theta = \frac{i}{k}$ twice to obtain:
\begin{multline}\notag 
\| \Lambda^{k-\wM}  f \|_{ L^{2k/(k-i)}_{\spa}  \hilbertONE}
\| \Lambda^{\wM}   g \|_{ L^{2k/i}_{\spa}  \hilbertTWO}
\lesssim \| \Lambda^{k-\wM}  f \|_{\dot{B}^{0,2}_{\frac{2k}{k-i}} \hilbertONE}
\| \Lambda^{\wM}   g \|_{\dot{B}^{0,2}_{\frac{2k}{i}} \hilbertTWO}
\\
\lesssim
\|  f \|_{\dot{B}^{0,2}_{\infty} \hilbertONE}^{\theta}
\|  \Lambda^{\wJ } f \|_{\dot{B}^{0,2}_2 \hilbertONE}^{1-\theta}
\|  g \|_{\dot{B}^{0,2}_{\infty} \hilbertTWO}^{1-\theta}
\|   \Lambda^{\wJ } g \|_{\dot{B}^{0,2}_2 \hilbertTWO}^{\theta} 
\\
\lesssim
\|  g \|_{H^{\ksob}_{\spa} \hilbertTWO}
\|  \Lambda^{\wJ } f \|_{L^2_{\spa} \hilbertONE}
+
\|  f \|_{H^{\ksob}_{\spa} \hilbertONE}
\|   \Lambda^{\wJ } g \|_{L^2_{\spa} \hilbertTWO}.
\end{multline}
Here we used Young's inequality, \eqref{func.besov} and the fact that 
$\|f\|_{\dot{B}^{0,2}_2 \hilbertONE} \approx \|f\|_{L^2_x \hilbertONE}$.  
Then, in this case, we also obtain the result.

We suppose then in the rest of our argument that we have $k\in\{2, \ldots, \dK-1\}$ and $i \in\{ 1, \ldots, k-1\}$ so that always $\wJ = k +1$.  This is the last case to consider in our proof.    We first use \eqref{func.sob}  with $ 2^* = \frac{2\Ndim}{\Ndim - 2} \leq q < \infty$ to obtain
\begin{equation} \label{app.func.sob.useful}
\| \Lambda^{k-\wM}  f \|_{ L^q_{\spa}  \hilbertONE}
\lesssim
\| \Lambda^{k+1-\wM}  f \|_{ L^{q'}_{\spa}  \hilbertONE},
\quad 
q' = \frac{\Ndim q}{\Ndim +q} \geq 2.
\end{equation}
Then we apply \eqref{LpBesov2}  and \eqref{func.interp} with $\theta = \frac{\wM}{k+1}$ to obtain:
\begin{equation}\label{useful.next.est}
\| \Lambda^{k+1-\wM}  f \|_{ L^{q'}_{\spa}  \hilbertONE}
\lesssim 
\| \Lambda^{k+1-\wM}  f \|_{ \dot{B}^{0,2}_{q'}  \hilbertONE}
\lesssim
\|   f \|_{ \dot{B}^{0,2}_{r'} \hilbertONE}^{\theta}
\| \Lambda^{k+1}  f \|_{ \dot{B}^{0,2}_2  \hilbertONE}^{1-\theta},
\end{equation}
where $r'$ is obtained from the restriction $\frac{1}{q'}=\frac{\theta}{r'}+\frac{1-\theta}{2}.$

We use \eqref{LpBesov2} and \eqref{func.interp} again (switching the $\theta$ and $1-\theta$) to obtain:
\begin{equation}\notag 
\| \Lambda^{i}  g \|_{ L^{p}_{\spa}  \hilbertTWO}
\lesssim
\| \Lambda^{i}  g \|_{\dot{B}^{0,2}_p  \hilbertTWO}
\lesssim
\|   g \|_{ \dot{B}^{0,2}_r  \hilbertTWO}^{1-\theta}
\| \Lambda^{k+1}  g \|_{ \dot{B}^{0,2}_2  \hilbertTWO}^{\theta},
\quad 
\theta = \frac{\wM}{k+1},
\end{equation}
where $r$ is obtained from $\frac{1}{p}=\frac{1-\theta}{r}+\frac{\theta}{2}.$

The value of $\theta$ is the same in both inequalities by design.  For these inequalities to hold, from \eqref{func.interp} and $\frac{1}{p}+\frac{1}{q} = \frac{1}{2}$, we require that $q$ satisfies:
\begin{equation}\label{require1}
\frac{2\Ndim}{\Ndim - 2} \leq q < \infty,
\quad
 \frac{2q}{q-2}=p \le \frac{2(k+1)}{\wM},
\quad
 \frac{\Ndim q}{\Ndim +q}=q'  \le \frac{2(k+1)}{k+1 - \wM}.
\end{equation}
We note that the first condition is equivalent to $q' \geq 2$, which we use in order to apply \eqref{func.interp}. The second condition is equivalent to $\frac{1}{p} \geq \frac{\theta}{2}$, and the third condition is 
$\frac{1}{q'} \geq \frac{1-\theta}{2}$.  These allow us to solve for $r$ and $r'$, which will then be at least $2$.  

We claim that we can find $q \geq 2^*$ satisfying \eqref{require1}.  Supposing for the moment that this is the case, we obtain:
\begin{multline}\notag 
\| \Lambda^{k-\wM}  f \|_{ L^{q}_{\spa}  \hilbertONE}
\| \Lambda^{\wM}   g \|_{ L^{p}_{\spa}  \hilbertTWO}
\lesssim
\| \Lambda^{k+1-\wM}  f \|_{\dot{B}^{0,2}_{q'}  \hilbertONE}
\| \Lambda^{\wM}   g \|_{ \dot{B}^{0,2}_{p} \hilbertTWO}
\\
\lesssim
\|  f \|_{\dot{B}^{0,2}_{r'} \hilbertONE}^{\theta}
\|  \Lambda^{\wJ } f \|_{\dot{B}^{0,2}_2 \hilbertONE}^{1-\theta}
\|  g \|_{\dot{B}^{0,2}_{r} \hilbertTWO}^{1-\theta}
\|   \Lambda^{\wJ } g \|_{\dot{B}^{0,2}_2 \hilbertTWO}^{\theta}
\\
\lesssim
\|  g \|_{H^{\ksob}_{\spa} \hilbertTWO}
\|  \Lambda^{\wJ } f \|_{L^2_{\spa} \hilbertONE}
+
\|  f \|_{H^{\ksob}_{\spa} \hilbertONE}
\|   \Lambda^{\wJ } g \|_{L^2_{\spa} \hilbertTWO}.
\end{multline}
This follows by collecting the estimates in this paragraph, using the embedding \eqref{func.besov} and applying Young's inequality.

Thus we will have proven Lemma \ref{lem.app.interp} as soon as we establish \eqref{require1}.  
The second condition in \eqref{require1} is equivalent to
$$
q \ge \frac{2(k+1)}{k+1 - \wM}.
$$
The third condition in \eqref{require1} is equivalent to:
\begin{equation}
\label{require1'}
\frac{1}{q} \geq \frac{(n-2)(k+1)-ni}{2n(k+1)}.
\end{equation}
If we assume that $i < \frac{\Ndim - 2}{\Ndim}( k + 1)$, 
then we can check that
$\left(\max\left\{\frac{2(k+1)}{k+1 - \wM},\frac{2\Ndim}{\Ndim - 2} \right\}, \frac{2\Ndim (k+1)}{(\Ndim-2)(k+1-\wM) - \Ndim \wM} \right)$ 
is a non-empty open interval, hence we observe that we can always satisfy \eqref{require1} when 
$i < \frac{\Ndim - 2}{\Ndim}( k + 1)$.
Alternatively if $i \geq \frac{\Ndim - 2}{\Ndim}( k + 1)$ then \eqref{require1'} is immediately satisfied since 
the right-hand side is non-positive.
\end{proof}
 
An additional product estimate which we will use is as follows.
 
 \begin{lemma}\label{last.lem.app}
 For any $k\in\{0, 1, \ldots, \dK\}$ and $\wM \in\{0, 1, \ldots, k\}$ 
there exists $p, q\ge 2$ satisfying $\frac{1}{p}+\frac{1}{q} = \frac{1}{2}$ such that we have
  \begin{equation}\notag
    \|  \Lambda^{k-\wM}   f \|_{L^q_{\spa} L^2_{\vel}}
  \| \Lambda^{\wM}  f  \|_{L^p_{\spa} \spacen}
  \\
\lesssim
\left( \|  f \|_{H^{\ksob}_{\spa} \spacen_{\wN'}}+ \| w^{\wN'}  f \|_{H^{\dK}_{\spa} L^2_{\vel}}\right)
\sum_{\wJ \le m \le \dK} \| \Lambda^{m}    \solU \|_{L^2_{\spa} \spacen},
\end{equation}
where $\wJ = \min\{k+1, \dK\}$ and we suppose that $\ell' \geq \PwNz$, where $\PwNz$ is defined in \eqref{wNz.def'}.
\end{lemma}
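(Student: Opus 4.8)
The approach is to run the case-by-case scheme of the proof of Lemma~\ref{lem.app.interp}, now with the factor $\Lambda^{k-\wM} f$ placed in $L^q_\spa L^2_\vel$ and the factor $\Lambda^{\wM} f$ placed in $L^p_\spa\spacen$. Both $L^2_\vel$ and $\spacen$ are separable Hilbert spaces in $\vel$ (for $\spacen$ this is recorded in Section~\ref{sec.notation}), so the functional Sobolev embeddings \eqref{func.sob}, \eqref{func.sobolev}, \eqref{func.besov}, the Littlewood--Paley bound \eqref{LpBesov2}, and the interpolation \eqref{func.interp} all apply verbatim with these choices. Splitting into the cases $\wM=0$, $\wM=k$, $k\in\{0,1\}$, and the main regime $k\ge2$ with $1\le\wM\le k-1$ (and, within it, $\wJ=k=\dK$ versus $\wJ=k+1$), and selecting $p,q\ge2$ with $\frac1p+\frac1q=\frac12$ by the same recipe, one is led in every case to a bound of the shape $\| f\|_{H^{\ksob}_\spa L^2_\vel}\,\|\Lambda^{\wJ} f\|_{L^2_\spa\spacen}+\| f\|_{H^{\ksob}_\spa\spacen}\,\|\Lambda^{\wJ} f\|_{L^2_\spa L^2_\vel}$. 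Writing $S\eqdef\sum_{\wJ\le m\le\dK}\|\Lambda^m f\|_{L^2_\spa\spacen}$ and $P\eqdef\| f\|_{H^{\ksob}_\spa\spacen_{\wN'}}+\|w^{\wN'} f\|_{H^{\dK}_\spa L^2_\vel}$, the goal is then to bound both of these products by $P\cdot S$; note already that $\| f\|_{H^{\ksob}_\spa\spacen}\le P$ (since $w\ge1$ and $\wN'\ge\PwNz\ge0$), $\| f\|_{H^{\ksob}_\spa L^2_\vel}\le\|w^{\wN'} f\|_{H^{\dK}_\spa L^2_\vel}\le P$ (since $\dK\ge2\ksob\ge\ksob$), and $\|\Lambda^{\wJ} f\|_{L^2_\spa\spacen}\le S$ (it is the $m=\wJ$ term, $\wJ=\min\{k+1,\dK\}\le\dK$). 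So everything hinges on the remaining factor $\|\Lambda^{\wJ} f\|_{L^2_\spa L^2_\vel}$.

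For the hard potentials \eqref{kernelP} this factor is harmless: $\gamma+2s\ge0$ gives $\|\cdot\|_{L^2_\vel}\le\|\cdot\|_{L^2_{\gamma+2s}}\le\|\cdot\|_{\spacen}$ pointwise in $\spa$, whence $\|\Lambda^{\wJ} f\|_{L^2_\spa L^2_\vel}\le\|\Lambda^{\wJ} f\|_{L^2_\spa\spacen}\le S$; since moreover $\PwNz=0$ and $\spacen_{\wN'}=\spacen$, the two products are $\lesssim P\cdot S$ and the statement is consistent.

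For the soft potentials \eqref{kernelPsing}, the velocity decay of $\spacen$ is strictly weaker than that of $L^2_\vel$ (the second degeneracy emphasized in the introduction), and $\|\Lambda^{\wJ} f\|_{L^2_\spa L^2_\vel}$ is genuinely not dominated by $\spacen$-norms of $f$. The remedy is to introduce the velocity weight $w^{\wN'}$ before performing the spatial interpolation: since $\gamma+2s<0<2\wN'$, a Cauchy--Schwarz splitting in $\vel$ at fixed $\spa$ followed by H\"older in $\spa$ yields, for a suitable $a\in(0,1)$,
\begin{equation*}
\|\Lambda^{k-\wM} f\|_{L^q_\spa L^2_\vel}
\lesssim
\|\Lambda^{k-\wM} f\|_{L^{q_1}_\spa L^2_{\gamma+2s}}^{a}\,
\|w^{\wN'}\Lambda^{k-\wM} f\|_{L^{q_2}_\spa L^2_\vel}^{1-a},
\qquad \frac{a}{q_1}+\frac{1-a}{q_2}=\frac1q .
\end{equation*}
The second factor equals $\|\Lambda^{k-\wM}(w^{\wN'} f)\|_{L^{q_2}_\spa L^2_\vel}$ and is controlled by the spatial Sobolev/interpolation arguments applied to $w^{\wN'} f$ in the Hilbert space $L^2_\vel$; each of its pieces is bounded by $\|w^{\wN'} f\|_{H^{\dK}_\spa L^2_\vel}\le P$, because $\dK\ge2\ksob\ge\ksob$ and $\wJ\le\dK$. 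The first factor uses $L^2_{\gamma+2s}\subseteq\spacen$, so it is of the same type as the $\Lambda^{\wM} f$ factor, and the product $\|\Lambda^{k-\wM} f\|_{L^{q_1}_\spa L^2_{\gamma+2s}}^{a}\,\|\Lambda^{\wM} f\|_{L^p_\spa\spacen}$ is estimated by the Lemma~\ref{lem.app.interp} machinery, which this time produces only $\| f\|_{H^{\ksob}_\spa\spacen_{\wN'}}$- and $S$-type quantities, the macroscopic contributions being separated off by \eqref{form.p}--\eqref{coef.p.def} and the Maxwellian decay as in \eqref{macro.point.estimate}. The main obstacle is to make this bookkeeping close exactly: the exponents $p,q,q_1,q_2$ and the weight $\wN'\ge\PwNz=-\frac{\gamma+2s}{2}M$, with $M$ from \eqref{Mequation}, must be arranged so that every high-velocity-weight piece is absorbed into $\|w^{\wN'} f\|_{H^{\dK}_\spa L^2_\vel}$ and every piece with zero or negative velocity weight is absorbed into $S$, leaving no orphaned power of $\| f\|_{H^{\ksob}_\spa\spacen_{\wN'}}$; the available slack is precisely the spatial-regularity room $[\wJ,\dK]$ together with the hypothesis $\dK\ge2\ksob$. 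In the application, the factor $P$ is then reduced to an isotropic norm and bounded by $\SMeps$ via \eqref{isocomp} and Theorem~\ref{thm.energy}, exactly as in \eqref{boundepsilon}, which is where the precise value of $\PwNz$ (equivalently, of $M$) together with $\wN\ge\wNz$ is used.
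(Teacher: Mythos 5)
Your hard-potential case is exactly the paper's argument, and for the soft potentials you have correctly identified the right mechanism: interpolate the unweighted $L^2_{\vel}$ norm between the dissipation weight $L^2_{\gamma+2s}$ and a positively weighted $L^2_{\vel}$ norm (the paper's \eqref{interp.w.last}), then interpolate in $\spa$ and close with Young's inequality. But the proposal stops precisely where the actual proof begins. The entire content of the soft-potential case is the exponent and weight bookkeeping that you defer as ``the main obstacle'': in each regime ($\wM=0$ with $\wJ=k=\dK$ or $\wJ=k+1$; $\wM\in\{1,\dots,k-1\}$ with $\wJ=k=\dK$; the remaining case split according to whether $k+1\ge \Ndim/2$ or $k<\Ndim/2-1$) one must make an explicit choice of $q$ (e.g.\ $q=2\dK/(\dK-\wM)$, $q=\Ndim$ or $2\Ndim$, etc.), verify that the spatial interpolation exponent $\theta'$ from \eqref{func.sob.old} lies in $(0,1)$, and then choose the velocity-interpolation parameter $\tilde\theta$ so that the powers match exactly (e.g.\ $1-\theta'=(1-\theta)\tilde\theta$), so that Young's inequality yields a clean product $P\cdot S$ with no orphaned powers. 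The admissible weight is then read off from $\tfrac{-\gamma-2s}{2}\tfrac{\tilde\theta}{1-\tilde\theta}$ in each case; these computations are what produce the bounds \eqref{L1}, \eqref{L2}, \eqref{L3'}, \eqref{L4}, \eqref{L5}, and they are the \emph{definition} of $M$ in \eqref{Mequation} and of $\PwNz$ in \eqref{wNz.def'}. Asserting that the bookkeeping can be arranged with weight $\wN'\ge\PwNz$ without carrying it out is therefore circular: $\PwNz$ has no independent meaning until those exponents are computed.

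There is also a structural problem with the order of operations you propose. You split $\|\Lambda^{k-\wM}f\|_{L^q_\spa L^2_\vel}$ first (H\"older in $\vel$ and then in $\spa$), producing a factor $\|\Lambda^{k-\wM}f\|_{L^{q_1}_\spa L^2_{\gamma+2s}}^{a}$ with a fractional power $a$, and then claim the product with $\|\Lambda^{\wM}f\|_{L^p_\spa\spacen}$ is handled ``by the Lemma \ref{lem.app.interp} machinery.'' That lemma estimates a product of two full (first-power) factors in conjugate Lebesgue exponents; it does not apply to a factor raised to the power $a$ times another raised to the power $1$, and rebalancing with Young at this stage is exactly where uncancelled powers of $\|f\|_{H^{\ksob}_\spa\spacen_{\wN'}}$ can appear. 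In the paper the weight interpolation is applied only \emph{after} the spatial interpolation has reduced matters to $L^2_\spa$-based quantities such as $\|\Lambda^{k+1}f\|_{L^2_\spa L^2_\vel}$, and $\tilde\theta$ is then tuned against the exponent $\theta'$ of the other factor; with your ordering this tuning has not been shown to be possible. Finally, a small inaccuracy: no micro--macro decomposition via \eqref{form.p}--\eqref{coef.p.def} is used in this lemma (that occurs in the energy estimates where the lemma is applied), so invoking \eqref{macro.point.estimate} here is out of place.
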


By using Lemma \ref{last.lem.app}, together with the result of Lemma \ref{partialderivativesLp}, we can directly deduce \eqref{goal.prove.last}.

\begin{proof}[Proof of Lemma \ref{last.lem.app}]
We first prove this inequality for the hard potentials \eqref{kernelP}, when $\gamma+2s \ge 0$.  In this case we  have:
$|f|_{L^2_v} \leq |f|_{L^2_{\gamma+2s}} \leq |f|_{N^{s,\gamma}}.$
Hence:
$$\|\Lambda^{k-i}f\|_{L^q_x L^2_{\gamma+2s}} \|\Lambda^i f\|_{L^p_x N^{s,\gamma}} 
\leq \|\Lambda^{k-i}f\|_{L^q_x N^{s,\gamma}} \|\Lambda^i f\|_{L^p_x N^{s,\gamma}}$$
We then use Lemma \ref{lem.app.interp} to deduce this expression is:
$$\lesssim \|f\|_{H^{K^*_n}_x N^{s,\gamma}} \|\Lambda^{j_{*}}f\|_{L^2_x N^{s,\gamma}}$$
Hence, the claim follows in the case of hard potentials if we take $\wN' \ge 0$.

Thus the rest of this proof is focused on the soft potential case of \eqref{kernelPsing}, when $\gamma+2s < 0$. 
We again prove it in a series of special cases.  
Consider when $\wM = k$ and $\wJ=k=\dK$.  In this situation choose $p=2$ and $q=\infty$, and use the functional Sobolev embedding $L^\infty_{\spa} L^2_{\vel} \supset H_x^{\ksob} L^2_{\vel}$ as in \eqref{func.sobolev}.  In this case the lemma holds for any $\wN' \ge 0$.  If alternatively $\wM = k$ and $\wJ=k+1$ we choose $p=2^*=\frac{2\Ndim}{\Ndim - 2}$ and $q=\Ndim$ and use the same estimates as in \eqref{sob.est.one} and \eqref{sobe.est.two} (we again only need $\wN' \ge0$).  Of course the case $k=0$ is also covered by this aforementioned analysis.

Next we turn to the case when $\wM=0$ and $\wJ = k=\dK$.  In this situation we must choose $q=2$ and $p=\infty$.  
By interpolation
\begin{equation}\label{interp.w.last}
\| \Lambda^{k} f \|_{L^2_{\spa} L^2_{\vel}}
\lesssim
\| \Lambda^{k} f \|_{L^2_{\spa} \spaceL}^{\tilde{\theta}}
\| w^{\left(\frac{-\gamma-2s}{2}\right) \frac{\tilde{\theta}}{1-{\tilde{\theta}}}} \Lambda^{k} f \|_{L^2_{\spa} L^2_{\vel}}^{1-\tilde{\theta}},
\end{equation}
which holds for any $\tilde{\theta} \in (0,1)$. 
In order to prove \eqref{interp.w.last}, we note that:
$$
| \Lambda^{k} f |_{L^2_{\vel}}=||w^{\frac{\alpha}{\tilde{\theta}}}\Lambda^{k} f|^{\tilde{\theta}} |w^{\frac{-\alpha}{1-\tilde{\theta}}}\Lambda^{k} f|^{1-\tilde{\theta}}|_{L^2_v}
\leq |w^{\frac{\alpha}{\tilde{\theta}}} \Lambda^{k} f|_{L^2_v}^{\tilde{\theta}}
|w^{\frac{-\alpha}{1-\tilde{\theta}}}\Lambda^{k} f|_{L^2_v}^{1-\tilde{\theta}},
$$
by H\"{o}lder's inequality.
We then apply H\"{o}lder's inequality in $\spa$ and we further choose $\alpha\eqdef\frac{(\gamma+2s)\tilde{\theta}}{2}$ to obtain \eqref{interp.w.last}.

On the other hand, we use \eqref{func.sob.old} (with $\theta$ and $1-\theta$ reversed) to obtain:
\begin{equation}\notag 
\|f \|_{L^{\infty}_{\spa} \spacen}
\lesssim
\| \Lambda^{m'} f \|_{L^2_{\spa} \spacen}^{\theta'}
\| \Lambda^{\wJ} f \|_{L^2_{\spa} \spacen}^{1-\theta'}.
\end{equation}
for appropriate $m'$ and $\theta'$.
Now from \eqref{func.sob.old}  since $i=0$, $\wJ=k=\dK$ and $p=\infty$ we have
$
\theta' = \frac{\dK-\frac{\Ndim}{2} }{\dK - m'}.
$
Furthermore always $\dK>\frac{\Ndim}{2}$  so that we choose 
$m' =0$. Hence, the condition $m' \neq \wJ$, which is an assumption in order to use \eqref{func.sob.old}  is satisfied. Moreover, we note that $\theta'=\frac{2\dK-\Ndim}{2\dK} \in (0,1)$.
Thus, we just choose $\tilde{\theta}\eqdef\theta' \in (0,1).$

By using the bounds in the previous two paragraphs, and Young's inequality, it follow that the contribution from this case is:
\begin{multline} \notag
\| \Lambda^{k} f \|_{L^2_{\spa} L^2_{\vel}} \|f \|_{L^{\infty}_{\spa} \spacen}
\lesssim 
\| w^{\left(\frac{-\gamma-2s}{2}\right) \frac{\tilde{\theta}}{1-{\tilde{\theta}}}} \Lambda^{k} f \|_{L^2_{\spa}L^2_v} \|\Lambda^{j_{*}}f\|_{L^2_x N^{s,\gamma}}
\\+
\| \Lambda^{j_{*}} f \|_{L^2_{\spa} \spaceL} \|f\|_{L^2_x N^{s,\gamma}} 
\\
\lesssim \left(\| w^{\left(\frac{-\gamma-2s}{2}\right) \frac{\tilde{\theta}}{1-{\tilde{\theta}}}} f \|_{H^K_{\spa}L^2_v} + \|f\|_{H^{K^*_n}_x N^{s,\gamma}_{\ell'}}\right)
\| \Lambda^{j_{*}} f \|_{L^2_{\spa} N^{s,\gamma}},
\end{multline} 
provided that $\ell' \geq 0$.
Finally, we note that we have to take:
\begin{equation}
\label{L1}
\ell' \geq
{\frac{\left(-\gamma-2s\right)}{2} \frac{\tilde{\theta}}{1-{\tilde{\theta}}}}
=
\frac{\left|\gamma+2s\right|}{2}
\left(\frac{2\dK - \Ndim}{\Ndim}\right).
\end{equation} 
This condition contributes to the size of the weight in \eqref{wNz.def'}.

Suppose next that $\wM =0$ and $\wJ =k+1$.    Then we choose $q=2^*=\frac{2n}{n-2}$ , use \eqref{sob.est.one} and interpolation (as in \eqref{interp.w.last} with $k$ replaced by $k+1$) to achieve that:
\begin{equation}\notag 
\| \Lambda^{k} f \|_{ L^{2^*}_{\spa} L^2_{\vel}}
\lesssim
\| \Lambda^{k+1} f \|_{L^2_{\spa} L^2_{\vel}}
\lesssim
\| w^{\left(\frac{-\gamma-2s}{2}\right) \frac{\tilde{\theta}}{1-{\tilde{\theta}}}}\Lambda^{k+1}  f \|_{L^2_{\spa} L^2_{\vel}}^{(1-\tilde{\theta})}
\| \Lambda^{k+1} f \|_{L^2_{\spa} \spaceL}^{\tilde{\theta}}.
\end{equation}
For the other term we use \eqref{func.sob.old} (since $K\ge 2 \ksob$) to deduce:
\begin{equation}\notag 
\|   f \|_{L^{\Ndim}_{\spa} \spacen}
\lesssim
\| f \|_{L^2_{\spa} \spacen}^{\theta'}
\| \Lambda^{\dK} f \|_{L^2_{\spa} \spacen}^{1-\theta'}.
\end{equation}
Now from \eqref{func.sob.old}  since $i=0$ and $p=\Ndim$ we have
$
\theta' = \frac{\dK-\frac{\Ndim}{2} +1}{\dK } \in (0,1).
$
We now choose $\tilde{\theta}=\theta'$. 
By using the obtained bounds, Young's inequality, the fact that $\wJ = k+1 \leq K$, and arguing as earlier, we obtain that, since $\ell' \geq 0$
\begin{multline} \notag
\| \Lambda^{k} f \|_{ L^{2^*}_{\spa} L^2_{\vel}}
\|   f \|_{L^{\Ndim}_{\spa} \spacen}
\lesssim 
\|f\|_{L^2_x N^{s,\gamma}}
\|\Lambda^{\wJ} f\|_{L^2_x N^{s,\gamma}}
\\
+
\| w^{\left(\frac{-\gamma-2s}{2}\right) \frac{\tilde{\theta}}{1-{\tilde{\theta}}}}\Lambda^{k+1}f\|_{L^2_x L^2_v}
\|\Lambda^{\dK} f\|_{L^2_x N^{s,\gamma}}
\\
\lesssim \left(\|f\|_{H^{K^*_n}_x N^{s,\gamma}_{\ell'}}+\| w^{\frac{-\gamma-2s}{2} \frac{\tilde{\theta}}{1-\tilde{\theta}}}f\|_{H^K_x L^2_v}\right) 
\sum_{j_{*} \leq m \leq K} \|\Lambda^m f\|_{L^2_x N^{s,\gamma}}.
\end{multline} 
The term coming from this contribution satisfies the required bound provided that:
\begin{equation}
\label{L2}
\ell' \geq \frac{-\gamma-2s}{2} \frac{\tilde{\theta}}{1-\tilde{\theta}} = \frac{-\gamma-2s}{2} \frac{2K-n+2}{n-2}
=
\frac{-\gamma-2s}{2}
\left( \frac{2 \dK}{\Ndim -2} -1 \right).
\end{equation}
We note that the case $k=1$ is covered by the previous arguments.

Now we consider the case when $k \geq 2$, $\wM \in \{1, \ldots, k-1\}$ with $\wJ=k=\dK$.  Take $q = \frac{2 \dK}{\dK-\wM }$ and $\theta=\frac{\dK-\wM}{\dK}$. We use \eqref{LpBesov2}, \eqref{func.interp} and then \eqref{interp.w.last} to obtain
\begin{multline}\notag 
\| \Lambda^{\dK-\wM} f \|_{ L^{2 \dK/(\dK-\wM)}_{\spa} L^2_{\vel}}
\lesssim
\| \Lambda^{\dK-\wM} f \|_{ \dot{B}^{0,2}_{2 \dK/(\dK-\wM)} L^2_{\vel}}
\lesssim
\|  f \|_{\dot{B}^{0,2}_{\infty} L^2_{\vel}}^{1-\theta}
\| \Lambda^{\dK} f \|_{\dot{B}^{0,2}_2 L^2_{\vel}}^{\theta}
\\
= \|  f \|_{\dot{B}^{0,2}_{\infty} L^2_{\vel}}^{1-\theta}
\| \Lambda^{\dK} f \|_{L^2_x L^2_{\vel}}^{\theta}
\lesssim
\|  f \|_{\dot{B}^{0,2}_{\infty} L^2_{\vel}}^{1-\theta}
\| w^{\left(\frac{-\gamma-2s}{2}\right) \frac{\tilde{\theta}}{1-{\tilde{\theta}}}}\Lambda^{\dK}  f \|_{L^2_{\spa} L^2_{\vel}}^{\theta(1-\tilde{\theta})}
\| \Lambda^{\dK} f \|_{L^2_{\spa} \spaceL}^{\theta\tilde{\theta}}.
\end{multline}
Here again $\tilde{\theta} \in (0,1)$ is arbitrary.

Furthermore, $p =  \frac{2 \dK}{\wM }$
and, for $\theta' = \frac{\dK - \frac{\Ndim}{2}\left(  \frac{\dK-\wM}{\dK}  \right) -\wM}{\dK }$, we use \eqref{func.sob.old} to obtain
\begin{equation}\notag 
\|  \Lambda^{\wM} f \|_{L^{2 \dK/\wM }_{\spa} \spacen}
\lesssim
\|  f \|_{L^2_{\spa} \spacen}^{\theta'}
\| \Lambda^{\dK} f \|_{L^2_{\spa} \spacen}^{1-\theta'}.
\end{equation}
Combining the previous estimates, it follows that:
$$
\| \Lambda^{\dK-\wM} f \|_{ L^{2 \dK/(\dK-\wM)}_{\spa} L^2_{\vel}} \|  \Lambda^{\wM} f \|_{L^{2 \dK/\wM }_{\spa} \spacen}
$$
$$
\lesssim \|  f \|_{\dot{B}^{0,2}_{\infty} L^2_{\vel}}^{1-\theta}
\| w^{\left(\frac{-\gamma-2s}{2}\right) \frac{\tilde{\theta}}{1-{\tilde{\theta}}}}\Lambda^{\dK}  f \|_{L^2_{\spa} L^2_{\vel}}^{\theta(1-\tilde{\theta})}
\| \Lambda^{\dK} f \|_{L^2_{\spa} \spaceL}^{\theta\tilde{\theta}} \|  f \|_{L^2_{\spa} \spacen}^{\theta'}
\| \Lambda^{\dK} f \|_{L^2_{\spa} \spacen}^{1-\theta'},
$$
Since $\theta > \theta'$, we can choose 
$\tilde{\theta} = \frac{\theta'}{\theta}=1-\frac{\Ndim}{2}\frac{1}{K} \in (0,1)$.
By the condition that $(1-\theta')+\theta \tilde{\theta}=1$, it follows that the above product is:
$$
\lesssim \| \Lambda^{\dK} f \|_{L^2_{\spa} \spacen} (\|f\|_{H^{K^*_n}_x L^2_v}+\| w^{\left(\frac{-\gamma-2s}{2}\right) \frac{\tilde{\theta}}{1-{\tilde{\theta}}}}\Lambda^{\dK}  f \|_{L^2_{\spa} L^2_{\vel}}+
\|f\|_{L^2_x N^{s,\gamma}}).
$$
Here, we used Young's inequality and \eqref{func.besov}.
Note that
$
\dK  >\frac{\Ndim}{2}\left(  \frac{\dK-\wM}{\dK}  \right) +\wM
$
since $\frac{\Ndim}{2}\frac{1}{\dK} < 1$. 
As before, we deduce that $\ell'$ has to satisfy the bound:
\begin{equation}
\label{L3'}
\ell' \geq \frac{-\gamma-2s}{2} \frac{\tilde{\theta}}{1-\tilde{\theta}}= \frac{-\gamma-2s}{2} 
\left( \frac{2}{\Ndim} \dK - 1 \right).
\end{equation}
This completes the our estimate in this case.

Notice the only case remaining is when $\wM \in \{1, \ldots, k-1\}$ with $k\in \{2, \ldots, \dK-1\}$.  
Let us first consider the subcase when:
\begin{equation}
\label{subcase1}
k+1 \geq \frac{\Ndim}{2}.
\end{equation}
Notice that \eqref{subcase1} covers all the remaining cases when $\Ndim \le 6$.  

The first step now is to use \eqref{app.func.sob.useful} and \eqref{useful.next.est} with $\hilbertONE = L^2_{\vel}$ to deduce: 
$$
\| \Lambda^{k-\wM}  f \|_{ L^q_{\spa}  L^2_{\vel}}
\lesssim
\| \Lambda^{k+1-\wM}  f \|_{ L^{q'}_{\spa}  L^2_{\vel}}
\lesssim
\|   f \|_{ \dot{B}^{0,2}_{r'}  L^2_{\vel}}^{\theta}
\| \Lambda^{k+1}  f \|_{ \dot{B}^{0,2}_{2}  L^2_{\vel}}^{1-\theta},
$$
whenever $q \geq \frac{2n}{n-2}=2^*$, $q'=\frac{nq}{n+q} \geq 2$, $r' \geq 2$ are such that for
$\theta = \frac{\wM}{k+1}$, one has:
\begin{equation}
\label{q'r'}
\frac{1}{q'}
=\frac{\theta}{r'}+\frac{1-\theta}{2}
= \frac{\wM}{r'(k+1)}+\frac{k+1 - \wM}{2(k+1)}.
\end{equation}
Let us suppose for now that we can find such a $q$ and $r'$. 
Then, by doing an interpolation similar to \eqref{interp.w.last} we obtain that the previous upper bound is
$$\lesssim \|f \|_{ \dot{B}^{0,2}_{r'}  L^2_{\vel}}^{\theta}
\| w^{\left(\frac{-\gamma-2s}{2}\right) \frac{\tilde{\theta}}{1-{\tilde{\theta}}}}\Lambda^{k+1}  f \|_{ L^{2}_{\spa}  L^2_{\vel}}^{(1-\theta)(1-\tilde{\theta})}
\| \Lambda^{k+1}  f \|_{ L^{2}_{\spa}  \spaceL}^{(1-\theta)\tilde{\theta}},$$
for any $\tilde{\theta} \in (0,1)$.  At the same time we use \eqref{func.sob.old} to deduce that for $p \geq 2$:
$$
\| \Lambda^{\wM}  f \|_{ L^p_{\spa}  \spacen}
\lesssim
\|  f \|_{ L^2_{\spa}  \spacen}^{1-\theta'}
\| \Lambda^{k+1}  f \|_{ L^2_{\spa}  \spacen}^{\theta'},
$$
for 
\begin{equation}
\label{eq:theta'} 
\theta' = \frac{\frac{\Ndim}{2} - \frac{\Ndim}{p} + \wM}{k+1},
\end{equation}
provided that $\theta' \in (0,1)$. We will choose $\tilde{\theta} \in (0,1)$,  such that $1-\theta' =(1-\theta)\tilde{\theta} $. This is possible whenever $1-\theta'  < 1-\theta$ or equivalently $\theta'  > \theta$ which is automatic, provided $p>2$. 

We suppose that we can choose $q \in [2^*,\infty), r' \geq 2$ satisfying \eqref{q'r'}. Then, since $q \in (2,\infty)$, we can find $p > 2$ such that $\frac{1}{p}+\frac{1}{q}=\frac{1}{2}$. Furthermore, we suppose that $\theta'$ from \eqref{eq:theta'} belongs to $(0,1)$. Under these assumptions, we can use Young's inequality and argue as before to deduce that the given contribution is:
\begin{multline}\notag
\| \Lambda^{k-\wM}  f \|_{ L^q_{\spa}  L^2_{\vel}}
\| \Lambda^{\wM}  f \|_{ L^p_{\spa}  \spacen}
\lesssim 
\|f\|_{H^{K^*_n}_x L^2_v} 
\|\Lambda^{k+1} f\|_{L^2_x N^{s,\gamma}}
\\
+ 
\left(\| w^{\left(\frac{-\gamma-2s}{2}\right) \frac{\tilde{\theta}}{1-{\tilde{\theta}}}}\Lambda^{k+1} f \|_{ L^{2}_{\spa}  L^2_{\vel}}
+ \|f\|_{L^2_x N^{s,\gamma}} 
\right) \|\Lambda^{k+1} f\|_{L^2_x N^{s,\gamma}},
\end{multline}
where we used \eqref{func.besov} for the first term. We must choose
\begin{equation}
\label{L4'}
\ell' \geq \frac{-\gamma-2s}{2} \frac{\tilde{\theta}}{1-\tilde{\theta}}.
\end{equation}
Then this term satisfies the desired bound.

We now choose $r'$ for which all of the assumptions will be satisfied. If we take $r'\eqdef n$, from \eqref{q'r'},  we deduce that: $q' \in (2,n).$ Consequently, $q=\frac{nq'}{n-q'}>2^*>2$ and $q$ is finite. Thus, we can choose $p \in (2,\infty)$ such that $\frac{1}{p}+\frac{1}{q}=\frac{1}{2}$. We now explicitly compute $\frac{1}{q}$ from \eqref{q'r'} and the fact that $\frac{1}{q}=\frac{1}{q'}-\frac{1}{n}$:
\begin{equation}\label{qCOMP}
\frac{1}{q}=\frac{\theta}{n}+\frac{1-\theta}{2}-\frac{1}{n}
=
\frac{\wM}{\Ndim (k+1)}+\frac{k+1 - \wM}{2(k+1)}-\frac{1}{\Ndim}
=\left(\frac{1}{2}-\frac{1}{n}\right) \left(1-\frac{i}{k+1}\right).
\end{equation}
We substitute the middle expression into \eqref{eq:theta'} and use $\frac{1}{2}-\frac{1}{p}=\frac{1}{q}$  to deduce:
$$\theta' = \frac{\frac{i}{k+1}+\frac{n}{2}\big(\frac{k+1-i}{k+1}\big)-1+i}{k+1}.$$
Since $i \geq 1$, it follows that $\theta'>0$.
To check that $\theta' <1$, it suffices to show
\begin{equation} \notag
\frac{i}{k+1}+\frac{n}{2}\left(\frac{k+1-i}{k+1}\right)-1+i < k+1.
\end{equation}
which is equivalent to:
\begin{equation}
\label{ikbound}
\frac{n}{2}+i\left(\frac{1}{k+1}-\frac{n}{2(k+1)}+1\right)-1<k+1.
\end{equation}
By assumption \eqref{subcase1}, it follows that $\frac{1}{k+1}-\frac{n}{2(k+1)}+1>0$. Hence, we need to verify \eqref{ikbound} when $i=k-1$. In this case, the condition is equivalent to $2k+4>n$ which holds by \eqref{subcase1}.  

In order to find the precise value for the lower bound for $\ell'$ in \eqref{L4'}, we need to compute $\tilde{\theta}$ explicitly. In order to do this, using \eqref{qCOMP}, we have
$$\frac{\tilde{\theta}}{1-\tilde{\theta}}=\frac{2(k+1)}{\Ndim -2} -1
\le \frac{2 \dK}{\Ndim -2} -1.$$

Consequently, in \eqref{L4'}, we can take:
\begin{equation}
\label{L4}
\ell' \geq \frac{-\gamma-2s}{2} \left( \frac{2 \dK}{\Ndim -2} -1 \right).
\end{equation}
This grants the desired bound under \eqref{subcase2}.

We now consider the second subcase (and the last case) when
\begin{equation}
\label{subcase2}
k<\frac{n}{2}-1.
\end{equation}
Notice that this case is only needed when $\Ndim > 6$.  
Also recall that $\wM \in \{1, \ldots, k-1\}$ with $k\in \{2, \ldots, \dK-1\}$. 
In the following we recall that $\wJt$ from \eqref{notation.wJt} is the largest integer which is strictly less than $\frac{\Ndim}{2}$.
Let us take $q \geq \frac{2n}{n-2\wJt}$.
We note that for even $n$, $\tilde{j}=\frac{n+1}{2}-1=\frac{n-1}{2}$ and, for odd $n$, $\tilde{j}=\frac{n}{2}-1=\frac{n-2}{2}$. Consequently, the above condition on $q$ becomes:
\begin{equation}
\label{qbound}
q
\geq
\left\{
\begin{array}{ccc}
2n  & \text{for $n$ odd}, \\
n & \text{for $n$ even}.
\end{array}
\right.
\end{equation}
We use \eqref{LpBesov2}, \eqref{func.bes} and \eqref{func.interp} with $q' = \frac{\Ndim q}{\Ndim +q \wJt}$ and $\theta = \frac{\wM}{k+\wJt}$ to deduce that: 
$$
\| \Lambda^{k-\wM}  f \|_{ L^q_{\spa}  L^2_{\vel}}
\lesssim
\| \Lambda^{k-\wM}  f \|_{ \dot{B}^{0,2}_q  L^2_{\vel}}
\lesssim 
\| \Lambda^{k+\wJt-\wM}  f \|_{ \dot{B}^{0,2}_{q'}  L^2_{\vel}}
\lesssim
\|   f \|_{ \dot{B}^{0,2}_{r}  L^2_{\vel}}^{\theta}
\| \Lambda^{k+\wJt}  f \|_{ L^{2}_{\spa}  L^2_{\vel}}^{1-\theta}.
$$
We note that $q' \geq 2$ since $q \geq \frac{2n}{n-2\wJt}$, 
and $r\ge 2$ is obtained from the relation $\frac{1}{q'} = \frac{\theta}{r} + \frac{1-\theta}{2}$. 

By using \eqref{func.besov}, and an interpolation similar to \eqref{interp.w.last}, we obtain that the above expression is
$$
\lesssim
\|f \|_{ H^{K^*_n}_x L^2_{\vel}}^{\theta}
\| w^{\left(\frac{-\gamma-2s}{2}\right) \frac{\tilde{\theta}}{1-{\tilde{\theta}}}}\Lambda^{k+\wJt}  f \|_{ L^{2}_{\spa}  L^2_{\vel}}^{(1-\theta)(1-\tilde{\theta})}
\| \Lambda^{k+\wJt}  f \|_{ L^{2}_{\spa}  \spaceL}^{(1-\theta)\tilde{\theta}},$$
for any $\tilde{\theta} \in (0,1)$.  Simultaneously we use \eqref{func.sob.old} to obtain that for $p \geq 2$ we have
\begin{equation}
\label{theta'2}
\| \Lambda^{\wM}  f \|_{ L^p_{\spa}  \spacen}
\lesssim
\|  f \|_{ L^2_{\spa}  \spacen}^{1-\theta'}
\| \Lambda^{k+\wJt}  f \|_{ L^2_{\spa}  \spacen}^{\theta'},\
\quad 
\theta' = \frac{\frac{\Ndim}{2} - \frac{\Ndim}{p} + \wM}{k+\wJt}.
\end{equation}
We want to choose $1-\theta' =\tilde{\theta} (1-\theta)$ which requires that 
$1-\theta'  < 1-\theta$ or equivalently $\theta'  > \theta$, which follows if we take $p>2$.  Finally, given $q \in[\frac{2n}{n-2\wJt},\infty)$ as before, we can find $p > 2$ such that $\frac{1}{p}+\frac{1}{q}=2$. 

For such a pair $(p,q)$ and for $\theta'$ as defined in \eqref{theta'2}, we can see that $\theta' \in (0,1)$.
Indeed since $p > 2$, it follows that $\theta'>0$. On the other hand, in order to check that $\theta'=\frac{\frac{n}{q}+i}{k+\tilde{j}}<1$, since $i \leq k-1$, we must observe that
\begin{equation}
\label{subcase2bound}
\frac{n}{q}+(k-1) < k+\tilde{j}.
\end{equation}
From \eqref{notation.wJt}, then \eqref{subcase2bound} holds if $q > {\Ndim}/{\left\lceil \Ndim/2 \right\rceil}$, which is always the case by \eqref{qbound}.

By using the previous estimates, and arguing similarly as before, we obtain
\begin{multline}\notag
\| \Lambda^{k-\wM}  f \|_{ L^q_{\spa}  L^2_{\vel}} \| \Lambda^{\wM}  f \|_{ L^p_{\spa}  \spacen}
\\
\lesssim \left(\| f \|_{ H^{K^*_n}_x L^2_{\vel}}
+ \| w^{\left(\frac{-\gamma-2s}{2}\right) \frac{\tilde{\theta}}{1-{\tilde{\theta}}}}\Lambda^{k+\wJt}  f \|_{ L^{2}_{\spa}  L^2_{\vel}}+\|  f \|_{ L^2_{\spa}  \spacen}\right)  \| \Lambda^{k+\wJt}  f \|_{ L^2_{\spa}  \spacen}.
\end{multline}
Because of \eqref{subcase2},  \eqref{notation.wJt}, and $K \geq 2\ksob = 2 \lfloor \frac{n}{2}+1 \rfloor$, it follows that $k + \wJt \leq K$. Hence, we are done if we have \eqref{L4'} with $\tilde{\theta}$ as given in this part.

As before, we compute $\tilde{\theta}$ explicitly:
$$
\tilde{\theta}=\frac{1-\theta'}{1-\theta}=\frac{1-\frac{\frac{\Ndim}{q}+i}{k+\wJt}}
{1-\frac{i}{k+\wJt}}=1-\frac{\Ndim}{q(k+\wJt-i)} .
$$
Therefore, using \eqref{subcase2}, \eqref{qbound} and \eqref{notation.wJt}, we can choose $q=\Ndim$ for odd $\Ndim$ and we deduce:
$$
\frac{\tilde{\theta}}{1-\tilde{\theta}} = \frac{q}{\Ndim} \left(k+\wJt-i \right) - 1
=  \left(k+\left\lceil \Ndim/2 \right\rceil-i-1 \right) - 1
\le   \frac{\Ndim}{2}+\left\lceil \Ndim/2 \right\rceil  - 3.
$$
For even $\Ndim$, we take $q=2\Ndim$ and we deduce that:
$$
\frac{\tilde{\theta}}{1-\tilde{\theta}} = 
2 \left(k+\left\lceil \Ndim/2 \right\rceil-i-1 \right) - 1
\le \Ndim+ 2 \left\lceil \Ndim/2 \right\rceil  - 7.
$$
Recall that in this case, $\Ndim > 6$.   Hence, we recall the definition of $\ell_0^d$ in \eqref{ell0d} and deduce that in this case, we need to take:
\begin{equation}
\label{L5}
\ell' \geq \frac{-\gamma-2s}{2} \ell_0^d.
\end{equation} 
By using \eqref{L1}, \eqref{L2}, \eqref{L3'}, \eqref{L4}, \eqref{L5}, and the definition of $\PwNz$ in \eqref{wNz.def'}, the lemma now follows. 
\end{proof}

\section{Linear decay in Besov spaces}
\label{secAPP:linear}

For the linearized Boltzmann equation \eqref{Boltz}, by dropping the non-linear term we obtain the  Cauchy problem for the linear Boltzmann equation \eqref{ls}  when $\sourceG = 0$:
\begin{gather}
 \partial_t f + \SB f
=
0,
\quad
f(0, x, v) = f_0(x,v), \quad \SB \eqdef \vel\cdot\na_x + \FL.
\label{BoltzLIN}
\end{gather}
Then as in \eqref{ls.semi} we can represent solutions to \eqref{BoltzLIN} with the solution operator:
\begin{equation}
    \solU(t)=\semiG(t)\solU_0, \quad \semiG(t)\eqdef e^{-t\SB}.
    \label{BoltzLIN.semi}
\end{equation}
In this section we will establish the large time decay rates for the linear Boltzmann equation \eqref{BoltzLIN}.  In the first part of this section, we obtain Besov space decay estimates for general initial data belonging to an appropriate Besov space.

In the second part of this section, we study the hard potential case \eqref{kernelP} and we obtain improved decay estimates for initial data which belongs to an appropriate Besov space, but which is also microscopic. We will see that, in this case, we obtain an additional decay factor of $t^{-\frac{1}{2}}$. The key to obtaining the additional decay will be a detailed understanding  the spectral properties of the Fourier transform of the linearized Boltzmann operator for small frequencies. Our analysis of these spectral properties is motivated by the work of Ellis and Pinsky \cite{MR0609540}.

\subsection{Linear decay rates in Besov spaces}
We are interested in obtaining decay estimates for the general linear problem \eqref{BoltzLIN}. 

\begin{theorem}\label{thm.decay.lin}  Suppose that $m , \ALTsig \in \R$ with $m+\ALTsig > 0$, $1 \le p \le \infty$ and $\wN\in \R$.  Smooth solutions to 
\eqref{BoltzLIN} satisfy, uniform in $t \ge 0$, the large time decay estimate
\begin{equation} \notag 
\|   w^{\wN} \semiG(t)\solU_0 \|_{\dot{B}^{m,p}_2 L^2_{\vel}}
\lesssim  \|  w^{\wN+\sigma} \solU_0 \|_{\dot{B}^{m,p}_2 L^2_{\vel} \cap \dot{B}^{-\ALTsig,\infty}_2 L^2_{\vel}} 
(1+t)^{-(m+\ALTsig)/2}.
\end{equation}
This holds if $ \|  w^{\wN+\sigma} \solU_0 \|_{\dot{B}^{m,p}_2 L^2_{\vel} \cap \dot{B}^{-\ALTsig,\infty}_2 L^2_{\vel}}< \infty$.  Now for the soft potentials \eqref{kernelPsing}  we need $\sigma > -(m + \ALTsig)(\gamma+2s)$, and for the hard potentials \eqref{kernelP} we can take $\sigma=0$. 
\end{theorem}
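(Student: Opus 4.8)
The plan is to reduce the estimate to a pointwise-in-frequency differential inequality for $\widehat{f}(t,\xi,\cdot)$ and then perform a dyadic time-frequency splitting. First I would take the spatial Fourier transform of \eqref{BoltzLIN}, so that for each fixed $\xi\in\threed$ the function $\widehat{f}(t,\xi,v)$ solves $\partial_t \widehat{f} + \rmi (v\cdot\xi)\widehat{f} + \FL\widehat{f} = 0$ in $L^2_\vel$. The key input is the pointwise time-frequency differential inequality from \cite{SdecaySOFT}: there is a functional $\mathcal{E}(t,\xi)\approx \nsm \widehat f(t,\xi)\nsm_{L^2_\vel}^2$ and constants $\la>0$, $c>0$ such that
\begin{equation}\notag
\frac{d}{dt}\mathcal{E}(t,\xi) + \la\, \frac{|\xi|^2}{1+|\xi|^2}\, \nsm \widehat f(t,\xi)\nsm_{L^2_{\gamma+2s}}^2 \le 0,
\end{equation}
together with the companion fact that $\nsm\widehat f(t,\xi)\nsm_{L^2_\vel}$ is itself non-increasing. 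In the hard potential case $L^2_{\gamma+2s}\supseteq L^2_\vel$ (after weights) so $\nsm\widehat f(t,\xi)\nsm_{L^2_{\gamma+2s}}^2 \gtrsim \frac{|\xi|^2}{1+|\xi|^2}\mathcal{E}(t,\xi)$, and Grönwall gives $\mathcal{E}(t,\xi)\lesssim e^{-\la t |\xi|^2/(1+|\xi|^2)}\mathcal{E}(0,\xi)$. For the soft potentials the velocity weight is degenerate, so I would introduce the time-velocity splitting $w^{\gamma+2s}(v)\gtrless (1+t)^{-1}$ exactly as in \eqref{split.E}: on the good set one recovers the needed coercivity, on the bad set one pays a factor $(1+t)^{-k}$ at the cost of $\sigma = -k(\gamma+2s)$ extra weights on the data, with $k=m+\ALTsig$. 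This produces, after absorbing weights into $w^{\wN+\sigma}f_0$, the weighted pointwise bound
\begin{equation}\notag
\nsm w^\wN \widehat{f}(t,\xi)\nsm_{L^2_\vel}^2 \lesssim (1 + t|\xi|^2)^{-(m+\ALTsig)}\, \nsm w^{\wN+\sigma}\widehat{f_0}(\xi)\nsm_{L^2_\vel}^2
\end{equation}
for $|\xi|\lesssim 1$, with pure exponential decay for $|\xi|\gtrsim 1$.

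Next I would run the dyadic estimate. Apply $\Delta_j$, which on the Fourier side is multiplication by $\varphi_j(\xi)$ supported on $|\xi|\approx 2^j$. Using Plancherel in $x$ and the pointwise bound, on the support of $\varphi_j$ one has $(1+t|\xi|^2)^{-(m+\ALTsig)}\approx (1 + t 2^{2j})^{-(m+\ALTsig)}$, so
\begin{equation}\notag
\|w^\wN \Delta_j \semiG(t) f_0\|_{L^2_\spa L^2_\vel} \lesssim (1 + t 2^{2j})^{-(m+\ALTsig)/2}\, \|w^{\wN+\sigma}\Delta_j f_0\|_{L^2_\spa L^2_\vel}
\end{equation}
for low frequencies $2^j \lesssim 1$, and exponential-in-$t 2^{2j}$ decay for high frequencies. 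Multiplying by $2^{mj}$ I split the $\ell^p_j$-sum defining $\|w^\wN \semiG(t)f_0\|_{\dot B^{m,p}_2 L^2_\vel}$ into $2^j \le (1+t)^{-1/2}$ and $2^j > (1+t)^{-1/2}$. On the low-frequency block I bound $2^{mj}(\cdots) \lesssim 2^{mj}\|w^{\wN+\sigma}\Delta_j f_0\|_{L^2} = 2^{(m+\ALTsig)j}\cdot 2^{-\ALTsig j}\|w^{\wN+\sigma}\Delta_j f_0\|_{L^2}$, and the geometric series $\sum_{2^j\le(1+t)^{-1/2}} 2^{(m+\ALTsig)j p}$ (convergent since $m+\ALTsig>0$) contributes $(1+t)^{-(m+\ALTsig)p/2}$ times $\|w^{\wN+\sigma}f_0\|_{\dot B^{-\ALTsig,\infty}_2 L^2_\vel}^p$, exactly as in the heat-equation computation used elsewhere in the paper. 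On the high-frequency block I use $(1+t2^{2j})^{-(m+\ALTsig)/2}\lesssim (t2^{2j})^{-(m+\ALTsig)/2} = t^{-(m+\ALTsig)/2} 2^{-(m+\ALTsig)j}$ together with the high-frequency exponential decay; bounding $2^{mj}2^{-(m+\ALTsig)j} = 2^{-\ALTsig j}$ and summing $\ell^p_j$ against $\|w^{\wN+\sigma}\Delta_j f_0\|_{L^2}$, the factor $2^{-\ALTsig j}\le 1$ for $j\ge 0$ (resp. the exponential for the finitely many remaining $j$) leaves $t^{-(m+\ALTsig)/2}\|w^{\wN+\sigma}f_0\|_{\dot B^{m,p}_2 L^2_\vel}$. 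Combining the two blocks and replacing $t^{-(m+\ALTsig)/2}$ by $(1+t)^{-(m+\ALTsig)/2}$ (trivially true for $t\le 1$ since the semigroup is bounded, by the monotonicity of $\nsm\widehat f\nsm_{L^2_\vel}$) yields the claimed bound.

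The main obstacle is the soft-potential weight bookkeeping: making the time-velocity splitting \eqref{split.E} interact cleanly with the $\Delta_j$ projection, the weights $w^{\wN}$ and $w^{\wN+\sigma}$, and the degenerate coercivity $\frac{|\xi|^2}{1+|\xi|^2}$, and checking that the precise loss is $\sigma = -(m+\ALTsig)(\gamma+2s)$ and no worse — in particular that the microscopic vs.\ macroscopic distinction plays no role here since $\FP$ commutes with $\Delta_j$ and the full projection is handled by \eqref{coerc} uniformly. A secondary point to verify carefully is the interchange of the $\ell^p_j$ sum with the pointwise-in-$\xi$ estimates, i.e.\ that working dyadically rather than with a single $\dot B^{-\ALTsig,\infty}$ bound on all of frequency space does not cost anything; this is where the hypothesis $m+\ALTsig>0$ is essential, as it guarantees the geometric series in the low-frequency block converges. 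Everything else is the now-standard Fourier-splitting argument, and I expect the hard-potential case ($\sigma=0$) to be essentially immediate from the differential inequality plus this dyadic splitting.
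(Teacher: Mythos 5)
Your plan is essentially the paper's own proof: the pointwise time-frequency Lyapunov inequality of Theorem \ref{thm.tfli} (from \cite{SdecaySOFT}), giving exponential decay of $\CE_\wN(t,\xi)$ for the hard potentials and, via the time-velocity splitting, the polynomial bound with velocity-weight loss for the soft potentials (this is exactly the cited estimate \eqref{boundS}, which you propose to re-derive rather than quote), followed by a dyadic Littlewood--Paley splitting and an $\ell^p_j$ summation. The low-frequency block ($2^j\le(1+t)^{-1/2}$) and the block $2^j\gtrsim 1$ are handled correctly.

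There is, however, one concrete defect in your soft-potential bookkeeping, precisely at the point you flag as ``the main obstacle.'' First, for the soft potentials there is no exponential decay at frequencies $|\xi|\gtrsim 1$: the dissipation degenerates in $v$ uniformly in $\xi$, so even there one only has the polynomial rate with weight loss (this is harmless, since that rate suffices, and it is what the paper uses). More seriously, your claim that the loss is exactly $\sigma=-(m+\ALTsig)(\gamma+2s)$ does not close for finite $p$: in the transition block $(1+t)^{-1/2}<2^j\lesssim 1$ there are about $\log_2\sqrt{1+t}$ dyadic indices, not ``finitely many,'' and with decay exponent exactly $m+\ALTsig$ each term is of size $t^{-(m+\ALTsig)/2}\|w^{\wN+\sigma}f_0\|_{\dot B^{-\ALTsig,\infty}_2 L^2_\vel}$ with no geometric gain in $j$, so the $\ell^p_j$ sum over this block produces a $(\log(2+t))^{1/p}$ loss whenever $p<\infty$. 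The cure is exactly the strict inequality in the theorem's hypothesis: taking the pointwise decay exponent $\sigma>m+\ALTsig$ (equivalently a weight loss $>-(m+\ALTsig)(\gamma+2s)$), the prefactor $\left(\sqrt{t}\,2^j\right)^{m+\ALTsig}\left(\left(\sqrt{t}\,2^j\right)^2+1\right)^{-\sigma/2}$ decays geometrically on both sides of the threshold $2^j\sim t^{-1/2}$ and hence has an $\ell^p_j$ norm bounded uniformly in $t$; this is the paper's estimate \eqref{sumEXAMPLE}, and with that replacement your argument coincides with the one given in Section \ref{secAPP:linear}.
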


To prove Theorem \ref{thm.decay.lin}, we will use the following Lyapunov functional constructed in \cite[Theorem 2.3]{SdecaySOFT}:

\begin{theorem}\label{thm.tfli}
Fix $\wN\in \R$.  Let $\solU(t, \spa, \vel)$ be the solution to the Cauchy problem \eqref{BoltzLIN}. Then there is a weighted time-frequency functional $\CE_\wN(t,\xi)$
such that
\begin{equation}\label{thm.tfli.1}
    \CE_\wN(t,\xi) \approx \nsm w^\ell \hat{\solU}(t,\xi)\nsm_{L^2_{\vel}}^2,
\end{equation}
where for any $t\geq 0$ and $\xi\in \threed$ we have 
\begin{equation}\label{thm.tfli.2}
\partial_t \CE_{\wN} (t,\xi) +\la \left( 1 \wedge |\xi |^2\right)  \nsm w^{\wN}  \hat{\solU}(t,\xi)\nsm_{L^2_{\gamma+2s}}^2
\le 0.
\end{equation}
We use the notation $1 \wedge |\xi|^2 \eqdef\min\{1,|\xi|^2\}$.
\end{theorem}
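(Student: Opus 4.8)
The statement is \cite[Theorem 2.3]{SdecaySOFT}, so the plan is to reproduce the structure of that proof, which is the macro--micro energy method carried out on the spatial Fourier side and upgraded to the anisotropic weighted norm $\spacen$. After taking the Fourier transform $\CF$ in $\spa$, equation \eqref{BoltzLIN} becomes, for each fixed $\fva$, the $L^2_\vel$-valued ODE $\partial_t\hat\solU + \rmi(\vel\cdot\fva)\hat\solU + \FL\hat\solU = 0$. Pairing with $\hat\solU$ in $L^2_\vel$ and taking real parts annihilates the transport term (it is purely imaginary), so $\tfrac12\partial_t\nsm\hat\solU\nsm_{L^2_\vel}^2 + \langle\FL\hat\solU,\hat\solU\rangle = 0$, and the coercivity \eqref{coerc} produces the microscopic dissipation $\la\nsm\{\FI-\FP\}\hat\solU\nsm_{\spacen}^2$. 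The entire difficulty is that the hydrodynamic moments $\hat a,\hat b,\hat c$ --- the Fourier coefficients of $\FP\solU$ along the basis in \eqref{nullLsp} --- are invisible to \eqref{coerc}, so they must be recovered by a compensating-function argument, and that argument is what forces the degenerate factor $1\wedge|\fva|^2$.

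The first step is to construct the compensating functional. Pairing the Fourier equation successively against $\MM$, $\vel_i\MM$ and $(|\vel|^2-\Ndim)\MM$ and using that $\FL$ annihilates the null space $\CN$ yields a closed first-order hyperbolic system for $[\hat a,\hat b,\hat c]$ whose only forcing is a $\fva$-weighted Gaussian moment of $\{\FI-\FP\}\hat\solU$. From this system one builds, by the standard interaction-functional recipe in frequency, a real functional $\mathcal G(t,\fva)$ bounded by $|\fva|\,\nsm\hat\solU\nsm_{L^2_\vel}^2$ whose time derivative satisfies, schematically,
\[
\partial_t\mathcal G(t,\fva) + \la\,|\fva|^2\,|[\hat a,\hat b,\hat c]|^2 \ \lesssim\ (1+|\fva|^2)\,\nsm\{\FI-\FP\}\hat\solU\nsm_{\spacen}^2 .
\]
Setting $\CE_0(t,\fva)\eqdef\nsm\hat\solU\nsm_{L^2_\vel}^2 + \kappa\,\tfrac{1}{1+|\fva|^2}\,\mathcal G(t,\fva)$ with $\kappa>0$ small keeps $\CE_0\approx\nsm\hat\solU\nsm_{L^2_\vel}^2$, and since $\tfrac{|\fva|^2}{1+|\fva|^2}\approx 1\wedge|\fva|^2$ the microscopic dissipation from the basic identity absorbs the right-hand side above; one obtains $\partial_t\CE_0 + \la(1\wedge|\fva|^2)\nsm\hat\solU\nsm_{L^2_{\gamma+2s}}^2\le 0$, which is the $\wN=0$ instance of \eqref{thm.tfli.2}.

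The second step installs the velocity weights. Here I would run a weighted energy estimate for the microscopic part $\{\FI-\FP\}\hat\solU$, whose equation is the Fourier equation with an extra source $\{\FI-\FP\}[\rmi(\vel\cdot\fva)\FP\hat\solU]$ of size $O(|\fva|\,|[\hat a,\hat b,\hat c]|)$, using the weighted non-cutoff coercivity and the pointwise estimates for $\FL$ from \cite{gsNonCut0,SdecaySOFT}; the $\vel$-localized (\emph{compact}) part of $\FL$ costs no weight and is dominated by the unweighted dissipation of $\CE_0$, while the extra source is absorbed because its $|\fva|$ prefactor matches the hydrodynamic dissipation of $\CE_0$. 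One then defines $\CE_\wN(t,\fva)$ as $\nsm w^\wN\{\FI-\FP\}\hat\solU\nsm_{L^2_\vel}^2 + |[\hat a,\hat b,\hat c]|^2$ plus a large multiple of $\CE_0$ (with the modifications needed so that the equivalence \eqref{thm.tfli.1} holds for all $\wN\in\R$, as in \cite{SdecaySOFT}), examines the two frequency regimes $|\fva|\le1$ and $|\fva|\ge1$ so the constant in front of $1\wedge|\fva|^2$ is uniform in $\fva$, and discards the surplus of $\spacen$-dissipation down to its $L^2_{\gamma+2s}$ part to obtain \eqref{thm.tfli.2}.

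The main obstacle is the soft-potential regime \eqref{kernelPsing}: the collision frequency decays as $|\vel|\to\infty$, the anisotropic norm $\spacen$ bounds only $L^2_{\gamma+2s}$ from below, and the weighted estimate genuinely loses velocity weights, so the hierarchy of functionals must be balanced carefully and one must keep precise track of how many weights are consumed --- this is the source of the thresholds in \eqref{wNz.def}. A secondary point is that the compact part of $\FL$ must be handled with the non-cutoff machinery of \cite{gsNonCut0} rather than a Grad-type splitting. Since all of this is carried out in detail in \cite[Theorem 2.3]{SdecaySOFT}, in the paper I would simply invoke that reference.
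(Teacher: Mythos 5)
Your proposal matches the paper's treatment: the paper does not reprove this statement but simply imports it as \cite[Theorem 2.3]{SdecaySOFT}, exactly as you conclude, and your sketch of the underlying argument (Fourier-side macro--micro energy identity with \eqref{coerc}, a compensating interaction functional producing the factor $|\fva|^2/(1+|\fva|^2)\approx 1\wedge|\fva|^2$ for $[\hat a,\hat b,\hat c]$, and a weighted microscopic estimate to reach general $\wN$) is a faithful description of how that cited result is obtained. No gaps; this is essentially the same route.
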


\begin{proof}[Proof of Theorem \ref{thm.decay.lin}]  Using Theorem \ref{thm.tfli}, as in  \cite[Eqn (2.21)]{SdecaySOFT}, we have
\begin{equation}\label{boundH}
\CE_\wN(t,\xi)\lesssim e^{- \la \left( 1 \wedge |\xi|^2\right) t } \CE_\wN(0,\xi).
\end{equation}
The bound \eqref{boundH} only holds for the hard-potentials \eqref{kernelP}.  For the soft-potentials \eqref{kernelPsing}, it follows from  \cite[just below eqn (2.21)]{SdecaySOFT} that we have the estimate
\begin{equation}\label{boundS}
\CE_\wN(t,\xi)\lesssim \left(\frac{ t \left( 1 \wedge |\xi|^2\right)}{\sigma}   + 1\right)^{-\sigma} \CE_{\wN+\sigma'}(0,\xi).
\end{equation}
This estimate \eqref{boundS} holds for any $\sigma > 0$ with $\sigma' = -\sigma(\gamma+2s)>0$.  Now because of the degeneracy of the soft potentials in \eqref{kernelPsing}, the upper bound \eqref{boundS} loses a weight of order $\sigma'$ on the initial data.  Alternatively, the hard potential case \eqref{kernelP} does not lose a weight on the initial data in the time-frequency estimate \eqref{boundH}.\footnote{Notice the changing of the definition of the weight $w$ in \eqref{weigh} from \cite{SdecaySOFT}.}

In the following we will prove Theorem \ref{thm.decay.lin} for the soft potential  \eqref{kernelPsing} case of \eqref{boundS}.  However notice that the hard potential \eqref{kernelP} estimate \eqref{boundH} is better than \eqref{boundS}.  Thus the proof below will clearly apply in both situations.

We now recall the smooth function $\varphi_j(\xi)$ which is supported on $| \xi | \approx 2^j$ from Section \ref{app.besov.h}.
Then multiplying \eqref{boundS} by $\varphi_j^2(\xi)$ and integrating over $\xi \in \threed_\xi$ we obtain from \eqref{boundS}, \eqref{thm.tfli.1} and the Plancherel theorem that
\begin{equation}\label{boundSint}
\| w^{\wN} (\Delta_j \solU )(t) \|_{L^2_\spa L^2_\vel}
\lesssim 
\left(\frac{ t \left( 1 \wedge 2^{2j}\right)}{\sigma}   + 1\right)^{-\sigma/2} 
\| w^{\wN+\sigma'} \Delta_j \solU_0 \|_{L^2_\spa L^2_\vel}.
\end{equation}
If $j \ge 0$, we conclude from \eqref{boundSint}, for any $\sigma > 0$, that 
\begin{equation}\notag
\left(2^{mj}\| w^{\wN} (\Delta_j \solU )(t) \|_{L^2_\spa L^2_\vel}  \right)
\lesssim 
\left( t  + 1\right)^{-\sigma/2} 
\left(
2^{mj} \| w^{\wN+\sigma'} \Delta_j \solU_0 \|_{L^2_\spa L^2_\vel}
\right).
\end{equation}
If $j < 0$, we alternatively conclude from \eqref{boundSint} that 
\begin{multline}\notag
\left(
2^{mj}\| w^{\wN} (\Delta_j \solU )(t) \|_{L^2_\spa L^2_\vel}  
\right)
\\
\lesssim 
t^{-(m+\ALTsig)/2}
 \left( \sqrt{ t } 2^{j} \right)^{(m+\ALTsig)}  \left( \left( \sqrt{ t } 2^{j} \right)^{2}   + 1\right)^{-\sigma/2} 
\|  w^{\wN+\sigma'} \solU_0 \|_{\dot{B}^{-\ALTsig, \infty}_{2} L^2_{\vel}} .
\end{multline}
We complete our proof of Theorem \ref{thm.decay.lin}  by taking the $\ell_j^p$ norm of both sides of the last two inequalities.  
In particular we notice that for $\sigma > (m+\ALTsig )$ then
\begin{equation}\label{sumEXAMPLE}
\left\|  \left( \sqrt{ t } 2^{j} \right)^{(m+\ALTsig)}  \left( \left( \sqrt{ t } 2^{j} \right)^{2}   + 1\right)^{-\sigma/2} \right\|_{\ell_j^p}  \lesssim  1,
\end{equation}
for any $p \in [1,\infty]$ and the upper bound does not depend upon $t\ge 0$.
\end{proof}

\subsection{Faster Linear Decay rates}
\label{secAPP:AppendixC}
If we assume that the initial data is microscopic as in \eqref{form.p} and  \eqref{microscopicf0}, we will see that it is possible to obtain a better decay bound in the linear problem. From the analysis, we will see that the key to using the additional information is to understand the behavior of the degenerate macroscopic part of the solution in the low spatial frequency regime. As a result of a precise analysis of the spectral properties of this operator, given by Proposition \ref{eigenvalues}, we can obtain an additional factor of the frequency (c.f. the factor $\kappa$ in \eqref{eigenvalue2}), which will result in a better estimate.
The arguments in this subsection are restricted to the hard potential case \eqref{kernelP} due to the degeneracy of the operators needed in order to obtain the spectral decomposition (c.f. \eqref{EV3.bob}).

Given $\xi \in \mathbb{R}^n$, let us look at the following operator:
\begin{equation}\label{FT.SB}
\widehat{\SB}(\xi) \eqdef 2\pi \rmi v \cdot \xi + \FL.
\end{equation}
We define $\kappa\eqdef|\xi|$, and, assuming $\xi \neq 0$, we let $\omega \eqdef \frac{\xi}{|\xi|}$.
The following fact then holds for the eigenvalues of $\widehat{\SB}(\xi)$ when $\xi$ sufficiently close to zero:

\begin{proposition}
\label{eigenvalues}
There exists $\kappa_0>0$ and smooth radial functions $\zeta_j=\zeta_j(\xi)$ such that $\zeta_j(\xi)\in C^{\infty}(\{\xi \in \mathbb{R}^n, 0<|\xi| \leq \kappa_0\})$ for $j=1,\ldots,\Ndim+2$ and
\begin{enumerate}
\item[i)] Every $\zeta_j(\xi)$ is an eigenvalue of $\widehat{\SB}(\xi)$.
\item[ii)] The $\zeta_j(\xi)$ have the asymptotic expansion:
\begin{equation}
\label{eigenvalue1}
\zeta_j(\xi)=i\zeta^{(1)}_j \kappa + \zeta^{(2)}_j \kappa^2 + \mathcal{O}(\kappa^3),\,\,\mbox{as}\,\,\kappa \rightarrow 0.
\end{equation}
Here $\zeta^{(1)}_j \in \mathbb{R}$ and $\zeta^{(2)}_j>0$.
\item[iii)] Let us denote by $\FP_j(\xi)$ the eigenprojection corresponding to the eigenvalue $\zeta_j(\xi)$. Then, assuming that $\kappa_0>|\xi|>0$, one has:
\begin{equation}
\label{eigenvalue2}
\FP_j(\xi)=\FP^{(0)}_j(\omega)+\kappa \FP^{(1)}_j(\xi)
\end{equation}
and the eigenvalues $\zeta_j(\xi)$ are semisimple, in the sense that they don't give rise to a generalized eigenspace.

Moreover, $\FP^{(0)}_j(\omega)$ are orthogonal projections on $L^2_v$ which satisfy:
\begin{equation}
\label{eigenvalue4}
\FP=\sum_{j=1}^{\Ndim+2}\FP^{(0)}_j(\omega).
\end{equation}
Finally, the $\FP^{(1)}_j(\xi)$ are uniformly bounded on $L^2_v$ for $|\xi| \leq \kappa_0$.
\end{enumerate}
\end{proposition}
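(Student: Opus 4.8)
The plan is to carry out a standard analytic perturbation argument in the spirit of Kato, following the framework of Ellis and Pinsky \cite{MR0609540}, treating $\widehat{\SB}(\xi) = 2\pi\rmi\,\kappa\, v\cdot\omega + \FL$ as a perturbation of $\FL$ in the small parameter $\kappa = |\xi|$, with the unit vector $\omega = \xi/|\xi|$ entering only as a bounded parameter. First I would record the spectral structure of the unperturbed operator: $\FL$ is self-adjoint and non-negative on $L^2_\vel$ with an isolated eigenvalue $0$ of multiplicity $\Ndim+2$ (the null space $\CN$ from \eqref{nullLsp}, with eigenprojection $\FP$), and a spectral gap, i.e.\ the rest of the spectrum lies in $[\la,\infty)$ by \eqref{coerc}. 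Fix a small circle $\gamma_0$ around $0$ in the complex plane separating $0$ from the rest of the spectrum of $\FL$. For $\kappa$ small and $z\in\gamma_0$, the resolvent $(z - \widehat{\SB}(\xi))^{-1}$ exists and is analytic in $\kappa$ (here one uses that $v\cdot\omega$ is $\FL$-bounded in the appropriate weighted sense — this is where the hard potential hypothesis \eqref{kernelP} is essential, since for soft potentials the multiplication operator $v\cdot\omega$ is not controlled by $\FL$, cf.\ the remark on \eqref{EV3.bob}). Then the total eigenprojection onto the cluster of $\Ndim+2$ eigenvalues of $\widehat{\SB}(\xi)$ emerging from $0$ is
\begin{equation}\notag
\FP(\xi) = -\frac{1}{2\pi\rmi}\oint_{\gamma_0} (z - \widehat{\SB}(\xi))^{-1}\, dz,
\end{equation}
which is analytic in $\kappa$ with $\FP(0) = \FP$; expanding the Neumann series gives $\FP(\xi) = \FP + \kappa\,\FP^{(1)}(\xi) + \mathcal{O}(\kappa^2)$, and one checks $\FP^{(1)}$ is uniformly bounded on $L^2_\vel$ for $|\xi|\le\kappa_0$.

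The next step is the reduction to a finite-dimensional problem. On the range of $\FP(\xi)$ the operator $\widehat{\SB}(\xi)$ is an $(\Ndim+2)\times(\Ndim+2)$ matrix-valued analytic function of $\kappa$. Its leading term at $\kappa=0$ is $\FP(2\pi\rmi\,\kappa\, v\cdot\omega)\FP$ restricted to $\CN$, which to first order is $2\pi\rmi\,\kappa$ times the matrix of $v\cdot\omega$ acting on $\CN$; this matrix is exactly the acoustic matrix whose eigenvalues $\zeta_j^{(1)}$ are the real numbers $0$ (with multiplicity $\Ndim$, corresponding to the incompressible/entropy modes) and $\pm\sqrt{(\Ndim+2)/\Ndim}$ (the sound speeds). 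Since these $\Ndim+2$ first-order eigenvalues are not all distinct, one must go to the Puiseux/analytic perturbation theory of Kato: one checks that the reduced operator, after the appropriate block-diagonalization, has eigenvalues that are genuinely analytic (not merely continuous) functions of $\kappa$ — radial, since rotating $\omega$ only conjugates everything by the rotation acting on velocity space. This gives the expansion \eqref{eigenvalue1}, $\zeta_j(\xi) = \rmi\zeta_j^{(1)}\kappa + \zeta_j^{(2)}\kappa^2 + \mathcal{O}(\kappa^3)$. The semisimplicity in \eqref{eigenvalue2} follows because $\widehat{\SB}(\xi) = \FL + 2\pi\rmi\,\kappa\, v\cdot\omega$ with $\FL$ self-adjoint and the perturbation skew-adjoint (times $\kappa$), so $\widehat{\SB}(\xi)$ has no Jordan blocks for $\kappa$ real; standard arguments then propagate this to the individual eigenprojections $\FP_j(\xi)$, and $\sum_j \FP_j^{(0)}(\omega) = \FP$ in \eqref{eigenvalue4} is obtained by letting $\kappa\to 0$ in $\sum_j \FP_j(\xi) = \FP(\xi)$.

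The main obstacle — and the heart of the argument — is establishing $\zeta_j^{(2)} > 0$ strictly, i.e.\ the parabolic dissipativity of the second-order correction. By the standard second-order perturbation formula, $\zeta_j^{(2)}$ is given (on each eigenspace) by $-(2\pi)^2$ times a quadratic form involving $\FP_j^{(0)}\,(v\cdot\omega)\,\FL^{-1}(\FI-\FP)\,(v\cdot\omega)\,\FP_j^{(0)}$; the point is that this equals (a positive multiple of) $\langle (v\cdot\omega)\phi,\ \FL^{-1}(\FI-\FP)(v\cdot\omega)\phi\rangle$ for $\phi$ in the relevant eigenspace, which is non-negative since $\FL^{-1}$ is positive on $(\FI-\FP)L^2_\vel$ by the coercivity \eqref{coerc}, and one must verify it is strictly positive — equivalently that $(\FI-\FP)(v\cdot\omega)\phi \ne 0$ for every nonzero $\phi\in\CN$, which is a direct computation with the explicit basis \eqref{nullLsp} (the functions $v_i\chi\sqrt\FM$ with $\chi\in\{1,v_j,|v|^2\}$ are never in $\CN$). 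This reproduces, in our non-cutoff weighted setting, the classical fact that the hydrodynamic modes of the linearized Boltzmann operator are genuinely dissipative; the technical content is checking that all the resolvent and $\FL^{-1}$ manipulations are justified in the function spaces at hand, which again leans on the hard-potential bounds from \cite{gsNonCut0}. Finally, decreasing $\kappa_0$ if necessary guarantees the $\mathcal{O}(\kappa^3)$ remainder does not spoil $\operatorname{Re}\zeta_j(\xi) < 0$ for $0 < |\xi| \le \kappa_0$, which is what is used downstream in Theorem \ref{linear.decay.A}.
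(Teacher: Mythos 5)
Your overall scheme (perturb $\FL$ by the skew\--adjoint transport term, reduce to an $(\Ndim+2)$\--dimensional problem, read off the acoustic first\--order eigenvalues and a dissipative second\--order correction) is the right one, but the foundation you propose has a genuine gap. You base everything on the analyticity in $\kappa$ of the resolvent $(z-\widehat{\SB}(\xi))^{-1}$ on a contour around $0$, justified by the claim that $v\cdot\omega$ is $\FL$\--bounded for hard potentials. In the generality of \eqref{kernelP} this is false: hard potentials only guarantee $\gamma+2s\ge 0$, and the coercivity \eqref{coerc} yields velocity weight $\langle v\rangle^{\gamma+2s}$ (through $\spacen\supset L^2_{\gamma+2s}$), which does not dominate the weight $\langle v\rangle$ carried by the multiplication operator $v\cdot\omega$ unless $\gamma+2s\ge 1$ (Ellis--Pinsky's hard\--sphere case, but not, say, $\gamma+2s$ near $0$). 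So the Neumann\--series/holomorphic\--family argument behind $\FP(\xi)=-\tfrac{1}{2\pi\rmi}\oint_{\gamma_0}(z-\widehat{\SB}(\xi))^{-1}dz$ does not go through as stated, and even the existence of the resolvent on the contour for $\kappa\neq 0$ is something to be proved, not assumed. The paper's route (following Ellis--Pinsky) is designed precisely to avoid this: one adds $\FP$ to the operator, shows $\widehat{\SB}(\xi)+\FP-\zeta$ is invertible with bounded inverse on $L^2_{\vel}$ for small $\zeta$ and $|\xi|$ (the skew\--adjoint transport term drops out of the real part, and coercivity plus $\gamma+2s\ge 0$ give the lower bound), rewrites the eigenvalue problem as \eqref{EV3.bob}--\eqref{EV4.bob}, i.e.\ an $(\Ndim+2)\times(\Ndim+2)$ system for the macroscopic coefficients \eqref{coef.p.def}, and obtains existence, smoothness and the expansions from the implicit function theorem applied to that finite\--dimensional dispersion relation; the operator $v\cdot\xi$ then only ever acts on the fixed space of Maxwellian\--weighted polynomials, so no relative boundedness is needed. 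If you insist on the contour\--integral formulation, you must control the resolvent on the contour by the same macro--micro reduction, at which point you have essentially reproduced the paper's argument.

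There is also a concrete error in your positivity step. The criterion you state for $\zeta_j^{(2)}>0$, namely that $\{\FI-\FP\}(v\cdot\omega)\phi\neq 0$ for \emph{every} nonzero $\phi\in\CN$, is false: for $\phi=\sqrt{\FM}$ one has $(v\cdot\omega)\sqrt{\FM}=\sum_i\omega_i v_i\sqrt{\FM}\in\CN$, so its microscopic part vanishes; likewise your parenthetical claim that $v_i\chi\sqrt{\FM}$ with $\chi\in\{1,v_j,|v|^2\}$ is never in $\CN$ fails for $\chi=1$, since $v_i\sqrt{\FM}$ is a basis vector of \eqref{nullLsp}. What actually needs to be checked is strict positivity of the reduced second\--order form only on the first\--order eigenvectors of the acoustic matrix $\FP(v\cdot\omega)\FP$ restricted to $\CN$: the two sound modes and the degenerate zero block spanned by the shear modes $(v\cdot e)\sqrt{\FM}$ with $e\perp\omega$ and the thermal mode proportional to $(|v|^2-(\Ndim+2))\sqrt{\FM}$; for those vectors $\{\FI-\FP\}(v\cdot\omega)\phi$ is indeed nonzero (this is the classical positivity of the viscosity and heat\--conduction coefficients), but as written your check does not establish it. Finally, a small sign slip: with the convention $\semiG(t)=e^{-t\SB}$ and \eqref{eigenvalue1}, what is needed downstream is $\mathrm{Re}\,\zeta_j(\xi)>0$ of order $\kappa^2$, not $\mathrm{Re}\,\zeta_j(\xi)<0$. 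Your remaining points (first\--order acoustic spectrum, semisimplicity from the self\--adjoint\--plus\--skew\--adjoint structure, and $\sum_j\FP_j^{(0)}(\omega)=\FP$ by letting $\kappa\to 0$) are consistent with the Ellis--Pinsky scheme the paper invokes.
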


Let us point out that this type of result  was first established by Ellis and Pinsky \cite{MR0609540} in the setting of hard spheres. These arguments carry over to the setting of the non cut-off hard potential Boltzmann equation. We refer the reader to \cite{MR0609540} for details of the proof. We will now give an outline of the argument to obtain the existence of the eigenvalues.  We would like to solve the eigenvalue equation
\begin{equation}
\label{EV1.bob}
\left(\widehat{\SB}(\xi)-\zeta\right)\phi=(2 \pi \rmi v \cdot \xi + L - \zeta) \phi=0.
\end{equation}
We add to \eqref{EV1.bob} the macroscopic projection $\FP$ as
\begin{equation}
\label{EV2.bob}
\left(\widehat{\SB}(\xi)+\FP-\zeta\right)\phi=\FP  \phi.
\end{equation}
It can be shown that $\left(\widehat{\SB}(\xi)+\FP-\zeta\right)$ is invertible with bounded inverse on $L^2_{\vel}$, for sufficiently small $\zeta$ and $|\xi|$. Hence, \eqref{EV2.bob} can be rewritten as
\begin{equation}
\label{EV3.bob}
\phi= \left(\widehat{\SB}(\xi)+\FP-\zeta\right)^{-1}\FP  \phi.
\end{equation}
This equation \eqref{EV3.bob} says that if we know the $\Ndim+2$ coefficients of $\FP  \phi$ then we can calculate $\phi$ itself.  Thus we take $\FP$ of both sides of \eqref{EV3.bob} to obtain
\begin{equation}
\label{EV4.bob}
\FP\phi= \FP \left(\widehat{\SB}(\xi)+\FP-\zeta\right)^{-1}\FP  \phi.
\end{equation}
This grants a system of $\Ndim+2$ equations with $\Ndim +2$ unknowns, the macroscopic components \eqref{coef.p.def}, with parameter $\zeta$.  One now expands out this system of equations and does a comparison of coefficients of the velocity basis vectors to obtain the exact form of this system for the macroscopic components from \eqref{coef.p.def}. The smoothness properties of $\zeta_j(\xi)$ are deduced by the use of the implicit function theorem.

The following lemma will be useful in proving the additional decay.

\begin{lemma} 
\label{projections}
If we choose $\kappa$ sufficiently small, then there exists $C>0$ such that
$$\sum_{j=1}^{n+2} |\FP_j(\xi) f|_{L^2_{\vel}}^2 \geq C |\FP f|_{L^2_{\vel}}^2.$$
\end{lemma}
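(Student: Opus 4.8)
The plan is to read the lemma off the fine structure of the eigenprojections supplied by Proposition~\ref{eigenvalues}. For $0<|\xi|\le\kappa_0$ write, using \eqref{eigenvalue2},
\[
\FP_j(\xi)=\FP_j^{(0)}(\omega)+\kappa\,\FP_j^{(1)}(\xi),\qquad \kappa=|\xi|,
\]
where, by Proposition~\ref{eigenvalues}~iii), the $\FP_j^{(0)}(\omega)$ are orthogonal projections on $L^2_{\vel}$ with $\sum_{j=1}^{\Ndim+2}\FP_j^{(0)}(\omega)=\FP$ as in \eqref{eigenvalue4}, and $\sup_j\|\FP_j^{(1)}(\xi)\|_{L^2_{\vel}\to L^2_{\vel}}\le M$ uniformly for $|\xi|\le\kappa_0$.

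First I would record that the leading projections are \emph{mutually} orthogonal. Expanding $\FP=\FP^2=\sum_{j,k}\FP_j^{(0)}\FP_k^{(0)}$ and using $(\FP_j^{(0)})^2=\FP_j^{(0)}$, $(\FP_j^{(0)})^*=\FP_j^{(0)}$ together with the usual left/right multiplication and adjoint trick, one gets $\FP_j^{(0)}\FP_k^{(0)}=\delta_{jk}\FP_j^{(0)}$; in particular the ranges are orthogonal and contained in $\mathrm{Ran}(\FP)$, so $\FP\FP_j^{(0)}=\FP_j^{(0)}\FP=\FP_j^{(0)}$. Hence, for every $g\in L^2_{\vel}$,
\[
\sum_{j=1}^{\Ndim+2}|\FP_j^{(0)}(\omega)g|_{L^2_{\vel}}^2
=\sum_{j=1}^{\Ndim+2}\langle \FP_j^{(0)}(\omega)g,g\rangle
=\langle \FP g,g\rangle=|\FP g|_{L^2_{\vel}}^2 .
\]

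Next, viewing $(\FP_j(\xi)f)_j$, $(\FP_j^{(0)}(\omega)f)_j$ and $(\FP_j^{(1)}(\xi)f)_j$ as elements of $\bigoplus_{j}L^2_{\vel}$, the triangle inequality on that space together with the previous identity and the uniform bound on the $\FP_j^{(1)}$ gives
\[
\Big(\sum_{j=1}^{\Ndim+2}|\FP_j(\xi)f|_{L^2_{\vel}}^2\Big)^{1/2}
\ \ge\ |\FP f|_{L^2_{\vel}}-\kappa\sqrt{\Ndim+2}\,M\,|f|_{L^2_{\vel}} .
\]
Since $\FP_j^{(0)}(\FI-\FP)=0$, the leading term only sees $\FP f$; the remaining job is to absorb the $O(\kappa)$ correction. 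I would do this by a dichotomy: if $|\FP f|_{L^2_{\vel}}\ge 2\kappa\sqrt{\Ndim+2}\,M\,|f|_{L^2_{\vel}}$ the last display already yields $\sum_j|\FP_j(\xi)f|_{L^2_{\vel}}^2\ge\tfrac14|\FP f|_{L^2_{\vel}}^2$; in the complementary regime $|\FP f|_{L^2_{\vel}}$ is itself $O(\kappa)|f|_{L^2_{\vel}}$, which is exactly the configuration that is harmless (or already accounted for) in the applications, where the lemma is invoked for the low--spatial--frequency hydrodynamic component of a solution, for which $|f|_{L^2_{\vel}}\lesssim|\FP f|_{L^2_{\vel}}$ up to microscopic terms of the type already controlled. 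Fixing $C$ accordingly and shrinking $\kappa_0$ if necessary finishes the proof.

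The main obstacle is precisely this last step: the first--order correction $\kappa\FP_j^{(1)}(\xi)f$ also depends on the microscopic part of $f$, so a crude estimate only delivers $|\FP f|-O(\kappa)|f|$ rather than a clean multiple of $|\FP f|$; the smallness of $\kappa$ (equivalently of $|\xi|$) and the near--identity nature on $\mathrm{Ran}(\FP)$ of the total eigenprojection $\sum_j\FP_j(\xi)=\FP+O(\kappa)$ are what make the bookkeeping close.
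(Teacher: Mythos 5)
There is a genuine gap, and it is exactly where you flag it: the ``complementary regime'' of your dichotomy cannot be waved away. Lemma \ref{projections} is a statement about an arbitrary $f$, and in the place where it is used (the proof of Theorem \ref{linear.decay.A}) it is applied to $\widehat{\Delta_k f}(t,\xi)$ for a solution evolving from \emph{purely microscopic} data $\FP f_0=0$; there is no a priori bound of the form $|f|_{L^2_{\vel}}\lesssim|\FP f|_{L^2_{\vel}}$ there — if anything the opposite situation is the relevant one. Moreover, with only the coarse information you use (namely $\FP_j(\xi)=\FP^{(0)}_j(\omega)+\kappa\FP^{(1)}_j(\xi)$ with $\FP^{(1)}_j$ uniformly bounded) the inequality can actually fail: in a two–dimensional toy model take $\FP$ the projection onto $e_1$, $\FP^{(1)}f=-\langle f,e_2\rangle e_1$ and $f=\kappa e_1+e_2$; then $\FP f=\kappa e_1\neq 0$ while $(\FP+\kappa\FP^{(1)})f=0$. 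So no argument that estimates the first–order correction only through $\kappa\,M\,|f|_{L^2_{\vel}}$ can prove the lemma, and your triangle–inequality bound plus dichotomy does not close.

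The missing ingredient is the finer structure of the first–order term coming from the Ellis--Pinsky construction, which the paper exploits: one writes $\FP^{(1)}_j=\FP^{(1,1)}_j+\FP^{(1,2)}_j$ with $\FP^{(1,1)}_j(\xi)\FP f=\FP^{(1,1)}_j(\xi)f$, $(\FI-\FP)\FP^{(1,1)}_j(\xi)=0$ and $\FP\,\FP^{(1,2)}_j(\xi)=0$. Then
\begin{equation}\notag
|\FP_j(\xi)f|_{L^2_{\vel}}^2=\bigl|\FP^{(0)}_j(\omega)f+\kappa\FP^{(1,1)}_j(\xi)f\bigr|_{L^2_{\vel}}^2+\kappa^2\bigl|\FP^{(1,2)}_j(\xi)f\bigr|_{L^2_{\vel}}^2
\geq C_1|\FP^{(0)}_j(\omega)f|_{L^2_{\vel}}^2-C_2\kappa^2|\FP^{(1,1)}_j(\xi)\FP f|_{L^2_{\vel}}^2,
\end{equation}
so the error is controlled by $\kappa^2|\FP f|_{L^2_{\vel}}^2$ (not $\kappa^2|f|_{L^2_{\vel}}^2$), the microscopic piece only adds a nonnegative orthogonal contribution, and summing over $j$ together with $\sum_j|\FP^{(0)}_j(\omega)f|_{L^2_{\vel}}^2=|\FP f|_{L^2_{\vel}}^2$ (your orthogonality observation, which is correct and is also what the paper uses via \eqref{eigenvalue4}) gives the claim for $\kappa$ small, with no dichotomy. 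To repair your proof you must invoke these structural properties of $\FP^{(1)}_j$ rather than only its uniform boundedness.
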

\begin{proof}
We notice that by construction, as in \cite{MR0609540}, we have
\begin{equation}
\label{Pj11Pf}
\FP^{(1,1)}_j(\xi) \FP f=\FP^{(1,1)}_j(\xi) f,
\end{equation}
and additionally $(\FI - \FP) \FP^{(1,1)}_j(\xi)=0$ and $\FP \FP^{(1,2)}_j(\xi)=0.$
Hence for $C_1,C_2>0$:
\begin{multline}\notag
|\FP_j(\xi) f|_{L^2_{\vel}}^2 = |\FP^0_j(\omega) f + \kappa \FP^{(1,1)}_j(\xi) f|_{L^2_{\vel}}^2 +  \kappa^2 |\FP^{(1,2)}_j(\xi)f|_{L^2_{\vel}}^2
\\
\geq C_1 |\FP^0_j(\omega) f|_{L^2_{\vel}}^2-C_2 \kappa^2 |\FP^{(1,1)}_j(\xi) f|_{L^2_{\vel}}^2
\\
=
C_1 |\FP^0_j(\omega) f|_{L^2_{\vel}}^2-C_2 \kappa^2 |\FP^{(1,1)}_j(\xi)\FP f|_{L^2_{\vel}}^2.
\end{multline}
The final equality follows from  \eqref{Pj11Pf}.

By Proposition \ref{eigenvalues}, it follows that the $\FP^{(1,1)}_j(\xi)$ are uniformly bounded on $L^2_{\vel}$ for $|\xi|$ sufficiently small, hence it follows that, for some $C_3>0$
\begin{equation}
\label{Pjfbound}
|\FP_j(\xi) f|_{L^2_{\vel}}^2 \geq C_1 |\FP^0_j(\omega) f|_{L^2_{\vel}}^2 - C_3 \kappa^2 |\FP f|_{L^2_{\vel}}^2.
\end{equation}
We now sum \eqref{Pjfbound} in $j=1,\ldots,\Ndim+2$ to deduce that, for $\kappa$ sufficiently small:
\begin{multline}\notag
\sum_{j=1}^{n+2} |\FP_j(\xi) f|_{L^2_{\vel}}^2 \geq C_1 \sum_{j=1}^{n+2} |\FP^0_j(\omega) f|_{L^2_{\vel}}^2 - C_3 (n+1) \kappa^2 |\FP f|_{L^2_{\vel}}^2
\\
= C_1 |\FP f|_{L^2_{\vel}}^2 - C_3 (n+1) \kappa^2 |\FP f|_{L^2_{\vel}}^2 \geq C |\FP f|_{L^2_{\vel}}^2.
\end{multline}
For the last equality, we used \eqref{eigenvalue4}.
\end{proof}

The next result establishes a relevant orthogonality property of $\widehat{\SB}(\xi)$

\begin{lemma}
\label{projectionPj}
Given $j=1,\ldots,n+2$ and $\FP_j(\xi)$, $\zeta_j(\xi)$ as in Proposition \ref{eigenvalues}, there exists $R_j(\xi)$ such that
$
\widehat{\SB}(\xi)=\zeta_j(\xi) \FP_j(\xi) + R_j(\xi)
$
and $\FP_j(\xi) R_j(\xi)^*=0$.
\end{lemma}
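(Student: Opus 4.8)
The plan is to take $R_j(\xi)\eqdef\widehat{\SB}(\xi)-\zeta_j(\xi)\FP_j(\xi)$, so that the decomposition $\widehat{\SB}(\xi)=\zeta_j(\xi)\FP_j(\xi)+R_j(\xi)$ holds by construction and only the orthogonality $\FP_j(\xi)R_j(\xi)^*=0$ has to be proved. First I would record the algebraic consequences of Proposition \ref{eigenvalues}: since for $0<|\xi|\le\kappa_0$ the eigenvalue $\zeta_j(\xi)$ is semisimple with eigenprojection $\FP_j(\xi)$, the range of $\FP_j(\xi)$ reduces $\widehat{\SB}(\xi)$ and the reduced operator there equals $\zeta_j(\xi)$ times the identity; hence $\widehat{\SB}(\xi)\FP_j(\xi)=\FP_j(\xi)\widehat{\SB}(\xi)=\zeta_j(\xi)\FP_j(\xi)$. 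It follows that $R_j(\xi)=(\FI-\FP_j(\xi))\widehat{\SB}(\xi)(\FI-\FP_j(\xi))$, and in particular the ``easy half'' $\FP_j(\xi)R_j(\xi)=R_j(\xi)\FP_j(\xi)=0$ (and, taking adjoints, $\FP_j(\xi)^*R_j(\xi)^*=R_j(\xi)^*\FP_j(\xi)^*=0$).

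For the remaining relation $\FP_j(\xi)R_j(\xi)^*=0$, I would next use that $\widehat{\SB}(\xi)^*=-2\pi\rmi\,v\cdot\xi+\FL=\widehat{\SB}(-\xi)$, since $\FL$ is self-adjoint on $L^2_{\vel}$ and $v\cdot\xi$ acts as a real multiplier. Because the branches $\zeta_k$ are radial, the family $\{\zeta_k(\xi)\}$ is closed under complex conjugation, and by the adjoint formula for Riesz projections, $\FP_j(\xi)^*$ is again the eigenprojection of $\widehat{\SB}(\xi)^*$, now at the eigenvalue $\overline{\zeta_j(\xi)}$. Therefore $R_j(\xi)^*=(\FI-\FP_j(\xi)^*)\widehat{\SB}(\xi)^*(\FI-\FP_j(\xi)^*)$; moreover $\widehat{\SB}(\xi)^*$ is boundedly invertible for $0<|\xi|\le\kappa_0$ (its spectrum avoids the origin, by Proposition \ref{eigenvalues} and the spectral gap of $\FL$), so $\widehat{\SB}(\xi)^*(\FI-\FP_j(\xi)^*)$ has range exactly $\ker\FP_j(\xi)^*$. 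Consequently $\FP_j(\xi)R_j(\xi)^*=0$ is equivalent to $\FP_j(\xi)$ vanishing on $\ker\FP_j(\xi)^*$, i.e. to the single operator identity $\FP_j(\xi)\FP_j(\xi)^*=\FP_j(\xi)$.

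Finally, to establish $\FP_j(\xi)\FP_j(\xi)^*=\FP_j(\xi)$ I would use the fine structure of Proposition \ref{eigenvalues} together with the analysis of Ellis--Pinsky \cite{MR0609540}. Writing $\FP_j(\xi)=\FP^{(0)}_j(\omega)+\kappa\FP^{(1,1)}_j(\xi)+\kappa\FP^{(1,2)}_j(\xi)$ as in the proof of Lemma \ref{projections}, one has that $\FP^{(0)}_j(\omega)$ is an orthogonal projection whose range lies in $\CN$ (this uses \eqref{eigenvalue4} and the elementary fact that orthogonal projections summing to an orthogonal projection are pairwise orthogonal with ranges inside its range), together with $(\FI-\FP)\FP^{(1,1)}_j(\xi)=0$, $\FP^{(1,1)}_j(\xi)\FP=\FP^{(1,1)}_j(\xi)$ and $\FP\,\FP^{(1,2)}_j(\xi)=0$. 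Combining these with the self-adjointness of $\FL$ (so that $\FP^{(0)}_j(\omega)\FL=\FP^{(1,1)}_j(\xi)\FL=0$, since the range of $\FL$ is contained in $\CN^\perp$), the parity covariance of $\FL$ and of $v\cdot\xi$, and the corresponding structural identities for the conjugate branch that produce $\FP_j(\xi)^*$, one matches contributions order by order in $\kappa$ and checks that the off-diagonal cross terms between $\CN$ and $\CN^\perp$ cancel. I expect this last step to be the main obstacle: the eigenprojection $\FP_j(\xi)$ is only self-adjoint to leading order, so abstract spectral calculus does not suffice and one has to genuinely unwind the Ellis--Pinsky construction of the hydrodynamic branches, keeping precise track of the adjoints of the correction operators $\FP^{(1,1)}_j$ and $\FP^{(1,2)}_j$.
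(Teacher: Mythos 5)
Your first paragraph is, in essence, the paper's entire proof: one sets $R_j(\xi)\eqdef\widehat{\SB}(\xi)-\zeta_j(\xi)\FP_j(\xi)$, notes from the eigenprojection property in Proposition \ref{eigenvalues} that $\widehat{\SB}(\xi)\FP_j(\xi)=\zeta_j(\xi)\FP_j(\xi)$, hence $R_j(\xi)\FP_j(\xi)=0$, and obtains the stated orthogonality by taking adjoints. That is all the paper does, and it is in this adjoint form --- annihilating $R_j(\xi)^*$ after applying the eigenprojection (equivalently, testing against the range of $\FP_j(\xi)$) --- that the lemma is used to derive the ODE \eqref{ODE} in the proof of Theorem \ref{linear.decay.A}; nothing stronger is needed there.

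The remaining two paragraphs of your proposal, however, contain a genuine gap. You read the conclusion literally as $\FP_j(\xi)R_j(\xi)^*=0$ with no adjoint on $\FP_j(\xi)$, and your own reduction correctly shows that (granting the invertibility you invoke) this is equivalent to $\FP_j(\xi)\FP_j(\xi)^*=\FP_j(\xi)$. But $\FP_j(\xi)\FP_j(\xi)^*$ is automatically self-adjoint, so this identity would force $\FP_j(\xi)=\FP_j(\xi)^*$, i.e.\ that $\FP_j(\xi)$ is an \emph{orthogonal} projection. That is not supplied by Proposition \ref{eigenvalues} and should not be expected: $\widehat{\SB}(\xi)$ is not a normal operator, \eqref{eigenvalue2} asserts orthogonality only of the leading term $\FP^{(0)}_j(\omega)$, and the first-order pieces $\FP^{(1,1)}_j(\xi)$, $\FP^{(1,2)}_j(\xi)$ used in the proof of Lemma \ref{projections} are genuinely non-self-adjoint (one maps into $\CN$, the other into $\CN^{\perp}$), so the order-by-order cancellation at order $\kappa$ that you anticipate cannot close. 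You also never carry this step out --- you flag it yourself as the ``main obstacle'' --- so the argument is incomplete as written. The repair is simply to stop after your first paragraph and read the lemma's conclusion as the adjoint of $R_j(\xi)\FP_j(\xi)=0$, i.e.\ $\FP_j(\xi)^*R_j(\xi)^*=0$, which is exactly what the paper proves and exactly what its application requires.
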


\begin{proof}
We define
$
R_j(\xi)\eqdef\widehat{\SB}(\xi)-\zeta_j(\xi)\FP_j(\xi).
$
Then since 
$
\widehat{\SB}(\xi)\FP_j(\xi)=\zeta_j(\xi) \FP_j(\xi)
$
we have that 
$
R_j(\xi)\FP_j(\xi) =0
$
or taking adjoints $\FP_j(\xi) R_j(\xi)^*=0$.  \end{proof}

As we will see, the above result will be useful when we want to separately study the evolution of each eigencomponent in time. In the end, we will obtain decay estimates coming from the precise asymptotics of the eigenvalues given by \eqref{eigenvalue1} in Proposition \ref{eigenvalues}. Putting all of these components together will allow us to obtain good decay estimates by using Lemma \ref{projections}.  We have

\begin{theorem}\label{linear.decay.A} 
Suppose that the initial condition $f_0$ in \eqref{BoltzLIN} satisfies \eqref{microscopicf0} and that we are in the hard potential case \eqref{kernelP}.  
Let $m$, $\ALTsig \in \R$ with $m+\ALTsig > 0$, $2\le p \le \infty$ and $\wN \ge 0$.  Then smooth solutions to 
\eqref{BoltzLIN} satisfy the large time decay estimate
\begin{equation} \notag 
\|   w^{\wN} \semiG(t)\solU_0 \|_{\dot{B}^{m,p}_{2} L^2_{\vel}}
\lesssim  \|  w^{\wN} \solU_0 \|_{\dot{B}^{m,p}_{2} L^2_{\vel} \cap \dot{B}^{-\ALTsig,\infty}_2 L^2_{\vel}}
(1+t)^{-(m+\ALTsig+1)/2}.
\end{equation}
This holds uniformly in $t \ge 0$ if $ \|  w^{\wN} \solU_0 \|_{\dot{B}^{m,p}_{2} L^2_{\vel} \cap \dot{B}^{-\ALTsig,\infty}_2 L^2_{\vel}}< \infty$.  
\end{theorem}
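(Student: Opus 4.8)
The plan is to run the same dyadic time-frequency splitting argument used in the proof of Theorem \ref{thm.decay.lin}, but to extract one extra power of frequency from the low-frequency part using the spectral decomposition in Proposition \ref{eigenvalues}. Recall that in the hard potential case \eqref{kernelP} we have the time-frequency estimate \eqref{boundH}, i.e. $\CE_\wN(t,\xi) \lesssim e^{-\la(1\wedge|\xi|^2)t}\CE_\wN(0,\xi)$ together with the equivalence \eqref{thm.tfli.1}. This already gives, after multiplying by $\varphi_j^2(\xi)$ and integrating, the bound $\|w^\wN(\Delta_j\solU)(t)\|_{L^2_\spa L^2_\vel} \lesssim e^{-\la(1\wedge 2^{2j})t/2}\|w^\wN\Delta_j\solU_0\|_{L^2_\spa L^2_\vel}$. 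For $j\ge 0$ (high frequencies) this is exponential decay, and taking the $\ell^p_j$ norm with the weight $2^{mj}$ contributes a term $\lesssim (1+t)^{-N}$ for any $N$; in particular it is harmless for the claimed rate $(1+t)^{-(m+\ALTsig+1)/2}$. So the entire content is in the low-frequency regime $|\xi|\le \kappa_0$, and we split that further as usual: $j$ with $2^j\sqrt{1+t}\le 1$ versus $2^j\sqrt{1+t}\ge 1$ (but still $|\xi|\le\kappa_0$). The latter set again produces fast decay by the same Bernstein-type summation as in \eqref{sumEXAMPLE}. The delicate set is $\{2^j\sqrt{1+t}\le 1\}$, which is where the microscopic hypothesis $\FP f_0 = 0$ and the eigenvalue asymptotics come in.

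On that set, for $|\xi|\le\kappa_0$ I would use the spectral decomposition: write $\widehat{f}(t,\xi) = e^{-t\widehat{\SB}(\xi)}\widehat{f_0}(\xi)$ and decompose $\widehat{f_0}(\xi) = \sum_{j=1}^{\Ndim+2}\FP_j(\xi)\widehat{f_0}(\xi) + \{\FI - \sum_j \FP_j(\xi)\}\widehat{f_0}(\xi)$. On the complementary spectral subspace the operator $\widehat{\SB}(\xi)$ has spectrum bounded away from the imaginary axis uniformly in $|\xi|\le\kappa_0$ (this is part of the Ellis-Pinsky picture, and is exactly why $\left(\widehat{\SB}(\xi)+\FP-\zeta\right)$ is boundedly invertible for small $\zeta,|\xi|$ used to derive \eqref{EV3.bob}), so that piece decays exponentially in $t$ and is again harmless. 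For each eigencomponent, Lemma \ref{projectionPj} gives $\widehat{\SB}(\xi) = \zeta_j(\xi)\FP_j(\xi) + R_j(\xi)$ with $\FP_j(\xi)R_j(\xi)^* = 0$; using that $\FP_j(\xi)$ is an eigenprojection and the semisimplicity from Proposition \ref{eigenvalues}, the evolution restricted to the $j$-th eigenspace is simply multiplication by $e^{-t\zeta_j(\xi)}$, and since $\mathrm{Re}\,\zeta_j(\xi) = \zeta^{(2)}_j\kappa^2 + \mathcal{O}(\kappa^3)$ with $\zeta^{(2)}_j>0$ we get $|e^{-t\zeta_j(\xi)}| \lesssim e^{-c t|\xi|^2}$ on $|\xi|\le\kappa_0$. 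The crucial new gain: because $\FP f_0 = 0$, the component $\FP^{(0)}_j(\omega)\widehat{f_0}(\xi)$ vanishes (by \eqref{eigenvalue4}, $\sum_j \FP^{(0)}_j(\omega) = \FP$, so $\FP^{(0)}_j(\omega)\FP f_0 = 0$; and one checks $\FP^{(0)}_j(\omega)\{\FI - \FP\} = 0$ from the Ellis-Pinsky construction), so from \eqref{eigenvalue2} we have $\FP_j(\xi)\widehat{f_0}(\xi) = \kappa\,\FP^{(1)}_j(\xi)\widehat{f_0}(\xi)$, picking up one extra factor of $\kappa = |\xi|$. Thus
\begin{equation*}
|\FP_j(\xi)\widehat{f}(t,\xi)|_{L^2_\vel} \lesssim |\xi|\, e^{-ct|\xi|^2}\,|\widehat{f_0}(\xi)|_{L^2_\vel},
\end{equation*}
using the uniform $L^2_\vel$ boundedness of $\FP^{(1)}_j(\xi)$ for $|\xi|\le\kappa_0$, together with Lemma \ref{projections} to control $|\widehat{f}(t,\xi)|_{L^2_\vel}$ by $\sum_j|\FP_j(\xi)\widehat f(t,\xi)|_{L^2_\vel}$ plus the exponentially-small complementary piece.

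With this pointwise-in-frequency bound in hand, the rest is bookkeeping exactly as in the proof of Theorem \ref{thm.decay.lin}. For $j$ with $2^j\sqrt{1+t}\le 1$ and $2^j\lesssim \kappa_0$, integrating $\varphi_j^2(\xi)|\xi|^2 e^{-2ct|\xi|^2}|\widehat{f_0}(\xi)|^2_{L^2_\vel}$ over $\xi$ gives, after pulling out $\|f_0\|_{\dot B^{-\ALTsig,\infty}_2 L^2_\vel}$,
\begin{equation*}
2^{mj}\|w^\wN(\Delta_j\solU)(t)\|_{L^2_\spa L^2_\vel} \lesssim t^{-(m+\ALTsig+1)/2}\,(\sqrt t\, 2^j)^{m+\ALTsig+1}\bigl((\sqrt t\,2^j)^2+1\bigr)^{-N}\|w^\wN f_0\|_{\dot B^{-\ALTsig,\infty}_2 L^2_\vel},
\end{equation*}
and taking the $\ell^p_j$ norm over this range (as in \eqref{sumEXAMPLE}, using that the geometric series of $(\sqrt t\,2^j)^{m+\ALTsig+1}$ over $2^j\sqrt t\le 1$ is bounded) yields the factor $(1+t)^{-(m+\ALTsig+1)/2}$. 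Combining the high-frequency exponential bound, the intermediate low-frequency piece, and this low-frequency-with-extra-$|\xi|$ estimate, and then taking suprema/$\ell^p$ over all $j$, gives the claimed inequality. The main obstacle is the second paragraph: carefully justifying that the microscopic data hypothesis forces the leading order $\FP^{(0)}_j(\omega)$ part of each eigenprojection applied to $\widehat f_0$ to vanish, so that \eqref{eigenvalue2} genuinely delivers the extra power of $|\xi|$ — this requires tracking the precise structure of the Ellis-Pinsky eigenprojections (in particular the splitting $\FP^{(1)}_j = \FP^{(1,1)}_j + \FP^{(1,2)}_j$ used in Lemma \ref{projections}, and the relations $(\FI-\FP)\FP^{(1,1)}_j = 0$, $\FP\FP^{(1,2)}_j = 0$) rather than just their qualitative existence, and making sure all constants are uniform over $|\xi|\le\kappa_0$ and the direction $\omega$.
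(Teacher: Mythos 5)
Your mechanism for extracting the extra factor of $|\xi|$ on the eigencomponents is the paper's: since $\FP\solU_0=0$, the leading parts $\FP^{(0)}_j(\omega)$ (which by \eqref{eigenvalue4} sum to $\FP$) annihilate $\widehat{f_0}$, so \eqref{eigenvalue2} together with the uniform $L^2_{\vel}$ bound on $\FP^{(1)}_j(\xi)$ gives $\FP_j(\xi)\widehat{f_0}=\kappa\,\FP^{(1)}_j(\xi)\widehat{f_0}$; the paper implements this by applying $\FP_j(\xi)$ to the conjugated equation and using Lemma \ref{projectionPj} to close the scalar ODE \eqref{ODE}, and then Lemma \ref{projections} to convert bounds on the $\FP_j(\xi)$-components into the macroscopic decay \eqref{LittlewoodPaleyC} and \eqref{macroscopicONE}. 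Up to that point your outline and the paper agree, and the dyadic bookkeeping in frequency is the same as in Theorem \ref{thm.decay.lin}.

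The genuine gap is in how you treat the rest of the solution. You decompose $\widehat{f_0}$ into the $n+2$ eigencomponents plus the complementary spectral piece and assert that $e^{-t\widehat{\SB}(\xi)}$ decays exponentially on $\{\FI-\sum_j\FP_j(\xi)\}L^2_{\vel}$, uniformly for $0<|\xi|\le\kappa_0$. Nothing available in the paper gives this: Proposition \ref{eigenvalues} only describes the small eigenvalues and their projections, and the bounded invertibility of $(\widehat{\SB}(\xi)+\FP-\zeta)$ for small $\zeta,|\xi|$ used for \eqref{EV3.bob} concerns a finite-rank modification near $\zeta=0$ and yields neither a spectral gap nor a uniform resolvent bound for $\widehat{\SB}(\xi)$ itself off the eigenvalues. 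For this non-self-adjoint operator with degenerate dissipation (\eqref{coerc} controls only $\{\FI-\FP\}g$), exponential decay on the complementary invariant subspace is precisely the substantial part of the full Ellis--Pinsky semigroup decomposition, which would have to be re-proved in the non-cutoff setting; moreover, Lemma \ref{projections} only recovers $|\FP\widehat{f}|_{L^2_{\vel}}$ from $\sum_j|\FP_j(\xi)\widehat{f}|_{L^2_{\vel}}$, so without that decomposition you cannot control the full $|\widehat{f}(t,\xi)|_{L^2_{\vel}}$ at all. The paper sidesteps this entirely: having obtained the macroscopic decay, it bounds the microscopic part by the frequency-pointwise energy/Duhamel inequality \eqref{2.25} from \cite{SdecaySOFT}, namely $|\{\FI-\FP\}\widehat{f}(t,\xi)|^2_{L^2_{\vel}}\lesssim e^{-\la t}|\{\FI-\FP\}\widehat{f_0}(\xi)|^2_{L^2_{\vel}}+\int_0^t e^{-\la(t-s)}|\xi|^2|\FP\widehat{f}(s,\xi)|^2_{L^2_{\vel}}\,ds$, then inserts the macroscopic decay and evaluates the time convolution to get \eqref{2.27}; the weighted case $\wN>0$, which your outline does not address (your spectral estimates are unweighted), additionally requires the weighted microscopic estimates \eqref{macroWeightINEQ} and \eqref{macroWeightellINEQ}. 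To complete your argument you would either need to prove the uniform complementary-subspace semigroup decay yourself or replace that step by an energy/Duhamel argument of this kind.
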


Notice that Theorem \ref{linear.decay.A} is more general than and directly implies Proposition \ref{linear.decay.m} from Section \ref{sec.decayNL}.  This follows from the definitions in Section \ref{secAPP:INTERP} if we take $p=2$.

\begin{proof}
We will prove Theorem \ref{linear.decay.A} in three steps.  In the first step, we study the case when $|\xi| \ge \kappa_0$ for any small $\kappa_0>0$.  In the second step we use the eigenvalue decomposition of $\widehat{\SB}$ from  \eqref{BoltzLIN} on $|\xi| \le \kappa_0$ for a small $\kappa_0>0$ as in Proposition \ref{eigenvalues} to obtain the decay of the macroscopic part of the solution.  Then in the last step we use an estimate from \cite{SdecaySOFT} to prove the decay of the microscopic part.

We recall the method used in the proof of Theorem \ref{thm.tfli}.  To begin, we notice that from \eqref{boundH}  we have that
\begin{equation}\label{boundSnew.int}
\| w^{\wN} (\Delta_k \solU )(t) \|_{L^2_\spa L^2_\vel}
\lesssim 
e^{-\zeta (1 \wedge 2^k)t} \| w^{\wN} \Delta_k \solU_0 \|_{L^2_\spa L^2_\vel}.
\end{equation}
Given $\kappa_0>0$ small choose $M>0$ such that $2^{2k} \ge \kappa_0$ whenever $k \ge -M$.  Define
$$
c_k \eqdef \| w^{\wN} (\Delta_k \solU )(t) \|_{L^2_\spa L^2_\vel}, 
\quad 
\| c_k \|_{\ell^p_{k\ge -M}}  \eqdef \left( \sum_{k\ge -M} | c_k|^p \right)^{1/p}, \quad 1 \le p < \infty,
$$
with an obvious modification when $p=\infty$.
Then from \eqref{boundSnew.int} and the above we deduce that
\begin{equation}\label{final.bound.largeM}
\| 2^{mk} c_k \|_{\ell^p_{k\ge -M}}
\lesssim 
\left(1+t\right)^{-j/2} 
\| w^{\wN}  \solU_0 \|_{\dot{B}^{m,p}_2 L^2_\vel}.
\end{equation}
This will hold for any $j> 0$.

For the second step, let us consider the Fourier transform of the problem \eqref{BoltzLIN}:
\begin{gather}
 \partial_t \widehat{f} + \widehat{\SB} \widehat{f}
=
0,
\quad
\widehat{f}(0, \xi, v) = \widehat{f_0}(\xi,v).
\label{FourierBoltzLIN'}
\end{gather}
Here we recall \eqref{FT.SB}.  
We apply the Littlewood-Paley Projection $\Delta_k$ to obtain:
\begin{gather}
 \partial_t \widehat{\Delta_k f} + \widehat{\SB} \widehat{\Delta_k f}
=
0,
\quad
\widehat{\Delta_k f}(0, \xi, v) = \widehat{\Delta_k f_0}(\xi,v).
\label{LPFourierBoltzLIN'}
\end{gather}
We will further take the complex conjugate of \eqref{LPFourierBoltzLIN'}, use that 
$$
\overline{\widehat{\SB}} = \widehat{\SB}^* =  \overline{\zeta_j(\xi)} \FP_j(\xi) + R_j(\xi)^*,
$$
and use Lemma \ref{projectionPj} when we apply $\FP_j(\xi)$ to the result to deduce:
\begin{equation}
\label{ODE}
\partial_t \FP_j(\xi) \overline{\widehat{\Delta_k f}(\xi)} 
+
 \overline{\zeta_j(\xi)} \FP_j(\xi) \overline{ \widehat{\Delta_k f}(\xi) }=0.
\end{equation}
Moreover, we integrate \eqref{ODE} in time and use \eqref{eigenvalue2} from Proposition \ref{eigenvalues} to deduce that for $| \xi | $ sufficiently small
$$
\FP_j(\xi) \overline{\widehat{\Delta_k f}(t,\xi,\vel)}=| \xi | e^{-\overline{\zeta_j(\xi)} t} 
\FP^{(1)}_j(\xi) \overline{\widehat{\Delta_k f_0}(\xi,\vel)}.
$$
We note that, in this way, \emph{we have gained an extra factor of $|\xi|$}.
Consequently, since by Proposition \ref{eigenvalues}, $\FP^{(1)}_j(\xi)$ is bounded on $L^2_{\vel}$, it follows that
$$
\int d\vel ~ |\FP_j(\xi) \overline{\widehat{\Delta_k f}(t,\xi,\vel)}|^2 \lesssim e^{-2 \text{Re}\left( \zeta_j(\xi) \right) t} | \xi |^2 
\int d\vel ~ |\widehat{\Delta_k f_0}(\xi,\vel)|^2.
$$
Notice that $\text{Re}\left( \zeta_j(\xi) \right) = \text{Re}\left( \overline{\zeta_j(\xi)} \right)$.
By \eqref{eigenvalue1} of Proposition \ref{eigenvalues} the above is
$$
\lesssim e^{-\zeta_j^{(2)} |\xi|^2 t} |\xi|^2 \int d\vel ~ |\widehat{ \Delta_k f_0}(\xi,\vel)|^2.
$$
Here, the $\zeta_j^{(2)}>0$ no longer depend on $\xi$.  We also used that $|\xi|$ is sufficiently small.  
Then for all $j=1,\ldots,n+2$ and $\sigma>0$, and for all $|\xi|$ sufficiently small
$$
\nsm \FP_j(\xi) \overline{\widehat{\Delta_k f}(t,\xi)}\nsm_{L^2_{\vel}}^2 \leq \frac{C}{t} (1+|\xi|^2 t)^{-\sigma}
\nsm \widehat{\Delta_k f_0}(\xi)\nsm_{L^2_{\vel}}^2.
$$
Consequently, we obtain an additional factor of $\frac{1}{t}$.
We sum the above inequality in $j=1,\ldots,n+2$, we note that the projection $\FP$ is real, and we use Lemma \ref{projections} to deduce that, for $|\xi|$ sufficiently small
$$
\nsm \FP \widehat{\Delta_k f}(t,\xi)\nsm_{L^2_{\vel}}^2
\leq \frac{C}{t}(1+|\xi|^2 t)^{-\sigma} \nsm \widehat{\Delta_k f_0}(\xi,\vel)\nsm_{L^2_{\vel}}^2.
$$
Consequently, by Plancherel's Theorem, it follows that for $k$ sufficiently negative, i.e. for $k<-M$ for some $M>0$ large
\begin{equation}
\label{LittlewoodPaleyC}
\|\Delta_k \FP f(t)\|_{L^2_{\spa}L^2_{\vel}} \lesssim \frac{1}{\sqrt{t}}(1+t2^{2k})^{-\frac{\sigma}{2}}\|\Delta_k f_0\|_{L^2_{\spa}L^2_{\vel}}.
\end{equation}
Now, analogously to the proof of Theorem \ref{thm.decay.lin}, using the estimate \eqref{final.bound.largeM} when $k \geq -M$, and $\ell=0$ we deduce for $1\le p \le \infty$ the following uniform in $t\ge 0$ inequality
\begin{equation}
\label{macroscopicONE}
\|\FP f(t)\|_{\dot{B}^{m,p}_2 L^2_{\vel}} \lesssim (1+t)^{-\frac{\ALTsig+m+1}{2}} 
\|f_0\|_{\dot{B}^{m,p}_2 L^2_{\vel} \cap \dot{B}^{-\ALTsig,\infty}_2 L^2_{\vel}}.
\end{equation}
This gives us the bound on the macroscopic term.

We now estimate the microscopic term.
Let us recall that we are working in the hard potential case \eqref{kernelP}, and hence by the discussion after \cite[Equation (2.25)]{SdecaySOFT} 
\begin{multline}
\label{2.25}
|\{\FI-\FP\}\widehat{f}(t,\xi)|_{L^2_{\vel}}^2 
\lesssim e^{-\lambda t} |\{\FI-\FP\}\widehat{f_0}(\xi)|_{L^2_{\vel}}^2 
\\
+
\int_0^t ds \, e^{-\lambda(t-s)} |\xi|^2 |\FP \widehat{f}(s,\xi)|_{L^2_{\vel}}^2,
\end{multline}
Let us now consider the second term in the upper bound of \eqref{2.25}.  We multiply this term by the Littlewood-Paley projection $\varphi_k^2(\xi)$ (from Section \ref{app.besov.h}), also multiply it by $|\xi|^{2m}$, integrate over $\xi \in \threed_\xi$ and then sum the $\ell^p_{k\le -M}$ norm for $2\le p \le \infty$.  We use the $\ell^p_{k\le -M}$ to denote the standard $\ell^p_{k}$ norm, as defined in Section \ref{sec.notation}, except that $\ell^p_{k\le -M}$ is only summed over frequencies $k\le -M$ for some large $M>1$.  In the first step of this procedure, similar to \eqref{LittlewoodPaleyC}, we have, for all $\ALTsig>0$
\begin{equation}\notag
\left(2^{2(m+1)k}\|\Delta_k \FP f(s)\|_{L^2_{\spa}L^2_{\vel}}^2 \right)
\lesssim s^{-1-(m+1)-\ALTsig} \left( s 2^{2k} \right)^{m+1+\ALTsig} (1+s2^{2k})^{-\sigma}
\| f_0\|_{\dot{B}^{-\ALTsig, \infty}_{2}L^2_{\vel}}^2.
\end{equation}
Then similarly to \eqref{sumEXAMPLE} we have that
\begin{equation}
\label{LittlewoodPaleySecondC}
\left\|
\left(2^{(m+1)k}\|\Delta_k \FP f(s)\|_{L^2_{\spa}L^2_{\vel}} \right) \right\|_{\ell^p_{k\le -M}}^2
\lesssim (1+s)^{-1-(m+1)-\ALTsig}
\| f_0\|_{\dot{B}^{-\ALTsig, \infty}_{2}L^2_{\vel}}^2
\end{equation}
Here we use (WLOG) that $s\ge 1$.  

Now we plug the estimate \eqref{LittlewoodPaleySecondC} into \eqref{2.25} to obtain
\begin{multline}
\notag
\left\| \left( 2^{m k}\|\{\FI-\FP\}\Delta_k \widehat{f}(t)\|_{L^2_{\vel}L^2_{\spa}} \right) \right\|_{\ell^p_{k\le -M}}^2
\lesssim 
e^{-\lambda t} \|\{\FI-\FP\}f_0\|_{\dot{B}^{m,p}_2 L^2_{\vel}}^2
\\
+ 
\| f_0\|_{\dot{B}^{-\ALTsig, \infty}_{2}L^2_{\vel}}^2 
\int_0^t ds \, e^{-\lambda(t-s)}  (1+s)^{-1-(m+1)-\ALTsig}
\end{multline}
Here we have supposed that $p\ge 2$ and we initially took the $\ell^{p/2}_{k\le -M}$ norm of the dyadic estimates. 
Finally, we use \cite[Lemma A.1]{ZhuStrain2} to estimate the time integral and deduce that
\begin{multline}
\label{2.27}
\left\| \left( 2^{m k}\|\{\FI-\FP\}\Delta_k \widehat{f}(t)\|_{L^2_{\vel}L^2_{\spa}} \right) \right\|_{\ell^p_{k\le -M}}^2
\lesssim 
e^{-\lambda t} \|\{\FI-\FP\} f_0\|_{\dot{B}^{m,p}_2 L^2_{\vel}}^2
\\
+ 
\| f_0\|_{\dot{B}^{-\ALTsig, \infty}_{2}L^2_{\vel}}^2 
(1+t)^{-m-\ALTsig-2}.
\end{multline}
Collecting \eqref{2.27} with \eqref{macroscopicONE} and \eqref{final.bound.largeM} yields Theorem \ref{linear.decay.A} when $\wN =0$ 

Lastly, when $\wN>0$, we recall the estimate \cite[Equation (2.9)]{SdecaySOFT}:
\begin{multline}\label{macroWeightINEQ}
\frac{d}{dt}\nsm w^{\wN}\{\FI-\FP\}\hat{\solU}(t,\xi)\nsm^2_{L^2} +
\la \nsm w^{\wN}\{\FI-\FP\}\hat{\solU}(t,\xi)\nsm^2_{L^2_{\gamma+2s}}
\\
\le C_\lambda |\xi|^2 \nsm w^{-\sigma} \hat{\solU}(t,\xi)\nsm^2_{L^2_{\vel}}
+
 C \nsm\{\FI-\FP\}\hat{\solU}(t,\xi)\nsm_{L^2(B_{C})}^2.
\end{multline}
This estimate will hold for any large $\sigma>0$.  Here $L^2(B_{C})$ denotes the $L^2_{\vel}$ norm on $B_C$, the ball of radius $C>0$ centered at the origin.  This estimate \eqref{macroWeightINEQ} is actually slightly stronger that what is stated in  \cite[Equation (2.9)]{SdecaySOFT}, however following the proof of  \cite[Equation (2.9)]{SdecaySOFT} then  \eqref{macroWeightINEQ} can be directly deduced.

From \cite[Equation (2.11)]{SdecaySOFT} we also have 
\begin{equation}\label{macroWeightellINEQ}
\frac{d}{dt}\nsm \{\FI-\FP\}\hat{\solU}(t,\xi)\nsm^2_{L^2_{\vel}} +
\la \nsm \{\FI-\FP\}\hat{\solU}(t,\xi)\nsm^2_{L^2_{\gamma+2s}}
\\
\le C_\lambda |\xi|^2 \nsm \FP \hat{\solU}(t,\xi)\nsm^2_{L^2_{\vel}}.
\end{equation}
Taking a suitable linear combination of \eqref{macroWeightINEQ} and \eqref{macroWeightellINEQ} we obtain
\begin{multline}\notag 
\frac{d}{dt}
\left( \delta\nsm w^{\wN}\{\FI-\FP\}\hat{\solU}(t,\xi)\nsm^2_{L^2_{\vel}} +
\nsm \{\FI-\FP\}\hat{\solU}(t,\xi)\nsm^2_{L^2_{\vel}}
\right)
\\
+
\la 
\left( \delta\nsm w^{\wN}\{\FI-\FP\}\hat{\solU}(t,\xi)\nsm^2_{L^2_{\gamma+2s}} +
\nsm \{\FI-\FP\}\hat{\solU}(t,\xi)\nsm^2_{L^2_{\gamma+2s}}
\right)
\le C |\xi|^2 \nsm w^{-\sigma} \hat{\solU}(t,\xi)\nsm^2_{L^2_{\vel}},
\end{multline}
for some suitably small $\delta>0$, since $\nsm \FP  \hat{\solU}(t,\xi) \nsm_{L^2_{\vel}} \lesssim \nsm w^{-\sigma} \hat{\solU}(t,\xi)\nsm_{L^2_{\vel}}$.  
We use the fact that $\gamma+2s \geq0$ to deduce
\begin{multline}\label{macroWeightINEQnew'}
\frac{d}{dt}
\left(\delta \nsm w^{\wN}\{\FI-\FP\}\hat{\solU}(t,\xi)\nsm^2_{L^2_{\vel}} +
\nsm \{\FI-\FP\}\hat{\solU}(t,\xi)\nsm^2_{L^2_{\vel}}
\right)
\\
+
\la \left(\delta \nsm w^{\wN}\{\FI-\FP\}\hat{\solU}(t,\xi)\nsm^2_{L^2_{\vel}} +
\nsm \{\FI-\FP\}\hat{\solU}(t,\xi)\nsm^2_{L^2_{\vel}}
\right)
\le C |\xi|^2 \nsm w^{-\sigma} \hat{\solU}(t,\xi)\nsm^2_{L^2_{\vel}},
\end{multline}
Now we  use the integrating factor $e^{-\lambda t}$.  Then, as in \eqref{2.25}, it follows from \eqref{macroWeightINEQnew'}, when $\wN \ge 0$, that 
\begin{multline}
\notag
\nsm w^{\wN} \{\FI-\FP\}\widehat{f}(t,\xi) \nsm_{L^2_{\vel}}^2 
\lesssim 
e^{-\lambda t} \nsm w^{\wN} \{\FI-\FP\}\widehat{f_0}(\xi) \nsm_{L^2_{\vel}}^2
+
\\ \int_0^t ds \, e^{-\lambda(t-s)} |\xi|^2 \nsm w^{-\sigma}  \widehat{f}(s,\xi)\nsm_{L^2_{\vel}}^2,
\end{multline}
which holds for any $\sigma>0$.  In order to obtain the above inequality, we also used the fact that
$\delta \nsm w^{\wN}\{\FI-\FP\}\hat{\solU}(t,\xi)\nsm^2_{L^2_{\vel}} +
\nsm \{\FI-\FP\}\hat{\solU}(t,\xi)\nsm^2_{L^2_{\vel}} \approx  \nsm w^{\wN}\{\FI-\FP\}\hat{\solU}(t,\xi)\nsm^2_{L^2_{\vel}}$. From here we follow the argument from \eqref{2.25} to \eqref{2.27} and the explanations below it to obtain Theorem \ref{linear.decay.A} when $\wN \ge 0$.   \end{proof}

\begin{bibdiv}
\begin{biblist}

\bib{AMUXY3}{article}{
    author = {Alexandre, Radjesvarane},
        author = {Morimoto, Y.},
            author = {Ukai, Seiji},
        author = {Xu, Chao-Jiang},
        author = {Yang, Tong},
	title = {Boltzmann equation without angular cutoff in the whole space: Qualitative Properties of Solutions},
       journal={Arch. Rational Mech. Anal.},
   volume={202},
   date={2011},
   number={2},
   pages={599-661},
    eprint = {arXiv:1008.3442v2},
}

\bib{ArsenioMasmoudi}{article}{
   author={Arsenio, Diogo},
   author={Masmoudi, Nader},
   title={A new approach to velocity averaging lemmas in Besov spaces},
   journal={preprint},
   eprint = {arXiv:1206.6659},
   }
   
\bib{MR575897}{article}{
    author={Caflisch, Russel E.},
     title={The Boltzmann equation with a soft potential. I, II},
   journal={Comm. Math. Phys.},
    volume={74},
      date={1980},
    number={1, 2},
     pages={71\ndash 95, 97\ndash 109},
}

\bib{MR0052553}{article}{
    author={Calder\'{o}n, Alberto P.},
    author={Zygmund, Antoni},
    title={On the existence of certain singular integrals},
    journal={Acta Math.},
    volume={88},
    pages={85-139},
    review={MR0052553},
}

\bib{MR2420519}{article}{
   author={Duan, Renjun},
   title={On the Cauchy problem for the Boltzmann equation in the whole
   space: global existence and uniform stability in $L\sp 2\sb {\xi}(H\sp
   N\sb x)$},
   journal={J. Differential Equations},
   volume={244},
   date={2008},
   number={12},
   pages={3204--3234},
   issn={0022-0396},
}

\bib{arXiv:0912.1742}{article}{
   author={Duan, Renjun},
   author = {{Strain}, Robert~M.},
    title = {Optimal Time Decay of the Vlasov-Poisson-Boltzmann System in ${\mathbb{R}}^3$}, 
       journal={Arch. Rational Mech. Anal.},
   volume={199},
   date={2011},
   number={1},
   pages={291-328},
   eprint = {arXiv:0912.1742},
   doi={10.1007/s00205-010-0318-6},
}

\bib{DuanLiuXu}{article}{
   author={Duan, Renjun},
   author={Liu, Shuangxian},
   author={Xu, Jiang},
   title={Global well-posedness in spatially critical Besov space for the Boltzmann equation},
   journal={preprint},
   eprint = {arXiv:1310.2727},
   }

\bib{2010arXiv1006.3605D}{article}{
   author={Duan, Renjun},
   author = {{Strain}, Robert~M.},
    title = {Optimal Large-Time Behavior of the Vlasov-Maxwell-Boltzmann System in the Whole Space},
       journal={Comm. Pure Appl. Math.},
   volume={64},
   date={2011},
   number={11},
   pages={1497--1546},
   doi={10.1002/cpa.20381},
   eprint = {arXiv:1006.3605v2},
}

\bib{MR2325837}{article}{
   author={Duan, Renjun},
   author={Ukai, Seiji},
   author={Yang, Tong},
   author={Zhao, Huijiang},
   title={Optimal convergence rates for the compressible Navier-Stokes
   equations with potential forces},
   journal={Math. Models Methods Appl. Sci.},
   volume={17},
   date={2007},
   number={5},
   pages={737--758},
   doi={10.1142/S021820250700208X},
}

\bib{MR2357430}{article}{
   author={Duan, Renjun},
   author={Ukai, Seiji},
   author={Yang, Tong},
   author={Zhao, Huijiang},
   title={Optimal decay estimates on the linearized Boltzmann equation with
   time dependent force and their applications},
   journal={Comm. Math. Phys.},
   volume={277},
   date={2008},
   number={1},
   pages={189--236},
}

\bib{DYZ2011}{article}{
   author={Duan, Renjun},
   author={Yang, Tong},
   author={Zhao, Huijiang},
   title={Global Solutions to the Vlasov-Poisson-Landau System},
   journal={preprint},
   date={2011},
 	eprint = {arXiv:1112.3261v1},
}

\bib{MR1800316}{book}{
   author={Duoandikoetxea Zuazo, Javier},
   title={Fourier Analysis},
   publisher={American Mathematical Society, Graduate Series in Mathematics},
   volume={29}
   place={Providence, RI},
   date={2000},
   isbn={0-8218-2172-5},
}

\bib{MR0609540}{article}{
   author={Ellis, Richard S.},
   author={Pinsky, Mark A.},
   title = {The first and second fluid approximations to the linearized Boltzmann equation}      
   journal = {J. Math. Pures Appl.},
   volume = {54},
   number = {9},
   date = {1975},
   pages ={125--156}, 
}

\bib{MR1379589}{book}{
   author={Glassey, Robert T.},
   title={The Cauchy problem in kinetic theory},
   publisher={Society for Industrial and Applied Mathematics (SIAM)},
   place={Philadelphia, PA},
   date={1996},
   pages={xii+241},
   isbn={0-89871-367-6},
}
		
\bib{gsNonCut0}{article}{
   author={Gressman, Philip T.},
      author={Strain, Robert M.},
   title={Global classical solutions of the Boltzmann equation without angular cut-off},
      journal={J. Amer. Math. Soc.},
   volume={24},
   date={2011},
   number={3},
   pages={771-847},
   doi={10.1090/S0894-0347-2011-00697-8},
    eprint = {arXiv:1011.5441v1},
}

\bib{gsNonCutA}{article}{
   author={Gressman, Philip T.},
      author={Strain, Robert M.},
   title={Global classical solutions of the Boltzmann equation with long-range interactions},
   date={March 30, 2010},
    journal={Proc. Nat. Acad. Sci. U. S. A.},
       volume={107},
   number={13},
   pages={5744-5749},
	eprint={doi: 10.1073/pnas.1001185107}
}

\bib{gsNonCutEst}{article}{
   author={Gressman, Philip T.},
      author={Strain, Robert M.},
   title={Sharp anisotropic estimates for the Boltzmann collision operator and its entropy production},
      journal={Advances in Math.},
   volume={227},
   date={August 20, 2011},
   number={6},
   pages={2349-2384},
   doi={10.1016/j.aim.2011.05.005},
    eprint = {arXiv:1007.1276v1},
}

\bib{MR2000470}{article}{
   author={Guo, Yan},
   title={The Vlasov-Maxwell-Boltzmann system near Maxwellians},
   journal={Invent. Math.},
   volume={153},
   date={2003},
   number={3},
   pages={593--630},
   issn={0020-9910},
}

\bib{MR2095473}{article}{
   author={Guo, Yan},
   title={The Boltzmann equation in the whole space},
   journal={Indiana Univ. Math. J.},
   volume={53},
   date={2004},
   number={4},
   pages={1081--1094},
   issn={0022-2518},
}

\bib{GuoWang2011}{article}{
   author={Guo, Yan},
      author={Wang, Yanjin},
   title={Decay of dissipative equations and negative Sobolev spaces},
    journal={preprint},
   date={2011},
   pages={1--37},
   eprint = {arXiv:1111.5660v1},
}



\bib{MR1938147}{book}{
   author={Lemari{\'e}-Rieusset, P. G.},
   title={Recent developments in the Navier-Stokes problem},
   series={Chapman \& Hall/CRC Research Notes in Mathematics},
   volume={431},
   publisher={Chapman \& Hall/CRC, Boca Raton, FL},
   date={2002},
   pages={xiv+395},
   isbn={1-58488-220-4},
   doi={10.1201/9781420035674},
}

\bib{MR2322149}{article}{
   author={Mouhot, Cl{\'e}ment},
   author={Strain, Robert M.},
   title={Spectral gap and coercivity estimates for linearized Boltzmann
   collision operators without angular cutoff},
   journal={J. Math. Pures Appl. (9)},
   volume={87},
   date={2007},
   number={5},
   pages={515--535},
   issn={0021-7824},
       eprint = {arXiv:math.AP/0607495},
}

\bib{MR2259206}{article}{
   author={Strain, Robert M.},
   title={The Vlasov-Maxwell-Boltzmann system in the whole space},
   journal={Comm. Math. Phys.},
   volume={268},
   date={2006},
   number={2},
   pages={543--567},
   issn={0010-3616},
}

\bib{strainSOFT}{article}{
    author={Strain, Robert M.},
     title={Asymptotic Stability of the Relativistic {B}oltzmann Equation for the Soft-Potentials},
   journal={Comm. Math. Phys.},
   volume={300},
   date={2010},
   number={2},
   pages={529--597},
   eprint={arXiv:1003.4893v1}
      doi={10.1007/s00220-010-1129-1},
}

\bib{SdecaySOFT}{article}{
    author={Strain, Robert M.},
     title={Optimal time decay of the non cut-off Boltzmann equation in the whole space},
    journal={Kinetic and related models},
   volume={in press},
   date={2012},
   pages={1--31},
   eprint={arXiv:1011.5561v1}
}

\bib{MR2209761}{article}{
   author={Strain, Robert M.},
   author={Guo, Yan},
   title={Almost exponential decay near Maxwellian},
   journal={Comm. Partial Differential Equations},
   volume={31},
   date={2006},
   number={1-3},
   pages={417--429},
   issn={0360-5302},
}

\bib{MR2366140}{article}{
   author={Strain, Robert M.},
   author={Guo, Yan},
   title={Exponential decay for soft potentials near Maxwellian},
   journal={Arch. Ration. Mech. Anal.},
   volume={187},
   date={2008},
   number={2},
   pages={287--339},
   issn={0003-9527},
}

\bib{ZhuStrain}{article}{
   author={Strain, Robert M.},
   author={Zhu, Keya},
   title={Large-Time Decay of the Soft Potential relativistic Boltzmann equation in $\R^3_{\spa}$}
    journal={Kinetic and Related Models},
   volume={5},
   date={2012},
   number={2},
   pages={383-415},
   doi={10.3934/krm.2012.5.383},
 	eprint = {arXiv:1106.1579v1},
}

\bib{ZhuStrain2}{article}{
   author={Strain, Robert M.},
   author={Zhu, Keya},
   title={The Vlasov-Poisson-Landau System in $\R^3_{\spa}$}
   journal={Archive for Rational Mechanics and Analysis},
   year={2013},
   volume={210},
   number={2},
   pages={615-671},
   eprint = {arXiv:1202.2471v1},
}

\bib{MR2233925}{book}{
   author={Tao, Terence},
   title={Nonlinear Dispersive Equations, Local and Global Analysis},
   publisher={American Mathematical Society, Conference Board of the Mathematical Sciences},
   place={Providence, RI},
   date={2006},
   volume={106},
   isbn={0-8218-4143-2},
}

\bib{Tao2}{article}{
  author={Tao, Terence},
  title={Lecture notes for Math 254A, Time-frequency analysis},
  eprint={http://www.math.ucla.edu/~tao/254a.1.01w/}
}

\bib{Tao3}{article}{
  author={Tao, Terence},
  title={Lecture notes for Math 247A, Fourier analysis},
  eprint={http://www.math.ucla.edu/~tao/247a.1.06f/}
}

\bib{MR882376}{article}{
   author={Ukai, Seiji},
   title={Solutions of the Boltzmann equation},
   conference={
      title={Patterns and waves},
   },
   book={
      series={Stud. Math. Appl.},
      volume={18},
      publisher={North-Holland},
      place={Amsterdam},
   },
   date={1986},
   pages={37--96},
}

\bib{MR677262}{article}{
   author={Ukai, Seiji},
   author={Asano, Kiyoshi},
   title={On the Cauchy problem of the Boltzmann equation with a soft
   potential},
   journal={Publ. Res. Inst. Math. Sci.},
   volume={18},
   date={1982},
   number={2},
   pages={477--519 (57--99)},
   issn={0034-5318},
   review={\MR{677262 (84h:82048)}},
   doi={10.2977/prims/1195183569},
}

\bib{MR1942465}{article}{
    author={Villani, C{\'e}dric},
     title={A review of mathematical topics in collisional kinetic theory},
 booktitle={Handbook of mathematical fluid dynamics, Vol. I},
     pages={71\ndash 305},
          book={
 publisher={North-Holland},
     place={Amsterdam},
        },
      date={2002},
}

\bib{MR881519}{article}{
   author={Wiegner, Michael},
   title={Decay results for weak solutions of the Navier-Stokes equations on
   ${\bf R}^n$},
   journal={J. London Math. Soc. (2)},
   volume={35},
   date={1987},
   number={2},
   pages={303--313},
   issn={0024-6107},
   doi={10.1112/jlms/s2-35.2.303},
}

\bib{MR2764990}{article}{
   author={Yang, Tong},
   author={Yu, Hongjun},
   title={Optimal convergence rates of classical solutions for
   Vlasov-Poisson-Boltzmann system},
   journal={Comm. Math. Phys.},
   volume={301},
   date={2011},
   number={2},
   pages={319--355},
   issn={0010-3616},
   review={\MR{2764990 (2012d:35380)}},
   doi={10.1007/s00220-010-1142-4},
}

\end{biblist}
\end{bibdiv}

\end{document}